\documentclass[12pt]{amsart} \usepackage{amsmath, amssymb} \newfont {\cyr} {wncyr10} 
\usepackage{amsfonts} \usepackage{amsmath} \usepackage{amssymb} \usepackage{setspace}
\usepackage{multicol}
\usepackage{color}
\usepackage{caption}

\usepackage{mathtools}
\usepackage{fancyref}

\newcommand*{\fancyrefthmlabelprefix}{thm}
\frefformat{plain}{\fancyrefthmlabelprefix}{Theorem \textup{#1}}

\newcommand*{\fancyrefequationlabelprefix}{eq}
\frefformat{plain}{\fancyrefequationlabelprefix}{\textup{(#1)}}

\newcommand*{\fancyrefsectionlabelprefix}{sec}
\frefformat{plain}{\fancyrefsectionlabelprefix}{Section~\textup{#1}}

\newcommand*{\fancyrefnotlabelprefix}{not}
\frefformat{plain}{\fancyrefnotlabelprefix}{Notation~\textup{#1}}

\newcommand*{\fancyrefhyplabelprefix}{hyp}
\frefformat{plain}{\fancyrefhyplabelprefix}{Hypothesis~\textup{#1}}

\newcommand*{\fancyreflemmalabelprefix}{lem}
\frefformat{plain}{\fancyreflemmalabelprefix}{Lemma~\textup{#1}}

\newcommand*{\fancyrefdeflabelprefix}{def}
\frefformat{plain}{\fancyrefdeflabelprefix}{Definition~\textup{#1}}

\newcommand*{\fancyrefproplabelprefix}{prop}
\frefformat{plain}{\fancyrefproplabelprefix}{Proposition~\textup{#1}}

\newcommand*{\fancyrefcorlabelprefix}{cor}
\frefformat{plain}{\fancyrefcorlabelprefix}{Corollary~\textup{#1}}

\renewcommand*{\fancyreftablabelprefix}{tab}
\frefformat{plain}{\fancyreftablabelprefix}{Table~\textup{#1}}

\newcommand*{\fancyrefremlabelprefix}{rem}
\frefformat{plain}{\fancyrefremlabelprefix}{Remark~\textup{#1}}

\newcommand*{\fancyrefchplabelprefix}{chp}
\frefformat{plain}{\fancyrefchplabelprefix}{Chapter~\textup{#1}}

\newcommand*{\fancyrefclaimlabelprefix}{claim}
\frefformat{plain}{\fancyrefclaimlabelprefix}{Claim~\textup{#1}}

\newcommand*{\fancyrefclmlabelprefix}{clm}
\frefformat{plain}{\fancyrefclmlabelprefix}{\textup{#1}}

 \newtheorem{Def}{Definition}[section]
\newtheorem{hyp}[Def]{Hypothesis}
\newtheorem{lemma}[Def]{Lemma}

\newtheorem*{CD*}{Case Divison}
 \newtheorem{remark}[Def]{Remark}
\newtheorem{proposition}[Def]{Proposition} 
 \newtheorem{Equ}{} 
 \newtheorem{notation}[Def]{Notation}

\newcounter{claim}[Def]

\newtheorem*{theorem*}{Main Theorem}
\newtheorem*{corollary*}{Corollary}

\newcommand{\Z}{{\mathbb Z}}
\newcommand{\Q}{{\mathds Q}}

 \def \Aut {\mbox {\rm Aut}}       \def \Syl {\mbox {\rm Syl}}  \def \Q {\mbox {\rm Q}}\def \Out {\mbox {\rm Out}} \def \Fun {\mbox {\rm Fun}}

\def \OO {\mathrm O}

\mathchardef\tnode="020E 

\def\arc{
  \hbox{\kern -0.15em
  \vbox{\hrule width 2.5em height 0.6ex depth -0.5 ex}
  \kern -0.33em}}

\def\darc{
  \rlap{\lower0.2ex\arc}{\raise0.2ex\arc}}

\def\tarc{
 \rlap{\rlap{\lower0.4ex\arc}{\raise0.4ex\arc}}{\arc}}

\def\stroke#1{
  \kern 0.05em
  \rlap\arc{{\textstyle{#1}}\atop\phantom\arc}
  \kern -0.22em}

\def\dstroke#1{
  \kern 0.05em
  \rlap\darc{{\textstyle{#1}}\atop\phantom\darc}
  \kern -0.22em}

\def\etc{\>\>\cdots\>\>} 

\def\centerscript#1{
  \setbox0=\hbox{$\tnode$}
  \hbox to \wd0{\hss$\scriptstyle{#1}$\hss}}

\def\node{
  \def\super{}
  \def\sub{}
  \futurelet\next\dolabellednode}

  \let\sp=^
  \let\sb=_

  \def\dolabellednode{%
    \ifx\next\sb\let\next\getsub
    \else
      \ifx\next\sp\let\next\getsuper
      \else\let\next\donode
      \fi
    \fi
    \next}

  \def\getsub_#1{\def\sub{#1}\futurelet\next\dolabellednode}

\def\getsuper^#1{\def\super{#1}\futurelet\next \dolabellednode}

  \def\donode{%
\rlap{$\mathop{\phantom\tnode}\limits_{\centerscript{\sub}}
    ^{\centerscript{\super}}$}\tnode}

\def\varcdn{
  \kern -0.03em\vbox{\kern -0.5ex
  \hbox to \wd0{\hss\vrule width 0.04em depth 5.8ex\hss}
  \kern -0.3ex
  \hbox{$\tnode$}}}

\def\m24{\node\stroke{c}\node\darc\node\arc\node} \def\tc5{\node\arc\node\arc\node\arc\node\dstroke{\sim}\node}

\def\Co1{\node\arc\node\arc\node\dstroke{\sim}\node} 

\newcommand{\varcdnl}[1]{ 
  \kern 0.3em\vbox{\kern -0.5ex
  \hbox to \wd0{\hss\vrule width 0.04em depth 5.8ex\hss}
  \kern -1ex
  \hbox{$\tnode^{#1}$}}}

\def\cnlchap7{\node^1\arc\node^2\arc\node^3\etc\node \arc \node \darc \node}
\def\dnlchap7{\node^1\arc\node^2\arc\node^3\etc\node \arc \node_{\varcdnl{}}\arc\node}

 \def\m24l{\node^1\stroke{c}\node^2\darc\node^3\arc\node^4}
\def\tc5l{\node^1\arc\node^2\arc\node^3\arc\node^4\dstroke{\sim}\node^5}
\def\U43l{\node^1\darc\node^2\darc\node^3} \def\F2l{\node^1\arc\node^2\arc\node^3\stroke{M_{22}}\node^4}
 
 \def\Co2l{\node^1\arc\node^2\stroke{M_{22}}\node^3}
\def\J4l{\node^1\arc\node^2\stroke{{\widetilde{M_{22}}}}\node^3}
\def\Dvier{\node^3\arc\node_{\varcdnl{4}}^1\arc\node^2} \def\sm24{\node^2\arc\node^1\dstroke{\sim}\node^0}
\def\sCo1{\node^4\arc\node^2\arc\node^1\dstroke{\sim}\node^0}

\def\nodef{
  \def\super{}
  \def\sub{}
  \futurelet\next\dolabellednodef}

  \let\sp=^
  \let\sb=_

  \def\dolabellednodef{%
    \ifx\next\sb\let\next\getsubf
    \else
      \ifx\next\sp\let\next\getsuperf
      \else\let\next\donodef
      \fi
    \fi
    \next}

\def\getsubf_#1{\def\sub{#1}\futurelet\next\dolabellednodef}

\def\getsuperf^#1{\def\super{#1}\futurelet\next \dolabellednodef}

  \def\donodef{%
\rlap{$\mathop{\phantom\tnodef}\limits_{\centerscript{\sub}}
    ^{\centerscript{\super}}$}\tnodef}

\def\varcdnf{
  \kern -0.03em\vbox{\kern -0.5ex
  \hbox to \wd0{\hss\vrule width 0.04em depth 5.8ex\hss}
  \kern -0.3ex
  \hbox{$\tnodef$}}}

\newcommand{\e}{\end{document}}

\def\multdef#1#2{\actdef{#1}{#2}}

\def\actdef#1#2#3{%
   \if #3. \def\next{}%
   \else\expandafter\def\csname #1#3\endcsname{{#2{#3}}}%
        \def\next{\multdef{#1}{#2}}
   \fi%
 \next}

\multdef{}{\operatorname}
          {lcm}{Hom}{B}{Aut}{Fix}{Inn}{GF}{Ly}
          {McL}{Sym}{Mat}{Cut}{Jump}{Alt}{Stab}
          {Syl}{D}{SD}{rad}{sgn}{id}{End}{tr}
          {Frob}{Ann}{rank}{Span}{J}{Int}
          {Irr}{IBr}{Bl}{Br}{Dih}{Supp}{FSym}
          {lead}{Diag}{GL}{SL}{ad}{orb}{T}{M}{nat}{PSL}
          {PSU}{PSp}{Sp}{G}{SU}{Sz}{F}{GU}{J}{U}{Co}{Suz}{E}{F}{L}{HN}{He}{Spin}{SO}
{PGL}.
\def \POmega {\mathrm{P}\Omega}

\begin{document}
 \title {The saturated fusion systems on a Sylow 2-subgroup of $\Omega^+_8(2)$}
\author{ Gernot Stroth }
\address{Gernot Stroth\\
Institut f\"ur Mathematik\\ Universit\"at Halle - Wittenberg\\
Theodor Lieser Str. 5\\ 06099 Halle\\ Germany}
\email{gernot.stroth@mathematik.uni-halle.de}
\maketitle
\begin{abstract} We consider  saturated fusion systems $\mathcal F$ on a Sylow $2$-subgroup of $\Omega^+_8(2)$ with $O_2(\mathcal F) = 1$. Examples for this are the $2$-fusion systems of $\Omega^+_8(2)$, $\Omega^+_8(2):3$, $\POmega^+_8(3)$ and $\POmega^+_8(3):3$.
\end{abstract}
\vspace{0.5cm}

In \cite[Conjeture 2]{PaSem} the authors raised the following conjecture: {\it Suppose that $p$ is a prime and $\mathcal S$ is a collection of pairs $(G,S)$ where $G$ is a simple group of Lie type defined in characteristic $p$, which is not isomorphic to $\PSp_4(p^{a})$ and $S$ a Sylow $p$-subgroup of $G$. Then for all but finitely many exceptions  if $(G,S) \in \mathcal S$ and $\mathcal F$ is a saturated fusion system on $S$ with $O_p(\mathcal F) = 1$, then $\mathcal F = {\mathcal F}_S(H)$ for some $G \leq H \leq \Aut(G)$.}
\\
\\
I believe that if the rank and the field are large enough $G$ is uniquely determined among the simple groups with Sylow $p$-subgroup $S$.
Hence one could also say that {\it if rank and field are large enough there are no exotic fusion systems on $S$}. \\
\\
In this paper we will concentrate at the case $q$ even. The case $q$ odd certainly will need different methods. If the Lie rank of $G$ is two this has been dealt with in \cite{vB1}. So let us assume that the Lie rank of $G$ is at least 3. To me no case is known in which the Sylow 2-subgroup of $G$ for $q > 2$ and Lie rank at least three  is isomorphic to a Sylow 2-subgroup of some different simple group. Hence we may concentrate on $q = 2$. If $|S| \leq 2^9$ this was dealt with in \cite{An1}. Thus we may assume that $|S| \geq 2^{10}$. We have the famous three simple groups $\L_5(2)$, $\M_{24}$ and $\He$ with the same Sylow 2-subgroup. These fusion systems have been dealt with in \cite{OlivVen}. An easy inspection shows that there is just the Sylow 2-subgroup of $\Omega^+_8(2)$ left, which is also a Sylow 2-subgroup of $\POmega^+_8(3)$. The purpose of this paper is to investigate the saturated fusion systems on the Sylow 2-subgroup of $\Omega^+_8(2)$.  We claim (believe) that all the counterexamples in characteristic two to the conjecture above have been done so far.
\\
\\
The groups  $\Omega_8^+(2)$, $\Omega^+_8(3)$, $\Omega_8^+(2):3$ and $\Omega^+_8(3):3$ all have a Sylow $2$-subgroup of type $\Omega^+_8(2)$. The $2$-fusion systems are pairwise not isomorphic.
We will prove:
\begin{theorem*}\label{thm:fusion} If $\mathcal F$ is a saturated fusion system on a Sylow $2$-subgroup of $\Omega_8^+(2)$ with $O_2(\mathcal F) = 1$, then $O^{2^\prime}(\mathcal F)$ is the $2$-fusion system of $\Omega^+_8(2)$ or $\POmega^+_8(3)$.
\end{theorem*}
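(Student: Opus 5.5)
The approach is to determine the $\mathcal F$-essential subgroups of $S$ and their automizers, and then to recognise the resulting amalgam. By the Alperin–Goldschmidt theorem, $\mathcal F$ is generated by $\Aut_{\mathcal F}(S)$ together with the groups $\Aut_{\mathcal F}(E)$, where $E$ runs over the $\mathcal F$-essential subgroups. So the first task is to work inside $S\in\Syl_2(\Omega^+_8(2))$, of order $2^{12}$.

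\textbf{Step 1: the candidate list.} I would record the structure of $S$: its characteristic subgroups, $Z(S)$ of order $2$, the unipotent radicals $Q_1,Q_2,Q_3,Q_4$ of the four minimal parabolics (labelled by the $D_4$ diagram), and the radicals of the maximal parabolics: $2^6$ for the three end nodes and $2^{1+8}_+$ for the central node.

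\textbf{Step 2: classify the essential subgroups.} An essential $E$ is $\mathcal F$-centric and $\Out_{\mathcal F}(E)$ has a strongly $2$-embedded subgroup. Writing $V$ for a suitable characteristic elementary abelian section of $E$ (such as $\Omega_1(Z(E))$ or $E/\Phi(E)$), $\Out_S(E)$ acts there as a group generated by quadratic (or cubic) offenders. I expect this to force $O^{2'}(\Out_{\mathcal F}(E))\cong \SL_2(2)$, except for a few large-rank cases such as $\Omega^+_6(2)\cong \L_4(2)$ on the elementary abelian $2^6$. The aim is to show that every essential subgroup is one of the radicals from Step 1, and that its automizer agrees with that in $\Omega^+_8(2)$ or $\POmega^+_8(3)$. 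The main tools are:
\begin{itemize}
\item the $\mathrm{FF}$-module and qrc-lemma techniques for $\Out_{\mathcal F}(E)$ acting on $V$;
\item direct checking that any other $\mathcal F$-centric subgroup $E$ has $N_S(E)/E$ acting in a way incompatible with a strongly $2$-embedded subgroup, for instance through a characteristic subgroup whose normaliser lies in $S$.
\end{itemize}
This is where most of the work lies. The main obstacle is ruling out exotic essential subgroups not arising as radicals in the known groups, and simultaneously controlling $\Aut_{\mathcal F}(S)$, which can contain the triality automorphism of order $3$ permuting the three end nodes.

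\textbf{Step 3: split by the automizers.} In $\POmega^+_8(3)$ the centralizer of the central involution is $2^{1+8}_+.(\SL_2(3)^{\times 3}).2$-like, with automizer of $2^{1+8}_+$ involving $O^+_8$-type pieces of odd order $27$. In $\Omega^+_8(2)$, by contrast, the essentials are the four minimal-parabolic radicals with $\Out\cong S_3$. So the case split is:
\begin{itemize}
\item \emph{Case A:} $Q=O_2(C_{\mathcal F}(Z(S)))\cong 2^{1+8}_+$ is essential with $\Out_{\mathcal F}(Q)$ containing $(3^{1+2}$ or $3^3)$-type subgroups, or the end-node $2^6$'s carry $\Omega^+_6(2)$-type automizers.
\item \emph{Case B:} this does not occur.
\end{itemize}
Using $O_2(\mathcal F)=1$ (so not all essentials contain a common normal subgroup) and saturation, I would show that in Case B exactly the four rank-one automizers $S_3$ occur, with $\Aut_{\mathcal F}(S)$ possibly including the triality $3$. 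I would also show that in Case A the configuration matches $\POmega^+_8(3)$, possibly extended by the triality.

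\textbf{Step 4: recognition.} Pass to $O^{2'}(\mathcal F)$, which removes the triality and any odd-order part of $\Aut_{\mathcal F}(S)$. In Case B, the rank-one automizers give a weak $BN$-pair / Curtis–Tits amalgam of type $D_4$ over $\mathbb F_2$. Its completion is $\Omega^+_8(2)$ by the amalgam uniqueness of Delgado–Stellmacher, or directly by a model theorem argument identifying the fusion systems. In Case A, $C_{\mathcal F}(Z(S))$ is constrained, so by the model theorem it is the fusion system of a group $2^{1+8}_+.X$. That group matches the involution centralizer in $\POmega^+_8(3)$, and since the fusion system is determined by this centralizer together with the remaining essential automizers, $O^{2'}(\mathcal F)=\mathcal F_S(\POmega^+_8(3))$.
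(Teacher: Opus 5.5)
\textbf{The case split is wrong.} It rests on a mistaken picture of the essential subgroups, and as a result it does not separate the two target fusion systems.

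In both $\Omega^+_8(2)$ and $\POmega^+_8(3)$, every essential subgroup is a normal subgroup of index $2$ in $S$ with $\Out_{\mathcal F}(E)\cong\Sigma_3$. Neither $Q\cong 2^{1+8}_+$ nor any elementary abelian $2^6$ is ever essential:
\begin{itemize}
\item in $\Omega^+_8(2)$, $\Out_{\mathcal F}(Q)\cong\Sigma_3^3=3^3{:}2^3$;
\item in $\POmega^+_8(3)$, $\Out_{\mathcal F}(Q)\cong 3^4{:}2^3$, because the involution centralizer there is $\SL_2(3)^{\circ 4}.2^3$, not something built from three $\SL_2(3)$'s;
\item the end-node $2^6$'s carry $\Omega^+_6(2)\cong A_8$ in both groups.
\end{itemize}
None of these groups has a strongly $2$-embedded subgroup.

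Consequently the first alternative of your Case A never occurs, and the order $3^3$ you attach to it is actually the $\Omega^+_8(2)$ value. The second alternative holds in both groups. The conclusion of Case B, ``exactly four rank-one automizers'', excludes $\POmega^+_8(3)$, which has five essential subgroups.

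The real dichotomy is as follows. The subgroup $C_S(Z_2(S))$ is always essential, forced by the weak closure of $Q$ together with $O_2(\mathcal F)=1$. Besides it, either three or all four of the index-$2$ subgroups $F\geq Q$ with $F/Q$ a four-group avoiding $\bar s_{i_0}$ are essential. The fourth such subgroup comes from the extra $\SL_2(3)$ factor in $\POmega^+_8(3)$.

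\textbf{Both hard steps are only asserted.}

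\emph{Classifying the essentials.} The troublesome candidates are non-normal $E$ with $|N_S(E):E|=2$. There $\Out_{\mathcal F}(E)$ has a strongly $2$-embedded subgroup as soon as $O_2(\Out_{\mathcal F}(E))=1$, so FF-module or qrc offender arguments on sections of $E$ cannot exclude them. The paper needs a global argument to show every essential is normal in $S$:
\begin{itemize}
\item an element $t\in Q$ normalizing $N_S(E)$ but not $E$;
\item the resulting amalgam $(G_1,G_2)$ with $G_1/E\cong D_{2p}$ (and $G_2/E^t\cong D_{2p}$);
\item the Goldschmidt--Fan classification of such amalgams.
\end{itemize}

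\emph{Recognition with five essentials.} Here the rank-one parabolics form a system with the affine diagram $\tilde D_4$. So no Curtis--Tits, Delgado--Stellmacher or Timmesfeld-type spherical uniqueness result applies. Your sentence that the fusion system ``is determined by this centralizer together with the remaining essential automizers'' is exactly what has to be proved. You would need to show that the model of $N_{\mathcal F}(Q)$ and the $\Sigma_3$ in $\Out_{\mathcal F}(C_S(Z_2(S)))$ fit together uniquely up to $\Aut(S)$, and you do not address this.

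The paper proceeds instead as follows:
\begin{itemize}
\item it invokes Kantor's identification of the universal cover of the chamber system with the affine building of $\Omega_8(\mathbb{Q}_2,f)$, where $f=\sum x_i^2$;
\item it uses the Bruhat--Tits fixed point theorem to show that $S$ is Sylow in $\Omega_8(\mathbb{Z}[\frac12],f)$ and that the fixed-point complexes of finite $2$-subgroups are contractible;
\item it applies Onofrei's criterion to get $O^{2'}(\mathcal F)=\mathcal F_S(\Omega_8(\mathbb{Z}[\frac12],f))$;
\item it passes to the finite quotient $\POmega^+_8(3)$.
\end{itemize}
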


The proof also yields the $2$-fusion system for $\POmega^+_8(p)$, where $p \equiv \pm 3 (\mod 8)$, which is the same as the one for $\POmega^+_8(3)$. As a corollary we obtain:
\begin{corollary*} If $\mathcal F$ is a reduced saturated fusion system on a Sylow $2$-subgroup of $\Omega_8^+(2)$, then $\mathcal F$ is the $2$-fusion system of $\Omega^+_8(2)$ or $\POmega^+_8(3)$.
\end{corollary*}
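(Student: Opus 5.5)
The corollary should follow directly from the Main Theorem by unwinding the definition of a reduced fusion system; no new local analysis is needed. Recall that a saturated fusion system $\mathcal F$ on a $p$-group $S$ is \emph{reduced} if $O_p(\mathcal F) = 1$ and $O^p(\mathcal F) = \mathcal F = O^{p^\prime}(\mathcal F)$.

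So let $\mathcal F$ be a reduced saturated fusion system on a Sylow $2$-subgroup $S$ of $\Omega^+_8(2)$. The plan has two steps.

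First, reducedness gives in particular $O_2(\mathcal F) = 1$. Hence $\mathcal F$ satisfies the hypothesis of the Main Theorem, and so $O^{2^\prime}(\mathcal F)$ is the $2$-fusion system of $\Omega^+_8(2)$ or of $\POmega^+_8(3)$.

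Second, reducedness also gives $O^{2^\prime}(\mathcal F) = \mathcal F$. Substituting this into the conclusion of the first step shows that $\mathcal F$ itself is the $2$-fusion system of $\Omega^+_8(2)$ or of $\POmega^+_8(3)$, which is the claim.

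The condition $O^2(\mathcal F) = \mathcal F$ is not used in this deduction. It only matters for the converse: both fusion systems named in the conclusion are in fact reduced, which is consistent with the abstract's remark that the extensions by the graph automorphism of order $3$ give different fusion systems. That converse is not part of the statement, so nothing further needs to be verified.

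There is no genuine obstacle here. All of the difficulty lies in the Main Theorem itself, whose proof has to identify $O^{2^\prime}(\mathcal F)$ by determining the $\mathcal F$-essential subgroups of $S$ and their automizers.
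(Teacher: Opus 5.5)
Your argument is correct and is exactly how the paper obtains the corollary: the paper gives no separate proof and states it as an immediate consequence of the Main Theorem, using $O_2(\mathcal F)=1$ to apply the theorem and $O^{2^\prime}(\mathcal F)=\mathcal F$ to identify $\mathcal F$ with $O^{2^\prime}(\mathcal F)$. Your closing remark about the converse (whether the two named fusion systems are themselves reduced) is not needed for the statement and does not affect the proof.
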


We will now sketch the proof. The main idea of the proof is not to use direct calculations between elements of $S$ or MAGMA calculations. We will use more substantial properties, which have the chance to be generalized if one tries to deal with Sylow 2-subgroups of groups of Lie type over fields $\GF(2^a)$, $a > 2$.

Let $\mathcal F$ be a saturated fusion system on a $p$-group $S$.
The basic result will be the  Alperin-Goldschmidt-Fusion-Theorem \cite[Theorem I.3.5]{AKO} (see also \fref{prop:alperin}) which says that $$\mathcal F = \langle \Aut_{\mathcal F}(E), \Aut_{\mathcal F}(S) ~|~E \text{ essential subgroup of }S\rangle.$$

Hence the first step is to determine the essential subgroups of $S$. In a group of Lie type by the Borel-Tits Theorem \cite[Theorem 3.1.3]{GLS} the fusion is determined by the minimal parabolics containing a Sylow $p$-subgroup. Hence in a first step we try to show that essential subgroups $E$ have to be normal in $S$.

For this we fix some essential subgroup $E$, which is not normal in $S$. Recall that $C_S(E) \leq E$, $O_2(\Aut_{\mathcal F}(E)/\Inn(E)) = 1$ and $\Aut_{\mathcal F}(E)/\Inn(E)$ contains a strongly $2$-embedded subgroup. In fact we do not use the full power of being essential. We just use that $C_S(E) \leq E$ and there is a certain subgroup of $N_{\mathcal F}(E)/E$, which has a strongly 2-embedded subgroup and as Sylow 2-subgroup the group $N_S(E)/E$.

There is some distinguished subgroup $Q$ in $S$, the radical of the normalizer of $Z(S)$. This is an extra-special group with $Q^\prime = Z(Q) = \Phi (Q) = Z(S)$. It is easy to see that $E \not\leq Q$, as $Q/E$ cannot embed in a strongly $2$-embedded subgroup (see \fref{lem:normal1}). That $Q \not\leq E$ in our case belongs to the fact hat $S/Q$ is abelian and so any overgroup of $Q$ is normal in $S$.

Set $T = N_S(E)$. Then in \fref{lem:normal2} we show that $Q \not\leq T$. We first get rid of the cases that $|T/E| \geq 4$. For this we use the possible actions of strongly $2$-embedded subgroups on $E$ as given in \fref{sec:prem}.
Unfortunately if $|T:E| = 2$, we always have a strongly $2$-embedded subgroup in $\Out(E)$. Here different methods are necessary.

We now follow the line in \cite{An}. We consider some $g \in N_S(T) \setminus T$ with $g^2 \in T$, recall $T \not= S$.
By the remark before we may choose $g \in Q$, which simplifies the action on $E$. Now there is some group $G_E$ which realizes $N_{\mathcal F}(E)$. Further there is $G_{E^g}$, which is isomorphic to $G_E$.


 We receive by \cite[Lemma 4.1]{An} groups $G_1 \leq G_E$ and $G_2 \leq  G_{E^g}$ such that $G_i/O_2(G_i) \cong D_{2p}$ for some odd prime $p$ and $g$ induces an automorphism $\beta_g$ between $G_1$ and $G_2$, which agrees with $g$ on $T$. We have $G_1/E \cong D_{2p}$ for some odd prime $p$.   Let $N$ be the largest normal subgroup of $T$ which is normal in both $G_1$ and $G_2$. Then $N^g = N$ and $(G_1/N, G_2/N)$ forms an amalgam on which $g$ acts. As $g \in Q$, we have a very restricted action.

We first investigate the case that $O_p(G_1/N) \not= 1$. Then $G_1/N \cong \Z_2 \times D_{2p}$ and $N$ is centric. Now $N_Q(E) \not\leq O_2(G_1)$ and we can determine the action on $E \cap Q$, which then eventually yields a contradiction (see  \fref{lem:nonnonstrict} and \fref{prop:strict}).

Now we have that $(G_1/N, G_2/N)$ is a Goldschmidt/Fan-amalgam.   Then we can apply \cite{Fan,Gold} and due to the fact that $G_1/N \cong G_2/N$ there are just two cases left, the amalgam of type $\L_3(2)$ and the one of type $\Sp_4(2)$.  In $T/N$ there are exactly two maximal elementary abelian subgroups which then are $E/N$ and $E^g/N$. Now $N_S(T)$ has to interchange these two groups.
As $g \in Q$, we have $[E,g] \leq Q$. The structure of $T/N$, then implies that $[E,g]$ contains an element of order four and so $Z(Q) \cap N = 1$, $Q \cap N$ is elementary abelian. With all these we eventually  reach at a contradiction and so we have that essential subgroups are normal in $S$, \fref{prop:essential}.

Now again we try to prove the analogue of the Borel-Tits Theorem. In a group of Lie type we have that $N_{\mathcal F}(E)$ is a minimal parabolic, i.e. $O^{2^\prime}(N_{\mathcal F}(E)/O_2(N_{\mathcal F}(E)))$ is a rank 1 group of Lie type. This in fact we are able to prove (\fref{prop:normalinS}), rank 1 group now means $\Sigma_3 \cong L_2(2)$. 

But in the actual case there might be one further essential normal subgroup of $S$ (see \fref{lem:essentialgroups}), which does not corresponds to a minimal parabolic of $\Omega^+_8(2)$.  This is the split between the $2$-fusion system of $\Omega^+_8(2)$ (just the radicals of the minimal parabolics of $\Omega_8^+(2)$) and $\POmega^+_8(3)$ (one essential subgroup more than the radicals of the minimal parabolics of $\Omega^+_8(2)$).

 Up to this point we do not use any properties of $\mathcal F$. In particular without using $O_2(\mathcal F) = 1$, we get all the possibilities for the essential subgroups. This might be of interest if one tries to deal with fusion systems on Sylow 2-group of $\Omega^+_{2n}(2)$, $n > 4$, where one could use induction.

 We also try to put concrete calculations to a minimum. What we use is the existence of an (extra)-special group $Q$, which is weakly $\mathcal F$-closed in $S$ and some elementary abelian subgroups normal in $S$. All this can be generalized to $q = 2^a > 2$, while calculations probably cannot be done. If one wants to deal with the cases $q = 2^a§´$, then one should prove a version of  \cite[Lemma 4.1]{An}.

Afterwards we can approach via the suggestions due to S. Onofrei \cite{Orno}, which also would work for odd primes $p$. There to any essential subgroup $E$ we attach a group $G_E$ and the $G_E$ at the end build a parabolic system, which yields a chamber system whose universal cover is known.  In  our case of exactly  four essential subgroups we get the building of type $D_4(2)$  by \cite{Tim}, and for five essential subgroups we receive by \cite{Kant} the building of $\OO_8(\Q_2,f)$ with discrete acting group $\Omega_8(\Z[\frac{1}{2}],f)$. Set $\tilde{\mathcal F} = \langle O^{2^\prime}(\Aut_{\mathcal F}(E))~|~E \text{ essential in }S\rangle$. Then in the former we receive that $\tilde{\mathcal F}$ is the $2$-fusion system of $\Omega^+_8(2)$. In the second case we now see that $\tilde{\mathcal F}$ is the $2$-fusion system of $\Omega_8(\Z[\frac{1}{2}],f)$. Further as $\Omega_8(\Z[\frac{1}{2}],f)$ has a finite faithful image $\POmega^+_8(3)$, we get that $\tilde{\mathcal F}$ is the $2$-fusion system of $\POmega_8^+(3)$. But as also $\POmega^+_8(p)$ for $p \equiv \pm 3 (\mod 8)$ are finite images they all have the same $2$-fusion system.  But if $q$ is not a prime this group is not generated by the normalizers of the essential subgroups.

As $\mathcal F = \langle \tilde{\mathcal F}, \Aut_{\mathcal F}(S) \rangle$ we see that $\tilde{\mathcal F} = O^{2^\prime}(\mathcal F)$ (\fref{lem:subsystem}). This then proves the main theorem.

One further remark is in order. $\Omega_8^+(2)$ possesses an automorphism of order three permuting the three essential subgroups in $S$.
If $N_{\mathcal F}(S) \not= S$, then also $\mathcal F$ could be the 2-fusion system of $\Omega^+_8(2):3$. As a Sylow $3$-subgroup of $\Aut(S)$ is of order three (\fref{lem:aut}) we see that the action of an element of order three in $N_{\mathcal F}(S)$ is as the one of $\Aut(\Omega^+_8(2))$ or $\Aut(\POmega^+_8(3))$ on the corresponding $2$-fusion system.

But this is not a general approach which would work if we consider automorphism groups  of groups of Lie type involving graph automorphisms. In an appendix we will give a different approach, which is based on a result of \cite{BMOR} which gives that in our situation $\mathcal F$ is not exotic. Then we may use the classification of the finite simple groups to find out that in the case just described for a realization we really have a normal subgroup of index three. More precise

{\bf \fref{prop:realization}} {\it Let  $\mathcal F$ be a fusion system on a Sylow $2$-subgroup of $\Omega_8^+(2)$ with $O_2(\mathcal F) = 1$.  Then $\mathcal F$ is the $2$-fusion system of a group $H$ with $F^\ast(H) \cong \Omega_8^+(2)$ or $\POmega^+_8(3)$ (more concrete  $\Omega_8^+(2) \leq H \leq \Omega^+_8(2):3$ or  $\POmega_8^+(3) \leq H \leq \POmega^+_8(3):3$).}
\\
\\
We state this result independently because in contrast to the results in \fref{sec:prem}-\fref{sec:F} the proof of \fref{prop:realization} uses the classification of the finite simple groups.

\section{Preliminaries}\label{sec:prem}
Let $\mathcal{F}$ be some fusion system on a $p$-group $P$. For the basic definitions and properties of fusion systems we refer to the paper \cite{AKO}.  The $\mathcal F$-essential subgroups and their automizers play a prominent role for the structure of $\mathcal{F}$.
Hence we will repeat the definition as far as it is important for this paper.

\begin{Def}\label{def:essential}{\rm (\cite[Definitions I.3.1, I.3.2]{AKO})} {\rm Let $\mathcal F$ be a fusion system over a finite $p$-group $S$. A subgroup $P$ of $S$ is called
\begin{itemize}

\item fully $\mathcal F$-centralized, if $|C_S(\phi P)| \leq |C_S(P)|$ for all $\phi \in \Hom_{\mathcal F}(P,S)$;
\item fully $\mathcal F$-normalized, if $|N_S(\phi P) \leq  |N_S(P)|$ for all $\phi \in \Hom_{\mathcal F}(P,S)$;
\item $\mathcal F$-centric, if $P$ is fully centralized and $C_S(P) = Z(P)$;
\item  $\mathcal F$-essential, if $P$ is $\mathcal F$-centric, fully $\mathcal F$-normalized and $\Out_{\mathcal F}(P)$ has a strongly $p$-embedded subgroup.
\end{itemize}}
\end{Def}

A subgroup $M$ of $\Out_{\mathcal F}(P)$ is strongly $p$-embedded if $M$ contains a Sylow $p$-subgroup $R$ of $\Out_{\mathcal F}(P)$, $R \not= 1$, and $R \cap R^\phi = 1$ for any $\phi \in \Out_{\mathcal F}(P) \setminus M$.
\\
\\
 We usually drop the prefix $\mathcal F$. The importance of this definition comes from the Alperin-Goldschmidt-fusion-theorem:

\begin{proposition}\label{prop:alperin}{\rm (\cite[Theorem I.3.5]{AKO})} Let $S$ be a $p$-group and $\mathcal F$ be a saturated fusion system over $S$. Then
$$\mathcal F = \langle \Aut_{\mathcal F}(P)~|~P = S \text{ or }P \text{ is } \mathcal F-essential \text{ in } S\rangle.$$
\end{proposition}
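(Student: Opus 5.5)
The statement is the Alperin–Goldschmidt fusion theorem for saturated fusion systems, quoted from \cite[Theorem I.3.5]{AKO}. My plan is to reprove it in the standard way: show that every $\mathcal F$-isomorphism is a composite of restrictions of automorphisms of $S$ or of essential subgroups. Let $\mathcal F_0$ be the subsystem generated by the displayed automizers. I will show by downward induction on $|P|$ that every $\phi\in\Hom_{\mathcal F}(P,S)$ lies in $\mathcal F_0$. When $P=S$ this is immediate, since $\Hom_{\mathcal F}(S,S)=\Aut_{\mathcal F}(S)$.

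For $|P|<|S|$, I first reduce to the case where $\phi P$ is fully normalized. Choose $\psi\in\Hom_{\mathcal F}(\phi P,S)$ with $Q=\psi\phi P$ fully normalized. Then $\phi=\psi^{-1}\circ(\psi\phi)$, so it suffices to treat isomorphisms $P\to Q$ with $Q$ fully normalized. Since $\mathcal F$ is saturated, $Q$ is receptive. By the extension axiom, any $\alpha\colon P\to Q$ extends to $N_\alpha$, where
\[
N_\alpha=\{g\in N_S(P) : \alpha c_g\alpha^{-1}\in\Aut_S(Q)\}.
\]
If $\alpha$ can be modified by an element of $\Aut_{\mathcal F}(Q)$ so that $\Aut_S(P)$ maps into $\Aut_S(Q)$, then $N_\alpha\ge N_S(P)>P$. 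The extension is then defined on a strictly larger subgroup, so it lies in $\mathcal F_0$ by induction, and hence so does its restriction. This reduces everything to showing $\Aut_{\mathcal F}(Q)\subseteq\mathcal F_0$ for each fully normalized proper subgroup $Q$, because $\alpha^{-1}$ transports $\Aut_S(P)$ to a $p$-subgroup of $\Aut_{\mathcal F}(Q)$, and Sylow's theorem lets us conjugate it into the Sylow subgroup $\Aut_S(Q)$.

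Now fix such a $Q$ and set $H\le\Aut_{\mathcal F}(Q)$ to be the subgroup generated by the elements that extend to $\mathcal F$-morphisms on strictly larger subgroups. By induction $H\subseteq\mathcal F_0$, and $H$ contains $\Aut_S(Q)$, since $c_g$ for $g\in N_S(Q)$ extends to $QC\langle g\rangle$-type overgroups (in fact to $N_S(Q)$ via $c_g$ itself, and $N_S(Q)>Q$). If $H=\Aut_{\mathcal F}(Q)$ we are done. Otherwise I split into two cases.

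If $Q$ is not centric, then $C_S(Q)\not\le Q$ after replacing $Q$ by a fully centralized conjugate, which is possible through the fully normalized choice. Every $\alpha\in\Aut_{\mathcal F}(Q)$ then has $N_\alpha\ge QC_S(Q)>Q$, so it extends and $H$ is everything.

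If $Q$ is centric, consider $\Aut_S(Q)$ and any $\alpha\in\Aut_{\mathcal F}(Q)$ with $\Aut_S(Q)\cap\Aut_S(Q)^\alpha$ properly containing $\Inn(Q)$. Then $N_\alpha>Q$, so $\alpha\in H$. Passing to $\Out_{\mathcal F}(Q)$, the image $\overline H$ contains the Sylow subgroup $\Out_S(Q)$. Moreover, for every $x\notin\overline H$, the intersection $\Out_S(Q)\cap\Out_S(Q)^x$ is trivial, using a Frattini-type argument with the $N$-conjugates of nontrivial $p$-subgroups. So if $\overline H\ne\Out_{\mathcal F}(Q)$, then $\overline H$ is strongly $p$-embedded, which makes $Q$ essential and puts $\Aut_{\mathcal F}(Q)$ among the generators.

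The main obstacle is this last step. One has to verify that every element fixing some nontrivial $p$-subgroup $R$ of $\Out_S(Q)$ lies in $\overline H$. The plan is to handle this with the Frattini argument $N(R)=N_{\overline H}(R)\,C(R)$ combined with the extension axiom applied to the preimage of $R$ in $N_S(Q)$. Care is also needed in choosing fully centralized versus fully normalized representatives.
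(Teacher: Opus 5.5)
Your proof is correct. It is essentially the standard proof of \cite[Theorem I.3.5]{AKO}, which the paper cites without reproving: reduce to fully normalized (hence fully automized and receptive) targets via Sylow's theorem and the extension axiom, then show by downward induction that the automizer of such a subgroup is generated by restrictions from larger subgroups unless the subgroup is essential. The step you call the main obstacle needs no Frattini argument, because if some $x\notin\overline H$ had $\Out_S(Q)\cap\Out_S(Q)^x\ne 1$, your own preceding observation (that then $N_\alpha>Q$) would already put $x$ in $\overline H$; you should also note that $\Out_S(Q)\ne 1$ because $Q<S$ is centric, and that a fully normalized $Q$ is already fully centralized, so no replacement is needed in the non-centric case.
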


In this paper $p = 2$. From the property centric we just use $C_S(P) = Z(P)$.
To get a list of possible essential subgroups we determine those $P$ with $C_S(P) \leq P$, which admit a group $X$ of outer automorphisms which contains a strongly 2-embedded subgroup such that $N_S(P)/P$ is a Sylow 2-subgroup of $X$. The last one is what we use from fully normalized. Sometimes such groups $P$ are also called {\it critical}.
\\
\\
In the first place we have to know the candidates for $X$. This is  the next lemma.

\begin{lemma}\label{lem:strongembedded}{\rm (\cite{Be})} Let $G$ be a finite group with a strongly $2$-embedded subgroup. Then one of the following holds
\begin{itemize}
\item[(i)] A Sylow $2$-subgroup $T$ of $G$ is cyclic or quaternion; or
\item[(ii)] $O^{2^\prime}(G/O(G)) \cong \PSL_2(q)$, $\Sz(q)$ or $\PSU_3(q)$, $q$ even, $q > 2$.
\end{itemize}
\end{lemma}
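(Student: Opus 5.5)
This is Bender's theorem on groups with a strongly embedded subgroup, here in its $p=2$ form. The paper cites it from \cite{Be}, so in context no proof is needed and we simply quote it. If one wanted to reconstruct the argument, the plan would be the classical one. Let $M$ be strongly $2$-embedded in $G$ and $T\in\Syl_2(M)$, which is also a Sylow $2$-subgroup of $G$.

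The first step is to establish the standard equivalent characterizations.
\begin{itemize}
\item[(a)] $M$ contains $N_G(U)$ for every $1\neq U\leq T$.
\item[(b)] $G$ has a single conjugacy class of involutions.
\item[(c)] $M\cap M^g$ has odd order for every $g\notin M$.
\end{itemize}
In particular $G$ acts on the cosets $G/M$ as a transitive permutation group in which involutions fix exactly one point.

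Next, if $T$ has $2$-rank one, then $T$ is cyclic or generalized quaternion, which is case (i). So the main work is when $m_2(T)\geq 2$.

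In that case we pass to $\bar G=G/O(G)$ and $L=O^{2'}(\bar G)$. The strongly embedded property survives this reduction, since $O(G)$ is of odd order and the Frattini argument applies. We then aim to show that the action of $L$ on the cosets of $\bar M\cap L$ is a doubly transitive Zassenhaus-type action. The two-point stabilizers should have odd order, and involutions should fix exactly one point.

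Controlling these two-point stabilizers is the main obstacle, and it is where Bender's deep argument lies. The approach would use the structure of $C_G(t)$ for an involution $t$, products of two involutions (which generate dihedral groups acting on $G/M$), and the Bender method of signalizer analysis. Together these should give $L$ a split $BN$-pair of rank one. Finally, the classification of Moufang/Zassenhaus doubly transitive groups with such stabilizers, due to Suzuki, Shult and Hering–Kantor–Seitz, identifies $L$ as $\PSL_2(q)$, $\Sz(q)$ or $\PSU_3(q)$ with $q$ even and $q>2$. This gives case (ii).

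For this paper we only invoke the statement; we do not reprove it.
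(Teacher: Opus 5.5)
Your proposal agrees with the paper, which gives no proof of this lemma and simply quotes Bender's theorem \cite{Be}. Your outline (one class of involutions, a doubly transitive action with odd-order two-point stabilizers giving a split $BN$-pair of rank one, then the Suzuki/Shult/Hering--Kantor--Seitz identification) is the standard route and is not needed here. One small correction to that outline: once $m_2(T)\geq 2$, $M$ passes to $G/O(G)$ because of coprime generation, $O(G)=\langle C_{O(G)}(v)\mid v\in V^\#\rangle\leq M$ for a four-group $V\leq T$, not because $|O(G)|$ is odd or by a Frattini argument (the Frattini argument is what you need for the further step to $L=O^{2'}(\bar G)$); this really fails in rank one, as the paper's example $5^2{:}\SL_2(5)$ shows.
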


Let us consider case \fref{lem:strongembedded}(i). If the Sylow 2-subgroup $T$ is cyclic then by Burnside \cite[Hauptsatz IV.2.6]{Hu} we have a normal 2-complement, whence $G = O(G)T$.

If $T$ is quaternion, then by the Brauer- Suzuki-Theorem \cite{BrauSuz} (for a complete proof see \cite[Theorem 6.8, Theorem 14.11]{Dade}) $G = O(G) C_G(Z(T))$. In both cases consider $O(G)T$. Then by the Frattini argument $T$ normalizes
some $p$-group in $O(G)$, which is not centralized by $Z(T)$. Hence in case (i) we have a subgroup (with a strongly 2-embedded subgroup) which is an extension of a $p$-group $P$ by $T$ such that $T$ acts irreducibly on $P/\Phi(P)$ and $\Omega_1(T)$ inverts $P/\Phi(P)$.

Recall that we do not claim that in the quaternion case $G$ is solvable as it is in the cyclic case. An extension of an elemenary abelian group of order 25 by $\SL_(5)$ with natural action would be an example for \fref{lem:strongembedded}(i).
\\
\\
In case of \fref{lem:strongembedded}(ii) there is a subgroup, which is an extension of a group of odd order by $\PSL_2(^n)$, $\Sz(2^n)$ or $\PSU_3(2^n)$, $n \geq 2$.
\\
\\
When we will study those subgroups $E$ of the Sylow 2-subgroup $S$, which are candidates for essential subgroups we do not use all the properties in full generality. We use in the first place that $C_S(E) \leq E$. Then we assume that $N_S(E)$ is a full Sylow 2-subgroup of $N_{\mathcal F}(E)$. But in fact this we do not really use, what we use is that there must be a group $U$ such that $E \unlhd U$ and $T = N_S(E)/E$ is a Sylow 2-subgroup of $U/E$. For an essential subgroup we could assume that $U$ possesses a strongly $2$-embedded subgroup.
But what we assume instead is that $U$ is one of the groups just mentioned, i.e.
\begin{itemize}
\item $U$ is nonsolvable and $U/O(U) \cong \PSL_2(^n)$, $\Sz(2^n)$ or $\PSU_3(2^n)$, $n \geq 2$.
\item $U$ is solvable, $U = O_p(U)T$ for some prime $p$, $T$ is cyclic of order at least four or quaternion  and  acts irreducibly on $O_p(U)/\Phi(O_p(U))$
with $\Omega_1(T)$ inverting this group.
\item $|N_S(E)/E| = 2$.
\end{itemize}

To use this information and to show that in our case just the third variant above occurs, we have to have informations about a possible action of $U$ on $E$. This will be done in the following two lemmas. The first one is the non solvable case, while the second one will be the solvable case. The restriction on $[V,t]$ in \fref{lem:representation1} follows from special properties of $S$, in fact it is due to $|S/Q| = 8$, where $Q$ is the normal extraspecial group of order $2^9$. The following results also can be found in a similar form in \cite[Section 1.2]{OlivVen}.

\begin{lemma}\label{lem:representations} Let $X/O_{2,2^\prime}(X) \cong Sz(q)$, $\PSU_3(q)$ or $\PSL_{2}(q), q$ even, $q >2$, $X = X^\prime$, and $V$ be some
faithful $\GF(2)$-module for $X$.
\begin{itemize}
\item[(i)] If $X/O_{2,2^\prime}(X) \not\cong \PSU_3(q)$. We have
$|[V,t]|\geq q^{\, 2}$, $q$, respectively, where $t$ is any involution
in $X$. Furthermore if $|[V,t]|\leq 8$, then there is just one nontrivial irreducible $X$-module involved in $V$.
\item[(ii)] $|V| \geq q^6$ in case of $X/O_{2,2^\prime}(X) \cong \PSU_3(q)$.
\end{itemize}
\end{lemma}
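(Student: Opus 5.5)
Let $\bar X = X/O_{2,2'}(X)$ and $t$ an involution of $X$. The plan is to pass to an irreducible $\GF(2)X$-composition factor $W$ of $V$ on which $X$ acts nontrivially, and bound $|[W,t]|$ from below using the structure of the Sylow $2$-subgroup of $\bar X$. Since $[V,t]$ maps onto $[W,t]$ for a quotient $W$ of a submodule (if $V_1\le V_2$ are submodules with $W=V_2/V_1$, then $[V_2,t]\le [V,t]$ maps onto $[W,t]$), any lower bound for $|[W,t]|$ is one for $|[V,t]|$. As $X=X'$ and $O_{2,2'}(X)$ is solvable, $X$ cannot act trivially on every composition factor: otherwise $X$ would act unipotently on $V$, be a $2$-group, and hence solvable. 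So some factor $W$ is nontrivial. Moreover the kernel $K$ of $X$ on $W$ satisfies $K\le O_{2,2'}(X)$ by perfectness and the simplicity of $\bar X$. Thus $X/K$ still maps onto $\bar X$, and $t$ acts nontrivially on $W$ whenever $t\notin K$. For $t\in O_{2,2'}(X)$ one reduces to a $t$ outside $O_{2,2'}(X)$; the statement is really about involutions mapping to involutions of $\bar X$.

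Next, fix a Sylow $2$-subgroup $R$ of $\bar X$, with $Z(R)$ elementary abelian of order $q$ and all involutions of $\bar X$ conjugate into $Z(R)$. For $\PSL_2(q)$, $R=Z(R)$ has order $q$, and a Borel subgroup $B=RH$ with $H$ cyclic of order $q-1$ acts transitively on $R^\#$. Since $O_2(\bar X)=1$, $C_W(R)$ is a proper $H$-invariant subspace and $[W,R]\neq 0$. The key step is $|[W,t]|\ge q$. Consider $\sum_{r\in R}[W,r]=[W,R]$; the $H$-conjugates of $t$ run through $R^\#$, so $[W,R]=\sum_h [W,t]^h$. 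The cleaner route is to use the fact that $W$ restricted to $O^{2}(B)$ involves a nontrivial irreducible $\GF(2)H$-module $M$ with $R$ acting as a field $\GF(q)$ on a one-dimensional $\GF(q)$-space (standard for $\PSL_2(q)$- and $\Sz(q)$-modules via the Chevalley commutator relations / Steinberg tensor product). Then $[M,t]$ is a $\GF(q)$-subspace, hence of order at least $q$. For $\Sz(q)$, $R$ has $Z(R)$ of order $q$ and $|R/Z(R)|=q$, and $H$ of order $q-1$ acts transitively on both $Z(R)^\#$ and $(R/Z(R))^\#$. The natural $4$-dimensional module over $\GF(q)$ has $t\in Z(R)$ acting with $[W,t]$ of $\GF(q)$-dimension $2$, giving $q^2$. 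By Steinberg's tensor product theorem every nontrivial irreducible module is a twisted tensor product of Galois conjugates of the natural module, and $[\,\cdot\,,t]$ only grows under tensoring. Realising over $\GF(2)$ multiplies sizes further. I expect this minimal-module argument to be the main obstacle, i.e. justifying $|[W,t]|\ge q^2$ (resp.\ $q$) uniformly for all irreducibles, including those where $O_{2,2'}(X)$ acts nontrivially. The remedy for the latter: if $O_{2'}$ of $O_{2,2'}(X)/O_2$ acts nontrivially, decompose $W$ into homogeneous components permuted by $X$. A nontrivial permutation action of $\bar X$ needs at least $q+1$ components, which makes $[W,t]$ even larger.

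For the second assertion of (i), suppose $|[V,t]|\le 8$. Then $q\le 8$ in the $\PSL_2$ case and $q^2\le 8$ forces $\bar X\not\cong \Sz(q)$ since $q\ge 8$. If two nontrivial irreducible factors $W_1,W_2$ occurred in a composition series, then $[V,t]$ would map onto pieces with $|[V,t]|\ge |[W_1,t]|\,|[W_2,t]|\ge q^2\ge 16$. To see this, filter $V$ by $0<V_1<\dots$ and note that $|[V,t]|\ge \prod_i |[V_i/V_{i-1},t]|$, since $[V,t]\cap V_{i}$ maps onto $[V_i/V_{i-1},t]$ modulo $V_{i-1}$. This contradicts $|[V,t]|\le 8$. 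For (ii), with $\bar X\cong \PSU_3(q)$, the smallest nontrivial irreducible $\GF(q^2)$-module is the natural $3$-dimensional one, giving $|W|\ge q^6$ when realised over $\GF(2)$ (its field of definition is $\GF(q^2)$, not smaller, because the Frobenius twist is the dual). The Steinberg tensor product and permutation-component argument above handle the remaining modules, so $|V|\ge|W|\ge q^6$.
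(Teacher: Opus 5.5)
Your reduction to a nontrivial composition factor $W$ is fine. So is your filtration bound $|[V,t]|\ge\prod_i|[V_i/V_{i-1},t]|$ for the \emph{furthermore} clause, which makes explicit what the paper leaves implicit. The core step, however, is not established: a lower bound for $|[W,t]|$ valid for \emph{every} nontrivial irreducible $\GF(2)X$-module $W$. Several of the reasons you give for it are wrong.

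First, the Borel step is garbled. On an irreducible $\GF(2)B$-module the normal $2$-subgroup $R$ acts trivially, and $O^2(B)=B$. Nothing in it links an $H$-constituent to $[W,t]$.

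Second, the claim that $[M,t]$ is a $\GF(q)$-subspace needs a $\GF(q)$-structure commuting with $t$. But $\End_X(W)=\GF(2^k)$ can have $k<n$, where $q=2^n$; for example, the Steinberg module of $\SL_2(4)$ is absolutely irreducible and defined over $\GF(2)$. In general $W\otimes\overline{\GF(2)}$ is the sum of the $k$ Galois conjugates of an absolutely irreducible $W_0$, so $|[W,t]|=2^{k\dim[W_0,t]}$. Restricting scalars of a $\GF(q)$-form of $W_0$ gives $n/k$ copies of $W$. So \emph{realising over $\GF(2)$ multiplies sizes} is backwards, and \emph{$[\cdot,t]$ grows under tensoring} only yields $|[W,t]|\ge 2^k$.

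An extra count is needed. For $\SL_2(q)$: if $W_0=\bigotimes_{i\in I}N^{(2^i)}$ with $|I|=m$, then $\dim[W_0,t]=2^{m-1}$, and $I$ is a union of cosets of its stabilizer in $\Z/n\Z$, which has order $n/k$. Hence $k2^{m-1}\ge km\ge n$. The analogous bookkeeping for $\Sz(q)$ and for (ii) is also missing.

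Third, your remedy for $O_{2,2'}(X)$ acting nontrivially only covers the case where $X$ permutes several homogeneous components. If $O_{2,2'}(X)/O_2(X)$ acts homogeneously, nothing is said. An example is an extraspecial $r$-group acting irreducibly on $W$, with $X$ acting through a Weil-type representation.

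The missing idea is the paper's elementary argument, which needs no representation theory.
\begin{itemize}
\item For (i): $t$ inverts an element $x$ of odd order mapping to an element of order $q+1$ for $\PSL_2(q)$, resp.\ $q+\sqrt{2q}+1$ for $\Sz(q)$. Since $t\,t^x$ generates $\langle x\rangle$, we get $[V,x]\le[V,t]+[V,t^x]$, hence $|[V,t]|^2\ge|[V,x]|$. Faithfulness of $\langle x\rangle$ on $V$ forces $|[V,x]|\ge q^2$, resp.\ $q^4$.
\item For (ii): a Zsigmondy prime $p$ of $2^{6n}-1$ divides $q^3+1$, hence $|X|$. But $p$ divides $|\GL_m(2)|$ only for $m\ge 6n$, so $|V|\ge q^6$.
\end{itemize}
These arguments only see a cyclic subgroup acting faithfully on $V$, so fields of definition and the action of $O_{2,2'}(X)$ never enter.

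Finally, $t\in O_{2,2'}(X)$ cannot be reduced away. Let $X=U^*{:}\SL_2(4)$ act on $\GF(2)v\oplus U$, where $U$ is the natural module and $\phi\in U^*$ acts by $u\mapsto u+\phi(u)v$. Then the elements of $U^*$ are transvections, so $|[V,t]|=2<q$. The lemma, in your version as in the paper's, has to be read for $t\notin O_{2,2'}(X)$.
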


\begin{proof} (i) We have that $t$ inverts an element of order
$q+\sqrt{2q}+1, q+1$, respectively, hence $|[V,t]|\geq q^{\, 2}, q$ by \cite[(XI 3.10)]{Hu3}.

(ii) As $q > 2$ there is be some Zsygmondy prime $p$, which divides $q^6-1$. Then, as $q^3+1$ divides $|X|$ and by definition of a Zsygmondy prime $p$ does not divide $2^{t}-1$ for $t < 6n$, we see that the smallest $\GL_m(2)$ whose order is divisible by $p$ would be $\GL_{6n}(2)$, the assertion.
\end{proof}

\begin{lemma}\label{lem:representation1} Let $X$ be a group, $p$ be an odd prime such that  $X/O_p(X)$ is cyclic of order at least four or quaternion. Assume further that $X$ acts irreducibly and faithfully on $O_p(X)/\Phi(O_p(X))$. Let  $V$ be a faithful $\GF(2)$-module for $X$ and $t$ be an involution in $X$. Assume that  $1 \not= |[V,t]| \leq 8$, then one of the following holds
\begin{itemize}
\item[(i)] $p = 5$, $|O_5(X)| = 5$ and $|[V,O_5(X)]| = 16$, $X/O_5(X) \cong \Z_4$.
\item[(ii)] $p = 3$, $O_3(X)$ is elementary abelian of order $9$ and $X/O_3(X) \cong \Z_4$, or extraspecial of order $27$ and $X/O_3(X) \cong \Z_4$, $\Z_8$ of $Q_8$. In all cases $|[V,O_3(X)]| \leq 64$ with equality in the last three cases. In the extraspecial case $|[V,O_3(X),t]| = 4$.
\end{itemize}
 \end{lemma}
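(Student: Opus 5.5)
Write $P = O_p(X)$ and $T = X/P$, so $T$ is cyclic of order at least four or quaternion, and let $t$ be an involution of $X$. Since $T$ has a unique involution, the image of $t$ in $T$ is $\Omega_1(T)$, which is central in $T$. Because $T$ acts faithfully and irreducibly on $\bar P = P/\Phi(P)$ and $p$ is odd, $\Omega_1(T)$ acts as $-1$ on $\bar P$. So $t$ inverts $\bar P$.

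\textbf{Step 1: $[V,P]$ lies in $[V,t]$ up to a bounded factor.} By coprime action, $V = C_V(P) \oplus [V,P]$, and $X$ is faithful on $[V,P]$, so we may replace $V$ by $W = [V,P]$. For each nontrivial character (or irreducible constituent) of $P$ on $W \otimes \overline{\GF(2)}$, look at the $t$-orbits. Since $t$ inverts $\bar P$, for $x \in P$ we have $x^t \equiv x^{-1}$ modulo $\Phi(P)$. For abelian $P$, $t$ therefore maps the $\lambda$-eigenspace of $P$ to the $\lambda^{-1}$-eigenspace. As $\lambda \neq \lambda^{-1}$ for odd $p$, $t$ swaps these spaces, so $\dim [W,t] = \dim W/2$ (as in \cite[(XI 3.10)]{Hu3}). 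Hence $|W| = |[W,t]|^2 \le 64$. In the extraspecial case, the constituents on which $Z(P)$ acts nontrivially have dimension $p^{m}$ with $|\bar P| = p^{2m}$, and $t$ again swaps eigenspaces of a suitable element of $P$, giving the same bound $|W| \le 64$. One also checks that $|[V,P,t]| = |W|^{1/2}$, which gives $4$ when $|W| = 16$ in the relevant case.

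\textbf{Step 2: bound $p$ and $|P|$.} $X$ now embeds in $\GL_m(2)$ with $m \le 6$, and $p$ must divide $|\GL_6(2)|$, so $p \in \{3,5,7,31\}$.
\begin{itemize}
\item For $p = 7$ or $31$, an element of order $p$ normalized by a cyclic group of order at least four or by $Q_8$, acting irreducibly and with $t$ inverting, requires that the $T$-orbit structure on $\bar P$ fit into $\GL_6(2)$. For $7$: $N_{\GL_3(2)}(7) = 7{:}3$ contains no element of order four inverting. In dimension six the normalizer of $7 \times 7$ or of $\Z_7$ has $2$-part too small to host a faithful $\Z_4$ whose square inverts. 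For $31$: the normalizer is $31{:}5$, which is impossible.
\item For $p = 5$: $\Z_5$ acts in $\GL_4(2)$ and its normalizer there is $(15){:}4$, which gives $X/P \cong \Z_4$ and $|W| = 16$. The group $5^2$ does not embed in $\GL_6(2)$, since that would need dimension $8$.
\item For $p = 3$: $\bar P$ has order $3^k$ with irreducible faithful action of $T$ and $t$ acting as $-1$. Since $T$ contains an element of order $4$, which cannot act as $-1$ on a $1$-dimensional space, $k \ge 2$. Elementary abelian $3^2$ needs $|W| = 16$ and has normalizer $\GL_2(3)$-type, with Sylow $2$-subgroup $\SD_{16}$. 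The subgroups that are cyclic of order $\ge 4$ or quaternion and act irreducibly are $\Z_4$, $\Z_8$, and $Q_8$. On $W$ of dimension $4$, however, we have $W = W_1 \oplus W_2$ with $\GL_2(2) \times \GL_2(2)$ normalizer, and only $\Z_4$ survives. The order $3^3$ elementary abelian case is ruled out since it needs dimension $6$ with $\GL_3(3)$-irreducible $T$ of the given type, which is impossible. Extraspecial $3^{1+2}$ has faithful modules of dimension $6$ over $\GF(2)$, with outer part $\Sp_2(3) = \SL_2(3)$ containing $Z_4$, $Q_8$, and (in $\GL_6(2)$) $\Z_8$.
\end{itemize}
Larger groups are excluded by the $64$ bound.

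\textbf{Main obstacle.} The delicate step is the case-by-case verification in Step 2 that exactly the listed $T$ occur, and in particular the exclusion of $\Z_8$ and $Q_8$ for $3^2$ while keeping them for $3^{1+2}$. I would settle this by computing $2$-parts of normalizers in $\GL_4(2)$ and $\GL_6(2)$ directly.
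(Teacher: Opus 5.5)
\textbf{Verdict: genuine gap.} Your Step 1 and Step 2 only handle $P = O_p(X)$ abelian or extraspecial, and nothing in the hypothesis forces either. Irreducibility of $X/P$ on $P/\Phi(P)$ still allows $P$ of any class and any size of $\Phi(P)$. Ruling out such $P$ is where the paper's proof does its work.

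\textbf{The bound $|[V,P]|\le 64$ is only proved for abelian $P$.} There coprime action gives $C_P(t)\le\Phi(P)$, hence $C_P(t)=1$. So $t$ inverts all of $P$ and swaps the $\lambda$- and $\lambda^{-1}$-eigenspaces, giving $|[V,P]|=|[V,P,t]|^2$. For nonabelian $P$ the eigenspace argument only applies to individual $t$-inverted elements $x\in P\setminus\Phi(P)$. It yields $|[V,x]|\le 64$, which says nothing yet about $|[V,P]|$. Consequently the embedding of $X$ into $\GL_6(2)$ at the start of Step 2, and the sentence ``larger groups are excluded by the $64$ bound'', are unsupported.

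The paper handles this as follows.
\begin{itemize}
\item It takes $t$-inverted $x_1$ and $x_2=x_1^y$ with $y^2=t$, and sets $W=[V,\langle x_1,x_2\rangle]$.
\item Counting $|[W,t]|$ shows $p\neq 5$, $|[V,x_1]|\le 16$ (so $p=3$), and $|W|\le 2^8$.
\item Placing $\langle x_1,x_2\rangle$ in a Sylow $3$-subgroup $(\Z_3\wr\Z_3)\times\Z_3$ of $\GL_8(2)$ shows $\langle x_1,x_2\rangle\cong 3^2$ or $3^{1+2}$.
\item The same counting, applied to a further $t$-inverted element $x_3\notin\langle x_1,x_2\rangle\Phi(P)$, rules it out. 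Hence $O_3(X)=\langle x_1,x_2\rangle$.
\end{itemize}
Your proposal has no substitute for this reduction.

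\textbf{The extraspecial count is also wrong.} Take $P=3^{1+2}$ and $t$ the central involution of $X/P$. Then $t$ centralizes $Z(P)$ and normalizes every abelian subgroup $A$ of order $9$. On each $3$-dimensional $\GF(4)$-constituent, $t$ fixes one of the three $A$-eigenlines and swaps the other two. Hence $|[W,t]|=4$ while $|W|=64$, i.e.\ $|[W,t]|=|W|^{1/3}$.

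So the claim $|[V,P,t]|=|W|^{1/2}$ is false. The bound $|W|\le 64$ cannot come from the same eigenspace-swapping argument, and the value $4$ in the statement belongs to the case $|[V,O_3(X)]|=64$.

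A correct count repairs this case, but only this case. Each faithful absolutely irreducible $P$-constituent of degree $p^m$ contributes $(p^m-1)/2$ to $\dim[W,t]$. These constituents come in Galois orbits of length $\mathrm{ord}_p(2)$. Together with $|[V,t]|\le 8$ this forces $p=3$, $m=1$ and $|W|=64$.

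Granting $|W|\le 64$, your case analysis in Step 2 is essentially fine; the missing piece is establishing that bound (equivalently, the structure of $P$) for general $P$.
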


\begin{proof}  Recall that by assumption $t$ inverts $O_p(X)/\Phi(O_p(X))$. If $O_p(X)$ is cyclic, then $O_p(X) = \langle x \rangle$ with $x^t = x^{-1}$. Now $x \in \langle t, t^x \rangle$ and then $O_p(X) \leq \GL_6(2)$, which gives that $|O_p(X)| = p$ or $9$. But as $O_p(X)$ has to admit a cyclic group of order 4, we get $O_p(X) = O_5(X)$ is of order 5 and we have (i).

Assume next $|O_p(X)/\Phi (O_p(X))| \geq  p^2$. Then there are $x_1, x_2 \in O_p(X)$ with $|\Phi(O_p(X))\langle x_1, x_2 \rangle/\Phi(O_p(X))| = p^2$ and $x_i^t = x_i^{-1}$.
 Furthermore we have that $|[V,x_i]| \leq 64$ and so as before $o(x_i) = 3, 5, 7$ or $9$.  We first show
 \begin{gather*}\label{eq:1} p\not= 5\tag{1}\end{gather*}
 Assume $p = 5$. Then $|[V,x_i]| = 16$ and $|[V,t] \cap [V,x_i]| = 4$. Thus $|[V,\langle x_1, x_2\rangle]| \leq 64$. But $\GL_6(2)$ contains no non cyclic 5-subgroups. This proves \fref{eq:1}.
\\
\\
 Let $y$ be of order four with $y^2 = t$. By \fref{eq:1} $y$ cannot normalize $\Phi(O_p(X))\langle x_1 \rangle$. Thus we may chose notation such that $x_1^y = x_2$. Again $|[V,x_i]| \leq 2^6$, $i = 1,2$.
 \begin{gather*}\label{eq:2} |[V,x_1]| \leq 2^4. \text{ In particular }p=3.\tag{2}\end{gather*}
 Assume $|[V,x_1]| = 2^6$.  Then $[V,t] \leq [V,x_i]$. Set $W =[V,\langle x_1, x_2\rangle]$, $|[W,t]| = 8$. Then $|W| \leq 2^9$. Suppose $\langle x_1, x_2 \rangle$ induces an elementary abelian group on $W$. Then $x_2$ acts on $C_W(x_1)$. But $t$ centralizes $C_W(x_1)$, which implies $[C_W(x_1),x_2] = 1$ and so $[V,x_1]=[V,x_2]$ is of order $2^6$. Hence $\langle x_1,x_2 \rangle$ induces a subgroup of $\GL_6(2)$. Let first $p = 3$. As $[V,x_1]$ is generated by the centralizers of the 3-elements in $\langle x_1, x_2 \rangle$, i.e. $[V,x_1] = C_{[V,x_1]}(x_1x_2) \times  C_{[V,x_1]}(x_1(x_2)^{-1})$ and $(x_1x_2)^{y^{-1}} = x_1x_2^{-1}$ this contradicts $|[V,x_1]| = 2^6$. Thus we have $p = 7$. But then for all $x \in \langle x_1,x_2\rangle$ we have that $[V,x_1] = [V,x]$ as $x$ is inverted by $t$. But this contradicts coprime action.

 We have that $\langle x_1, x_2 \rangle$ induces a non abelian subgroup  of $\GL_9(2)$. This implies  $p = 3$. Further as a Sylow 3-subgroup of $\GL_9(2)$ has a fixed point on the natural module $\langle x_1, x_2 \rangle$ induces a subgroup $Y$ of $\GL_8(2)$. In particular it is contained in $U = \Z_3 \wr \Z_3 \times \Z_3$. We consider $Z(Y) \cap \Phi(Y)$ which is contained in $\Phi(U)$  and so in  $\GL_6(2)$. Then $|Z(Y) \cap \Phi(Y)| = 3$. As $y$ normalizes $Z(Y) \cap \Phi(Y)$, we see that $[Z(Y) \cap \Phi (Y), t]=1$. This implies that $|[W,Z(Y) \cap \Phi(Y),t]| = 4$. As $|[W,t]| = 8$,  $t$ induces a transvection $C_W(Z(Y) \cap \Phi(Y))$. As $t = y^2$ this implies that $[C_W(Z(Y) \cap \Phi (Y)),Y] = 1$. As $W = [W,Y]$, then $C_W(Z(Y) \cap \Phi(Y)) = 1$, a contradiction.  This proves \fref{eq:2}
 \begin{align*}\label{eq:3}\tag{3}& |[V,x]| = 16 \text{ for all }x \in O_3(X) \setminus \Phi(O_3(X)), \text{ which are inverted}\\& \text{by }t,\text{ or the first case of (ii) holds}.\end{align*}
 Assume $|[V,x_1]| = 4$. Then $W = [V,\langle x_1,x_2\rangle]$ is of order 16 and $y$-invariant. In particular
 \begin{gather*}\label{eq:3.1}|[W,t]| = 4.\tag{3.1}\end{gather*}
  Further  $Y = \langle x_1, x_2 \rangle$ is an elementary abelian group of order 9 in $\GL_4(2)$. Suppose $Y = \langle x_1,x_2 \rangle = O_3(X)$, then the structure of $\GL_4(2)$ shows that $X/O_3(X) \cong \Z_4$, which is the first case in (ii).

 Thus assume  $Y \not= O_3(X)$.  Choose $x_3 \in O_3(X) \setminus \Phi(O_3(X))Y$ with $x_3^t = x_3^{-1}$. By \fref{eq:2} $|[V,x_3]|\leq 16$. By \fref{eq:3.1} $|[V,x_3,t] : [V,x_3,t] \cap W| \leq 2$. This shows that $|W[V,x_3]| \leq 2^6$.
 Set $x_4 = x_3^y$. Then by the same argument $|W\langle [V,x_3], [V,x_4]\rangle| \leq 2^8$. Hence $\langle x_1,x_2,x_3,x_4\rangle$ induces a subgroup of $\GL_8(2)$. In particular $\langle x_1,\ldots ,x_4\rangle$ induces a subgroup of $U = \Z_3 \wr Z_3 \times \Z_3$. Assume that $\langle x_1,\ldots ,x_4\rangle$ is elementary abelian of order $3^4$. As there is a unique elementary abelian subgroup of order $3^4$ in $U$,  there is also some element $x$ of order three in this group with $|[V,x]| = 2^8$ and $x^t = x^{-1}$. But this contradicts $|[V,t]| \leq 8$. Hence we must have that $\langle x_1, \ldots, x_4 \rangle$ does not induce a group with Frattini factor group of order $3^4$. This then shows that $[V,x_3] \leq W$ and the same applies for $[V,x_4]$. This then implies $W = [V,O_3(X)]$ and so $O_3(X)$ cannot act faithfully on $V$. This proves \fref{eq:3}.
\\
\\
Now by \fref{eq:3} we may assume that $|[V,x]| = 16$ for all $x \not\in \Phi(O_3(X))$ with $x^t = x^{-1}$. In particular $[V,x_1] \not= [V,x_2]$, otherwise there is $x \in \langle x_1,x_2 \rangle$, $x^t = x^{-1}$ and $|[V,x]| = 4$, which contradicts \fref{eq:3}.  Set $W = [V, \langle x_1, x_2\rangle]$. This now gives $|W| = 2^6$ and so $Y = \langle x_1, x_2 \rangle \leq \GL_6(2)$ and then $Y$ is  a subgroup of $U = \Z_3 \wr \Z_3$.
If $\langle x_1, x_2 \rangle$ is elementary abelian, then by \fref{eq:3} $|C_{[V,\langle x_1,x_2\rangle]}(x)| = 4$ for any $1 \not= x \in \langle x_1, x_2 \rangle$. But this contradicts coprime action.

Thus $Y = \langle x_1,x_2 \rangle \cong 3^{1+2}$ or $U$. Assume the latter. We have an $\langle y \rangle$-invariant chain $1 <Z(Y) < Z_2(Y) < J(Y) < Y$ all of index three and so $y^2 = t$ must centralize $Y$, a contradiction. Thus
\begin{align*}\label{eq:4}\tag{4}& \text{If }|[V,x]| = 16\text{ for all }x \not\in \Phi(O_3(X)), \text{ then }\\&Y = \langle x_1, x_2 \rangle \text{ is extraspecial of order }27\end{align*}

We have that $|[Y,V,t]| \geq 4$.  In particular as $Z(Y) = Z(U)$ acts fixed point freely on $W = [V,Y]$ we have $[V,Y,t]$ is of order 4. Suppose that $Y \not= O_3(X)$. Then consider again $x_3 \in O_3(X) \setminus Y\Phi(O_3(X))$ with $x_3^t = x_3^{-1}$. Then by \fref{eq:3} also $[x_3,V]$ is of order 16. Consider $W_1 =[V,x_1][V,x_2][V,x_3]$. As $|[W,t]| = 4$ we have that $|W_1|\leq 2^8$. If $[V,x_3] \leq W$, Then $\langle x_1,x_2,x_3 \rangle$ cannot act faithfully on $W$. Thus $|W_1| = 2^8$ and $\langle Y, x_3\rangle \leq \GL_8(2)$ and so contained in a Sylow 3-subgroup $U$ of $\GL_8(2)$ which is isomorphic to $(\Z_3 \wr \Z_3) \times \Z_3$. As $\langle Y, x_3 \rangle$ is not abelian, we see that either $\langle Y, x_3 \rangle = U$ or isomorphic to $3^{1+2} \times \Z_3$. In both cases $t$ inverts some element $x$ of order three in $Z(\langle Y, x_3 \rangle)$ which is not in $Y$. Then we may assume that $x = x_3$. But in $Z(U) = Z(\langle Y, x_3\rangle)$, there are just 3-elements $a$, with $|[W_1,a]| = 2^6$, $b$ with $|[W_1,b]| = 4$ and $a\cdot b$ with $|[W_1,a\cdot b]| = 2^8$. As $|[W_1,x_3]| = 2^4$, this is a contradiction. This contradiction shows $$Y = O_3(X).$$

By \fref{eq:4} $O_3(X)$ is extraspecial of order 27 and exponent 3 and $W = [V,Y]$ is of order $2^6$. As $\Out(O_3(X)) \cong \GL_2(3)$, we see that $X/O_3(X)$ is cyclic of order at most eight or a quaternion group of order eight. Further $|[W,t]| = 4$. These are the remaining cases in (ii).
\end{proof}

\section{Some results about the Sylow 2-subgroup of $\Omega^+_8(2)$}

In this section we will prove some basic facts about $S$, a Sylow 2-subgroup of $\Omega_8^+(2)$. For this we make use of the structure of $\Omega^+_8(2)$ and the fact that $\Omega^+_8(2)$ is a subgroup of $\POmega^+_8(3)$ of odd index. This can be seen in \cite{AschSm}. We will use this paper as a reference for all the properties we need for $\POmega^+_8(3)$. Most of the results of this section also hold for $\POmega^+_8(q)$, $q = 2^{a} > 2$, just by replacing $2$ by $q$. If not, we will comment on it. Further $\POmega_8^+(3)$ has a 2-local geometry (parabolic system) of type
\vspace{-1cm}

$$\setlength{\unitlength}{2mm}
\begin{picture}(50,40)(-20,-30) \put(0.3,0.3){\makebox(0,0){$\circ^1$}} \put(6.1,0.3){\makebox(0,0){$\circ^2$}}
\put(-5.6,0.3){\makebox(0,0){$\circ^3$}} \put(0.3,5.6){\makebox(0,0){$\circ_5$}}
\put(0.3,-5.9){\makebox(0,0){$\circ_4$}} \put(-0.05,0.4){\line(0,1){5}} \put(-0.05,-0.4){\line(0,-1){5}}
\put(-0.5,0){\line(-1,0){5}} \put(0.4,0){\line(1,0){5}} \end{picture} $$
\vspace{-5cm}

\noindent Here any 4-set containing 1 corresponds to a subgroup $\Omega^+_8(2)$. By \cite{Atlas} $\POmega_8^+(3)$ has an outer automorphism group $\Sigma_4$, which acts on the diagram by permuting $\{2,3,4,5\}$. Further any 3-set containing $1$ corresponds to a subgroup $2^6: \Omega^+_6(2)$ in $\POmega^+_8(3)$.
\\
\\
In case of $\Omega_8^+(2)$, we have the 2-local geometry
$$\Dvier$$
Again any 3-set containing 1 corresponds to a subgroup $2^6 :\Omega_6^+(2)$.
\\
\\
{\bf Notation:} We denote by $Q$ the extra-special subgroup of order $2^9$ and plus type in $S$. This group is $O_2(C_G(R))$, where $G \cong \POmega_8^+(2)$ and $R = Z(S)$ is a root group. We will denote $\bar{S} = S/Q$. If $q > 2$, then $Q$ is a special group.

\begin{lemma}\label{lem:cent} We have $|Z(S)| = 2$, $|Z_2(S)| = 2^2$ and $\bar{S}$ is elementary abelian of order $8$.
\end{lemma}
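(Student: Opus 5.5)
We will work inside $G \cong \Omega_8^+(2)$ and use its parabolic structure, with $S \in \Syl_2(G)$. Set $R = Z(S)$, a long root group. Let $P = N_G(R)$ be the corresponding maximal parabolic, the one attached to the central node of the $D_4$ diagram. We have $Q = O_2(P)$ extraspecial of order $2^9$ and $P/Q \cong \Sigma_3 \times \Sigma_3 \times \Sigma_3 \cong \Omega_4^+(2)\times \SL_2(2)$, that is, a Levi factor $\SL_2(2)^3$.

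The first step settles $\bar S$. Since $|G|_2 = 2^{12}$ and $|Q| = 2^9$, we get $|\bar S| = 8$. Moreover $\bar S$ is a Sylow $2$-subgroup of $P/Q \cong \SL_2(2)^3$, and each factor has Sylow $2$-subgroup $\Z_2$. Hence $\bar S \cong \Z_2^3$ is elementary abelian of order $8$.

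The second step is $|Z(S)| = 2$. Any $z \in Z(S)$ centralizes $Q$, so $z \in C_S(Q)$. As $Q$ is extraspecial and $C_G(Q) \leq Q$ (the standard fact $C_P(O_2(P)) \leq O_2(P)$ for parabolics), we have $C_S(Q) = Z(Q) = R$. Thus $Z(S) \leq R$, and $R \leq Z(S)$ since $R$ is a root group normal in $S$. So $|Z(S)| = 2$.

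The third step computes $Z_2(S)/Z(S) = Z(S/R)$. We use that $Q/R$ is elementary abelian of order $2^8$ and, as a module for the Levi factor $L = \SL_2(2)^3$, is the tensor product $V_1 \otimes V_2 \otimes V_3$ of the three natural $2$-dimensional modules. This is the standard description of the $D_4$ central-node parabolic, and it matches the triality symmetry of the diagram in the figure. We first show $Z(S/R) \leq Q/R$. If $xR \in Z(S/R)$ with $x \notin Q$, then $\bar x \neq 1$ and $[Q,x] \leq R$. So $\bar x$ acts trivially on $Q/R$, contradicting the faithfulness of $L$ on $V_1\otimes V_2\otimes V_3$. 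Hence $Z(S/R) = C_{Q/R}(\bar S)$. The centralizer in $V_1\otimes V_2\otimes V_3$ of a Sylow $2$-subgroup $\langle t_1,t_2,t_3\rangle$ of $L$, where $t_i$ is a transvection on $V_i$, is $C_{V_1}(t_1)\otimes C_{V_2}(t_2)\otimes C_{V_3}(t_3)$, which has order $2$. To prove this, take fixed points of $t_1 \otimes 1\otimes 1$ on $V_1 \otimes W$; these equal $C_{V_1}(t_1)\otimes W$, and induction on the factors gives the rest. Therefore $|Z_2(S)/Z(S)| = 2$, and $|Z_2(S)| = 4$.

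The main obstacle is justifying the module structure $Q/R \cong V_1\otimes V_2\otimes V_3$ and the faithfulness used in the third step. There are two ways to do it. One is to cite \cite{AschSm}. The other is to realise $Q/R$ as $R^\perp/R$ for the isotropic line $R$, via the natural orthogonal module: there $P/Q \cong \Omega_4^+(2)\times\SL_2(2)$ acts on a tensor product of the $4$-dimensional orthogonal module with the natural $\SL_2(2)$-module, and $\Omega_4^+(2)$ acting on its natural module is $V_1\otimes V_2$. Once this structure is in place, the remaining computations are the routine ones above.
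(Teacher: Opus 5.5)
Your argument is correct. It is essentially a worked-out version of what the paper simply asserts ("this can easily be calculated inside of $\Omega^+_8(2)$"). The central-node parabolic $P = Q:(\Sigma_3\times\Sigma_3\times\Sigma_3)$, with $Q/R\cong V_1\otimes V_2\otimes V_3$, gives $\bar S$ elementary abelian of order $8$, $C_S(Q)=R$ and $|C_{Q/R}(\bar S)|=2$ exactly as you say. In fact the faithfulness of the Levi factor on $Q/R$, which you prove anyway, already yields $C_P(Q)\le Q$, so you do not need to quote Borel--Tits separately.

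There is one slip in your last paragraph. $P$ is the stabilizer of a totally singular $2$-space $\ell$ of the natural module, and $Q/R\cong \ell\otimes(\ell^\perp/\ell)$. It is $\ell^\perp/\ell$, not $Q/R$, that is the $4$-dimensional $\Omega_4^+(2)$-module, so "$Q/R$ as $R^\perp/R$" is wrong (it has the wrong dimension). The tensor description you give immediately afterwards is the correct one.
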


\begin{proof} This can  easily be calculated inside of $\Omega^+_8(2)$.
\end{proof}

\begin{lemma}\label{lem:64} {\rm(a)} $S$ contains exactly $6$ elementary abelian subgroups $E$ of order $2^6$. Further $|E : E \cap Q| = 2$. They  all are conjugate under $\Aut(S)$. Let $E$ be one of them then $N_{\POmega^+_8(3)}(E)/E \cong \Omega^+_6(2)$ and induces the natural module on $E$.

{\rm (b)} Let $\bar{s}_i$, $i = 1,\ldots, 7$, be the involutions in $\bar{S}$. Then $[Q,s_i]/Z(Q) = C_{Q/Z(Q)}(s_i)$ is of order $16$ for all $i$. There is exactly one $i_0 \in \{1,\ldots , 7\}$ such that $[Q,s_{i_0}]$ is not elementary abelian. In this case $[Q,s_{i_0}] \cong \Z_4 \times \Z_2 \times \Z_2 \times \Z_2$. Furthermore all involutions in the coset $(Q/Z(Q))s_i$ are conjugate. If $s_{i_0}$ is an involution then even all involutions in $Qs_{i_0}$ are conjugate.

{\rm (c)} Let $\bar{s}_i \not= \bar{s}_j$ be as in (b). Then $|[Q/Z(Q),s_i][Q/Z(Q),s_j]| = 64$ and $|[Q/Z(Q),s_i,s_j]| = 4$.
\end{lemma}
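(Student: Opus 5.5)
We will argue inside $G = \Omega^+_8(2)$, viewing $S$ as a Sylow $2$-subgroup of the parabolic $N_G(Z(S)) = Q:(\Sigma_3\times\Sigma_3\times\Sigma_3)$, where $Q = O_2(N_G(Z(S)))$ and $Q/Z(Q)$ is the tensor product $V_1\otimes V_2\otimes V_3$ of the three natural $2$-dimensional modules of the three $\SL_2(2)$-factors. Then $\bar S = \langle \bar t_1,\bar t_2,\bar t_3\rangle$ with $t_k$ a transvection on $V_k$. Every question in the lemma then becomes a question about commutators in this tensor product, together with the quadratic form $q$ on $Q/Z(Q)$ whose zero set records which elements of $Q$ are involutions.

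\emph{Part (b).} An involution $\bar s\in\bar S$ corresponds to a nonempty subset $I\subseteq\{1,2,3\}$, namely the set of factors on which it acts. We compute $[Q/Z(Q),s_I]$ as a tensor product in which the factor $[V_k,t_k]$ is used in a suitable way for $k\in I$. Checking $|I|=1,2,3$ shows that in each case the commutator space has dimension $4$, so $|C_{Q/Z(Q)}(s_I)| = |[Q/Z(Q),s_I]| = 16$ and the two coincide because both have the same dimension and $[V,s]\le C_V(s)$ for an involution. Whether $[Q,s_I]$ is elementary abelian is decided by whether $q$ vanishes identically on $[Q/Z(Q),s_I]$ (totally singular), since commutativity comes from the associated bilinear form. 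For $|I|=1$ and $|I|=2$ we expect totally singular subspaces. For $I=\{1,2,3\}$ we expect a $4$-space with a $3$-dimensional radical and $q\neq 0$ on it, which gives $\Z_4\times\Z_2\times\Z_2\times\Z_2$. So $i_0$ corresponds to the diagonal element; identifying $i_0$ correctly is the main obstacle.

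For the conjugacy statement, all complements to $C_{Q/Z(Q)}(s)$ in a coset give conjugate elements: as $x$ runs over $Q/Z(Q)$, the elements $s^x = s[s,x]$ cover $s[Q/Z(Q),s]$, and the involutions in $(Q/Z(Q))s$ lie in $C(s)s$. We then use the $\Sigma_3$'s, or $Z(Q)$-translation via an element of $Q$ whose commutator with $s_{i_0}$ is $Z(Q)$ (available because $[Q,s_{i_0}]$ is non-abelian), to fuse over $Z(Q)$ as well.

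\emph{Part (c).} This follows from the same tensor computation, using $\dim([V,s_I]+[V,s_J]) = 6$ and $[V,s_I,s_J]$ of dimension $2$ for $I\neq J$, checked by cases on the symmetric difference.

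\emph{Part (a).} Every elementary abelian $E$ of order $2^6$ satisfies $|EQ/Q|\le 2$. This is because $E\cap Q$ is abelian in the extraspecial group $Q$ of order $2^9$ and hence has order at most $2^5$, which forces $|E:E\cap Q|\ge 2$. Conversely, if $\bar E$ had order $4$, then $E\cap Q$ would lie in $C_Q(\bar E)$; by (c) this is too small, of order at most $2^{3}\cdot 2$ modulo $Z(Q)$.

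With $|E:E\cap Q| = 2$ forced, we find $E = \langle s\rangle(E\cap Q)$ with $E\cap Q$ a maximal elementary abelian subgroup of order $2^5$ in $C_Q(s)$. By (b) this requires $s\neq s_{i_0}$ and $E\cap Q = [Q,s]Z(Q)$ together with one more element, and $s$ can be chosen only up to the involutions described in (b).

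Counting: the six choices $I\neq\{1,2,3\}$ each give exactly one such $E$, and these are the six radicals of the $2^6:\Omega_6^+(2)$ parabolics. The statements about $\Aut(S)$ and $N_{\POmega^+_8(3)}(E)$ follow from the diagram symmetry ($\Sigma_4$ on $\{2,3,4,5\}$) and \cite{AschSm}.
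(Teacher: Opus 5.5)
Your route is genuinely different from the paper's, and it works. However, the step you call the main obstacle is the real content of (b), and you only state its expected outcome.

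\emph{How the paper argues.} It never uses the model $V_1\otimes V_2\otimes V_3$.
It takes at least six $E$'s from the $2$-local geometry of $\POmega^+_8(3)$. It gets $|E:E\cap Q|=2$ from $|C_{Q/Z(Q)}(S)|=2$. It then works inside the point stabilizer $2^6:\Omega^+_6(2)$ of $\Omega^+_8(2)$ to show $[E,Q]=E\cap Q$, so $E$ is the only elementary abelian $2^6$ in $EQ$. Finally, it identifies $\bar s_{i_0}$ as the element fixed by $N_{\Aut(\POmega^+_8(3))}(S)$. That element must invert all of $O_3(N_{\POmega^+_8(3)}(Q)/Q)\cong 3^4$, because $\Out(\POmega^+_8(3))$ permutes the four $\SL_2(3)$'s transitively. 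Hence it normalizes but does not centralize a $Q_8$, which gives the $\Z_4$.
Part (c) then needs no casework. $[V,s_i]$ is $S$-invariant, so $|C_{[V,s_i]}(s_j)|\geq 8$ would force $|C_V(S)|\geq 4$.

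\emph{What your route buys.} It is more elementary and self-contained, and it even produces all six $E$'s inside $\Omega^+_8(2)$, where only the three with $|I|=1$ are parabolic radicals. Like the paper, you still need $\POmega^+_8(3)$ to fuse the two triality orbits under $\Aut(S)$ and for $N_{\POmega^+_8(3)}(E)$.

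\textbf{Closing the gap.} You must pin down the squaring form $q$ on $V=Q/Z(Q)$. Let $Z(Q)=\langle z\rangle$, and let $x_{abc}$ ($a,b,c\in\{e,f\}$) be the pure tensors from bases $e_k,f_k$ of $V_k$ with $t_k\colon f_k\mapsto f_k+e_k$.

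The commutator form is $b_1\otimes b_2\otimes b_3$, since $V$ is absolutely irreducible. So $x_{abc}$ is orthogonal to every $x_{a'b'c'}$ except the one with $a'\neq a$, $b'\neq b$, $c'\neq c$.

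Next, $q$ is constant on the single $\SL_2(2)^3$-orbit of nonzero pure tensors. This constant is $0$, because $x_{eee}$, $x_{fee}$ and their sum $(e_1+f_1)\otimes e_2\otimes e_3$ are pairwise orthogonal pure tensors.

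Since $C_V(s)=[V,s]^\perp$ for an isometry $s$, the space $[V,s]$ is Lagrangian. So $b$ vanishes on it and $q$ is additive there, and a basis check suffices:
\begin{itemize}
\item For $|I|\leq 2$ there are bases of singular vectors, e.g.\ $x_{eec}$, $x_{efc}+x_{fec}$ for $I=\{1,2\}$.
\item For $I=\{1,2,3\}$, the vector $(s-1)x_{fff}=\sum_{abc\neq fff}x_{abc}$ contains exactly three non-orthogonal pairs, so $q=1$ on it.
\end{itemize}
So $i_0$ is indeed $t_1t_2t_3$. Your ``$3$-dimensional radical'' must mean $\ker q|_{[V,s]}$, not a radical of $b$.

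\textbf{Corrections.}
\begin{itemize}
\item Your reason for fusing $s_{i_0}$ with $s_{i_0}z$ is wrong, since $[Q,s_{i_0}]$ is abelian. What is true is that an involution $s$ inverts every $[x,s]$, so $[v,s]=v^{-2}=z$ for any $v\in[Q,s]$ of order $4$.
\item The same identity gives $[y,s]=z^{q(\bar y)}$ for $\bar y\in[V,s]$. This settles existence: the Levi involution $t_I$ centralizes the preimage of $[V,t_I]$ when this is totally singular. Uniqueness of $E$ in $Qt_I$ holds because the preimage of a Lagrangian is self-centralizing in $Q$.
\item In (a), the abelian-subgroup argument gives $|E:E\cap Q|\geq 2$, not $\leq 2$. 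The cases $|\bar E|=4$ and $|\bar E|=8$ are excluded because then $|C_{Q/Z(Q)}(\bar E)|\leq 4$, resp.\ $\leq 2$.
\end{itemize}
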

\begin{proof}(a)+ (b): By the remark on $\POmega^+_8(3)$ before we have at least $6$ elementary abelian subgroups of order $2^6$ in $S$. Let $E$ be some elementary abelian group of order $2^6$. Assume first $EQ = S$. Then $|E \cap Q| = 2^3$ and so $|C_{Q/Z(Q)}(S)| \geq 2^2$, contradicting \fref{lem:cent}.
\\
\\
Now assume that $|E : E \cap Q| = 4$. Then $|C_{Q/Z(Q)}(E)| \geq 8$. As $|S : EQ| = 2$,  $|C_{Q/Z(Q)}(S)| \geq 4$, contradicting \fref{lem:cent} again.
\\
\\
Thus we have shown that $|E : E \cap Q| = 2$ for each elementary abelian subgroup of order $64$.
\\
\\
We now turn our interest to $\Omega^+_8(2)$. The point stabilizer in the natural $8$-dimensional representation of $\Omega^+_8(2)$ is an extension of an elementary abelian group $E$ of order $2^6$ by $\Omega^+_6(2)$, which induces the natural module on $E$. Then $QE/E$ corresponds to the unique elementary abelian subgroup of order 16 in $S/E$. Hence $[E,Q] = Q \cap E$, which is of index 2 in $E$. This shows that the involutions in $EQ \setminus Q$ are exactly those in $E$. Hence $E$ is the unique elementary abelian subgroup of order $64$ in $EQ$.
\\
\\
As we have 6 conjugates of $E$ under $\Out(\POmega^+_8(3))$, they all have the same normalizer structure.
\\
\\
Now chose notation such that $\bar{s}_i$, $i = 1, \ldots , 7$, are the involutions in $\bar{S}$. Then as there are $6$ conjugates of $E$ under $N_{\Aut(\POmega^+_8(3))}(S)$, we get six $\bar{s}_i$ such that the preimage contains exactly one elementary abelian group of order $64$.

Let $\bar{s}_{i_0}$ be the remaining involution.
Then $\bar{s}_{i_0}$ is fixed by $N_{\Aut(\POmega^+_8(3))}(S)$. We consider $N_{\Aut(\POmega^+_8(3))}(Q)$. We have that $N_{\POmega^+_8(3)}(Q)/Q$ contains an extension of an elementary abelian 3-group of order $3^4$ by $\bar{S}$. This is just the parabolic in $\POmega^+_8(3)$ corresponding to $\{2,3,4,5\}$ in the diagram before. Hence the preimage is  a central product of 4 groups $\SL_2(3)$ (which is just the image in $\POmega_8^+(3)$ or $\SO_4(3) \times \SO_4(3)$) extended by an elementary abelian group of order 8. Further $\Out(\POmega^+_8(3))$ permutes the four groups $\SL_2(3)$ transitively and so $\bar{s}_{i_0}$ must invert $O_3(N_{\POmega^+_8(3)}(Q)/Q)$. This gives that $[s_{i_0},Q]$ is abelian but not elementary abelian as $s_{i_0}$  normalizes some $Q_8$ but does not centralizes this $Q_8$.  This proves most of (b).
The last two statements for $Qs_i$ are obvious. If $s_{i_0}$ can be chosen as an involution, it inverts some element of order 4 in $[Q,s_{i_0}]$ and so all involution in $Qs_{i_0}$ are conjugate. This is (b).\\
\\
(c) By (b) if $|[C_{[Q/Z(Q),s_i]}(s_j)| \geq 8$, then $|C_{[Q/Z(Q),s_i]}(S)| \geq 4$, which contradicts \fref{lem:cent}. Hence $|[Q/Z(Q),s_i,s_j]| = 4$ and then $$|[Q/Z(Q),s_i][Q/Z(Q),s_j]| = 64,$$ which is (c).
\end{proof}

\begin{remark} {\rm The situation in \fref{lem:64} for $q > 2$ is a little bit different. In any case there are exactly $3\cdot q$ many elementary abelian subgroup $E$ of order $q^6$. But under $\Aut(S)$ they form two orbits of length $3$ and $3 \cdot (q-1)$. In any case, also $q=2$, these are the orbits under $N_{Aut(\Omega_8^+(q))}(S)$. In case of $q=2$ there is an additional automorphism of $S$ which connects the two orbits of length 3.}
\end{remark}

In what follows when working in $N_{\POmega^+_8(3)}(E)$, $E \leq S$ elementary abelian of order 64, we often use that $\Omega^+_6(2)$ is isomorphic to $A_8$ and the natural $\Omega^+_6(2)$-module is the irreducible part of the permutation module for $A_8$. For the convenience of the reader we will state  the well known fact of this action, which we will use freely in what follows.
\\
\\
Let $\{v_1, \ldots , v_8\}$ be the basis of the $8$-dimensional permutation module $V$ permuted by $A_8$. Denote by $\bar{}$ the natural homomorphism to $V/\langle v_1+v_2+v_3+v_4+ v_5+ v_6 + v_7 + v_8\rangle$. Then $$E = \langle \overline{v_1+v_2}, \overline{v_1 + v_3}, \overline{v_1 + v_4}, \overline{v_1 + v_5}, \overline{v_1 + v_6}, \overline{v_1 +v_7}, \overline{v_1 + v_8}\rangle.$$ Further we have a Sylow $2$-subgroup of $A_8$
$$\langle (12)(34), (13)(24), (56)(78), (57)(68) \rangle \langle (12)(56), (15)(26)(37)(48) \rangle,$$
which we denote by $S/E$. Then $Q$ is generated by  $$\langle \overline{v_1+v_2}, \overline{v_1 + v_3}, \overline{v_5+v_6}, \overline{v_5+v_7}, \overline{v_1+v_2+v_3+v_4}\rangle$$
and $$\langle (12)(34), (13)(24), (56)(78), (57)(68) \rangle.$$
In the next lemma we will use a certain property of $A_8$, which we now will state
\begin{lemma}\label{lem:A8}{\rm (a)}  Let $x = (12)(34)(56)(78)$ be a $2$-central involution in $A_8$. Then $C_{A_8}(x) = Q_x \langle (1,3,5)(2,4,6), (13)(24) \rangle$ with

\begin{align*}Q_x =& \langle (12)(34)(56)(78), (13)(24)(57)(68),\\& (15)(26)(37)(48), (14)(23)(57)(68), (16)(25)(37)(48)\rangle,\end{align*} extraspecial of order $2^5$ and of $(+)$-type. Besides the involutions in $Q_x$ above there are
involutions like $(12)(34)$, $(12)(56)$ and $(34)(56)$ in $Q_x$.
\\
\\
{\rm (b)} If we consider $A_8$ as $\Omega_6^+(2)$. Then the conjugates of $\overline{v_1+v_2+v_3+v_4}$ are the isotropic vectors. Now $$[x,E] = \langle \overline{v_1+v_2+v_3+v_4}, \overline{v_1 + v_2 + v_5+v_6}\rangle$$ is totally isotropic, but $$[(12)(34),E] = \langle \overline{v_1+v_2}, \overline{v_3+v_4}\rangle$$ is not.
\end{lemma}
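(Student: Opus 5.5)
The plan is to verify both parts by direct computation inside $\Sigma_8$ and inside the permutation module $V$. No earlier lemma is needed; only the explicit model of $E$ and $S/E$ just described is used.

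For (a), I first determine $|C_{A_8}(x)|$. In $\Sigma_8$ the centralizer of a fixed-point-free involution with four $2$-cycles is $\Z_2 \wr \Sigma_4$, of order $2^4\cdot 24 = 384$. It contains odd permutations, for instance the transposition $(12)$, so $|C_{A_8}(x)| = 192 = 2^6\cdot 3$.

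Next I check that each of the five listed generators of $Q_x$ commutes with $x$, that is, permutes the blocks $\{1,2\},\{3,4\},\{5,6\},\{7,8\}$. Modulo $\langle x\rangle$ they commute pairwise, so $Q_x/\langle x\rangle$ is elementary abelian. I then verify $|Q_x| = 32$. The cleanest way is to note that $Q_x$ is the kernel of the action of $C_{A_8}(x)$ on the four blocks, intersected with its preimage of the Klein four-group $V_4 \unlhd \Sigma_4$, restricted to even permutations. This makes $Q_x = O_2(C_{A_8}(x))$, normal of order $2^5$ with $Q_x' = Z(Q_x) = \langle x\rangle$.

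The element $(1,3,5)(2,4,6)$ cycles the first three blocks and $(13)(24)$ swaps two of them. Together they generate a group isomorphic to $\Sigma_3$, and this group meets $Q_x$ trivially. Hence $C_{A_8}(x) = Q_x\langle(1,3,5)(2,4,6),(13)(24)\rangle$ by orders, since $32\cdot 6 = 192$.

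To see that $Q_x$ has $+$-type, I exhibit an elementary abelian subgroup of order $8$. The subgroup $\langle x, (13)(24)(57)(68), (14)(23)(57)(68)\rangle$ works, because the last two generators multiply to $(12)(34)\in Q_x$. This shows that the listed extra involutions $(12)(34)$, $(12)(56)$, $(34)(56)$ occur as products of generators, and that $Q_x$ is $2^{1+4}_+$ rather than of minus type, whose maximal elementary abelian subgroups have order $4$.

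For (b), I use that the quadratic form on $E$ (the even-weight vectors modulo the all-one vector) is $q(\bar w) = \mathrm{wt}(w)/2 \bmod 2$. The singular nonzero vectors are therefore the images of weight-$4$ vectors. There are $70/2 = 35$ of them, which equals the number of singular points of $\Omega_6^+(2)$, namely $(2^3-1)(2^2+1) = 35$. Since $A_8$ is transitive on $4$-subsets, these are exactly the conjugates of $\overline{v_1+v_2+v_3+v_4}$.

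Then $[x,E]$ is the image of $1+x$. Applying it to the basis vectors $\overline{v_1+v_j}$ gives $0$ for $j=2$, and gives $\overline{v_1+v_2+v_3+v_4}$, $\overline{v_1+v_2+v_5+v_6}$ and $\overline{v_1+v_2+v_7+v_8}$ for the other pairs. The third of these is the sum of the first two modulo the all-one vector, so $[x,E]$ has order $4$. All three of its nonzero vectors have weight $4$, so it is totally singular.

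For $(12)(34)$ the images are $\overline{v_1+v_2}$ and $\overline{v_3+v_4}$, which have weight $2$ and so are nonsingular. Hence this space is not totally isotropic.

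The only step needing some care is confirming that $Q_x$ has order exactly $32$, i.e. that the listed generators are independent modulo $\langle x\rangle$. I would settle this via the block-kernel description above rather than by multiplying out products.
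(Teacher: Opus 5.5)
Your proposal is correct and follows the paper's route: the paper's proof is only the remark that everything is a direct calculation inside $A_8$. Your block-action picture ($C_{A_8}(x)\cong 2^3{:}\Sigma_4$, with $Q_x$ the preimage of $O_2(\Sigma_4)$) and the half-weight quadratic form on $E$ simply organize that calculation. Two small points: the sentence describing $Q_x$ as a kernel ``intersected with its preimage'' should just say that $Q_x$ is the full preimage of $V_4$ under the block action $C_{A_8}(x)\to\Sigma_4$. To see that the five listed elements generate all of it, you still need the two products giving $(12)(34)$ and $(12)(56)$.
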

\begin{proof} All this can easily be calculated inside of $A_8$.
\end{proof}

\begin{lemma}\label{lem:essential64} {\rm(a)} Let $E$ be one of the elementary abelian subgroups of order $64$ in $S$ as in \fref{lem:64}. Then $Z_2(S) \leq E$.

{\rm (b)} Let $E_1$, $E_2$ be two different elementary abelian subgroups of order $64$ in $S$ as in \fref{lem:64}. Then $[E_1,E_2] = E_1 \cap E_2$ is of order $8$
\end{lemma}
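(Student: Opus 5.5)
For (a) I will argue inside $EQ$. By \fref{lem:64}(a) we have $|E : E \cap Q| = 2$, and $E$ is normalized by $S$: the six subgroups of order $64$ are characterized by their images $\bar{s}_i$, and $\bar{S}$ is abelian, so $EQ \unlhd S$. Since $E$ is the unique elementary abelian subgroup of order $64$ in $EQ$, it is characteristic there, and hence normal in $S$. Then $E \unlhd S$ with $E \neq 1$ gives $E \cap Z(S) \neq 1$, so $Z(S) \leq E$.

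For $Z_2(S)$ I will use the normal subgroup $E \cap Q$ of $S$. It has order $32$, and $E\cap Q = [E,Q]$ by the proof of \fref{lem:64}. As $Z_2(S)$ has order $4$ by \fref{lem:cent}, it suffices to show $Z_2(S) \leq Q$ and that $Z_2(S)$ centralizes $E$. Since $C_S(E) = E$ (e.g.\ via $N_{\POmega^+_8(3)}(E)/E \cong \Omega^+_6(2)$ acting faithfully), that gives $Z_2(S) \leq E$.

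To see $Z_2(S) \leq Q$, note that $Z_2(S)Q/Q$ would otherwise be a nontrivial element $\bar{s}_i$ with $[Q,s_i] \leq Z(S) = Z(Q)$. This contradicts \fref{lem:64}(b), which gives $|[Q/Z(Q),s_i]| = 16$.

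Inside $Q/Z(Q)$, the image of $Z_2(S)$ is a $1$-dimensional subspace centralized by $\bar{S}$. By \fref{lem:cent} there is only one such subspace, namely $C_{Q/Z(Q)}(S)$, of order $2$. On the other hand, $(E\cap Q)/Z(Q)$ is a nontrivial $S$-invariant subspace, so it contains a nonzero $S$-fixed vector, and this must be the unique one. Hence $Z_2(S) \leq E\cap Q$, which proves (a). Alternatively, this can be checked concretely in the $A_8$-model: $Z_2(S)E/E$ corresponds to $\overline{v_1+\cdots+v_4}$, which lies in $E$.

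For (b), I will use the same linear algebra in $Q/Z(Q)$. We have $E_k \cap Q \geq Z(Q)$, and $(E_k\cap Q)/Z(Q) = [Q/Z(Q),s_{i_k}]$ has order $16$, by \fref{lem:64}(b) together with $[E_k,Q]=E_k\cap Q$. By \fref{lem:64}(c), the product of these two subspaces has order $64$, so their intersection in $Q/Z(Q)$ has order $4$. Thus $|E_1\cap E_2 \cap Q| = 8$.

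Next I need $E_1 \cap E_2 \leq Q$. This holds because $E_1Q \neq E_2Q$: distinct $E_k$ correspond to distinct $\bar{s}_i$. So $|E_1\cap E_2| = 8$.

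For the commutator, $[E_1,E_2] \leq E_1\cap E_2$ since both groups are normal. For equality, compute modulo $Z(Q)$: $[E_1,E_2]$ contains $[E_1\cap Q, s_{i_2}]$, whose image is $[Q/Z(Q),s_{i_1},s_{i_2}]$, of order $4$ by (c). It remains to show $Z(Q) \leq [E_1,E_2]$. Here I take $x\in E_2\setminus Q$ and $e \in E_1\cap Q$ with $[e,x]\notin Z(Q)$, and use that a commutator in $Q$ with some element outside $C_Q(\cdot)$ hits $Z(Q)$ via the extraspecial form. Alternatively I would verify it directly in the $A_8$-model; this last point is where I expect the only real care to be needed.
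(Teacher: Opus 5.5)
Your route differs from the paper's in both parts, and everything except the last step is correct.

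\textbf{Part (a).} The paper notes that $[E,Z_2(S)]\le Z(S)$ has order at most $2$. So $Z_2(S)$ would act by transvections on the natural $\Omega_6^+(2)$-module $E$, which is impossible. Hence $[E,Z_2(S)]=1$, and $C_S(E)=E$ gives $Z_2(S)\le E$. You avoid the $\Omega_6^+(2)$-action entirely. From $E\unlhd S$, $Z_2(S)\le Q$ and $|Z_2(S)|=4$, you get that $Z_2(S)/Z(Q)$ is the unique $S$-fixed line of $Q/Z(Q)$. That line must then lie in the nontrivial $S$-invariant subspace $(E\cap Q)/Z(Q)$. This is sound. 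Your announced reduction via $C_S(E)=E$ is never used and can be dropped.

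\textbf{Part (b).} The paper works in the $A_8$-model of $N_{\POmega_8^+(3)}(E_1)/E_1$. It places $E_2E_1/E_1$ inside the extraspecial group $C_S(Z_2(S))/E_1$ of order $32$. It then uses a $\Sigma_3$ normalizing that group to force $|E_2E_1/E_1|=8$. Finally it checks $|[E_1,E_2]|=8$ separately for the three candidate subgroups of order $8$. Your computation in $Q/Z(Q)$ uses $(E_k\cap Q)/Z(Q)=C_{Q/Z(Q)}(s_{i_k})$, Lemma~\ref{lem:64}(c), and $E_1\cap E_2\le Q$. It gives $|E_1\cap E_2|=8$ and $|[E_1,E_2]Z(Q)/Z(Q)|=4$ much more directly, using nothing beyond Lemmas~\ref{lem:cent} and~\ref{lem:64}. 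That fits the paper's stated aim of arguments that might carry over to $q=2^a$. What the paper's route buys in exchange is the precise position of $E_2E_1/E_1$ inside $S/E_1$.

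\textbf{The gap: $Z(Q)\le[E_1,E_2]$.} Your sketch for this does not prove it. Since $[e,x]\in E_1\cap E_2$ and both $E_k$ are abelian, $[e,x]$ commutes with every element of $E_1\cup E_2$. So the element $q\in Q$ with $[[e,x],q]\ne 1$ supplied by the extraspecial form lies outside $E_1\cup E_2$. Nothing you say places $[[e,x],q]$ in $[E_1,E_2]$. Without this step, $[E_1,E_2]$ could a priori be a complement of order $4$ to $Z(Q)$ in $E_1\cap E_2$.

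The missing observation is that $[E_1,E_2]$ is normal in $S$, because $E_1$ and $E_2$ are. Then $[E_1,E_2]$ is closed under commutation with $Q$, which makes your sketch work. More simply, a nontrivial normal subgroup of $S$ meets $Z(S)$ nontrivially, and $|Z(S)|=2$ gives $Z(S)=Z(Q)\le[E_1,E_2]$.

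Alternatively, you can use the form directly. Each $W_k=(E_k\cap Q)/Z(Q)$ is a $4$-dimensional totally isotropic subspace (since $E_k\cap Q$ is abelian) of the $8$-dimensional nondegenerate commutator space $Q/Z(Q)$. Hence $W_k=W_k^{\perp}$, and $W_1\ne W_2$ forces $[E_1\cap Q,E_2\cap Q]=Z(Q)$. With either line added, your proof is complete.
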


\begin{proof} Set $H = \POmega_8^+(3)$. If $E$ is elementary abelian as in \fref{lem:64} then $N_H(E)/E \cong \Omega^+_6(2)$ and induces the natural module on $E$.

(a) By \fref{lem:cent} $|[E,Z_2(S)]| \leq 2$. But by \fref{lem:A8}(b) there are no transvections in $\Omega^+_6(2)$ and so $[E, Z_2(S)] = 1$. As $C_S(E) = E$, we have $Z_2(S) \leq E$, the assertion.

(b) As $E_1$ and $E_2$ are normal in $S$ we have that $[E_1,E_2] \leq E_1 \cap E_2$. Again let $H = \POmega^+_8(3)$. Then by \fref{lem:64}  $N_H(E_1)/E_1 \cong \Omega^+_6(2)$. We have that $\Omega_6^+(2) \cong A_8$ and it induces on $E_1$ the irreducible part of the permutation module.  The action on $E_1$ shows that $C_S(Z_2(S))/E_1$ is the extra-special group $Q_{E_1}$ of order 32 in $N_S(E_1)/E_1$, see also \fref{lem:A8}. Hence  by (a) $E_1E_2/E_1 \leq Q_{E_1}$ and so $|E_1E_2/E_1| \leq 8$. By \fref{lem:A8}(a) in $N_H(E_1)/E_1$ the group $Q_{E_1}$ is normalized by some group isomorphic to $\Sigma_3$, with an element of order 3 acting fixed point freely on $Q_{E_1}/Z(Q_{E_1})$. In particular this $\Sigma_3$ permutes the elementary abelian subgroups $E$ of order 64 with $EE_1/E_1 \leq Q_{E_1}$.   By \fref{lem:64} $S$ normalizes any elementary abelian group of order 64. Thus the same now is true for $N_{N_H(E_1)/E_1}(Q_{E_1})$. Thus $E_2E_1/E_1$ is normalized by this $\Sigma_3$. As $|E_1E_2/E_1| > 2$, otherwise we would have transvections on $E_2$, which is not possible by \fref{lem:A8}(b) again, we receive that $|E_2E_1/E_1| = 8$. In particular $|E_1 \cap E_2| = 8$, which is the second statement in (b).

There are exactly three elementary abelian subgroup of order 8 in $Q_{E_1}$, which are normal in $N_{N_H(E_1)/E_1}(Q_{E_1})$.
With the notation from \fref{lem:A8}(a) these are
\begin{align*}&\langle (12)(34)(56)(78), (13)(24)(57)(68), (15)(26)(37)(48)\rangle,\\&
\langle (12)(34)(56)(78), (14)(23)(57)(68), (16)(25)(37)(48)\rangle\\&
\text{and }\langle (12)(34)(56)(78), (12)(56), (34)(56)\rangle.\end{align*}
The first two of them are normalized by $\L_3(2)$ in $N_H(E_1)/E_1$ and so if $E_2$ corresponds to one of them we have that $|[E_1,E_2]| = 8$.

Thus assume that $E_2E_1/E_1$ is the third elementary abelian subgroup of $Q_{E_1}$ normalized by $N_{N_H(E_1)/E_1}(Q_{E_1})$. Then besides $Z(Q_{E_1})$ this group just contains non 2-central involutions in $N_H(E_1)/E_1$. As by \fref{lem:A8}(b) $[E_1, Z(Q_{E_1})]$ is totally singular while for a non 2-central involution this is not the case, we receive that $[E_1,E_2] \not= [E_1,Z(Q_{E_1})]$ and so $|[E_1,E_2]| = 8$, the assertion.
\end{proof}

\begin{lemma}\label{lem:Z3} $Z_3(S) \leq Q$ is elementary abelian of order $2^5$ and $$C_S(Z_3(S)) = Z_3(S).$$ Further for any elementary abelian subgroup $E$ as in \fref{lem:64} we have that $|E \cap Z_3(S)| =2^4$
\end{lemma}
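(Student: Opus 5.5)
The plan is to first force $Z_3(S)$ into $Q$ and then do everything inside $Q/Z(Q)$. There the commutator map gives a nondegenerate quadratic (plus type) space $\bar Q = Q/Z(Q)$ of dimension $8$ on which $\bar S$ acts. Throughout, $Z(Q) = Z(S)$ has order $2$, $Z_2(S)$ has order $4$ (\fref{lem:cent}), and $\bar S$ is elementary abelian of order $8$.

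\emph{Step 1: $Z_3(S) \leq Q$.} Suppose $s \in Z_3(S) \setminus Q$. Then $[Q,s] \leq [S,s] \leq Z_2(S)$, which has order $4$. On the other hand, \fref{lem:64}(b) gives $|[Q,s]Z(Q)/Z(Q)| = 16$, a contradiction. Hence $Z_3(S) \leq Q$, and moreover every $s \in S \setminus Q$ satisfies $|[\bar Q,s]| = 16$.

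\emph{Step 2: order $2^5$ and abelian.} Inside $\bar Q$ we have $Z_3(S)/Z(Q) = \{\bar x \in \bar Q : [\bar x,S] \leq \bar Z_2\}$, where $\bar Z_2 = Z_2(S)/Z(Q) = C_{\bar Q}(S)$ has order $2$. So $Z_3(S)/Z(Q)$ is the second socle of the $\bar S$-module $\bar Q$, and I need it to have dimension $4$. For this I would use the explicit model introduced before \fref{lem:A8}. Fix one $E$ as in \fref{lem:64}; then $Q \cap E = [E,Q]$ has order $2^5$ and, by \fref{lem:essential64}(a), contains $Z_2(S)$. The second socle can then be read off from $\langle \overline{v_1+v_2}, \overline{v_1+v_3}, \overline{v_5+v_6}, \overline{v_5+v_7}, \overline{v_1+v_2+v_3+v_4}\rangle$ together with the listed generators of $S/E$. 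One checks that the preimage of the second socle is a $4$-dimensional subspace that is totally singular for the commutator form and consists of involutions, so $Z_3(S)$ is elementary abelian of order $2^5$.

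If a computation-free argument is preferred, one can use \fref{lem:64}(b),(c) instead. Each $C_{\bar Q}(s_i) = [\bar Q,s_i]$ has order $16$, and $[\bar Q,s_i,s_j]$ has order $4$. An element $\bar x$ lies in the second socle iff $[\bar x,s_i] \in \bar Z_2$ for $s_i$ running through generators of $\bar S$. Intersecting these conditions over three generators gives the dimension count $8 - 3 - 1 = 4$. Total singularity follows because $[Z_3(S),Z_3(S)] \leq [Z_3(S),S] \cap Q' \leq Z_2(S) \cap Z(Q)$, together with the observation that squares of elements of $Z_3(S)$ lie in $Z(Q)$ and are forced trivial by the model.

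\emph{Step 3: $C_S(Z_3(S)) = Z_3(S)$.} Since $Z_3(S)/Z(Q)$ is totally singular of dimension $4$ in the $8$-dimensional space $\bar Q$, it is maximal totally singular. Hence $C_Q(Z_3(S))$, which is the preimage of $(Z_3(S)/Z(Q))^\perp$, equals $Z_3(S)$. It remains to exclude $s \in S \setminus Q$ centralizing $Z_3(S)$. Such an $s$ would give $\bar Z_3 \leq C_{\bar Q}(s) = [\bar Q,s]$, and both sides have order $16$, so $\bar Z_3 = [\bar Q,s]$. I would rule this out case by case. For $s = s_{i_0}$ it fails because $[Q,s_{i_0}]$ is not elementary abelian by \fref{lem:64}(b). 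For the other $s_i$, equality would force $[\bar Q,s_i,s_j] \leq \bar Z_2$ of order $2$, contradicting $|[\bar Q,s_i,s_j]| = 4$ in \fref{lem:64}(c), since $\bar Z_3$ is killed into $\bar Z_2$ by every $s_j$.

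\emph{Step 4: $|E \cap Z_3(S)| = 2^4$.} Both $E \cap Q$ and $Z_3(S)$ are elementary abelian of order $2^5$ containing $Z(Q)$, and their images are totally singular $4$-spaces in $\bar Q$. The intersection has order $16$ exactly when these images meet in dimension $3$, i.e. when they lie in the same family of maximal totally singular subspaces but are distinct. Distinctness holds because $Z_3(S) \not\leq E$: otherwise $E \leq C_S(Z_3(S)) = Z_3(S)$, which is too small. For the dimension I would verify one $E$ in the model and transfer to the others by the $\Aut(S)$-transitivity of \fref{lem:64}(a), noting that $Z_3(S)$ is characteristic in $S$.

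The main obstacle is Step 2, namely pinning the second socle down to exactly dimension $4$ and showing it is totally singular. I expect to fall back on the explicit $A_8$ permutation-module model there.
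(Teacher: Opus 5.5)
Your main line is the paper's. You get $Z_3(S)\le Q$ from \fref{lem:64}(b), since any $s\in S\setminus Q$ has $|[Q/Z(Q),s]|=16$ while $[Q,s]\le Z_2(S)$ would force order at most $2$. You read off the order $2^5$ and the elementary abelian structure in the $2^6{:}A_8$ model; the paper obtains $Z_3(S)=\langle \overline{v_1+v_2},\overline{v_3+v_4},\overline{v_5+v_6},\overline{v_1+v_3+v_5+v_7}\rangle\langle(12)(34)(56)(78)\rangle$ there. You check $|E\cap Z_3(S)|=2^4$ for one $E$ and transport it by $\Aut(S)$.

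The one genuine difference is Step 3. The paper just asserts $C_S(Z_3(S))=Z_3(S)$ by calculation. You instead deduce it from two facts. First, $Z_3(S)/Z(Q)$ is a $4$-dimensional totally isotropic, hence self-perpendicular, subspace of the symplectic space $Q/Z(Q)$, which gives $C_Q(Z_3(S))=Z_3(S)$. Second, any $s\notin Q$ centralizing $Z_3(S)$ would force $Z_3(S)/Z(Q)=[Q/Z(Q),s]$, contradicting $|[Q/Z(Q),s_i,s_j]|=4$ from \fref{lem:64}(c). That argument is correct and more conceptual than the paper's calculation. The \fref{lem:64}(c) contradiction works for every $\bar s_i$, including $\bar s_{i_0}$, so your separate case is unnecessary.

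Two side remarks are wrong, though neither carries weight in your proof.

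In Step 2, the ``computation-free'' variant does not work. Each condition $[\bar x,s_k]\in\bar Z_2$ cuts out a $5$-dimensional subspace of the $8$-dimensional space, and intersecting three of them gives no count like $8-3-1$. Also, $[Z_3(S),Z_3(S)]\le Z(Q)$ holds for any subgroup of $Q$ and says nothing about commutativity. So you really do need the model there.

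In Step 4, the parenthetical is backwards. In an $8$-dimensional plus-type quadratic space, two maximal totally singular subspaces lie in the same family iff their intersection has even dimension. So images meeting in dimension $3$ lie in opposite families. Parity also cannot distinguish dimension $1$ from dimension $3$, so what carries this step is the explicit check in the model, exactly as in the paper.
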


\begin{proof} Let $E$ be one of the elementary abelian subgroups of order 64 from \fref{lem:64}. Then by \fref{lem:64} we have that $N_{\POmega_8^+(3)}(E)/E \cong \Omega^+_6(2)$. Hence we can use the notation introduced before \fref{lem:A8}. Then one calculates that $$Z_3(S) = \langle \overline{v_1+v_2}, \overline{v_3 + v_4}, \overline{v_5 + v_6}, \overline{v_1+v_3+v_5+v_7}\rangle  \langle (12)(34)(56)(78) \rangle.$$ Obviously  $Z_3(S)$ is elementary abelian and an easy calculation shows that $C_S(Z_3(S))  = Z_3(S)$. Further for the given $E$  we have that $|E \cap Z_3(S)| =2^4$. As $\Aut(S)$ acts transitively on the elementary abelian subgroups of order 64, this is true for any of them.

Assume there is $x \in Z_3(S) \setminus Q$. Then $|[Q/Z(Q),x]| = 2$, which contradicts \fref{lem:64}. This proves the lemma.
\end{proof}

\begin{lemma}\label{lem:Z3E} Let $x$, $y$ be involutions in $S \setminus Q$, which belong to elementary abelian subgroups $E_x$, $E_y$ of order $64$, $E_x \not= E_y$. Then $E_x \cap E_y = C_Q(x) \cap C_Q(y) \leq Z_3(S)$. If $s_{i_0}$ is an involution, then $C_Q(s_{i_0}) \leq Z_3(S)$.
\end{lemma}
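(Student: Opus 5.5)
The plan is to show $E_x \cap E_y = C_Q(x)\cap C_Q(y)$ first, and then identify this group inside $Z_3(S)$. By \fref{lem:64}(a) we have $|E_x : E_x\cap Q| = 2$. Since $x \in S\setminus Q$ lies in $E_x$, this gives $E_x = (E_x\cap Q)\langle x\rangle$. By the proof of \fref{lem:64}, $[E_x,Q] = E_x\cap Q$ has order $2^5$, and $E_x$ is the unique elementary abelian subgroup of order $64$ in $E_xQ$.

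Next we compare $E_x\cap Q$ with $C_Q(x)$. Certainly $E_x\cap Q \le C_Q(x)$, because $E_x$ is abelian. For the reverse, \fref{lem:64}(b) gives $|C_{Q/Z(Q)}(x)| = 16$, so $|C_Q(x)| \le 2^5$; hence $C_Q(x) = E_x\cap Q$. Similarly $C_Q(y) = E_y\cap Q$, and therefore $C_Q(x)\cap C_Q(y) = E_x\cap E_y\cap Q$.

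It remains to check that $E_x\cap E_y\le Q$. Suppose not. Then $E_x\cap E_y$ contains an element $z\notin Q$. From $E_x = (E_x\cap Q)\langle x\rangle$ we get $zQ = xQ$, and likewise $zQ = yQ$. So $xQ = yQ = zQ$, and $E_x,E_y\le \langle Q,z\rangle$. The uniqueness of the elementary abelian subgroup of order $64$ in $E_xQ$ then forces $E_x = E_y$, a contradiction. Hence $E_x\cap E_y = C_Q(x)\cap C_Q(y)$. By \fref{lem:essential64}(b) this group has order $8$.

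For the containment in $Z_3(S)$, both $E_x$ and $E_y$ are normal in $S$, so $I = E_x\cap E_y$ is normal in $S$ and of order $8$. A normal subgroup of order $8$ in a $2$-group lies in $Z_3(S)$, since $|I\cap Z(S)|\ge 2$ and one can iterate through the upper central series. Concretely, $I\cap Z(S)\ne1$ gives $Z(S)\le I$ because $|Z(S)|=2$ by \fref{lem:cent}. Then $I/Z(S)$ is normal of order $4$ in $S/Z(S)$ and meets $Z(S/Z(S))=Z_2(S)/Z(S)$ nontrivially. Continuing, $I\le Z_3(S)$. Alternatively one can argue directly: by \fref{lem:Z3}, $|E\cap Z_3(S)|=2^4$, and $E_x\cap E_y = [E_x,E_y]$, so in the $A_8$-model one checks that $[E_x,E_y]\le Z_3(S)$. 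The first argument is cleaner.

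For $s_{i_0}$ an involution, the strategy is to show $C_Q(s_{i_0})\le Z_3(S)$. Since $s_{i_0}$ is fixed by $N_{\Aut(\POmega^+_8(3))}(S)$, we may take $C_Q(s_{i_0})$ to be normal in $S$: the coset $Qs_{i_0}$ is normal in $S$ and its involutions are all conjugate by \fref{lem:64}(b), so $C_Q(Qs_{i_0})$-type arguments apply. Its image in $Q/Z(Q)$ is $[Q/Z(Q),s_{i_0}]$ of order $16$. This is $S$-invariant because $\bar S$ is abelian, and $[Q,s_{i_0}]$ is normal in $S$. I would show $C_Q(s_{i_0})\le [Q,s_{i_0}]Z(Q)$ and then compute $[Q,s_{i_0}]\cong \Z_4\times 2^3$ inside the model.

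This last step is the main obstacle, because a normal subgroup of order $2^5$ need not lie in $Z_3(S)$. Here I would use $\bar S$ elementary abelian together with \fref{lem:64}(c). For any $s_j$ we have $[Q/Z(Q),s_{i_0},s_j]$ of order $4$, and $[Q/Z(Q),s_{i_0},s_j,s_k]$ is of order at most $2$ and central. Using this, $[C_Q(s_{i_0}),S,S,S]=1$, so $C_Q(s_{i_0})\le Z_3(S)$. I would confirm this with the explicit description of $Z_3(S)$ given in \fref{lem:Z3}.
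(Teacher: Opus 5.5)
Your first part is correct, and it takes a different route from the paper. You justify $E_x\cap E_y=C_Q(x)\cap C_Q(y)$ explicitly: $C_Q(x)=E_x\cap Q$ by orders, and $E_x\cap E_y\le Q$ by uniqueness of $E_x$ in $E_xQ$. The paper takes this for granted. You then get $E_x\cap E_y\le Z_3(S)$ from the general fact that a normal subgroup of order $2^3$ of a $2$-group lies in its third centre. The paper instead counts inside $Z_3(S)$, using $|Z_3(S):E\cap Z_3(S)|=2$ from \fref{lem:Z3}.

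The second part, however, has a genuine gap. First, the image of $C_Q(s_{i_0})$ in $Q/Z(Q)$ is not $[Q/Z(Q),s_{i_0}]$. By \fref{lem:64}(b) the involution $s_{i_0}$ inverts an element of order $4$ in $[Q,s_{i_0}]\cong\mathbb{Z}_4\times 2^3$. So $q\mapsto[q,s_{i_0}]$ is a nontrivial homomorphism $[Q,s_{i_0}]\to Z(Q)$ with kernel $C_Q(s_{i_0})$. Hence $|C_Q(s_{i_0})|=2^4$, and its image is a subgroup of index $2$ in $[Q/Z(Q),s_{i_0}]$.

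More seriously, your ingredients for $[C_Q(s_{i_0}),S,S,S]=1$ cannot work. That $\bar S$ is elementary abelian, and \fref{lem:64}(c), only describe how $\bar S$ acts on $[Q/Z(Q),s_{i_0}]$, and applied to that group they give the opposite conclusion. If $\bar s_{i_0},\bar s_j,\bar s_k$ generate $\bar S$, then $[Q/Z(Q),s_{i_0},s_j,s_k]=Z_2(S)/Z(Q)\neq 1$ (as in the proof of \fref{lem:frattini}). This group is central in $S/Z(Q)$, but its preimage $Z_2(S)$ is not central in $S$. Hence $[[Q,s_{i_0}],S,S,S]\neq 1$, consistent with $[Q,s_{i_0}]$ containing elements of order $4$ while $Z_3(S)$ is elementary abelian.

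So your data cannot distinguish $C_Q(s_{i_0})$ from the other index-$2$ subgroups of $[Q,s_{i_0}]$ containing $Z(Q)$, and some of those do not lie in $Z_3(S)$. The missing point is that $C_Q(s_{i_0})Z(Q)/Z(Q)$ is exactly the index-$2$ subgroup $[Q/Z(Q),s_{i_0}]\cap Z_3(S)/Z(Q)$. This is precisely where the hypothesis that $s_{i_0}$ is an involution must enter, and your sketch never uses it. The normality discussion does not help either: a normal subgroup of order $2^4$ is only guaranteed to lie in $Z_4(S)$.

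The paper closes this as follows. $Z_3(S)\le Q$ is elementary abelian of order $2^5$ with $[Z_3(S),s_{i_0}]\le Z_2(S)$, so $s_{i_0}$ centralizes modulo $Z(Q)$ a subgroup of $Z_3(S)$ of order $2^4$. Since $s_{i_0}$ is an involution, any involution of $Q$ it centralizes modulo $Z(Q)$ is actually centralized (the paper cites \fref{lem:64} for this). Hence $|C_{Z_3(S)}(s_{i_0})|\ge 2^4=|C_Q(s_{i_0})|$, and so $C_Q(s_{i_0})=C_{Z_3(S)}(s_{i_0})\le Z_3(S)$.
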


\begin{proof}  By \fref{lem:Z3} $|Z_3(S) : E_x \cap Z_3(S)| = 2 = |Z_3(S) : E_y \cap Z_3(S)|$. Thus $|Z_3 : E_x \cap E_y \cap Z_3(S)| \leq 4$. As $|E_x \cap E_y| = 8$ by \fref{lem:essential64} we get $E_x \cap E_y \leq Z_3(S)$.

We have that $[Z_3(S)/Z(Q),\bar{s}_{i_0}] \leq Z_2(S)/Z(Q)$. Thus $\bar{s}_{i_0}$ centralizes a group of order 16 in $Z_3(S)$ modulo $Z(Q)$. But by \fref{lem:64} any involution which is centralized by $s_{i_0}$ modulo $Z(Q)$ is really centralized. Thus $|C_{Z_3(S)}(s_{i_0})| = 16 = |C_Q(s_{i_0})|$, which gives the second assertion.
\end{proof}

\begin{lemma}\label{lem:frattini} $|S/\Phi(S)| = 16$, $\Phi(S) \leq Q$, $|Q : \Phi(S)| = 2$ and $\Phi(S) = C_Q(Z_2(S))$. Further $S^\prime = \Phi(S)$.
\end{lemma}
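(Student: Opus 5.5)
Since $\bar S = S/Q$ is elementary abelian by \fref{lem:cent}, we get $\Phi(S) \leq Q$. I would first determine $\Phi(S)\cap Q$ from above and below, and then identify $\Phi(S)$ with $C_Q(Z_2(S))$.

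\emph{Upper bound.} Write $Z_2(S) = \langle z, w\rangle$ with $Z(S) = Z(Q) = \langle z \rangle$. By \fref{lem:Z3}, $Z_2(S) \leq Z_3(S) \leq Q$, so $w \in Q \setminus Z(Q)$. Since $Q$ is extraspecial, $C_Q(w)$ has index $2$ in $Q$. Put $C = C_Q(Z_2(S)) = C_Q(w)$. The group $S$ centralizes $Z_2(S)/Z(S)$ and $Z(S)$, so it acts on $Z_2(S)$ as a $2$-group of order at most $2$ via $w \mapsto wz$. Hence $C_S(Z_2(S))$ is normal of index at most $2$ in $S$. Moreover $C_S(Z_2(S)) \neq S$, since otherwise $w \in Z(S)$.
\begin{itemize}
\item The quotient $S/C_S(Z_2(S))$ has order $2$, so $\Phi(S) \leq C_S(Z_2(S))$.
\item Combined with $\Phi(S) \leq Q$, this gives $\Phi(S) \leq C_S(Z_2(S)) \cap Q = C$, and $|Q:C| = 2$.
\end{itemize}

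\emph{Lower bound.} I would show $|S/\Phi(S)| \leq 16$, equivalently $|\Phi(S)| \geq 2^8 = |C|$. For this I would show $S' \geq C$, working modulo $Z(Q)$, where $Q/Z(Q)$ is a $\GF(2)$-module for $\bar S$.
\begin{itemize}
\item By \fref{lem:64}(b) and (c), $[Q/Z(Q), s_i]$ has order $16$, and $[Q/Z(Q),s_i][Q/Z(Q),s_j]$ has order $64$ for $\bar s_i \neq \bar s_j$.
\item Take three generators $\bar s_1, \bar s_2, \bar s_3$ of $\bar S$. The subgroup $[Q/Z(Q), S] = \sum_i [Q/Z(Q),s_i]$ contains a subgroup of order $64$ and is contained in $C/Z(Q)$, which has order $2^7$.
\item Since $S$ is a $2$-group, $[Q/Z(Q),S] \neq Q/Z(Q)$, so $[Q/Z(Q),S]$ has order $2^6$ or $2^7$.
\item To rule out $2^6$, I would use \fref{lem:64}(c) with a third involution $\bar s_k \notin \langle \bar s_i, \bar s_j\rangle$. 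The order of $[Q/Z(Q),s_i]+[Q/Z(Q),s_j]+[Q/Z(Q),s_k]$ is $2^{4+4+4-4-4-4+d}$, where $d$ accounts for the triple intersection. I expect a direct dimension count to be inconclusive here.
\item The more robust route: if $[Q/Z(Q),S]$ had order $2^6$, then $C_{Q/Z(Q)}(S)$ would be its orthogonal complement with respect to the commutator form, of order $2^2$. This contradicts \fref{lem:cent}, which gives $|Z_2(S)/Z(S)| = 2$ and forces $|C_{Q/Z(Q)}(S)| = 2$.
\end{itemize}
Indeed, $[V,S]^\perp = C_V(S)$ holds for the $S$-invariant nondegenerate form on $V = Q/Z(Q)$. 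So $|[Q/Z(Q),S]| = 2^7$ and hence $[Q,S]Z(Q) = C$.

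\emph{Conclusion.} Finally $Z(Q) = Q' \leq S'$, so $C \leq S' \leq \Phi(S) \leq C$. This gives $S' = \Phi(S) = C_Q(Z_2(S))$, with $|Q:\Phi(S)| = 2$ and $|S/\Phi(S)| = 2\cdot 8 = 16$.

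The main step to check is the duality $[V,S]^\perp = C_V(S)$, which rests on the form on $Q/Z(Q)$ being $S$-invariant and nondegenerate. Apart from that, the argument uses only \fref{lem:cent} and the extraspecial structure of $Q$.
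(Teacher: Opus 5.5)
Your proposal is correct and follows the paper's proof. Both bound $\Phi(S)$ above by $C_S(Z_2(S))\cap Q=C_Q(Z_2(S))$, and both get the matching lower bound $|Q:[Q,S]|=2$ by duality on $V=Q/Z(Q)$. Your version states the paper's terse ``by duality'' step explicitly as $[V,S]^\perp=C_V(S)$ for the $S$-invariant commutator form, and reads $|C_V(S)|=|Z_2(S)/Z(S)|=2$ directly off \fref{lem:cent}. That makes both the commutator-chain computation via \fref{lem:64} and your tentative dimension count unnecessary.
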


\begin{proof} By \fref{lem:cent} $|Z_2(S)| = 4$ and so $\Phi (S) \leq C_S(Z_2(S))$. Further $Z_2(S) \leq Q$ and so $|Q : C_Q(Z_2(S))| = 2$. Thus $Q \not\leq \Phi(S)$. By \fref{lem:cent}  $\bar{S}$ is elementary abelian and so $\Phi(S) \leq Q$.  Let $\bar{S}= \langle \bar{s}_1, \bar{s}_2, \bar{s}_3\rangle$. By \fref{lem:64} we have $|[Q/Z(Q),s_1]| = 16$. By \fref{lem:64} again we have that $|C_{[Q/Z(Q)]}(s_2,s_3)| = 2$. Hence we have $|[[Q,s_1],s_2]| = 4$ and then $[Q/Z(Q),s_1,s_2,s_3] = Z_2(S)/Z(Q)$ of order $2$. As $[Q,s_1]$ is abelian we get by duality that $|Q :[Q,S]| = 2$. Thus $|Q : \Phi(S)| = 2$. As $\Phi(S) \leq C_S(Z_2(S))$, we have that $\Phi(S) = C_Q(Z_2(S))$. This proves the lemma.
\end{proof}

\begin{lemma}\label{lem:elab} Set $\tilde{S} = S/Z(Q)$. If $\tilde{F} \leq \tilde{S}$ is elementary abelian and $|\tilde{F}| \geq 2^6$, then $\tilde{F} \leq \tilde{Q}$.
\end{lemma}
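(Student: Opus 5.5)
Suppose $\tilde{F} \not\leq \tilde{Q}$. The plan is to bound $|\tilde{F}|$ by counting separately the part of $\tilde{F}$ inside $\tilde{Q}$ and the image of $\tilde{F}$ in $\bar{S}$.

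Set $\tilde{F}_0 = \tilde{F} \cap \tilde{Q}$. Since $\tilde{S}/\tilde{Q} \cong \bar{S}$ is elementary abelian of order $8$ by \fref{lem:cent}, we have $|\tilde{F} : \tilde{F}_0| = 2^k$ with $1 \le k \le 3$. Recall that $\tilde{Q} = Q/Z(Q)$ is elementary abelian of order $2^8$, carrying the nondegenerate symplectic form coming from commutation in the extraspecial group $Q$.

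First I bound $\tilde{F}_0$. Pick $s \in F$ with $\bar{s} \ne 1$, so $\bar{s} = \bar{s}_i$ for some $i$. As $\tilde{F}$ is abelian, $\tilde{F}_0 \le C_{\tilde{Q}}(s)$. By \fref{lem:64}(b), $C_{Q/Z(Q)}(s_i) = [Q/Z(Q), s_i]$ has order $16$. Hence $|\tilde{F}_0| \le 16$, and therefore $|\tilde{F}| \le 2^{4+k}$.

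\emph{Case $k \ge 2$.} Choose $s, t \in F$ with $\bar{s} = \bar{s}_i \ne \bar{s}_j = \bar{t}$. Then $\tilde{F}_0 \le C_{\tilde{Q}}(s) \cap C_{\tilde{Q}}(t)$. I would show this intersection has order $4$. Since $C_{\tilde{Q}}(s_i) = [\tilde{Q}, s_i]$, the intersection $C_{[\tilde{Q},s_i]}(s_j)$ is the kernel of $[\,\cdot\,, s_j]$ restricted to $[\tilde{Q}, s_i]$. Its image is $[\tilde{Q}, s_i, s_j]$, of order $4$ by \fref{lem:64}(c), so the kernel has order $16/4 = 4$. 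Thus $|\tilde{F}_0| \le 4$.
\begin{itemize}
\item If $k = 2$, then $|\tilde{F}| \le 2^4$.
\item If $k = 3$, then $\tilde{F}_0 \le C_{\tilde{Q}}(S)$, since $F$ maps onto $\bar{S}$. By \fref{lem:cent} (as used in the proof of \fref{lem:64}), $|C_{\tilde{Q}}(S)| = |Z_2(S)/Z(Q)| = 2$. So $|\tilde{F}| \le 2^4$.
\end{itemize}
Both contradict $|\tilde{F}| \ge 2^6$.

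\emph{Case $k = 1$.} This is the main obstacle, since here the crude bound only gives $|\tilde{F}| \le 2^5$. Fortunately that is already enough: $2^5 < 2^6$, so $k = 1$ is also impossible.

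Thus every case contradicts $|\tilde{F}| \ge 2^6$, and so $\tilde{F} \le \tilde{Q}$. The only genuinely needed input is the kernel computation in the case $k \ge 2$, which rests on \fref{lem:64}(b),(c).
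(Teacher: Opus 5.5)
Your proof is correct and follows the paper's argument. You use Lemma~\ref{lem:64}(b) to get $|\tilde F\cap\tilde Q|\le 16$, which forces $|\bar F|\ge 4$; then two distinct $\bar s_i,\bar s_j\in\bar F$ and Lemma~\ref{lem:64}(c) give $|\tilde F\cap\tilde Q|\le 4$, hence $|\tilde F|\le 32$. Your separate treatment of $k=3$ via $C_{\tilde Q}(S)$ is valid but unnecessary, since $4\cdot 8<2^6$ already suffices.
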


\begin{proof} Suppose $\tilde{F} \not\leq \tilde{Q}$. Then there is some $\bar{s}_i \in \bar{F}$, $\bar{s}_i$ as in \fref{lem:64}. By \fref{lem:64} we have that $\tilde{F} \cap \tilde{Q} \leq C_{\tilde{Q}}(s_i)$ is of order at most 16. Hence $|\bar{F}| \geq 4$ and so there is $\bar{s}_i \not= \bar{s}_j \in \bar{F}$. Now again by \fref{lem:64} $\tilde{F} \cap \tilde{Q} \leq C_{\tilde{Q}}(s_i) \cap C_{\tilde{Q}}(s_j)$, which is of order 4. As $|\bar{F}| \leq 8$, this is a contradiction.
\end{proof}

\begin{lemma}\label{lem:wc}  If $Q_2 \leq S$, $Q_2 \cong Q$, then $Q = Q_2$.
\end{lemma}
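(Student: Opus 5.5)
Suppose $Q_2 \leq S$ with $Q_2 \cong Q$, so $Q_2$ is extraspecial of order $2^9$. We argue by contradiction, assuming $Q_2 \neq Q$; then $Q_2 \not\leq Q$. The approach is to use \fref{lem:elab}: an extraspecial group of order $2^9$ and plus type has large elementary abelian sections, and these must lie in $Q$ modulo $Z(Q)$.

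First we show $Z(Q_2) = Z(Q)$. Since $Q_2 \not\leq Q$ and $|S/Q| = 8$, we have $Q_2Q/Q \neq 1$, so $|Q_2 \cap Q| \geq 2^6$. Suppose $Z(Q_2) \neq Z(S)$. Then $Z(Q_2)$ is a subgroup of order $2$ with $Z(Q_2)Z(S)$ of order $4$. In either case we work in $\tilde S = S/Z(Q)$, where $\tilde Q_2$ is a quotient of $Q_2$. If $Z(Q) \leq Q_2$, then $Z(Q) = Z(Q_2)$ unless $Z(Q)$ is a noncentral element of $Q_2$; handle both cases by counting.

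Next, $Q_2$ contains a maximal elementary abelian subgroup $A$ of order $2^5$ (plus type, isotropic subspace of dimension $4$ together with the center). We pass to $\tilde S$. If $Z(Q) \leq A$ (arranged when $Z(Q)=Z(Q_2)$), we only get $|\tilde A| = 2^4$, which is too small. So instead we use the whole of $\tilde Q_2$. If $Z(Q_2) = Z(Q)$, then $\tilde Q_2$ is elementary abelian of order $2^8 \geq 2^6$. By \fref{lem:elab}, $\tilde Q_2 \leq \tilde Q$, hence $Q_2 \leq Q$ and so $Q_2 = Q$ by order.

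It therefore remains to prove $Z(Q_2) = Z(Q)$, and this is the main obstacle. Let $z_2$ generate $Z(Q_2)$ and assume $z_2 \neq z$, where $Z(Q) = \langle z\rangle$.

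If $z \in Q_2$, then $z$ is noncentral in $Q_2$. So $C_{Q_2}(z)$ has order $2^8$ and contains $\langle z, z_2\rangle$. Take a maximal elementary abelian subgroup $B$ of $C_{Q_2}(z)$ containing $z$, of order $2^5$. Then $\tilde B$ has order only $2^4$, so \fref{lem:elab} does not apply directly. Instead, take $x \in Q_2 \setminus Q$ and estimate $|C_{\tilde Q}(x)| \leq 2^4$ via \fref{lem:64}(b). Since $|Q_2 \cap Q| \geq 2^6$, we get $|C_{Q_2\cap Q}(x)| \geq 2^{6-?}$. In an extraspecial group of order $2^9$, $|C_{Q_2}(x)| = 2^8$, so $|C_{Q_2 \cap Q}(x)| \geq |Q_2\cap Q|/2$. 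Comparing with $|C_Q(x)| \leq 2^5$ (the preimage of a group of order $16$, minus possibly the center issue) forces $|Q_2 \cap Q| \leq 2^6$. Then $|Q_2Q/Q| = 8$, so $Q_2Q = S$. But $Z(S)$ centralizes $Q_2$, hence $z \in C_S(Q_2)$. Since $z \in Q_2$ this gives $z \in Z(Q_2)$, a contradiction.

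If $z \notin Q_2$, then $C_S(Q_2)$ contains $z$, which is outside $Q_2$. Using $Q_2Q = S$ again (same counting), $z_2$ would be centralized by $S$, so $z_2 \in Z(S) = \langle z\rangle$, again a contradiction.

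The key counting step is the bound $|C_Q(x)| \leq 2^5$ for $x \in S\setminus Q$ from \fref{lem:64}(b). We also need that if $|Q_2 \cap Q| \geq 2^7$ then some $x \in Q_2\setminus Q$ centralizes too much, i.e. $|C_{Q_2\cap Q}(x)| \geq 2^6 > 2^5$. This is the step I expect to need care.
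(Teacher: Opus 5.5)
Your reduction matches the paper's: once $Z(Q_2)=Z(Q)$, the group $Q_2/Z(Q)$ is elementary abelian of order $2^8$, and Lemma~\ref{lem:elab} forces $Q_2\le Q$. The gap is in your proof that $Z(Q_2)=Z(Q)$, in the case $z\notin Q_2$. There you claim that $Q_2Q=S$ makes $z_2$ central in $S$. But $Q_2Q=S$ only tells you that $Q_2$ centralizes $z_2$; nothing you have shown makes $Q$ centralize $z_2$.

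Lemma~\ref{lem:64}(b) does force $z_2\in Q$: otherwise $C_Q(z_2)\ge Q_2\cap Q$ would have order $2^6>2^5$. But then $z_2\in Z(S)$ is equivalent to $z_2\in Z(Q)$, which is exactly what you are trying to prove. A priori you only get $|S:C_S(z_2)|\le 2$. So that step is unjustified. By contrast, the case $z\in Q_2$ needs no counting: $z\in Z(S)$ centralizes $Q_2$, so $z\in Z(Q_2)$ at once.

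Your own count closes the missing case. Take $x\in Q_2\setminus Q$. You showed $|C_{Q_2\cap Q}(x)|\ge |Q_2\cap Q|/2\ge 2^5$. Now $z$ centralizes $x$, lies in $Q$, and lies outside $Q_2$. Hence $C_{Q_2\cap Q}(x)\langle z\rangle\le C_Q(x)$ has order at least $2^6$, contradicting $|C_Q(x)|\le 2^5$ from Lemma~\ref{lem:64}(b).

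The paper avoids the case split entirely. Since $|Q\cap Q_2|\ge 2^6$ exceeds $2^5$, the largest order of an abelian subgroup of $Q\cong 2^{1+8}_+$, the group $Q\cap Q_2$ is nonabelian. Therefore $1\ne (Q\cap Q_2)'\le Q'\cap Q_2'=Z(Q)\cap Z(Q_2)$, which gives $Z(Q)=Z(Q_2)$ immediately.
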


\begin{proof} By \fref{lem:cent} we see that $\overline{Q_2}$ is elementary abelian of order at most 8 and so $|Q \cap Q_2| \geq 2^{9-n} \geq 2^6$, where $|\overline{Q_2}| = 2^n$. As $Q$ contains no abelian subgroup of order $2^6$, we have that $(Q \cap Q_2)^\prime = Z(Q) = Z(Q_2)$. In particular $Q_2/Z(Q)$ is an elementary abelian group of order $2^8$ in $S/Z(Q)$, which by \fref{lem:elab} yields $Q_2 \leq Q$ and so $Q = Q_2$.
\end{proof}

\begin{remark} {\rm \fref{lem:wc} is a very strong tool in this paper. It says that whatever the 2-fusion system $\mathcal F$ might be the group $Q$ is weakly $\mathcal F$-closed. This is also true for $q > 2$ and for many groups of Lie type in characteristic two.}
\end{remark}
\begin{lemma}\label{lem:aut} $\Aut(S)$ is a $\{2,3\}$ group with a cyclic Sylow $3$-subgroup of order $3$.
\end{lemma}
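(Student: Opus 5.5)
Since $Q$ is characteristic in $S$ by \fref{lem:wc} (any automorphism maps $Q$ to an isomorphic subgroup, hence to $Q$), every $\alpha \in \Aut(S)$ normalizes $Q$. The plan is therefore to study $\Aut(S)$ through the restriction map $\Aut(S) \to \Aut(Q)$ and the induced action on $\bar{S} = S/Q$ and on $Q/Z(Q)$.

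\emph{Step 1: the kernels are $2$-groups.} Let $K$ be the subgroup of $\Aut(S)$ centralizing the chain $1 < Z(Q) < Q < S$ factor by factor, i.e.\ acting trivially on $Z(Q)$, on $Q/Z(Q)$ and on $\bar{S}$. By the standard result on automorphisms stabilizing a normal series, $K$ is a $2$-group. So any odd-order element of $\Aut(S)$ acts nontrivially on $Q/Z(Q)$ or on $\bar{S}$. An element of odd order acting trivially on $\bar{S}$ and normalizing $Q$ must commute with the induced action of $\bar{S}$ on $Q/Z(Q)$. By \fref{lem:64}(b) the centralizers $C_{Q/Z(Q)}(s_i)=[Q/Z(Q),s_i]$ are invariant, and by \fref{lem:cent} the chain $Z_k(S)$ has $2$-dimensional steps at most. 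I would show that such an element stabilizes the $S$-invariant flag in $Q/Z(Q)$ given by the $Z_k(S)Z(Q)/Z(Q)$ together with the distinct subspaces $[Q/Z(Q),s_i]$. It therefore acts as an odd-order automorphism of a flag with enough rank-one steps, and by coprime action it is trivial on $Q/Z(Q)$. Consequently the odd part of $\Aut(S)$ embeds into its image in $\Aut(\bar{S}) \cong \GL_3(2)$.

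\emph{Step 2: restricting the image in $\GL_3(2)$.} The image must permute the seven involutions $\bar{s}_i$ preserving the distinguished structure from \fref{lem:64}(b). It fixes $\bar{s}_{i_0}$, the unique one with $[Q,s_{i_0}]$ not elementary abelian. It also fixes the image of $\Phi(S)$-related data: by \fref{lem:frattini}, $S/\Phi(S)$ has order $16$ and $Q\Phi(S)/\Phi(S)$ is a distinguished point. The stabilizer of a point in $\GL_3(2)$ is $\Sigma_4$, of order $2^3\cdot 3$. Hence the odd part of $\Aut(S)$ has order dividing $3$, so $\Aut(S)$ is a $\{2,3\}$-group whose Sylow $3$-subgroup has order at most $3$, and is therefore cyclic.

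\emph{Step 3: existence of an element of order $3$.} The graph (triality) automorphism of $\Omega^+_8(2)$ normalizes a Sylow $2$-subgroup $S$ and acts nontrivially on it, since it permutes the three essential subgroups. It thus induces an element of order $3$ in $\Aut(S)$, so the Sylow $3$-subgroup has order exactly $3$.

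The main obstacle is Step 1, namely showing that an odd-order automorphism trivial on $\bar{S}$ is trivial on $Q/Z(Q)$. If the flag argument is insufficient, I would use that such an element centralizes each $\bar{s}_i$, hence commutes with $S$ modulo $Q$. It then normalizes each $[Q/Z(Q),s_i]$ and their pairwise intersections $[Q/Z(Q),s_i,s_j]$, which have order $4$ by \fref{lem:64}(c). It also normalizes the intersections of three of them, which are $Z_2(S)/Z(Q)$ of order $2$ as in the proof of \fref{lem:frattini}. Enough invariant subspaces of order $2$ then force the element to act trivially, since its action on each $1$-dimensional invariant piece and on extensions of such pieces is trivial by coprimeness.
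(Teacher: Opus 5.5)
Your architecture matches the paper's. $Q$ is characteristic by \fref{lem:wc}, the kernel $N=C_{\Aut(S)}(\bar S)$ is a $2$-group, the image of $\Aut(S)$ in $\GL_3(2)$ fixes $\bar s_{i_0}$ and so lies in a point stabilizer $\Sigma_4$, and an element of order $3$ exists. Your triality of $\Omega_8^+(2)$ normalizing $S$ serves as well as the paper's appeal to $\Out(\POmega^+_8(3))$. Steps 2 and 3 are fine.

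The gap is Step 1, which as written does not show that $N$ is a $2$-group. Your primary flag argument rests on ``by \fref{lem:cent} the chain $Z_k(S)$ has $2$-dimensional steps at most''. That lemma says nothing of the sort, and the claim is false: by \fref{lem:Z3}, $|Z_3(S):Z_2(S)|=8$. So on $V=Q/Z(Q)$ this chain has a $3$-dimensional step on which an odd-order element could a priori act nontrivially, and you never say how the $[V,s_i]$ refine it.

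The fallback names the right invariant subspaces, but ``enough invariant subspaces of order $2$ then force the element to act trivially'' is an assertion, not a proof. It can be completed. For instance, $C_V(\bar S)<[V,s_1,s_2]<[V,s_1,s_2]+[V,s_1,s_3]<[V,s_1]<[V,s_1]+[V,s_2,s_3]<[V,s_1]+[V,s_2]<[V,\bar S]<V$ is a complete flag of $\rho$-invariant subspaces. Its dimensions $1,\dots,8$ follow from \fref{lem:64}(b),(c), $|C_V(\bar S)|=2$ and $|Q:[Q,S]|=2$. More quickly: $C_V(\bar S)$ is $\rho$-invariant of order $2$, so it lies in $C_V(\rho)$. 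Then the $\bar S$-submodule $[V,\rho]$ satisfies $C_{[V,\rho]}(\bar S)\le C_V(\rho)\cap[V,\rho]=0$, so $[V,\rho]=0$.

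None of this is needed, and the fact that removes the difficulty is one you already quote in Step 2. The paper argues as follows. By \fref{lem:frattini}, $|Q:\Phi(S)|=2$, so an odd-order $\rho\in N$ centralizes $Q/\Phi(S)$ as well as $S/Q$. Hence it centralizes $S/\Phi(S)$ by coprime action, and $\rho=1$ by Burnside's theorem. Working in the $16$-element group $S/\Phi(S)$, rather than in the $8$-dimensional space $Q/Z(Q)$, makes Step 1 a one-line argument.
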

\begin{proof} By \fref{lem:wc} $Q$ is characteristic in $S$. Hence $\Aut(S)$ normalizes $Q$. Let $N = C_{\Aut(S)}(\bar{S})$ and $\rho \in N$ be of odd order. By \fref{lem:frattini} $[\rho,Q] \leq \Phi(S)$ and so $[S,\rho] \leq \Phi (S)$, a contradiction.

Thus $N$ is a $2$-group. In particular $\Aut(S)/N$ acts faithfully on $\bar{S}$ and so is a subgroup of $\L_3(2)$. By \fref{lem:64} no element of order 7 can act on $\bar{S}$, so $\Out(S)$ is a $\{2,3\}$-group. By the remark before $\Out(\POmega^+_8(3))$ induces $\Sigma_4$ on $\bar{S}$ and so $\Aut(S)/N \cong \Sigma_4$ and the lemma is proved.
\end{proof}

\begin{lemma}\label{lem:aut1} Let $\rho \in \Aut(S)$ be of order three. Then $[Q/\Phi(S),\rho] = 1$. Further $|[\rho,\bar{S}]| = 4$ and $C_{\bar{S}}(\rho) = \langle \bar{s}_{i_0} \rangle$, $\bar{s}_{i_0}$ as in \fref{lem:64}.
\end{lemma}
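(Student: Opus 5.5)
We prove the statement in the order: the action on $\bar{S}$ first, then on $Q/\Phi(S)$.

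\textbf{Action on $\bar{S}$.} By \fref{lem:wc} the group $Q$ is characteristic in $S$, so $\rho$ acts on $\bar{S}$ and on $Q$. By the proof of \fref{lem:aut}, the kernel $N = C_{\Aut(S)}(\bar{S})$ is a $2$-group, so $\rho$ acts nontrivially on $\bar{S}$. An element of order three in $\GL_3(2)$ acting on the elementary abelian group $\bar{S}$ of order $8$ fixes exactly one involution. It acts fixed point freely on a complementary $\rho$-invariant subgroup of order $4$. Hence $|[\rho,\bar{S}]| = 4$ and $|C_{\bar{S}}(\rho)| = 2$. By \fref{lem:64}(b), $\bar{s}_{i_0}$ is the unique involution of $\bar{S}$ whose preimage property (that $[Q,s_{i_0}]$ is not elementary abelian) is not shared with the others. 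This property is invariant under $\Aut(S)$, so $\bar{s}_{i_0}$ is fixed by every automorphism of $S$. Therefore $C_{\bar{S}}(\rho) = \langle \bar{s}_{i_0} \rangle$.

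\textbf{Action on $Q/\Phi(S)$.} By \fref{lem:frattini} we have $|Q : \Phi(S)| = 2$. The group $Q/\Phi(S)$ has order $2$, and $\rho$ normalizes both $Q$ and $\Phi(S)$. So $\rho$ induces an automorphism of a group of order $2$, which is trivial. This gives $[Q/\Phi(S),\rho] = 1$ at once.

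This last step is pure bookkeeping. The only point needing care is in the first part: justifying that $\bar{s}_{i_0}$ is $\Aut(S)$-invariant. This follows because the defining property in \fref{lem:64}(b) is characteristic once $Q$ is characteristic. Alternatively, one can use that $\bar{s}_{i_0}$ is the image of the unique coset not containing an elementary abelian subgroup of order $64$, and $\Aut(S)$ permutes these subgroups.
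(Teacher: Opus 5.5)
Your proof is correct and follows essentially the same route as the paper: $|Q:\Phi(S)|=2$ gives the first claim, and $\bar{s}_{i_0}$ is fixed because it is intrinsically characterized (the paper uses that its preimage is the only one containing no elementary abelian subgroup of order $64$). The only cosmetic difference is that the paper gets $|[S/\Phi(S),\rho]|=4$ directly from $\rho$ not centralizing $S$, while you cite the fact from the proof of \fref{lem:aut} that $C_{\Aut(S)}(\bar{S})$ is a $2$-group and then use the $1+2$ decomposition of an element of order three in $\GL_3(2)$. These rest on the same Burnside argument.
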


\begin{proof} By \fref{lem:wc}  $\rho$ normalizes $Q$. Hence the first assertion follows by \fref{lem:frattini}. As $\bar{s}_{i_0}$ corresponds to the only subgroup of order two in $\bar{S}$, whose preimage contains no elementary abelian subgroup of order 64, we have that $[\rho,\bar{s}_{i_0}] = 1$. As $\rho$ does not centralize $S$, we have that $|[S/\Phi(S),\rho]| = 4$, which are the remaining assertions.
\end{proof}

\section{The essential subgroups of $S$}\label{sec:notnormal}

The aim of this section is to prove that  any essential subgroup of $S$ is normal in $S$. For this we consider the following hypothesis
\begin{hyp}\label{hyp:setup}
\begin{enumerate}
\item[(i)] There is  $E \leq S$, $C_S(E) \leq E$. Set $T = N_S(E)$. Assume there is some group $U$ such that $F^\ast(U) = O_2(U) = E$, $T \in \Syl_2(U)$ and $U_1 = U/O_{2,2^\prime}(U) \cong \PSL_2(2^n)$, $\PSU_3(2^n)$, $n \geq 2$, or $Sz(2^n)$, $n \geq 3$, or cyclic or quaternion. Assume further that $U = \langle T^{U} \rangle$.
    \item[(ii)] $T \not= S$ and $T$ not normal in $U$.
    \end{enumerate}
    \end{hyp}

We will show that no group $U$ satisfies \fref{hyp:setup}. This then will imply that essential subgroups of $S$ are normal in $S$.

In a first step we will show that if $U$ satsfies \fref{hyp:setup}, then $|T: E| =2$. The main toll will be the interplay of $E$, $T$ and $Q$.
\begin{lemma}\label{lem:normal1}
Assume \fref{hyp:setup} Then $Q \not\leq E$ and $E \not\leq Q$.
\end{lemma}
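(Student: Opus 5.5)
Both claims are proved by contradiction, in the order $Q\not\leq E$ first and then $E\not\leq Q$.

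\emph{Step 1: $Q \not\leq E$.} Suppose $Q \leq E$. By \fref{lem:cent} $\bar S = S/Q$ is abelian, so every subgroup of $S$ containing $Q$ is normal in $S$. In particular $E \unlhd S$, hence $T = N_S(E) = S$. This contradicts \fref{hyp:setup}(ii), so $Q \not\leq E$.

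\emph{Step 2: $E \not\leq Q$.} Suppose $E \leq Q$. Since $Z(Q) \leq Z(S)$ centralizes $E$ and $C_S(E) \leq E$, we get $Z(Q) \leq E$. As $Q$ normalizes $E$, we have $Q \leq T$, and by Step 1 $QE/E = Q/E$ is a nontrivial subgroup of $T/E$.

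The plan is to show that $E$ is normal in $S$, which again contradicts $T \neq S$. Two ingredients look usable.

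First, consider the structure of $E \leq Q$ with $C_Q(E) \leq E$. Then $\tilde E = E/Z(Q)$ is a subspace of the symplectic space $\tilde Q$ of dimension $8$ with $\tilde E^{\perp} \leq \tilde E$. So $\tilde E$ is coisotropic, $|E| \geq 2^5$, and $Q/E \cong \tilde E^{\perp}$ (dually) is elementary abelian with $|Q/E| \leq 2^4$.

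Second, use the strongly embedded structure of $U$. Here $T/E \in \Syl_2(U/E)$ and $Q/E \unlhd T/E$. In the nonsolvable case the Sylow $2$-subgroups of $U_1$ are TI. In the cyclic or quaternion case, $T/E$ has a unique involution, so the elementary abelian group $Q/E$ has order $2$.

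The idea is to show that $Q/E$ is normal in $U/E$. Then $T = \langle T^U\rangle$-type arguments force $T \unlhd U$, contradicting (ii). Concretely, I would argue as follows. $Q$ is weakly closed in $S$ by \fref{lem:wc}, and $Q/E$ is elementary abelian and normal in $T/E$. Now use the structure of $U/E$ from \fref{lem:representations} and \fref{lem:representation1}: $Q/E$ contains an involution $t$ with $[E,t] \leq [Q,t] \cap E$. Here $|[\tilde Q,t]| = 16$ by \fref{lem:64}, but $t \in Q$ acts on $E$ with $[E,t] \leq Z(Q)$, so $|[E,t]| \leq 2$, i.e.\ $t$ acts on $E$ as a transvection or trivially. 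Since $O_2(U/E)=1$, an element of $Q$ acting trivially on $E$ would lie in $C_S(E) \leq E$, so $t$ acts as a transvection with $[E,t] = Z(Q)$. But \fref{lem:representations}(i) gives $|[V,t]| \geq q \geq 4$ for the faithful module $E$ in the nonsolvable case. In the solvable case, $t$ inverts $O_p(U)$, and $[E,O_p(U)] \leq [E,\langle t,t^x\rangle]$ would be centralized modulo $Z(Q)$ by $O_p(U)$, which forces a trivial or too-small action, contradicting \fref{lem:representation1}. The PSU case also needs $|V| \geq q^6$. This contradiction shows $E \not\leq Q$.

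The main obstacle is the solvable (cyclic or quaternion) case in Step 2. There, the element $t \in Q\setminus E$ acting as a transvection must be matched against \fref{lem:representation1}, whose conclusion uses $|[V,t]| \leq 8$; I would check that $|[E,t]| = 2$ forces $p=3$ with $O_3(U)$ of order $3$, contradicting the requirement that $T/E$ has order at least $4$ acting irreducibly.
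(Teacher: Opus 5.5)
\textbf{There is a genuine gap: the case $|T:E| = 2$ is never handled.} Your Step 1 and your handling of the non-solvable and $\PSU_3$ cases in Step 2 agree with the paper. \fref{hyp:setup}(i) explicitly allows $U_1$ to be cyclic of order $2$, i.e.\ $|T:E|=2$ with $U/E$ dihedral of order $2p$. This is exactly the case the rest of Section~3 works in (see \fref{lem:normal4}).

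Your closing contradiction with ``the requirement that $T/E$ has order at least $4$'' uses a requirement that is not part of the hypothesis. That bound is a hypothesis of \fref{lem:representation1}, so that lemma is simply unavailable here. In this case a transvection gives no contradiction: $\langle t,t^x\rangle\cong\Sigma_3\cong\SL_2(2)$ acts faithfully by transvections on $Z(Q)Z(Q)^x$. So neither ``trivial'' nor ``too small'' action yields anything. (A minor point: $|[\tilde Q,t]|=16$ cannot come from \fref{lem:64}, since $t\in Q$ and $\tilde Q$ is abelian.)

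\textbf{The missing idea is to work with $E/\Phi(E)$ rather than $E$.}
\begin{enumerate}
\item You correctly derive $|Q:E|=2$ in the cyclic/quaternion case. Then $E$ is non-abelian, because abelian subgroups of $Q\cong 2^{1+8}_+$ have order at most $2^5$.
\item Hence $E'=Z(Q)$, so $[E,t]\le Z(Q)\le\Phi(E)$, and $t$ centralizes $E/\Phi(E)$.
\item The paper then shows $T=Q$. Cosets of $Q$ in $S$ contain involutions by \fref{lem:64}, while $T/E$ has a unique involution, which lies in $Q/E$.
\item Therefore $U=\langle T^U\rangle$ centralizes $E/\Phi(E)$. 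This forces $U/E$ to be a $2$-group, so $T\unlhd U$, contrary to \fref{hyp:setup}(ii).
\end{enumerate}

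Alternatively, you can reuse your own observation that $O_2(U/E)=1$, applied to $E/\Phi(E)$ instead of $E$. $C_U(E/\Phi(E))$ is normal in $U$ and is a $2$-group modulo $C_U(E)\le E$ (Burnside). Hence it is a normal $2$-subgroup of $U$ and lies in $O_2(U)=E$. Yet it contains $t\notin E$, a contradiction. This also disposes of $|T:E|\ge 4$ without \fref{lem:representation1}.
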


\begin{proof}  As $S/Q$ is abelian and $E$ is not normal in $S$, we have that $Q \not\leq E$. As $C_S(E) \leq E$, we have that $Z(Q) \leq E$.

Assume next that $E \leq Q$. Then elements of $Q \setminus E$ induce transvections to $Z(Q)$ on $E$. This now implies by \fref{lem:representations} that $U_1$ is cyclic or quaternion. In particular $|Q : E| = 2$. By \fref{lem:64} any coset of $Q/Z(Q)$ contains an involution. As $Z(Q) \leq E$ this implies that $U_1$ is of order two. In particular $T = Q$. Now $[T,E/\Phi(E)] = 1$ and so $[\langle T^U \rangle,E/\Phi(E)] = 1$, As $C_U(E) \leq E$, this implies that $T$ is normal in $U$, a contradiction.
\end{proof}

\begin{lemma}\label{lem:normal3} Assume \fref{hyp:setup}. If $|T : E| \geq 4$, then $Z(Q)$ is normal in $U$.
\end{lemma}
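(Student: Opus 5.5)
Under \fref{hyp:setup} with $|T:E| \geq 4$, the group $U_1$ cannot have Sylow 2-subgroup of order two. We therefore work in the nonsolvable case, or in the solvable case with $T/E$ cyclic of order at least four or quaternion. The idea is to use $Q$, which meets $T$ in a large subgroup, to force a small commutator $[E,t]$ for a suitable involution $t \in T/E$. We then apply \fref{lem:representations} or \fref{lem:representation1} to $V=E/\Phi(E)$, or to a suitable $U$-chief factor. Once the structure of $U_1$ is pinned down, we locate $Z(Q)$ inside a canonical $U$-invariant subgroup of $E$.

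\textbf{Step 1: $Q \cap T \not\leq E$.} By \fref{lem:normal1}, $Q \not\leq E$ and $E \not\leq Q$. Since $\bar S = S/Q$ is elementary abelian of order $8$ (\fref{lem:cent}), we get $|T : T\cap Q| \leq 8$. If $Q \cap T \leq E$, then $T/E$ embeds into $\bar S$ and is elementary abelian of order at least $4$. This rules out the cyclic and quaternion cases. In the remaining cases, $T/E$ elementary abelian of order $4$ or $8$ with $U_1 \cong \PSL_2(2^n)$ conflicts with the $[V,t]$-bound coming from $Q$ (see Step 2), and we expect this to give a contradiction as well. So choose $x \in (Q\cap T)\setminus E$ with $x^2 \in E$, using that every coset of $Q/Z(Q)$ contains an involution (\fref{lem:64}).

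\textbf{Step 2: bounding $[E,x]$.} Since $E \not\leq Q$, pick $e \in E \setminus Q$. Then $[E,x] \leq [E\cap Q, x][e,x]$, and $[E\cap Q,x] \leq Z(Q)$ because $Q$ is extraspecial. Also $|E : E\cap Q| \leq 8$ by \fref{lem:cent}. Hence $|[E,x]| \leq 2\cdot 8 = 16$, and modulo $Z(Q)$ we get $|[E,x]Z(Q)/Z(Q)| \leq 8$. More precisely, \fref{lem:elab} and the shape of $\bar S$ give that $E\cap Q$ has index $2$ or $4$ in $E$, so $|[E/Z(Q),x]|\le 4$. This is exactly the regime $|[V,t]|\le 8$ of \fref{lem:representations}(i) and \fref{lem:representation1}. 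It forces $q=4$ for $\PSL_2(q)$, kills $\Sz(q)$ and $\PSU_3(q)$ (whose modules are too large or have too large $[V,t]$), and leaves only the $p=3$ or $p=5$ configurations of \fref{lem:representation1}. In all surviving cases there is a unique nontrivial irreducible constituent $W$, of order $16$ or $64$. Moreover $[V,U]$ is small, and $C_V(O^{2}(U))$ has small codimension.

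\textbf{Step 3: locating $Z(Q)$.} Set $K = O^{2}(U)$, the preimage of the odd-order part or of the $\PSL_2(4)$. We want to show $Z(Q) \leq C_E(K)$, or $Z(Q) = $ the unique $T$-fixed line in a $U$-invariant subgroup, which then forces $Z(Q)\unlhd U$. Since $Z(Q) = Z(S)$ is centralized by $x$ and by $T$, and $x$ acts on $W$ with $|[W,x]|$ at least the minimum given by the lemmas, we show that $Z(Q) \not\leq [E,K]$. Otherwise $Z(Q)$ would be a $T$-fixed vector in a module where, by the representation lemmas, $C_W(T)$ is generated by $[W,x]$-type vectors of order $4$ or more, contradicting $[E\cap Q, x]\leq Z(Q)$ being of order at most $2$. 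Then $Z(Q) \leq C_E(K)\,\Phi(E)$, and an argument with $\Phi(E)\le Z(Q)$ or $\Phi(E)=1$ gives $[Z(Q),K]=1$. As $U = KT$ and $T$ centralizes $Z(Q)$, we conclude $Z(Q)\unlhd U$.

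The main obstacle is Step 3. Its hardest sub-case is $E$ nonabelian, where $V=E/\Phi(E)$ loses $Z(Q)$ if $Z(Q)\le \Phi(E)$. There I would first try to show $\Phi(E) \leq Z(Q)$, using that $E\cap Q$ has commutators in $Z(Q)$ and $E/(E\cap Q)$ is small. Then $\Phi(E)\in\{1,Z(Q)\}$, and if $\Phi(E)=Z(Q)$ the normality of $Z(Q)$ is immediate since $\Phi(E)$ is characteristic in $E\unlhd U$.
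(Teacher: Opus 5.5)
\textbf{There is a genuine gap: Step~3 carries the whole weight of your argument, and it is not a proof.}

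Neither \fref{lem:representations} nor \fref{lem:representation1} says anything about $C_W(T)$, and in general nothing can be said. $T$ is a $2$-group, so it fixes nontrivial vectors of $W=[W,K]$; this happens, e.g., in the natural $\SL_2(4)$-module and in the modules of order $16$ for $5:4$ and $3^2:4$. Module theory alone cannot stop $Z(Q)$ from being one of those vectors.

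Even if you had $Z(Q)\not\le[E,K]$, the passage to $Z(Q)\le C_E(K)\Phi(E)$ needs a splitting $E/\Phi(E)=[E/\Phi(E),K]\times C_{E/\Phi(E)}(K)$. That splitting is unavailable when $K=O^2(U)$ contains $2$-elements, which is exactly the $\PSL_2(4)$ case. It would also only control $Z(Q)$ modulo $\Phi(E)$. Your fallback $\Phi(E)\le Z(Q)$ is unjustified. That $E/(E\cap Q)$ is elementary abelian gives only $\Phi(E)\le E\cap Q$. But $\Phi(E)\ge[E\cap Q,e]$ for $e\in E\setminus Q$, and nothing forces this into $Z(Q)$; compare \fref{lem:64}(b).

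Step~2 is also unreliable, for three reasons:
\begin{itemize}
\item A single $e$ does not account for $E/(E\cap Q)$, which may have order $8$.
\item The claim that this index is $2$ or $4$ is not proved.
\item Computing $[E/Z(Q),x]$ presupposes that $Z(Q)$ is $U$-invariant, which is the conclusion you want.
\end{itemize}
Done correctly you get only $|[E/\Phi(E),x]|\le 16$. That does not exclude $\PSL_2(8)$ or $\PSL_2(16)$, and it does not reach the hypothesis $|[V,t]|\le 8$ of \fref{lem:representation1}.

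\textbf{The missing idea is the choice of module.} Argue by contradiction with $V=\langle Z(Q)^U\rangle$.
\begin{itemize}
\item Since $Z(Q)=Z(S)\le C_S(E)\le E$, you have $V\le Z(E)$. So $V$ is an elementary abelian subgroup of $S$, and $|V|\le 2^6$.
\item If $|V|=2^6$, then $V$ is one of the self-centralizing groups of \fref{lem:64}. Then $E\le C_S(V)=V$, so $E\unlhd S$, contrary to \fref{hyp:setup}(ii).
\item Hence $|V|\le 2^5$. The representation lemmas now force $U$ to induce $\PSL_2(4)$, $5:4$ or $3^2:4$ on $V$. So $|T:E|=4$ and no element of $T$ induces a transvection on $V$.
\item This gives $Z_2(S)\le E$ and $Z_3(S)\le T$.
\item The $S$-specific information in \fref{lem:Z3}, \fref{lem:Z3E} and \fref{lem:essential64} then shows $V\not\le Q$ and $|VQ/Q|=2$.
\item Finally one obtains either an element of $Z_3(S)\setminus E$ inducing a transvection on $V$, or $E=E_x\unlhd S$. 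Both are contradictions.
\end{itemize}
What proves the lemma is this small, transvection-free module $V$ combined with the concrete subgroups of $S$. A fixed-point argument in $E/\Phi(E)$ does not.
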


\begin{proof} Assume false. As $Z(Q) \leq Z(E)$, we have that $V= \langle Z(Q)^{U} \rangle \leq Z(E)$ is elementary abelian and so by \fref{lem:64} of order at most $2^6$. If it is of order $2^6$, then by \fref{lem:64} $Z(E)$ it is one of the six elementary abelian groups $E_i$. But as $C_S(E_i) = E_i$, we see $E = E_i$ and so $E$ is normal in $S$, which contradicts \fref{hyp:setup}(ii).
\\
\\
Thus $|V| \leq 2^5$. Now  $U/O_{2,2^\prime}(U)$ is involved in $\L_5(2)$. Hence by \fref{lem:representations} and \fref{lem:representation1} we receive that $U$ induces $\L_2(4)$, $\Z_5 \cdot \Z_4$ or $3^2 \cdot \Z_4$ on $V$. In particular
\begin{gather*}\label{eq:T4} |V| \geq 2^4 \text{ and }|T : E| = 4.\tag{1}\end{gather*}
 Furthermore we now see that no element in $T$ can induce a transvection on $V$. Thus as $Z(Q) \leq E$, $Z_2(S) \leq T$ and as there no transvections on $V$ we have
\begin{equation}\label{eq:Z2} Z_2(S) \leq E \tag{2}
\end{equation}
Hence
\begin{equation}\label{eq:Z3Q} Z_3(S) \leq T.\tag{3}\end{equation}

We will show
\begin{gather*}\label{eq:QV} V \not\leq Q.\tag{4}\end{gather*}
Assume $V \leq Q$, then $[N_Q(E),V] \leq Z(Q)$. As no element in $T$ can induce transvections on $V$, we see that
\begin{gather*}\label{eq:QV1} [N_Q(E),V] = 1.\tag{4.1}\end{gather*}
By \fref{eq:Z3Q} $Z_3(S) \leq N_Q(E)$ and so $V \leq C_S(Z_3(S)) = Z_3(S)$ by \fref{lem:Z3}. As $[E,V] = 1$, again by \fref{lem:Z3}   we have $V \not= Z_3(S)$. Now $|V| = 16$. By \fref{lem:normal1} $E \not\leq Q$. Suppose $|E : E \cap Q| \geq 4$. Then by  \fref{lem:64} $|C_Q(E)| \leq 8$. Thus   $|E : E \cap Q| = 2$. Further $|C_Q(V)| = 64$ is a central product of $V$ by some $D_8$. As $|Z_3(S) : V| = 2$, we see that $Z_3(S)$ centralizes $V$ and $E/V$. Hence $Z_3(S) \leq E$ and so
\begin{gather*}\label{eq:QV2} Z_3(S) \leq E \cap Q.\tag{4.2}\end{gather*}
 As $N_Q(E) \leq C_Q(V)$ by \fref{eq:QV1} and $N_Q(E) > E \cap Q$ we now receive that $E \cap Q = Z_3(S)$ and so $2 \leq |E : Z(E)| \leq 4$. which gives $|E^\prime| = 2$, but $E^\prime \leq V$, which is an irreducible $U$-module, a contradiction.
This proves \fref{eq:QV}.
\\
Hence
\begin{equation}\label{eq:Z3notV} Z_3(S) \not\leq V.\tag{5}\end{equation}
We next show
\begin{equation}\label{eq:V2} |VQ/Q| = 2.\tag{6}\end{equation}
If $|VQ/Q| = 8$, then $C_Q(V) = Z_2(S)$, so $E \cap Q \leq Z_2(S)$. Then by \fref{lem:Z3} and \fref{eq:Z3Q} we have $|Z_3(S) : Z_3(S) \cap E| = 8$, a contradiction to $|T : E| =4$ by \fref{eq:T4}.

Assume now $|VQ/Q| = 4$. Then $V = (V \cap Q)\langle x, y \rangle$ with involutions $x$, $y$. By \fref{lem:64} we may choose notation such that $x \in E_x$, $y \in E_y$, $E_x, E_y$ elementary abelian of order 64. By \fref{lem:essential64} $|C_Q(x) \cap C_Q(y)| = 8$. In particular $|E \cap Q| \leq 8$ and so $|Z_3(S) \cap E| \leq 8$. By \fref{lem:Z3} $|C_{Z_3(S)}(x)| = 16$. Hence there is $u \in Z_3(S) \cap C_Q(x)$, which does not centralizes $y$. Then $|V : C_V(u)| = 2$, a contradiction again. This by \fref{eq:QV} proves \fref{eq:V2}.
\\
\\
We now have $V = (V \cap Q)\langle x \rangle$, $x$ an involution, and $E \cap Q \leq C_Q(x)$, which by \fref{lem:64}  is of order at most 32.

Suppose $EQ = VQ$. Assume that there is an elementary abelian subgroup $E_x$  of order 64, with $x \in E_x$. Then $E \leq E_x$. As $C_S(E) \leq E$, we see that $E = E_x$. But then $E$ is normal in $S$, a contradiction.
Thus $x$ corresponds to an involution in $Qs_{i_0}$ in the notation of \fref{lem:64} and so $|C_Q(x)| = 16$ and $C_Q(x) \leq Z_3(S)$ by \fref{lem:Z3E}. But then $Z_3(S)$ induces a transvection on $V$, which is impossible as $Z_3(S) \not\leq E$.

We have $|EQ/Q| \geq 4$. As $|V| \geq 16$, $|V \cap Q| \geq 8$ by \fref{eq:V2} and so by \fref{lem:cent} $|EQ/E| = 4$. This now implies that $|C_Q(E)| \leq 8$ and then $|V| = 16$ and $V \cap Q \leq Z_3(S)$. We have $E \cap Q \leq C_Q(x)$ and so by \fref{lem:Z3} and \fref{eq:QV}, $Z_3(S) \not\leq E$. Thus  there is $u \in Z_3(S) \setminus E$. By \fref{lem:Z3} $[u,V \cap Q] = 1$ and then $u$ induces a transvection on $V$, a contradiction.
This final contradiction proves
the lemma.
\end{proof}

\begin{lemma}\label{lem:normal2} Assume \fref{hyp:setup}. Then $Q \not\leq T$.
\end{lemma}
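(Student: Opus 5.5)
We argue by contradiction and assume $Q \leq T = N_S(E)$. By \fref{lem:normal1} we have $Q \not\leq E$ and $E \not\leq Q$. We shall use $Q$ as a normal subgroup of $T$ which acts on $E$ in a tightly controlled way, and derive a contradiction from the shape of $U$.

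\emph{Step 1: show $|T:E| = 2$.} Suppose $|T:E| \geq 4$. Then $Z(Q)$ is normal in $U$ by \fref{lem:normal3}. We then pass to $U/O_{2,2'}(U)$ and compare the action of $T$ with that of the normal subgroup $QE/E$ of $T/E$. In the nonsolvable cases $\PSL_2(2^n)$, $\Sz(2^n)$ and $\PSU_3(2^n)$, the group $U$ acts on $E/Z(Q)$. An element $q \in Q\setminus E$ satisfies $[E,q] \leq [S,Q] \cap E$, and $[E\cap Q,q] \leq Z(Q)$. Together with \fref{lem:64}(b), which gives $|[Q/Z(Q),s]| = 16$, this bounds $|[E/Z(Q),q]|$ by a small power of $2$. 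We then apply \fref{lem:representations} to rule out $n \geq 2$. In the cyclic and quaternion cases, \fref{lem:representation1} restricts $|T:E|$ to $4$ or $8$ and pins down the $3$- or $5$-group acting.

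The key observation is that $QE/E$ is a normal subgroup of $T/E$ lying inside the cyclic or quaternion group $T/E$. If $QE/E \ne 1$, it contains the unique involution $tE$. Then $t$ can be chosen in $Q$, so $[E,t] \leq E\cap Q$ and $[E\cap Q,t]\le Z(Q)$. This makes $|[E,t]|$ too small for the estimates of \fref{lem:representation1}, in particular in the extraspecial case where $|[V,O_3(X),t]|=4$ is required.

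\emph{Step 2: the case $|T:E| = 2$.} Here $T = QE$, since $Q \not\leq E$. Since $Z(Q) \trianglelefteq U$ or not, we instead work with $\langle T^U\rangle = U$. The element $t \in Q \setminus E$ induces on $E$ an automorphism with $[E,t] \leq E \cap Q$ and $[E\cap Q, t] \leq Z(Q)$. The aim is to show that some nontrivial characteristic subgroup of $T$ lying in $E$ is centralized or normalized by all conjugates of $T$, which would force $T \trianglelefteq U$ and contradict \fref{hyp:setup}(ii). The natural candidate is $Z(Q)$, or $E\cap Q$, or $[E,Q]Z(Q)$. We use \fref{lem:wc} to argue that $Q$ is characteristic in $T$, since $Q$ is the unique extraspecial subgroup of that isomorphism type in $T\le S$. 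Hence $Q$ is normalized by $N_{U}(T)$.

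Then $U = \langle T, T^u\rangle$ with a dihedral quotient $D_{2p}$. We consider $Q \cap Q^u$, which lies in $E$ since $Q,Q^u$ lie in distinct Sylow subgroups of $U/E$ of order $2$. We show that $\langle Q,Q^u\rangle$ centralizes a large subgroup of $E$ containing $Z(Q)$, and that its odd-order part acts trivially there. This would make $\langle T^U\rangle$ act as a $2$-group modulo $C_U(E)\le E$, a contradiction.

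\emph{Step 3: use $E \not\leq Q$ and the structure of $S/Q$.} Since $S/Q$ is abelian and $E$ is not normal in $S$, we also need $EQ \neq S$. Elements of $E\setminus Q$ act on $Q/Z(Q)$ with commutator of order $16$ by \fref{lem:64}(b). This forces $|E\cap Q|\le 2^5\cdot 2$, whereas $C_S(E)\le E$ forces $C_Q(E)\le E$. Combining these with \fref{lem:elab} and \fref{lem:Z3} gives $E\cap Q \geq Z_3(S)$ or a similarly rigid configuration, which should clash with $E$ not being one of the six groups $E_i$ of \fref{lem:64}.

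I expect the main obstacle to be Step 2, the index-$2$ case. There a strongly embedded subgroup exists automatically, so the contradiction has to come from the concrete action of $Q$ on $E$ rather than from the representation lemmas.
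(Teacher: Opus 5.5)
Your proposal has a genuine gap. None of the three steps actually reaches a contradiction, and the idea that makes the lemma work is missing.

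\emph{Why each step fails.}
\begin{itemize}
\item In Step 1, the best bound you get for $x\in Q\setminus E$ comes from $[E,x]\le E\cap Q$ and $[E\cap Q,x]\le Z(Q)$. These give $|E/Z(Q):C_{E/Z(Q)}(x)|\le |E:E\cap Q|\le 4$, using $EQ\ne S$. Lemma~\ref{lem:representations} only requires $|[V,t]|\ge q$, so $\PSL_2(4)$ is not excluded, contrary to your claim that $n\ge 2$ is ruled out.
\item In the cyclic/quaternion case of Step 1, Lemma~\ref{lem:representation1} takes $|[V,t]|\le 8$ as a hypothesis; it does not give a lower bound. So ``$|[E,t]|$ too small'' is no contradiction. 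For example, $3^2{:}\Z_4\le\GL_4(2)$ has $|[V,t]|=4$.
\item Step 2 never names a subgroup that is shown to be $U$-invariant. Centralizing a large subgroup of $E$ does not force the odd-order part to act trivially on $E$.
\item In Step 3, the bound on $|E\cap Q|$ is unjustified. $E$ need not be abelian, so elements of $E\setminus Q$ need not centralize $E\cap Q$, and Lemma~\ref{lem:64}(b) says nothing about $|E\cap Q|$.
\end{itemize}

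\emph{The missing idea} is to compare $Q\cap E$ with its $U$-conjugates. You use that elementary abelian subgroups of $Q$ have order at most $2^5$, together with Lemma~\ref{lem:elab}. The right case split is whether $Z(Q)\unlhd U$, not the size of $|T:E|$. Start from $|E:E\cap Q|\le 4$, which holds because $Q\le T$ and $EQ\ne S$.
\begin{itemize}
\item First show $Z(Q)\unlhd U$. If not, Lemma~\ref{lem:normal3} gives $|T:E|=2$, so $T=EQ$ and $|Q\cap E|=2^8$. Take $g\in U$ with $Z(Q)^g\ne Z(Q)$. Then $Q\cap Q^g\cap E$ is elementary abelian, hence of order $\le 2^5$. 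So $|(Q\cap E)(Q^g\cap E)|\ge 2^{11}$, which forces $|E:E\cap Q|\ge 8$, a contradiction.
\item Next, $E\cap Q$ is not $U$-invariant. Otherwise $Q$ would stabilize the $U$-invariant series $1\le Z(Q)\le E\cap Q\le E$, and so would lie in $O_2(U)=E$.
\item Take $g$ moving $E\cap Q$. Then $(E\cap Q)^g/Z(Q)=(E\cap Q^g)/Z(Q)$ is elementary abelian in $S/Z(Q)$. By Lemma~\ref{lem:elab}, this forces $|E\cap Q|\le 2^6$.
\item Hence $QE/E$ is elementary abelian of order $\ge 8$. This rules out the cyclic, quaternion and $\PSL_2(4)$ cases at once. $\PSU_3(2^n)$ is excluded because $|E/Z(Q)|\ge 2^{18}$ is impossible.
\item For $\PSL_2(2^n)$ and $\Sz(2^n)$ with $n\ge 3$, Lemma~\ref{lem:representations} applied to $x\in Q\setminus E$ gives $|E:E\cap Q|\ge 2^n\ge 8$. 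This contradicts $|E:E\cap Q|\le 4$.
\end{itemize}
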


\begin{proof} Assume that $Q \leq T$. As $E$ is not normal in $S$ and $E \not\leq Q$ by \fref{lem:normal1} we have that
\begin{equation}\label{eq:EQ} 2 \leq |E : E \cap Q| \leq 4.\tag{1}\end{equation}
We first show
\begin{equation}\label{eq:ZQ} Z(Q) \text{ is normal in }U.\tag{2}\end{equation}
Assume false. Then by \fref{lem:normal3} $|T : E| = 2$. As $Q \leq T$ and $Q \not\leq E$ by \fref{lem:normal1} we have $EQ = T$ and $|Q : Q \cap E| = 2$. Let $g \in U$ with $Z(Q)^g \not= Z(Q)$. Hence $Q^g \cap Q \cap E$ is elementary abelian and so $|Q \cap Q^g \cap E| \leq 2^5$. We have $|Q \cap E| = 2^8 = |Q^g \cap E|$.    This implies that $|(Q^g \cap E)(Q \cap E)| \geq 2^{11}$ and so $|(Q^g \cap E)(Q \cap E)/Q \cap E| \geq 8$. Then  $|E: E \cap Q| \geq 8$, which contradicts \fref{eq:EQ}. This proves \fref{eq:ZQ}.
\\
\\
As $[Q,E] \leq E \cap Q$ and $[E \cap Q,Q] \leq Z(Q)$, we see that $E \cap Q$ cannot be normal in $U$. Hence there is $g \in U$ with $E \cap Q \not= (E \cap Q)^g$. As $Z(Q) = Z(Q^g)$ we see that $(E \cap Q^g)/Z(Q)$ is an elementary abelian subgroup of $S/Z(Q)$. By \fref{lem:elab} $|E \cap Q| \leq 2^6$.   Then $|Q/Q \cap E| \geq 2^3$. This implies by \fref{hyp:setup} that $U$ induces $\PSL_2(2^n)$, $Sz(2^n)$ or $\PSU_3(2^n)$, $n \geq 3$, on $E$. By  \fref{lem:representations} in the last case we have that $|E/Z(Q)| \geq 2^{6n} \geq 2^{18}$, which is impossible. In the other cases by the same lemma we see that $|(E/Z(Q))/C_{E/Z(Q)}(x)| \geq 2^n$ for $x \in Q\setminus E$, and $[x,E \cap Q/Z(Q)] = 1$. This now gives $|E : Q \cap E| \geq 2^n \geq 8$, which contradicts \fref{eq:EQ}. This proves the lemma.
\end{proof}

We  fix the following notation:

\begin{notation}\label{not:Q1} {\rm $Q_1 = N_Q(E) = T \cap Q$.  By \fref{lem:normal2} $Q \not\leq T$. Let $Q_2$ be the preimage of $C_{Q/Q_1}(E)$. Then first of all $Q_2 > Q_1$ and so also $Q \cap E < Q_1$.}
\end{notation}



We now prove the first step announced.

\begin{lemma}\label{lem:normal4} Assume \fref{hyp:setup}. Then $|T:E| = 2$.
\end{lemma}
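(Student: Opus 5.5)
I argue by contradiction: assume $|T:E| \geq 4$. Then \fref{lem:normal3} gives that $Z(Q)$ is normal in $U$. The aim is to show that $U$ then acts on too small a section for a group of the type in \fref{hyp:setup}, with at least a Klein four-group's worth of $2$-elements in $T/E$. Since $U_1$ has Sylow $2$-subgroup $T/E$ of order at least $4$, the group $U$ induces on the relevant sections either $\PSL_2(2^n)$, $\Sz(2^n)$, $\PSU_3(2^n)$ with $n\ge 2$, or one of the solvable configurations of \fref{lem:representation1}. In each case every involution $t$ of $U_1$ has $|[V,t]|$ bounded below by \fref{lem:representations} or \fref{lem:representation1}, and every $2$-element acts without transvections.

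First I use \fref{lem:normal2} and \fref{not:Q1}. We have $Q\not\le T$, $Q_1=N_Q(E)$, and $E\cap Q<Q_1$. The normality of $Z(Q)$ in $U$ lets me work in $\tilde{E}=E/Z(Q)$ and in $C_E(Z(Q))=E$. Since $Z(Q)\le Z(E)$, the group $U$ normalizes $C_S(Z(Q))\cap E=E$ trivially, so I need a better invariant. I take $W=\langle (E\cap Q)^U\rangle$. If $E\cap Q$ is $U$-invariant, then $[Q_1,E]\le E\cap Q$ and $[Q_1,E\cap Q]\le Z(Q)$. So $Q_1E/E$ is a nontrivial $2$-subgroup of $U_1$ acting trivially on both $E/(E\cap Q)$ and $(E\cap Q)/Z(Q)$. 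This forces $Q_1E/E\le O_2(U_1)$ by coprime action (a $2$-group stabilizing a series is in $O_2$ of the image on $E/\Phi(E)$). That contradicts $F^\ast(U)=E$ together with $U=\langle T^U\rangle$, exactly as at the end of \fref{lem:normal1}. Hence there is $g\in U$ with $(E\cap Q)^g\ne E\cap Q$. Because $Z(Q)^g=Z(Q)$, the image of $(E\cap Q)(E\cap Q)^g$ in $S/Z(Q)$ is contained in $E/Z(Q)$, which is abelian modulo $\Phi(E)$. Mimicking the argument in \fref{lem:normal2}, \fref{lem:elab} then yields $|E\cap Q|\le 2^6$ whenever $E/Z(Q)$ is elementary abelian. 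In general I would apply \fref{lem:elab} to $\Omega_1$ of the relevant elementary abelian sections.

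Next I bound $|E:E\cap Q|\le 4$ by \fref{lem:cent}, since $EQ/Q\le\bar S$ has order $8$ and $T\ne S$. Combined with $|E\cap Q|\le 2^6$, this gives $|E|\le 2^8$. An element $x\in Q_1\setminus E$ centralizes $(E\cap Q)/Z(Q)$, so $|[\tilde E,x]|\le |E:E\cap Q|\le 4$. In the nonsolvable cases \fref{lem:representations}(i) gives $|[\tilde E,x]|\ge q=2^n$, which forces $n=2$, $U_1\cong\PSL_2(4)$, and $|E:E\cap Q|=4$. The case $\PSU_3$ is excluded by part (ii) because $|\tilde E|<2^{12}$. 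In the solvable case, \fref{lem:representation1} with $|[\tilde E,t]|\le 4$ leaves $3^2\cdot\Z_4$ or $5\cdot\Z_4$ acting on a $4$-dimensional section.

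The step I expect to be the main obstacle is eliminating these small residual cases ($\L_2(4)$, $5\cdot 4$, $3^2\cdot 4$ on a section of order $16$), because there the representation-theoretic bounds no longer contradict anything. There I would use the fine structure of $S$. Since $|EQ/Q|=4$, \fref{lem:64}(c) and \fref{lem:essential64} give $|C_Q(E)|\le 8$, and $E\cap Q\le C_Q(x)\cap C_Q(y)\le Z_3(S)$ by \fref{lem:Z3E}. Then $|T:E|\ge 4$ forces $Z_3(S)\le T$ but $Z_3(S)\not\le E$. An element $u\in Z_3(S)\setminus E$ centralizes $E\cap Q$ by \fref{lem:Z3}, so it induces a transvection on the $U$-module $\tilde E$ or on $E/\Phi(E)$. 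In none of the residual actions is a transvection possible, which gives the final contradiction, just as at the end of \fref{lem:normal3}.
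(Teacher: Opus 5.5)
Your opening steps are sound: \fref{lem:normal3} gives $Z(Q)\unlhd U$, and $E\cap Q$ is not $U$-invariant. The bound $|E\cap Q|\le 2^6$ also holds, for a simpler reason than the one you give. Since $g\in U$ normalizes $Z(Q)$, the group $(E\cap Q)^g/Z(Q)$ is elementary abelian as a conjugate of $(E\cap Q)/Z(Q)$, so \fref{lem:elab} applies directly; no hypothesis on $E/Z(Q)$ is needed.

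\emph{First gap: the reduction to the small cases.} Your bound $|E:E\cap Q|\le 4$ does not follow from \fref{lem:cent} and $T\ne S$. Because $Q\not\le T$ (\fref{lem:normal2}), $EQ=S$ does not make $E$ normal in $S$, so nothing excludes $|EQ/Q|=8$. The paper only uses $|\tilde{E}:C_{\tilde{E}}(Q_1)|\le 8$.

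This is not harmless. Your own estimate $|[V,x]|\le|E:E\cap Q|$ on a nontrivial chief factor $V$, combined with \fref{lem:representations}(i), shows that $U/O_{2,2'}(U)\cong\PSL_2(8)$ forces $EQ=S$. So you have discarded this case without argument.

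Even granting $|[V,t]|\le 4$, \fref{lem:representation1} does not leave only $3^2\cdot\Z_4$ and $5\cdot\Z_4$. In the extraspecial cases $3^{1+2}\cdot\Z_4$, $3^{1+2}\cdot\Z_8$, $3^{1+2}\cdot Q_8$ one has $|[V,O_3(X),t]|=4$, so they survive. The paper removes $\PSL_2(8)$ and $3^{1+2}$ together:
\begin{itemize}
\item the nontrivial section has order at least $2^6$, and its preimage is nonabelian by \fref{lem:elab};
\item this yields a $U$-invariant extraspecial section of order $2^7$, of $+$-type because $\widetilde{Q\cap E}$ is elementary abelian of index at most $8$;
\item neither $\PSL_2(8)$ nor $3^{1+2}$ is involved in $\Omega_6^+(2)$.
\end{itemize}

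\emph{Second gap: the residual cases.} The inclusion $E\cap Q\le C_Q(x)\cap C_Q(y)$ for $x,y\in E$ requires $E\cap Q\le Z(E)$, and \fref{lem:Z3E} also needs $x,y$ to be involutions. In \fref{lem:normal3} this centrality came from $V\le Z(E)$, but here $E$ need not be abelian.

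Worse, your conclusion $Z_3(S)\not\le E$ is the opposite of what your transvection argument actually proves. Since $[E,Z_2(S)]\le Z(Q)$, $Z_2(S)$ centralizes $\tilde{E}$ and so lies in $O_2(U)=E$. Then every $u\in Z_3(S)$ normalizes $E$ with $[\tilde{E},u]\le\widetilde{Z_2(S)}$. As $T/E$ induces no transvections on the nontrivial chief factor and $C_U(E)\le E$, this gives $Z_3(S)\le E$ and $|\widetilde{E\cap Q}|\ge 16$, not a contradiction.

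The missing idea is the paper's argument with an odd-order subgroup $W$:
\begin{itemize}
\item Take $W$ of order $5$ inverted by an element of $Q_1$ (respectively $W=O_p(U/E)$). Then $|[\tilde{E},W]/\Phi([\tilde{E},W])|=16$.
\item $\Phi([\tilde{E},W])\le\tilde{Q}$ is centralized by $Q_1$, hence by $W$. The three-subgroup lemma then gives $[\tilde{E},W,C_{\tilde{E}}(W)]=1$.
\item If $C_{\tilde{E}}(W)\cap[\tilde{E},W]=1$, then $C_{\widetilde{Q\cap E}}(W)[W,\tilde{E}]$ is elementary abelian of order at least $2^6$ and not contained in $\tilde{Q}$, contradicting \fref{lem:elab}.
\item Otherwise it has an extraspecial quotient $X\cong D_8\ast D_8$. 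This forces $U/E\cong 3^2\cdot\Z_4\le\OO_4^+(2)$, where the involution $i\in T/E$ satisfies $[X,i]\cong\Z_4\times\Z_2$. That contradicts $[\tilde{E},i]\le\tilde{Q}$ being elementary abelian.
\end{itemize}
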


\begin{proof} Assume false. Then by \fref{lem:normal3} $Z(Q)$ is normal in $U$. Set $\tilde{E} = E/Z(Q)$. Then $|\bar{E} : C_{\bar{E}}(Q_1)| \leq 8$ and $Q_1$ acts quadratically on $\bar{E}$. Application of \fref{hyp:setup}  then immediately  \fref{lem:representations} implies $U/O_{2,2^\prime}(U) \cong \PSL_2(2^n)$, $n = 2$ or $3$, or $U$ is solvable. Recall that in case of $\PSU_3(q)$ we would have $|\tilde{E}| \geq 2^{12}$. In the solvable case   by \fref{lem:representation1} $U$ induces $\Z_5 \cdot \Z_4$ or $\Z_3^2 \cdot \Z_4$, $3^{1+2} \cdot \Z_4$, $3^{1+2} \cdot \Z_8$ or $3^{1+2} \cdot Q_8$.  In the solvable cases $|Q_1E/E| = 2$. In all cases  $\tilde{E}$ involves exactly one irreducible non-trivial  $U$-module. Furthermore as $[Z_2(S), E ] \leq Z(Q)$, we have that $[\tilde{E},\widetilde{Z_2(S)}] = 1$ and so $Z_2(S) \leq E$. As $Q_1/E$ cannot induce transvections on $\tilde{E}$ we have
\begin{equation}\label{eq:Z3S} Z_3(S) \leq E \text{ and }|\widetilde{E \cap Q}| \geq 16.\tag{1}\end{equation}
The second assertion in \fref{eq:Z3S} follows from \fref{lem:Z3}.
\\
\\
Assume $U/O_{2,2^\prime}(U) \cong \PSL_2(2^3)$ or $U_1 \cong 3^{1+2}$ acts on $\tilde{E}$. Let $F$ be some section on which $U$ acts. Then by \fref{lem:representations} in the first case and \fref{lem:representation1} in the second case we have that $|F| \geq 2^6$. By \fref{lem:elab} the preimage of $F$ is not abelian. In particular $|E| \geq 2^8$. In case of $\PSL_2(8)$ this gives equality as $|T| \leq 2^{11}$. Then $\tilde{E}$ is extraspecial of order $2^7$. If $U_1 \cong 3^{1+2}$ then either again $\tilde{E}$ is extraspecial or $T/E \cong \Z_4$ and $|C_{\tilde{E}}(U_1)| = 4$. Also in the last case some $U_1$-invariant extraspecial group of order $2^7$ is involved. As  $\widetilde{Q \cap E}$ is an elementary abelian group of index at most 8 in $\tilde{E}$, this shows that this extraspecial group is of $+$-type. But neither $\PSL_2(8)$ nor $3^{1+2}$ is involved in $\Omega_6^+(2)$.
\\
\\
Thus $U/O_{2,2^\prime}(U) \cong \PSL_2(4)$ or $T/E$ is cyclic of order four and $U$ induces $\Z_5 : \Z_4$ or $3^2 : \Z_4$. If $U$ is nonsolvable let $\omega \in U$ be an element of order 5, which is inverted by some element in $Q_1$. Set $W = \langle \omega \rangle$. In the solvable case set $W = O_p(U/E)$, $p = 5$ or $3$. By \fref{lem:representations} and \fref{lem:representation1} there is exactly one non-trivial irreducible module in $\tilde{E}$. Hence $|[\tilde{E},W]/\Phi([\tilde{E},W])| = 16$. By \fref{lem:frattini} $\Phi ([\tilde{E},W]) \leq \tilde{Q}$ and so centralized by $Q_1$ and then by $W$ as well. As $[W, \tilde{E}, W] = [W, \tilde{E}]$ the 3-subgroup lemma implies  $[\tilde{E}, W, C_{\tilde{E}}(W)] = 1$. As some element from $Q_1$ inverts $W$, we see that $|[\tilde{E},W] : [\tilde{E},W] \cap \tilde{Q}| = 4$. By \fref{eq:Z3S}  we have that $|\widetilde{E \cap Q}| \geq 16$.  We have that $C_{\widetilde{Q \cap E}}(W)[W,\tilde{E}]$ contains an elementary abelian subgroup of index 4, which is $C_{\widetilde{Q \cap E}}(W)([W,\tilde{E}] \cap \tilde{Q})$.

Suppose that $C_{\tilde{E}}(W) \cap [\tilde{E},W] = 1$ and so $C_{\widetilde{Q \cap E}}(W)[W,\tilde{E}]$ is elementary abelian of order at least 64 by \fref{eq:Z3S}. But this contradicts \fref{lem:elab} and $E \not\leq Q$ by \fref{lem:normal1}.

Assume now that $C_{\tilde{E}}(W) \cap [W,\tilde{E}] \not= 1$. In particular $\Phi ([W, \tilde{E}) \not= 1$. Then there is some hyperplane  $\tilde{H}$  in  $C_{\widetilde{Q \cap E}}(W)$ such that $X = (C_{\widetilde{Q \cap E}}(W)[W,\tilde{E}])/\tilde{H}$ is extraspecial. Then $U/E$ acts on this group. As this group contains an elementary abelian group of index 4 it is isomorphic to $D_8 \ast D_8$. Thus $U/E$ is isomorphic to a subgroup of $\OO^+_4(2)$ and so  $U/E \cong \Z_3^2 \cdot \Z_4 \leq O^+_4(2)$. Then for the involution $i$ in $T/E$ we have that $[X,i] \cong \Z_4 \times \Z_2$. But $[\bar{E}, i] \leq \bar{Q}$ and so elementary abelian, a contradiction. This proves the lemma.
\end{proof}

According to \fref{lem:normal4} we are now in the situation of \cite[chapter 4]{An}. In particular we may apply \cite[Lemma 4.1(b)]{An} with $E$ in the role of $P_1$ and $T = EQ_1$ in the role of $P$. Further we choose $t \in Q_2 \setminus Q_1$. Then we receive the following:
\begin{notation}\label{not:amalgam}{\rm
\begin{itemize}
\item[(i)] There is a  group $G_1$ with $T \leq G_1$, $E$ normal in $G_1$, $C_{G_1}(E) \leq E$ and $G_1/E \cong D_{2p}$, $p$ odd.
\item[(ii)] There is a group $G_2$ with $T \leq G_2$, $E^t$  normal in $G_2$, $C_{G_2}(E^t) \leq E^t$ and $G_2/E^t \cong D_{2p}$, dihedral of order $2p$ for some odd $p$.
\item[(iii)] $t$ induces an isomorphism $\beta_t$ between $G_1$ and $G_2$, such that $\beta_t$ induces on $T$ the  automorphism induced by $t$.
\item[(iv)] Let $N$ be the largest subgroup contained in $T$, which is normal in both $G_1$ and $G_2$. Then $N^t = N$.
\end{itemize}}
\end{notation}
Our first goal is to prove that $O_p(G_i/N) = 1$ for both $i$.
\begin{lemma}\label{lem:centric} If $O_p(G_1/N) \not= 1$, then $N$ is centric in $S$, in particular $Z(Q) \leq N$.
\end{lemma}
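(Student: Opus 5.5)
We will show $C_S(N) \leq N$. Once that holds, $Z(Q) = Z(S) \leq C_S(N) \leq N$ follows at once. The plan is to find a normal subgroup $P$ of $G_1$ which is a product of $N$ with a $p$-group, and then use coprime action. Set $X = O_p(G_1/N) \neq 1$. Since $G_1/E \cong D_{2p}$ and $E/N$ is a $2$-group, $X$ maps isomorphically onto $O_p(G_1/E)$, which has order $p$. Let $P$ be the preimage of $X$ in $G_1$, so $P = N\langle x\rangle$ with $x$ of order $p$ and $P \unlhd G_1$.

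\emph{Step 1: $C_E(N) \leq N$.} Suppose $C_E(N) \not\leq N$. Since $x$ normalizes $N$ and $E$, it acts on $C_E(N)$; since $x$ maps to a normal subgroup of $G_1/N$, it centralizes $E/N$. By coprime action $x$ centralizes $C_E(N)/Z(N)$, and therefore $[x, C_E(N)] \leq Z(N)$. Now use the maximality of $N$. Any subgroup of $T$ that is normal in both $G_1$ and $G_2$ lies in $N$. Consider $M = N C_E(N)$. It is normal in $G_1$, since $E$ and $N$ are. To get normality in $G_2$ as well, replace $M$ by $M\cap M^t$ or by $N C_{E\cap E^t}(N)$: here $\beta_t$ swaps $G_1$ and $G_2$ and fixes $N$, so a subgroup that is $t$-invariant and normal in $G_1$ is normal in $G_2$. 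If we check that $C_{E\cap E^t}(N) \not\leq N$, this contradicts maximality. The orders help here. $E/N$ carries $G_1/E$ with $O_p(G_1/N) \neq 1$, so $G_1/N \cong \Z_2\times D_{2p}$, as the sketch suggests, and $|E/N| = 2$. Then $E\cap E^t$ has index $2$ in $E$ and contains $N$, so $E \cap E^t = N$ and $C_E(N)\leq E$ is tiny. With $|E:N|=2$, a nontrivial $C_E(N)\not\leq N$ would give $E = N C_E(N)$. Then $x$ acts trivially on $E/N$ and on $N/\Phi$-type sections, where we use $C_{G_1}(E)\leq E$ and coprime action. The precise chain: $x$ centralizes $E/N$ and, since $E = NC_E(N)$, $x$ also centralizes $E/Z(N)$-sections. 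This forces $[E,x]\leq N$ with $x$ acting faithfully on $N$, and a contradiction should come from $E/N$ being a central $2$-section swapped by the involution in $G_1/N$.

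\emph{Step 2: from $E$ to $S$.} Since $C_S(E)\leq E$, centralizers computed in $S$ reduce to $T$. For $s\in C_S(N)$, the group $\langle s\rangle N$ is normalized by $E$ (as $E \leq N_S(N)$?). The route I would take is this: $C_S(N)$ is normalized by $N_S(N)\geq T$, so $C_T(N) \neq 1$ modulo $N$ if $C_S(N)\not\leq N$, by the standard ``normalizer grows'' argument. Then $C_T(N)\cap E \not\leq N$ or $C_T(N)E = T$, so $C_T(N)$ covers $T/E$. In the latter case, an element of $T\setminus E$ centralizes $N$, hence centralizes $P/N$ modulo $E$. But $T$ induces the involution inverting $x$ in $D_{2p}$, and centralizing $N$ means centralizing $x$ modulo $N$ by the three-subgroups lemma on $[x,N]\leq N$. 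That contradicts the inversion.

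The main obstacle is Step 1: showing $C_E(N)\le N$ using only the maximality of $N$ and the $\Z_2\times D_{2p}$ structure. I would first try to establish the structure $G_1/N\cong \Z_2\times D_{2p}$ directly, which makes $|E:N|=2$. I would then exploit the fact that $E/N$ is the central $\Z_2$ while $T/N$ is elementary abelian of order $4$, with $t$ swapping $E/N$ and $E^t/N$.
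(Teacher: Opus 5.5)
Your proposal does not yet prove the lemma. The mechanism that actually drives the proof is missing; the paper gives no argument of its own here and simply quotes \cite[Lemma 4.2(b)]{An}.

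\textbf{Step 1.} Appealing to the maximality of $N$ achieves nothing here. We have $M\cap M^t\le E\cap E^t=N$ and $C_{E\cap E^t}(N)=Z(N)$, so you never leave $N$. (Also, $E\cap E^t=N$ should be derived from maximality, not from the unproved $|E:N|=2$: $E\cap E^t$ is normalized by $T$ and by $x$, which centralizes $E/N$.) Your closing sentence is not an argument either. $E/N$ is central in $G_1/N$, so nothing in $G_1$ moves it; only $t$ does.

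The missing idea is to combine $O_p(G_1/N)\neq 1$ with $C_{G_1}(E)\le E$.
\begin{itemize}
\item Every $p$-element $y\in G_1$ lies in $P$. Indeed, its image in $G_1/E$ lies in $O_p(G_1/E)$, and $P/N$ is the unique Sylow $p$-subgroup of $PE/N=P/N\times E/N$. Hence $y$ centralizes $E/N$.
\item If such a $y$ also centralizes $N$, then $[E,y]=[E,y,y]=1$ by coprime action. So $y\in C_{G_1}(E)\le E$, and $y=1$.
\item Thus the normal subgroup $C_{G_1}(N)$ of $G_1$ contains no nontrivial $p$-element. Every nontrivial normal subgroup of $D_{2p}$ contains the subgroup of order $p$, so $C_{G_1}(N)\le E$, i.e.\ $C_{G_1}(N)=C_T(N)$.
\item Applying $\beta_t$, which maps $N$ and $T$ to themselves, gives $C_{G_2}(N)=C_T(N)$ too. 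So $C_T(N)$ is normal in both $G_1$ and $G_2$, and only now does maximality give $C_T(N)\le N$.
\end{itemize}
Your ``latter case'' was heading this way but is misstated. The three-subgroups lemma gives $[s,x]\in C_{G_1}(N)$, not $[s,x]\in N$. The contradiction comes from the $p$-part of $[s,x]$, which is nontrivial because $s$ inverts $x$ modulo $E$, centralizing $E$; it does not come from the inversion itself. Your first case reduces to this one by conjugating with $t$, which preserves $C_T(N)$ and interchanges $E$ and $E^t$.

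\textbf{Step 2.} This step has a second gap. The normalizer-grows argument does not give $C_T(N)\not\le N$ from $C_S(N)\not\le N$. If $C_S(N)\not\le T$, it only gives some $c\in C_S(N)\cap N_S(T)$ with $c\notin T$. Moreover, $C_T(N)\le N$ does not imply $C_S(N)\le N$ in general (take $N=T$ abelian). You must use $T=N_S(E)$. For $k\in T$, the elements $k$ and $k^c$ induce the same automorphism of $N$, so $[T,c]\le C_T(N)\le N$. Hence $c$ normalizes every subgroup between $N$ and $T$, in particular $E$, which forces $c\in T$, a contradiction.

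With these two additions you get $C_S(N)\le N$, and so $Z(Q)=Z(S)\le N$. As written, neither step is established.
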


\begin{proof} This is \cite[Lemma 4.2 (b)]{An}.
\end{proof}

\begin{lemma}\label{lem:P1} Assume $O_p(G_1/N) \not= 1$. Then $G_1/N \cong \Z_2 \times D_{2p}$.
\end{lemma}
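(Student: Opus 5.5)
We want to show that $G_1/N \cong \Z_2 \times D_{2p}$ when $O_p(G_1/N) \neq 1$. The plan is to pin down the structure of $G_1/N$ as an extension of the $2$-group $E/N$ by $D_{2p}$, and to show that $E/N$ has order exactly $2$ and is central.

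\emph{Step 1: the basic shape of $G_1/N$.} By \fref{not:amalgam}(i) we have $G_1/E \cong D_{2p}$ with $|T:E| = 2$ by \fref{lem:normal4}, so $T/N$ is a Sylow $2$-subgroup of $G_1/N$. Let $P/N = O_p(G_1/N) \neq 1$. Since $O_2(G_1/N) \geq E/N$ and $O_p(G_1/N)$ is a normal $p$-subgroup, $[P,E] \leq N$. Hence $P/N$ centralizes $E/N$, and $PE/N = P/N \times E/N$ with $P/N$ of order $p$, mapping onto $O_p(G_1/E)$.

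\emph{Step 2: $E/N$ is central in $G_1/N$.} We have $G_1 = PT$, and $P$ centralizes $E/N$. So $E/N$ is normalized by $T$, and every $G_1$-invariant subgroup of $E/N$ is simply a $T$-invariant subgroup. Now use the maximality of $N$ (\fref{not:amalgam}(iv)). The aim is to show that $\Phi(E/N)$, $[E/N,T]$ and similar characteristic pieces are normal in $G_1$. It is then enough to check that their preimages are also normal in $G_2$, forcing them into $N$. The natural tool is $\beta_t$. Since $t$ normalizes $T$ and $N$, and $\beta_t$ agrees with $t$ on $T$, a subgroup $M$ with $N \leq M \leq E \cap E^t$ that is normal in $T$ and centralized by $O^2(G_1)$ modulo $N$ is carried by $\beta_t$ to $M^t$, which is normal in $G_2$. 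The plan is to use $M = C_{E/N}(T)$-type and Frattini-type subgroups to conclude that $E/N$ is elementary abelian with $[E/N,T] = 1$. Then $E/N \leq Z(G_1/N)$.

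\emph{Step 3: $|E/N| = 2$.} This is the expected main obstacle. The idea is that $E/N$ is a central $2$-subgroup of $G_1/N$. If $|E/N| \geq 4$, take a subgroup $N < M < E$ with $|E:M| = 2$ that is $t$-invariant; such an $M$ exists because $t$ acts on $E/N$ as a $2$-element, for instance $M \supseteq [E,t]N$ when $E^t = E$ modulo $N$. However, $t$ does not normalize $E$, so the argument must instead be run in $E\cap E^t$.

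I would first show $E/N = (E \cap E^t)N/N$, using $|T:E| = 2$, that $E \neq E^t$, and $T = EE^t$. Then $(E\cap E^t)/N$ has index $2$ in $E/N$ and is central in both $G_1/N$ and, via $\beta_t$, in $G_2/N$. Its preimage is therefore normal in both $G_1$ and $G_2$ and contained in $T$. Maximality of $N$ then gives $E \cap E^t = N$, so $|E/N| = 2$.

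Combining the steps, $G_1/N$ is a central extension of $\Z_2$ by $D_{2p}$ in which the Sylow $2$-subgroup $T/N$ has order $4$. It contains the complement $P/N \rtimes \langle s\rangle$, where $s \in T \setminus E$ is chosen to be an involution modulo $N$. Such a choice is possible since $T/N$ is of order $4$ and noncyclic, because $t$ interchanges $E/N$ and $E^t/N$. Hence $G_1/N \cong \Z_2 \times D_{2p}$.
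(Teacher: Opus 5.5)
This is essentially the paper's argument. Since $O_p(G_1/N)$ centralizes $E/N$ and $G_1 = PT$, the $T$-invariant subgroup $E\cap E^t$ is normal in $G_1$. Via $\beta_t$ it is also normal in $G_2$, because $(E\cap E^t)^t = E\cap E^t$ as $t^2 \in Z(Q) \le E$. Maximality of $N$ then gives $N = E\cap E^t$ and $|T/N| = 4$. Your observation that $T/N$ is a fours group, because it contains the two distinct subgroups $E/N \ne E^t/N$ of order $2$, is a cleaner way to get the required involution than the paper's appeal to $Q_1 \not\le E$.

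Two slips should be fixed. First, Step 2 is superfluous and only sketched: Step 3 needs just normality of $(E\cap E^t)/N$, not centrality, and Step 1 already gives that. Second, the sentence claiming $E/N = (E\cap E^t)N/N$ is false, since $N \le E\cap E^t$ would force $E = E^t$. What you actually need and use is $|E : E\cap E^t| = |T : E^t| = 2$.
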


\begin{proof} We have that $|T : O_2(G_1)  \cap O_2(G_2)| = 4$. As $O_p(G_i/N)$ centralizes $O_2(G_i/N)$ we have that  $(O_2(G_1)  \cap O_2(G_2))/N$ is normal in $G_i/N$ for both $i$, and so we have that $N = O_2(G_1)  \cap O_2(G_2)$. Thus $|T/N| = 4$. As $Q_1 \not\leq E$, there is some involution in $G_1/N \setminus O_2(G_1)/N$. This is the assertion.
\end{proof}

We now fix the following hypothesis and notation.
\begin{hyp}\label{hyp:nonstrict} We adopt the notation $G_1$, $G_2$, $N$ and $t$ from \fref{not:amalgam} and continue with \fref{not:Q1}
\begin{itemize}
\item[(i)] $G_i/N \cong Z_2 \times D_{2p}$.
\item[(ii)] Let $E = N\langle u \rangle$ and let $u_1 \in Q_1 \setminus E$.
\item[(iii)] We choose notation as in \fref{not:amalgam}(iii) above such that $(uN)^t = (uu_1)N$.
\end{itemize}
\end{hyp}

\begin{lemma} \label{lem:NnotQ} Assume \fref{hyp:nonstrict}. Then $N \not\leq Q$. In particular $2 \leq |N : N \cap Q| \leq 4$ and $E \cap Q \leq N$, i.e. $E \cap Q = N \cap Q$. Furthermore $[u,N] \leq N \cap Q$.
\end{lemma}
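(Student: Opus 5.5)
We prove $N \not\leq Q$ by contradiction and then read off the remaining assertions from the index relations in \fref{hyp:nonstrict}. Suppose $N \leq Q$. By \fref{lem:centric}, $N$ is centric in $S$. Since $E = N\langle u\rangle$ and $E \not\leq Q$ by \fref{lem:normal1}, we get $u \notin Q$ and $E \cap Q = N$.

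Now $T = E Q_1$ with $|T:E| = 2$, and $u_1 \in Q_1 \setminus E$. So $T = N\langle u, u_1\rangle$, and $Q_1 = N\langle u_1\rangle$ has $|Q_1 : N| = 2$. By \fref{not:amalgam}(iv), $t \in Q$ normalizes $N$. Hence $N\langle t\rangle \leq Q$ as well, and $|N_Q(N) : N| \geq 4$.

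In $Q$ we use the quotient $Q/Z(Q)$, which carries a nondegenerate symplectic form of dimension $8$, and $Z(Q) \leq N$. Then $N/Z(Q)$ has perp space $C_Q(N)Z(Q)/Z(Q)$. Since $N$ is centric, $C_Q(N) \leq N$, so $N/Z(Q)$ contains its perp, and $|N| \geq 2^5$. On the other hand, $G_1/N \cong \Z_2 \times D_{2p}$ acts on $N$ with an element of order $p$ that is inverted by $u_1 \in Q$. The element $u_1$ acts on $N/Z(Q)$ with commutator of order at most $2$ (it is a symplectic transvection-like action inside $Q$). Therefore \fref{lem:representation1} and \fref{lem:representations} force $p = 3$, with $[N, O_3(G_1/N)]$ generated by two conjugates of such commutators, so of order at most $4$ modulo $Z(Q)$.

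Since $u_1 \in Q$ centralizes $Z(Q)$ and $u_1$ inverts the element $x$ of order $3$, the group $\langle x, u_1\rangle \cong \Sigma_3$ acts on $N$ with $[N,x]$ of order $4$ (modulo $Z(Q)$, or exactly $4$). In the action of $u \notin Q$, by \fref{lem:64}(b) $u$ acts on $Q/Z(Q)$ with $|[Q/Z(Q),u]| = 16$. Since $[N,u] \leq [Q,u]$ and $u$ centralizes $O_3(G_1/N)$ modulo $N$ (as $G_1/N \cong \Z_2 \times D_{2p}$, with $uN$ central), $u$ normalizes $[N,x]$ and $C_N(x)$.

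Then we combine two facts. First, $t$ swaps $uN$ and $uu_1N$ by \fref{hyp:nonstrict}(iii). Second, $t, u_1 \in Q$ act on $E/N$ trivially modulo $Q$, i.e. $[u,t] \in Q$. This forces $u_1 N = [u,t]N$. We would then derive a contradiction by counting $C_{Q/Z(Q)}(u) \cap N/Z(Q)$, which is of order at most $16$ by \fref{lem:64}(b), against $N/Z(Q)$ of order at least $2^4$ being largely centralized by $u$ because $u$ is central in $G_1/N$ modulo $\langle x\rangle$. I expect this to be the main obstacle, and I would try to push it through via \fref{lem:Z3} and \fref{lem:Z3E}, showing that $N$ must contain $Z_3(S)$ and then that $C_S(Z_3(S)) = Z_3(S)$ contradicts $u_1 \notin N$.

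Once $N \not\leq Q$ holds, we have $N \cap Q \leq E \cap Q$ and $|E : E\cap Q| \leq 4$. The argument in the proof of \fref{lem:normal3} shows $|EQ/Q| \leq 4$, because otherwise $E\cap Q \leq Z_2(S)$, contradicting centricity. Since $|E:N| = 2$ and $N \not\leq Q$, we get $NQ = EQ$, so $E = N(E\cap Q)$. If $E \cap Q \not\leq N$, then $u$ can be chosen in $Q$, and $E \cap Q = (N\cap Q)\langle u\rangle$ with $|E\cap Q : N\cap Q| = 2$. That contradicts $|E : N| = 2$ together with $N \not\leq Q$, since $|NQ : Q| = |EQ:Q|$ forces $|E\cap Q : N\cap Q| = |E:N| = 2$ only when $u \in Q$. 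We therefore choose $u \notin Q$, which is possible because $E \not\leq Q$, and conclude $E\cap Q = N\cap Q$ and $2 \leq |N:N\cap Q| = |E : E\cap Q| \leq 4$.

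Finally, $[u,N] \leq N$ since $N \unlhd E$. Also $[u,N] \leq Q$ because $S/Q$ is abelian and $u, N \leq S$. Hence $[u,N] \leq N \cap Q$.
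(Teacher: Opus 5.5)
\textbf{The proof of $N\not\le Q$ is incomplete.} You say so yourself, and your sketch misses the decisive observation. If $N\le Q$, then $[N,Q]\le Q'=Z(Q)\le N$. So $u_1$, and indeed every element of $Q$ including $t$, acts \emph{trivially} on $N/Z(Q)$; it does not act ``with a commutator of order at most $2$''. The paper uses this twice.

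First, $\Phi(N)\le Z(Q)$. If $\Phi(N)=Z(Q)$, then $u_1$ centralizes $N/\Phi(N)$. Since $u_1N$ inverts $O_p(G_1/N)$, the group $O_p(G_1/N)$ lies in the normal closure of $u_1N$ in $G_1/N$. Hence it also centralizes $N/\Phi(N)$, so it centralizes $N$, and then $E$ (as $|E:N|=2$). This contradicts $C_{G_1}(E)\le E$, so $N$ is elementary abelian.

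Second, with $E\cap Q=N$, the preimage of $C_{Q/Z(Q)}(u)$ normalizes $E$, so it lies in $Q_1$. Then \fref{lem:64}(b) gives $[Q,u]\le Q_1$, hence $[E,Q]\le Q_1$. So $Q$ normalizes both $T=EQ_1$ and $N$, and acts on the fours group $T/N$. Thus $C_Q(T/N)$ has index at most $2$ in $Q$ and lies in $N_Q(E)=Q_1$, giving $|Q:N|\le 4$. This is impossible, because an elementary abelian subgroup of $Q\cong 2^{1+8}_+$ has order at most $2^5$.

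Your lower bound $|N|\ge 2^5$ from centricity is correct. However, you never obtain either the abelianness of $N$ or an upper bound on $|Q:N|$. Your appeal to \fref{lem:representation1} to force $p=3$ is unjustified: its hypothesis that $X/O_p(X)$ be cyclic of order at least $4$ or quaternion fails for $\Z_2\times D_{2p}$. The closing ``counting'' via \fref{lem:Z3} is only a hope.

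\textbf{The deduction of $E\cap Q=N\cap Q$ is invalid.} Since $|E:N|=2$, we have $2=|EQ:NQ|\cdot|E\cap Q:N\cap Q|$. So your claim $NQ=EQ$ is \emph{equivalent} to $E\cap Q\not\le N$, which is the negation of what you want. Moreover, whether $E\cap Q\le N$ holds does not depend on which representative $u$ of $E\setminus N$ you pick, so choosing $u\notin Q$ proves nothing.

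The missing idea is to use $t\in Q\setminus Q_1$. If $E=N(E\cap Q)$, then $t$ normalizes $N$ and $[E\cap Q,t]\le Z(Q)\le E\cap Q$, so $E^t=E$. This contradicts $t\notin N_Q(E)$.

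Once $E\cap Q=N\cap Q$ is known, you get $|N:N\cap Q|=|E:E\cap Q|/2$, not $|E:E\cap Q|$. The bound $\le 4$ then follows simply from $|S/Q|=8$. Your bound $|EQ/Q|\le 4$ is unsupported, since the argument in \fref{lem:normal3} relied on $V\le Z(E)$, and it is not needed anyway. Only the last assertion, $[u,N]\le N\cap Q$, is handled correctly.
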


\begin{proof} Suppose false. Then $N \leq Q$ and so $\Phi(N) \leq Z(Q)$. If $Z(Q) = \Phi(N)$, then $u_1$ acts trivially on $N/\Phi(N)$ and the same applies for $O_p(G_1/N)$. By  \fref{lem:P1} then $O_p(G_1/N)$ centralizes $E$, a contradiction. Thus we have
\begin{center} $N$ is elementary abelian\end{center}
As $E = N\langle u \rangle = (E \cap Q)\langle u \rangle$ and $Z(Q) \leq N$, we have that the preimage $Q_3$ of $C_{Q/Z(Q)}(u)$ is contained in $Q_1$. Thus $|Q_3 : Q_3 \cap N| \leq 2$. Furthermore as $[Q/Z(Q),u] = C_{Q/Z(Q)}(u)$ by \fref{lem:64}(b), we see $[E,Q] \leq Q_1$ and so $Q_2 = Q$.
Now $Q$ normalizes $EQ_1 = T$.  By \fref{lem:centric}  $Z(Q) \leq N$. Thus  $Q$ also normalizes $N$ and then it normalizes $T/N$, a fours group. Now $|Q : N_Q(E)| \leq 2$ and as $|Q_1 : N| = 2$, we get $|Q : N| \leq 4$. But $Q$ contains no elementary abelian subgroup of index 4, a contradiction.  This proves the first assertion.
\\
\\
Assume that $E = N(E \cap Q)$.  As $[Q,t] \leq Z(Q) \leq N$, we have that $E^t = E$, a contradiction.
Thus $E \cap Q = N \cap Q$. As $|E : E \cap Q| \leq 8$ and $N \not= E$, we see  $|N : N \cap Q| \leq 4$, which is the second assertion.
The third one follows as $EQ/Q$ is elementary abelian and so $E^\prime \leq Q$.
\end{proof}

Let $H_1$ be the free amalgamated  product over $T$  of $G_1$ and $G_2$ and $H = H_1 \langle \beta_t \rangle$. Then $H$ acts on $N$ and we denote by $\tilde{H} = H/C_H(N)$. In fact $\tilde{H_1}/(N/Z(N))$ is a faithful completion.

\begin{lemma}\label{lem:faithful}  Assume \fref{hyp:nonstrict}. Set $ N_1 = \langle Z(Q)^{H} \rangle$ and $\hat{N} = N/N_1\Phi(N)$. Then $C_{G_i}(\hat{N})/N \geq O_p(G_i/N)$, $i = 1,2$.
\end{lemma}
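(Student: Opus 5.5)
The statement says that the odd part $O_p(G_i/N)$ acts trivially on the section $\hat N$. The plan is to exploit the very restricted shape $G_i/N\cong \Z_2\times D_{2p}$ from \fref{lem:P1}: the involution $u_1N$ (resp.\ its image under $\beta_t$) inverts $O_p(G_i/N)$, while $uN$ centralizes it. Since $N_1\Phi(N)$ is invariant under $H$, $\hat N$ is an $H$-module (a $\GF(2)$-module), so it suffices to show that an element $P$ of order $p$ generating $O_p(G_1/N)$ acts trivially on $\hat N$. The case $i=2$ then follows by applying $\beta_t$, which interchanges $G_1$ and $G_2$ and normalizes $N$, $N_1$ and $\Phi(N)$.

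\textbf{Step 1.} Determine where $[N,u_1]$ lives. By \fref{lem:NnotQ} we have $E\cap Q=N\cap Q$ and $|N:N\cap Q|\le 4$. Also $u_1\in Q_1\le Q$, so $[N,u_1]\le [S,Q]\cap N$. Since $Q$ is extraspecial, $[N\cap Q,u_1]\le Z(Q)\le N_1$, and hence $u_1$ acts trivially on the image of $N\cap Q$ in $\hat N$. Moreover $[N,u_1]\le Q$, so modulo $N\cap Q$ the action of $u_1$ on $N/(N\cap Q)$ is trivial. Consequently $u_1$ acts on $\hat N$ with $[\hat N,u_1]$ contained in the image of $N\cap Q$, and with $[\hat N,u_1,u_1]=1$. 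Using $|N:N\cap Q|\le 4$, this gives $|[\hat N,u_1]|\le 4$.

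\textbf{Step 2.} Apply coprime action together with the dihedral relation. Because $u_1$ inverts $P$, we have $[\hat N,P]=[\hat N,P,u_1]\cdot[\hat N,P,u_1^P]$. More precisely, $P\le\langle u_1,u_1^{x}\rangle$ for a suitable conjugate, so $[\hat N,P]\le [\hat N,u_1][\hat N,u_1^x]$, which has order at most $16$. In addition, $u$ centralizes $P$ modulo $N$, and $u\notin Q$ acts on $N\cap Q$ via \fref{lem:64}(b).

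The key point is to use the $H$-invariance. The group $[\hat N,P]$ is normalized by $u_1$ and $u$, and $[\hat N,P]\le$ (image of $N\cap Q$) $\cdot\,[\hat N,u_1]^x$. I would then use $\beta_t$: the subgroup $N_1\Phi(N)$ already absorbs all conjugates of $Z(Q)$, so $Z(Q)$ is trivial in $\hat N$. I would show that the image of $N\cap Q$ in $\hat N$ is centralized by $u_1$ and also by $u_1^t=u_1\cdot(\text{element of } N)$-type conjugates. Since $u_1$ and $u_1^x$ both act trivially on the quotient $\hat N/\widehat{N\cap Q}$, which has order at most $4$, $P$ acts trivially there, and the action is therefore concentrated on $\widehat{N\cap Q}$. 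On $\widehat{N\cap Q}$, however, $u_1$ acts trivially. Then $P=[P,u_1]$ acts on the series $\widehat{N\cap Q}\le\hat N$ with trivial action on both factors, so $P$ acts trivially by coprime action.

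\textbf{Main obstacle.} The delicate point is the claim that $u_1^x$ (the conjugate generating $P$ together with $u_1$) also acts trivially on $\widehat{N\cap Q}$, since $u_1^x$ need not lie in $Q$. Here I would first try to show that $N\cap Q$ is normal in $G_1$ modulo $N_1\Phi(N)$, using that $N\cap Q=E\cap Q$ and the weak closure of $Q$ from \fref{lem:wc}. Failing that, I would fall back on an order argument: $|[\hat N,P]|\le 16$ with $P$ inverted by $u_1$, where $|[\hat N,u_1]|\le 4$. Combined with \fref{lem:64}(c) (the commutator $[Q/Z(Q),s_i,s_j]$ has order 4) and $u$ centralizing $P$, this should force $[\hat N,P]=1$.
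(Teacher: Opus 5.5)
\textbf{There is a genuine gap in Step~2.} Both halves of your key claim need $\widehat{N\cap Q}$ to be $P$-invariant. The first half is that $u_1^x$ acts trivially on $\hat N/\widehat{N\cap Q}$; the second, your acknowledged obstacle, is that it acts trivially on $\widehat{N\cap Q}$. $P$-invariance would make $\widehat{N\cap Q}$ $G_1$-invariant, since $T$ normalizes $N\cap Q$. Without it, $u_1^x$ is only known to centralize $\widehat{(N\cap Q)^x}$ and the quotient by it. You give no argument for this invariance, and none is at hand:
\begin{itemize}
\item $G_1\not\le S$, and \fref{lem:wc} only controls subgroups of $S$ isomorphic to $Q$;
\item $N\cap Q=E\cap Q$ has no intrinsic description inside $N$ or $E$ that $G_1$ would have to respect.
\end{itemize}
The fallback order argument does not close the gap either. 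All you extract inside $G_1$ is $|[\hat N,u_1]|\le 4$, $u_1$ inverting $P$, and $u$ centralizing $P$ modulo $N$. As module data, these are satisfied by genuinely nontrivial dihedral actions, for example $\Sigma_3\cong\GL_2(2)$ on a $2$-dimensional $[\hat N,P]$, or $D_{10}$ on a $4$-dimensional one, with $u$ trivial on $[\hat N,P]$. That is exactly why the paper gets only $p\le 5$ and $[[\hat N,O_p(G_1/N)],u]=1$ from this local information. Invoking \fref{lem:64}(c) does not change this.

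\textbf{The missing ingredient is the amalgam.} You need the element $t$, with $E^t\neq E$ and $(uN)^t=uu_1N$, together with the second group $G_2$. Your proposal uses $t$ and $G_2$ only to pass from $i=1$ to $i=2$. The paper argues by contradiction, assuming $O_p(G_1/N)$ acts nontrivially on $\hat N$:
\begin{itemize}
\item If $[\hat N,u]=1$, then $C_T(\hat N)=E$ would be $t$-invariant, which is impossible.
\item If $u_1$ centralized $C_{\hat N}(O_p(G_1/N))$, then $|[\hat N,uu_1]|=|[\hat N,u]|\cdot|[[\hat N,O_p(G_1/N)],u_1]|>|[\hat N,u]|$. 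This contradicts the fact that $t$ conjugates $uN$ to $uu_1N$.
\item Transporting this to $G_2$ via $\beta_t$ and using $|\hat N:C_{\hat N}(u_1)|\le 4$ forces $p=3$ and $|[\hat N,O_3(G_i/N)]|=4$ for $i=1,2$.
\item Quadratic action of $\langle u_1,t\rangle\le Q$ makes these two spaces intersect nontrivially. So $\langle O_3(G_1/N),O_3(G_2/N)\rangle$ acts on a space of order at most $8$, carrying a faithful $\Z_2\times\Sigma_3$. That is impossible in $\GL_3(2)$.
\end{itemize}
Some argument of this kind, exploiting $t$ and $G_2$, is needed. The local data about $G_1$ and $Q$ that your proposal relies on do not suffice.
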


\begin{proof} Assume false. We just have to prove the lemma for $i = 1$. The assertion for $i=2$ then follows by application of $\beta_t$.

We have that $H$ acts on $\hat{N}$. By \fref{lem:centric} $Z(Q) \leq N$ and so $N_1 \leq Z(N)$. By \fref{lem:NnotQ} we have that
\begin{align*}\label{eq:centu1}\tag{1} |\hat{N} : C_{\hat{N}}(u_1)| \leq 4 \end{align*}
and so $p \leq 5$. Furthermore
$[u,\hat{N}] \leq \widehat{Q \cap N}$ by \fref{lem:NnotQ}. Thus
\begin{align*}\label{eq:quadratic}\tag{2} \langle u, u_1 \rangle \text{ acts quadratically on }\hat{N}.\end{align*}
This implies
\begin{align*}\label{eq:trivial}\tag{3} [[O_p(G_1/N),\hat{N}],u] = 1. \end{align*}
If $[\hat{N},u] = 1$, then as $E = N\langle u \rangle$, $[\hat{N},E] = 1$. As $[O_p(G_1/N),\hat{N}] \not= 1$ we receive that $E = C_T(\hat{N})$. Hence $E^t = E$, a contradiction. Thus
\begin{gather*}\label{eq:nontrivial} [\hat{N},u] \not= 1.\tag{4}\end{gather*}
\\
Assume that $[u_1, C_{\hat{N}}(O_p(G_1/N))] = 1$. Then by \fref{eq:trivial} $$|[\hat{N},u]| \cdot |[u_1, [O_p(G_1/N),\hat{N}]]| = |[\hat{N},uu_1]|.$$ But by \fref{hyp:nonstrict}(iii) $uN$ is conjugate to $uu_1N$ via $t$, a contradiction. Thus
\begin{align*}\label{eq:OP1notcent}\tag{5}  [u_1,C_{\hat{N}}(O_p(G_1/N))] \not= 1.\end{align*}
As $\beta_t(u_1) = u_1$, $\langle t, u_1\rangle \leq Q$ and $\beta_t(O_p(G_1/N)) = O_p(G_2/N)$ also $[u_1,C_{\hat{N}}(O_p(G_2/N))]\not= 1$. This then implies by \fref{eq:centu1} and \fref{eq:OP1notcent} that
\begin{align*}\label{eq:GL4}\tag{6} |[u_1,\hat{N}]| = 4 \text{ and } |[\hat{N}, O_3(G_i/N)]| = 4, i = 1,2.\end{align*}
In particular
\begin{gather*}\label{eq:p=3} p = 3.\tag{7}\end{gather*}
 As $t \in Q$, we also see that $\langle u_1, t \rangle$ acts quadratically on $\hat{N}$. And so $[[\hat{N}, O_3(G_i/N)],u_1,t] = 1$. Thus $[\hat{N}, O_3(G_1/N)] \cap [\hat{N}, O_3(G_2/N)] \not= 1$. In particular by \fref{eq:GL4}
\begin{gather*} |[\langle O_3(G_1/N), O_3(G_2/N)\rangle, \hat{N}]| \leq 8. \tag{8}\end{gather*}
But $\GL_3(2)$ does not have a subgroup $\Z_2 \times \Sigma_3$, a contradiction. This proves the lemma.
\end{proof}

\begin{lemma}\label{lem:ZN} Assume \fref{hyp:nonstrict}. Let $N_1$ be as in \fref{lem:faithful}, then $ N_1  \not\leq Q$.
\end{lemma}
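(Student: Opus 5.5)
We argue by contradiction and assume $N_1 \leq Q$, where $N_1 = \langle Z(Q)^H \rangle$. By \fref{lem:centric} we have $Z(Q) \leq N$ with $N$ normal in $H$. So $N_1$ is an $H$-invariant subgroup of $N \cap Q$. By \fref{lem:NnotQ} it lies in $E \cap Q = N \cap Q$. Since $Z(Q) \leq Z(N)$, the group $N_1$ is central in $N$, hence abelian. It is generated by involutions, so $N_1$ is elementary abelian.

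The first step is to show that $O_p(G_1/N)$ acts nontrivially on $N_1$. Suppose instead that $O_p(G_1/N)$ centralizes $N_1$. By \fref{lem:faithful} it also centralizes $\hat{N} = N/N_1\Phi(N)$. A $p$-element centralizing both $N_1$ and $N/N_1\Phi(N)$ centralizes $N$ by coprime action. Since $N$ is centric by \fref{lem:centric}, this is a contradiction. Applying $\beta_t$ gives the same conclusion for $G_2$. So $N_1$ is a faithful module for $\langle O_p(G_1/N), O_p(G_2/N)\rangle$ modulo kernels.

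The second step uses the hypothesis $N_1 \leq Q$. All of $Q_1$ acts on $N_1$ with $[N_1, Q_1] \leq [Q,Q] = Z(Q)$, so $u_1$ and $t$ act as transvections (or trivially) towards $Z(Q)$. Moreover $N_1$ is elementary abelian inside $Q$, so $|N_1| \leq 2^5$; together with \fref{lem:elab}, the normal $2^4$/$2^5$ subgroups of $S$ in $Q$ are essentially $Z_3(S)$ or subgroups of it. Since $u_1$ is an involution of $G_1/N \cong \Z_2 \times D_{2p}$ centralizing $O_p(G_1/N)$, it acts on $[N_1, O_p(G_1/N)]$ commuting with the $p$-element. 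A transvection commuting with a nontrivial $p$-action on an irreducible $p$-module must act trivially there. Hence $[N_1, O_p(G_1/N), u_1] = 1$, and likewise for $G_2$.

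Next consider $u$ and $uu_1$, which are conjugate via $t$ by \fref{hyp:nonstrict}(iii). Repeat the counting argument of \fref{lem:faithful} (5): the sizes $|[N_1,u]|$ and $|[N_1,uu_1]|$ must agree, which forces $[u_1, C_{N_1}(O_p(G_1/N))] \neq 1$. Combined with the transvection property, this gives $|[N_1,u_1]| = 2$ with $[N_1,u_1] = Z(Q)$, and then $Z(Q) \leq C_{N_1}(O_p(G_1/N))$. By symmetry under $\beta_t$, which fixes $Z(Q)$ as $t \in Q$, we get $Z(Q) \leq C_{N_1}(O_p(G_2/N))$. Since $H_1 = \langle G_1, G_2 \rangle$ and $Z(Q)$ is also normalized by $T$, $Z(Q)$ is normal in $H_1$, so $N_1 = Z(Q)$. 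But then $O_p(G_1/N)$ centralizes $N_1$, contradicting the first step.

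The main obstacle is the middle step: pinning down $|[N_1,u_1]|$ and showing that the $u_1$-commutator is exactly $Z(Q)$ and sits in the $O_p$-fixed part. I would first try to exploit that $u_1, t \in Q$ act quadratically with image in $Z(Q)$. If the irreducible $p$-part is too large, I would bound $|N_1| \leq 2^5$ and use the structure of $Z_3(S)$ from \fref{lem:Z3} to rule out $p$-groups acting with the required inversion by $u$.
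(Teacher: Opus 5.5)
There is a genuine gap, and it is in your second step. There you treat $u_1$ as an involution of $G_1/N\cong\Z_2\times D_{2p}$ that centralizes $O_p(G_1/N)$. That is false.

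\begin{itemize}
\item $E/N=\langle uN\rangle$ is normal of order $2$, so it is the central $\Z_2$. Hence $uN$ is the only involution of $G_1/N$ centralizing $O_p(G_1/N)$.
\item Since $u_1\in Q_1\setminus E$, the element $u_1N$ inverts $O_p(G_1/N)$. The paper uses exactly this: $[O_3(G_i/N),u_1]\neq 1$.
\item Your first step shows $O_p(G_1/N)$ acts nontrivially on $N_1$, so $[N_1,O_p(G_1/N),u_1]\neq 1$. As $|[N_1,u_1]|\le |Z(Q)|=2$, this gives $Z(Q)=[N_1,u_1]\le [N_1,O_p(G_1/N)]$.
\end{itemize}

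This is the opposite of the intermediate claim $Z(Q)\le C_{N_1}(O_p(G_1/N))$ your route needs. That claim cannot be derived, so you never reach ``$Z(Q)\trianglelefteq H_1$, hence $N_1=Z(Q)$''.

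The counting step also proves nothing under your own premises. If $u_1$ centralized both $[N_1,O_p(G_1/N)]$ and $C_{N_1}(O_p(G_1/N))$, it would centralize $N_1$. Then $|[N_1,u]|=|[N_1,uu_1]|$ holds trivially, and nothing is ``forced''.

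The missing idea is to exploit the inversion rather than aim for normality of $Z(Q)$. Let $x$ generate $O_p(G_1/N)$. Then $x\in\langle u_1,u_1^x\rangle$, so $[N_1,x]\le [N_1,u_1][N_1,u_1^x]$ has order $4$. Hence $p=3$ and $Z(Q)\le [N_1,O_3(G_1/N)]$. The same holds for $G_2$, since $u_1N$ is not central in $G_2/N$, whose central involution is $uu_1N$. From here there are two ways to finish.

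\begin{itemize}
\item The paper's route: $\langle O_3(G_1/N),O_3(G_2/N)\rangle$ moves only a group of order $8$. On it $G_1/N\cong \Z_2\times\Sigma_3$ would act faithfully, giving an embedding into $\GL_3(2)$, which is impossible.
\item Your counting idea, corrected. Since $|[N_1,u_1]|\le 2$ is already attained on $[N_1,O_3(G_1/N)]$, $u_1$ must centralize $C_{N_1}(O_3(G_1/N))$. Meanwhile $u$ commutes with $O_3(G_1/N)$, so it centralizes the four-group $[N_1,O_3(G_1/N)]$; the centralizer of an element of order $3$ in $\GL_2(2)$ has odd order. Hence $|[N_1,uu_1]|=2\,|[N_1,u]|$. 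This contradicts $u^t\in uu_1N$, because $\beta_t$ normalizes $N_1$ and $N$ centralizes $N_1$.
\end{itemize}

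Either way, $O_p(G_1/N)$ centralizes $N_1$, and your first step finishes the proof; it is exactly the paper's closing argument.

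Two smaller points. In the first step, draw the contradiction from $C_{G_1}(E)\le E$: a $p$-element centralizing $N$ also centralizes $E$, since $E/N$ is central in $G_1/N$. Citing that $N$ is centric in $S$ is not enough, as that says nothing about elements of odd order. Your aside about normal subgroups of $S$ of order $2^4$ or $2^5$ is unjustified, since $N_1$ is only known to be $T$-invariant, but you never use it.
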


\begin{proof} Assume false, i.e. $N_1 \leq Q$. Recall that $N_1 \leq Z(N)$. We first show
\begin{align*}\label{eq:ZNQ}\tag{1} C_{G_1}(N_1)/N \geq O_p(G_1/N)].
\end{align*}

Suppose false. As $N_1 \leq Q$ by assumption, we have that $Q_1$ induces transvections on $N_1$. In particular $p = 3$ and $$|[O_3(G_1/N), N_1]| = 4.$$ As $\beta_t$ moves $G_1$ to $G_2$ in $H$, we also have that $$|[O_3(G_2/N), N_1]| = 4.$$
As $[[O_3(G_i/N),u_1] \not= 1$, $i = 1,2$ we have
$$Z(Q) \leq [O_3(G_1/N), N_1] \cap [O_3(G_2/N), N_1].$$
Then $$|[\langle O_3(G_1/N), O_3(G_2/N) \rangle, N_1]| = 8.$$ As $T/N$ normalizes $O_3(G_1/N)$ and $O_3(G_2/N)$ as well and $u^t \not= u$, we also see that $[u,N_1] \not= 1$. This implies that $G_1/N \cong Z_2 \times \Sigma_3$ is isomorphic to a subgroup of $\GL_3(2)$, which is absurd. This proves \fref{eq:ZNQ}.
\\
\\
As $Z(Q) \leq N_1$ we now have that $Z(Q)$ is normal in $G_1$ and $G_2$ as well and so $Z(Q) = N_1$. By \fref{lem:faithful} we have $[O_p(G_1/N),N] \leq Z(Q)\Phi(N)$ and so even $[O_p(G_1/N),N] \leq \Phi(N)$, which is absurd as $O_p(G_1)$ would act  trivially on $N$
and then also on $E$. But $C_{G_1}(E) \leq E$ by \fref{not:amalgam}. This proves the lemma.
\end{proof}

\begin{lemma}\label{lem:N12} Assume \fref{hyp:nonstrict}. Then $$|\Omega_1(Z(N))  : \Omega_1(Z(N)) \cap Q| = 2.$$
\end{lemma}

\begin{proof} Assume $|\Omega_1(Z(N))  : \Omega_1(Z(N)) \cap Q| \geq 4$. By \fref{lem:ZN} $\Omega_1(Z(N)) \not\leq Q$.  Hence there are $x, y \in \Omega_1(Z(N)) \setminus Q$, $y \not\in xQ$. By \fref{lem:64} we may choose notation such that $x \in E_x$, $y \in E_y$, $E_x$, $E_y$ elementary abelian of order 64. By \fref{lem:Z3E} $N \cap Q \leq E_x \cap E_y$ and so $|N \cap Q| \leq 8$ by \fref{lem:essential64}. This by \fref{lem:NnotQ}  implies that $|N| \leq 2^5$ and $N = Z(N)$. As $N$ is centric by \fref{lem:centric} $E_x \cap E_y  \leq N$. Thus  $|N| = 2^5$ and in particular $Z_2(S) \leq N \leq E$. Then $Z_3(S) \leq N_S(E)$. By \fref{lem:NnotQ}  we have $N \cap Q = E \cap Q$. Thus by \fref{lem:Z3} $|Q_1E/E| \geq 4$, a contradiction.   This proves the lemma.
\end{proof}
By \fref{lem:ZN} there is $x \in N_1 \setminus Q$. As $x \in \Omega_1(Z(N))$ we now by \fref{lem:N12} get $\Omega_1(Z(N)) = (\Omega_1(Z(N)) \cap Q)\langle x \rangle$. Further $N \cap Q \leq C_Q(x)$. By \fref{lem:essential64} we have that
\begin{itemize}
\item $|N \cap Q| \leq 2^5$ in case that $x \in E_x$, $E_x$ elementary abelian of order 64. Further  $|N| \leq 2^7$; or
\item $|N \cap Q| \leq 2^4$ and $|N| \leq 2^6$.
\end{itemize}

\begin{lemma}\label{lem:N1struc} Assume \fref{hyp:nonstrict}. Then $N \not= C_Q(x)\langle x \rangle$.
\end{lemma}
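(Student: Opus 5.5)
Argue by contradiction: assume $N = C_Q(x)\langle x\rangle$ with $x \in N_1\setminus Q$ as above. Then $N\cap Q = C_Q(x)$, and by \fref{lem:NnotQ} $E\cap Q = N\cap Q = C_Q(x)$. The plan is to exploit that $C_Q(x)$ is then too large and too rigid to be compatible with $E$ not normal in $S$ and with $|T:E| = 2$ from \fref{lem:normal4}.

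Split according to the two possibilities listed before the lemma.

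\emph{Case $x \in E_x$}, with $E_x$ one of the elementary abelian groups of order $64$ from \fref{lem:64}. By \fref{lem:64}(b), $C_Q(x)$ is the preimage of $[Q/Z(Q),x]$, of order $2^5$, and $C_Q(x)\langle x\rangle = E_x$, since $E_x\cap Q$ has index $2$ in $E_x$ and centralizes $x$. Hence $N = E_x$ is normal in $S$. As $C_S(E_x) = E_x$ and $N \le E$ is centric (\fref{lem:centric}), $E \le N_S(N)$ acts on $E_x$. The aim is to force $E = E_x$, contradicting \fref{hyp:setup}(ii). If $E > E_x$, then $E = E_x\langle u\rangle$ with $u$ acting nontrivially on $E_x$, so $E\cap Q > E_x\cap Q = C_Q(x)$. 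This contradicts $E\cap Q = N\cap Q$.

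\emph{Case $x \in Qs_{i_0}$.} By \fref{lem:Z3E}, $C_Q(x) \le Z_3(S)$ has order $16$, so $N \le Z_3(S)\langle x\rangle$ has order $32$. Since $Z_3(S)$ is abelian (\fref{lem:Z3}), $Z_3(S)$ centralizes $C_Q(x)$ and normalizes $N$: indeed $[Z_3(S),x] \le Z_2(S) \le C_Q(x)$, the last inclusion because $Z_2(S)$ is centralized by everything modulo $Z(Q)$ and by \fref{lem:64}(b). Thus $Z_3(S) \le N_S(N)$. Next compare with $E$. We have $E\cap Q = C_Q(x)$ of order $16$, $E = N\langle u\rangle$, and $|N:N\cap Q| = 2$, so $|E:E\cap Q| \le 4$. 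Then $Z_3(S)$ normalizes $E$ once $[Z_3(S),u] \le N$. I would try to obtain this from $[u,Q] \le Q_1$ together with $[E,Z_3(S)] \le Z_2(S)$-type commutator estimates, and then conclude $Z_3(S) \le T$. Since $Z_3(S)\cap E = C_Q(x)$ has index $2$ in $Z_3(S)$, we get $T \ge EZ_3(S)$ with $Z_3(S)\not\le E$. Then $u_1$ can be taken in $Z_3(S)$, and it centralizes $N\cap Q = C_Q(x)$, so it acts on $N/\Phi(N)$ as a transvection or trivially. Now use that $O_p(G_1/N)$ is inverted by $u_1$, and $G_1/N \cong \Z_2\times D_{2p}$ (\fref{lem:P1}). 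This forces $|[N,u_1]\Phi(N)/\Phi(N)| \ge 4$ for $p$ odd, contradicting the transvection. Symmetrically, $t$ should give the same contradiction via $\beta_t$.

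\textbf{Main obstacle.} The hardest step is the second case, specifically showing that $u_1$, or some element of $Q_1\setminus E$, lies in $Z_3(S)$ and acts with a commutator too small on $N$ modulo the relevant Frattini and $N_1$ quotient. Here I expect to need \fref{lem:faithful} in place of a plain Frattini quotient: $O_p(G_1/N)$ acts trivially on $\hat N$, so the nontrivial action sits on $N_1\Phi(N)$. I would then apply the argument of \fref{lem:ZN}, which rules out $\Z_2\times\Sigma_3 \le \GL_3(2)$, to the small group $N_1 \le Z(N)$ of order at most $2^5$.
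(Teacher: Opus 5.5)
Your proposal has a genuine gap in each of the two cases. In both, the missing idea is to use $|T:E| = 2$ from \fref{lem:normal4}, by exhibiting more of $N_S(E)$ than that index allows.

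\emph{Case $x \in E_x$.} The inference ``$u$ acts nontrivially on $E_x$, so $E\cap Q > E_x\cap Q$'' is a non sequitur. With $N = E_x$ and $E = E_x\langle u\rangle$, the equality $E\cap Q = N\cap Q$ of \fref{lem:NnotQ} only says $u \notin E_xQ$. That is perfectly consistent: take $u$ in some other $E_y\setminus Q$, which acts nontrivially on $E_x$ while $E_x\langle u\rangle\cap Q = E_x\cap Q$. The commutators $[E_x,u]$ lie in $E_x$ and give no new element of $E\cap Q$, so you have no contradiction. The paper argues instead that $E/E_x$ is an involution in $S/E_x$, a Sylow $2$-subgroup of $A_8$. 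That group has a normal elementary abelian subgroup of order $16$, so every involution in it has centralizer of order at least $8$. Hence $|N_S(E):E|\ge 4$, a contradiction.

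\emph{Case $x \in Qs_{i_0}$.} Getting $Z_3(S)\le T$ is immediate, since $[Z_3(S),S]\le Z_2(S)\le C_Q(x)\le E$. The paper uses this too. Your final contradiction, however, is false: a transvection can invert an element of order $3$, because the involutions of $\GL_2(2)\cong\Sigma_3$ are transvections. So nothing forces $|[N,u_1]|\ge 4$. Here $N$ is elementary abelian of order $32$, and your argument does not exclude $p=3$ with $|[N,O_3(G_1/N)]| = 4$. Your ``main obstacle'' paragraph does not supply a working replacement. The paper again finishes by counting against $|T:E|=2$. By \fref{lem:64}(b), $[Q,x]\cong\Z_4\times\Z_2\times\Z_2\times\Z_2$ contains $C_Q(x)$ with index $2$, and it also normalizes $E$. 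Indeed, it is abelian, $[[Q,x],x]=Z(Q)$, and $u$ normalizes $C_Q(x)$ (as $x^u\in xC_Q(x)$), so $u$ acts trivially on $[Q,x]/C_Q(x)$. Since $[Q,x]\ne Z_3(S)$ (only the latter is elementary abelian), $Q_1\ge Z_3(S)[Q,x]$ gives $|Q_1:Q_1\cap E| = |Q_1:C_Q(x)|\ge 4$, contradicting $|T:E|=2$.
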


\begin{proof} Assume false. Then by \fref{hyp:nonstrict}(ii)  $$E = N\langle u \rangle = C_Q(x)\langle x, u \rangle.$$ Suppose first that $x
\in E_x$, $E_x$ elementary abelian of order 64. Then by \fref{lem:64} $E_x = C_Q(x)\langle x \rangle \leq E$.  As stated in \fref{lem:64}(a)  $S/E_x$ is isomorphic to a Sylow 2-subgroup of $\Omega_6(2) \cong A_8$ and so $E/E_x$ corresponds to some involution in $S/E_x$. In $A_8$ we see that any involution in $S/E_x$ is centralized by some  subgroup of order at least 8. This follows from the fact that there is a normal elementary abelian subgroup of order 16 in $S/E_x$. This implies that $|N_S(E)/E| \geq 4$, a contradiction.

Let $x$ correspond to $s_{i_0}$. Then $|[Q,x] : C_{[Q,x]}(x)| = 2$ by \fref{lem:64}. So $[Q,x] \leq Q_1$. By \fref{lem:Z3E} we have that $C_Q(x) \leq Z_3(S)$ and of index 2 in $Z_3(S)$. Hence also $Z_3(S) \leq Q_1$, which now implies $$|Q_1/C_Q(x)| = 4 = |Q_1 : Q_1 \cap N|,$$ a contradiction again.
\end{proof}

\begin{lemma}\label{lem:N4} Assume \fref{hyp:nonstrict}. Then $|N : N \cap Q| = 4$.
\end{lemma}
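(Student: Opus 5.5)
By \fref{lem:NnotQ} we already know $2 \leq |N : N \cap Q| \leq 4$, so the proof only has to rule out $|N : N \cap Q| = 2$. Assume therefore $|N : N\cap Q| = 2$. The plan is to show that then $N = C_Q(x)\langle x\rangle$, which \fref{lem:N1struc} forbids.

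First we fix the element $x$. As in the remark before \fref{lem:N1struc}, take $x \in N_1 \setminus Q$; by \fref{lem:ZN} such an $x$ exists and $x \in \Omega_1(Z(N))$. With $|N : N \cap Q| = 2$ we get
$$N = (N \cap Q)\langle x\rangle, \qquad N \cap Q \leq C_Q(x).$$
So it suffices to prove the reverse inclusion $C_Q(x) \leq N$.

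To get $C_Q(x) \leq N$, use that $N$ is centric by \fref{lem:centric}, so $C_S(N) \leq N$. Since $x \in Z(N)$, we have $C_Q(N) = C_{C_Q(x)}(N \cap Q)$, and this group lies in $N \cap Q$. We now split according to the type of $x$ as in \fref{lem:64}.
\begin{itemize}
\item If $x \in E_x$ with $E_x$ elementary abelian of order $64$, then $C_Q(x) = E_x \cap Q$ is elementary abelian of order $32$. Hence it centralizes $N \cap Q$, so $C_Q(x) \leq C_Q(N) \leq N$.
\item If $x$ lies in the coset $Qs_{i_0}$, then \fref{lem:Z3E} gives $C_Q(x) \leq Z_3(S)$, which is elementary abelian by \fref{lem:Z3}. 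The same argument yields $C_Q(x) \leq N$.
\end{itemize}
In either case $N \cap Q = C_Q(x)$, so $N = C_Q(x)\langle x\rangle$, contradicting \fref{lem:N1struc}. Hence $|N : N \cap Q| = 4$.

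The step I expect to need the most care is the claim that $C_Q(x)$ is abelian in the $s_{i_0}$ case. Here $x$ need not be an involution, since $x \in \Omega_1(Z(N))$ only guarantees order $2$ modulo $Q$. I would check that $x \in \Omega_1(Z(N))$ indeed makes $x$ an involution, so that \fref{lem:Z3E} applies. If that fails, I would instead bound $|C_Q(x)| \leq 16$ via \fref{lem:64}(b) and compare orders with $N \cap Q$, using centricity to force $C_Q(x) \leq N$.
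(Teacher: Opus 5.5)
Your proof is correct and is the paper's argument: from $|N : N\cap Q| = 2$ you get $N = (N\cap Q)\langle x\rangle$ with $C_Q(x)$ abelian, so centricity of $N$ gives $N = C_Q(x)\langle x\rangle$, contradicting \fref{lem:N1struc}. The paper gets abelianness of $C_Q(x)\langle x\rangle$ in both cases from \fref{lem:64} alone instead of using \fref{lem:Z3E} in the $s_{i_0}$ case, and your worry about $x$ is unfounded: $Z(N)$ is abelian, so $\Omega_1(Z(N))$ is elementary abelian and $x$ is an involution.
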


\begin{proof} By \fref{lem:NnotQ} we otherwise may assume that $N = (N \cap Q)\langle x \rangle$, $x \in \Omega_1(Z(N))$. Now $N \leq C_Q(x)\langle x \rangle$. By \fref{lem:64} $C_Q(x) \langle x \rangle$ is abelian. As $N$ is centric we have $N = C_Q(x)\langle x \rangle$, which contradicts \fref{lem:N1struc}.
\end{proof}

\begin{lemma}\label{lem:L32} Both $G_1$ and $G_2$ act faithfully on $N_1$. Further $$N_1/N_1 \cap \Phi(N)| \geq 2^4.$$
\end{lemma}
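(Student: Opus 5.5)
The plan is to work in the amalgam $H = H_1\langle\beta_t\rangle$ acting on $N_1 = \langle Z(Q)^H\rangle \leq Z(N)$ and to treat faithfulness and the size bound together. By \fref{lem:ZN} we have $N_1 \not\leq Q$. By \fref{lem:N12} we have $N_1 \leq \Omega_1(Z(N)) = (\Omega_1(Z(N))\cap Q)\langle x\rangle$ with $x \in N_1\setminus Q$. By symmetry under $\beta_t$ it suffices to treat $G_1$. Moreover $G_1/N \cong \Z_2\times D_{2p}$, so the kernel of $G_1$ on $N_1$ is controlled by two pieces: $O_p(G_1/N)$ and the central involution $E/N = \langle uN\rangle$.

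\textbf{Step 1: $O_p(G_1/N)$ acts nontrivially on $N_1$.} Suppose $O_p(G_1/N)$ centralizes $N_1$. Then by \fref{lem:faithful} it also centralizes $\hat N = N/N_1\Phi(N)$, so it acts trivially on $N/\Phi(N)$ up to the section $N_1\Phi(N)/\Phi(N)$. Combining the two via coprime action, $O_p(G_1/N)$ centralizes a series of $N/\Phi(N)$ and hence centralizes $N$. Then it centralizes $E$ as well, because $E = N\langle u\rangle$ and $[u,N]\leq N\cap Q$ by \fref{lem:NnotQ}. This contradicts $C_{G_1}(E)\leq E$.

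\textbf{Step 2: $E/N$ acts nontrivially on $N_1$.} Suppose $u$ centralizes $N_1$. Then $E = C_T(N_1)$, since $u_1$ cannot centralize $N_1$: it is inverted against $O_p$ and $O_p$ acts nontrivially by Step 1. Now $N_1$ is $\beta_t$-invariant, so $C_T(N_1)$ is $t$-invariant, which gives $E^t = E$. This contradicts \fref{hyp:nonstrict}(iii), where $(uN)^t = uu_1N \neq uN$. It remains to rule out a diagonal kernel $\langle uu_1\rangle$. That case is symmetric, since $t$ swaps $u$ and $uu_1$ modulo $N$, and it leads to the same contradiction applied to $E^t$. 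Hence $T/N$ acts faithfully on $N_1$, and with Step 1 this makes $G_1$, and likewise $G_2$, faithful on $N_1$.

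\textbf{Step 3: the bound $|N_1/N_1\cap\Phi(N)|\geq 2^4$.} Apply Steps 1 and 2 to the elementary abelian section $\check N_1 = N_1/(N_1\cap\Phi(N))$, repeating the coprime-action argument modulo $\Phi(N)$. This shows $G_1/N \cong \Z_2\times D_{2p}$ embeds in $\GL(\check N_1)$, because the kernel on $\check N_1$ is a normal subgroup meeting neither $O_p$ nor $T/N$. Since $\GL_2(2)\cong\Sigma_3$ and $\GL_3(2)$ contain no $\Z_2\times\Sigma_3$ (the same fact used at the end of \fref{lem:faithful} and \fref{lem:ZN}), and $\Z_2\times D_{10}$ is not in $\GL_3(2)$ either, we get $|\check N_1|\geq 2^4$.

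The main obstacle is the faithfulness modulo $\Phi(N)$ in Step 3, specifically excluding that $u$ acts trivially on $\check N_1$ while acting nontrivially on $N_1$. For this I would use that $[u,N_1]\leq N\cap Q$ and compare with the images under $t$, again forcing $E^t = E$ in the same way as in Step 2.
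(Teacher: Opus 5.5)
Your proposal is correct and is essentially the paper's proof. \fref{lem:faithful} together with coprime action shows that $O_p(G_1/N)$ acts nontrivially on $N_1/(N_1\cap\Phi(N))$. The only other possible kernel, $E/N$, is excluded by means of $t$: you use that $C_T(N_1/(N_1\cap\Phi(N)))=E$ would then be $t$-invariant, whereas the paper lets $E/N$ invert $O_p(G_2/N)$ and transports back with $\beta_t$. Finally $\GL_3(2)$ contains no $\Z_2\times D_{2p}$.

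The obstacle you flag in Step 3 is not real. Since $T$, $N_1$ and $\Phi(N)$ are all $t$-invariant, your Step 2 applies verbatim to the section. Neither $[u,N]\le N\cap Q$ nor $[u,N_1]\le N\cap Q$ is needed anywhere; in Step 1 the point is just $|E:N|=2$ plus coprime action. In Step 3 you should cover all odd $p$, not only $p=3,5$, by noting that $\GL_3(2)$ has no element of order $2p$.
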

\begin{proof} Assume that $O_p(G_1/N)$ centralizes $N_1/N_1 \cap \Phi(N)$. Then by \fref{lem:faithful} it centralizes $N/\Phi(N)$ and so also $N$. As $|E/N| = 2$ it centralizes  $E$ a contradiction to $C_{G_1}(E) \leq E$ by \fref{not:amalgam}.

 Suppose that $N_1/\Phi(N) \cap N_1$ is centralized by some non-trivial element in $G_1/N$, then this is in $O_2(G_1/N)$. But then it inverts $O_p(G_2/N)$ and so $[N_1/N_1 \cap \Phi(N),O_p(G_2/N)] = 1$. Application of $\beta_t$ shows in both cases that  $N_1 = \langle Z(Q)^T \rangle = Z(Q)$, a contradiction to \fref{lem:ZN}.

As $\GL_3(2)$ does not contain a subgroup isomorphic to $\Z_2 \times D_{2p}$, we see that $|N_1/N_1 \cap \Phi(N)| \geq 2^4$.
\end{proof}

\begin{lemma}\label{lem:Nnonabelian} $N^\prime \not= 1$.
\end{lemma}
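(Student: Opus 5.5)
The plan is to argue by contradiction: assume \fref{hyp:nonstrict} and that $N$ is abelian, so that $N = Z(N)$ and $\Phi(N) = \mho^1(N)$. Then push the structural information from \fref{lem:N4}, \fref{lem:N12} and \fref{lem:L32} until they become incompatible.

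\emph{Step 1.} By \fref{lem:N4} we have $|N : N \cap Q| = 4$. Pick $x, y \in N \setminus Q$ with $\bar x \neq \bar y$. Since $N$ is abelian, $N \cap Q \leq C_Q(x) \cap C_Q(y)$. By \fref{lem:64}(c), $|C_{Q/Z(Q)}(x) \cap C_{Q/Z(Q)}(y)| = 4$, so $|C_Q(x) \cap C_Q(y)| \leq 8$. (When $x, y$ lie in elementary abelian groups $E_x \neq E_y$ of order $64$, this is also \fref{lem:Z3E} together with \fref{lem:essential64}(b).) Hence $|N \cap Q| \leq 8$ and $|N| \leq 2^5$. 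As $N$ is centric by \fref{lem:centric}, $C_S(N) = N$. This forces $C_Q(x) \cap C_Q(y) \leq N$, so in fact $N \cap Q = C_Q(x) \cap C_Q(y)$ has order exactly $8$.

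\emph{Step 2.} Now use \fref{lem:N12}. Because $N = Z(N)$, it says $|\Omega_1(N) : \Omega_1(N) \cap Q| = 2$. Since $N \cap Q$ has order $8$, this gives $|\Omega_1(N)| \leq 2^4$. On the other hand $N_1 = \langle Z(Q)^H \rangle \leq \Omega_1(Z(N)) = \Omega_1(N)$. By \fref{lem:L32}, $|N_1 / N_1 \cap \Phi(N)| \geq 2^4$. Therefore $N_1 = \Omega_1(N)$ has order $16$ and $N_1 \cap \Phi(N) = 1$.

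\emph{Step 3.} For an abelian $2$-group, $\Phi(N) = \mho^1(N) \neq 1$ forces $\Omega_1(N) \cap \mho^1(N) \neq 1$, since any nontrivial $\mho^1(N)$ contains involutions. So $\Phi(N) = 1$ and $N$ is elementary abelian. But then $\Omega_1(Z(N)) = N$, and $|N : N \cap Q| = 4$ by \fref{lem:N4}, contradicting the index $2$ from \fref{lem:N12}. This proves $N' \neq 1$.

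The step I expect to need the most care is Step 1: making sure the bound $|C_Q(x) \cap C_Q(y)| \leq 8$ holds for every choice of cosets, including the exceptional coset $Qs_{i_0}$. For that coset I would use \fref{lem:Z3E} ($C_Q(s_{i_0}) \leq Z_3(S)$) together with \fref{lem:64}(c). If needed, I would instead pick $x, y$ from the two of the three nontrivial cosets of $NQ/Q$ that avoid $Qs_{i_0}$. The remaining steps are only counting with the lemmas already established.
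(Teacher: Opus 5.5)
Your proof is correct, and it takes a different route from the paper's. The paper's proof is one line. With $N=Z(N)$, Lemma~\ref{lem:N4} gives $|N:N\cap Q|=4$ and Lemma~\ref{lem:N12} gives $|\Omega_1(N):\Omega_1(N)\cap Q|=2$. From these two indices the paper concludes that $N/(N\cap Q)$ is cyclic of order $4$, contradicting that $S/Q$ is elementary abelian.

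That conclusion does not follow from the two indices alone. In $\Z_4\times\Z_2$ with $M=\langle(2,0)\rangle$ both index conditions hold, yet the quotient is a fours group. What must be ruled out is a coset $y(N\cap Q)$ with $y^2\in Q$ that contains no involution, and the paper gives no reason why this cannot occur.

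Your argument supplies that reason:
\begin{itemize}
\item The centralizer bound from Lemma~\ref{lem:64}(c) caps $|N\cap Q|$ at $8$, so $|\Omega_1(N)|\le 2^4$.
\item Lemma~\ref{lem:L32} then forces $N_1=\Omega_1(N)$ and $N_1\cap\Phi(N)=1$.
\item Hence the abelian $2$-group $N$ is elementary abelian, and Lemmas~\ref{lem:N4} and~\ref{lem:N12} give conflicting indices.
\end{itemize}
This is essentially the abelian special case of the computation the paper itself carries out later in Lemma~\ref{lem:nonnonstrict}. It costs an extra appeal to Lemma~\ref{lem:L32}, but in return the lemma genuinely follows from what has already been proved.

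Two minor remarks.
\begin{itemize}
\item You only use $|N\cap Q|\le 8$, so you can drop the claim that $N\cap Q=C_Q(x)\cap C_Q(y)$ has order exactly $8$. The inclusion $C_Q(x)\cap C_Q(y)\le N$ needs the observation that this group lies in the abelian group $[Q,x]$ of Lemma~\ref{lem:64}(b). ``Exactly $8$'' is not justified, since centralizing modulo $Z(Q)$ need not lift to centralizing in $Q$.
\item Lemma~\ref{lem:64}(c) holds for every pair $\bar s_i\neq\bar s_j$, including $\bar s_{i_0}$, so the exceptional coset needs no separate treatment.
\end{itemize}
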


\begin{proof} Assume $N = Z(N)$. By \fref{lem:N4} $|Z(N) : Z(N) \cap Q| = 4$. By \fref{lem:N12}  $|\Omega_1(Z(N)) : \Omega_1(Z(N)) \cap Q| = 2$.  Thus $N/N \cap Q$ is cyclic of order 4, which contradicts $S/Q$ elementary abelian.
\end{proof}

\begin{lemma}\label{lem:nonnonstrict} \fref{hyp:nonstrict} does not hold.
\end{lemma}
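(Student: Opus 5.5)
We argue by contradiction and assume \fref{hyp:nonstrict}. The lemmas before the statement already pin down the shape of $N$. By \fref{lem:N4}, $|N : N\cap Q| = 4$. By \fref{lem:N12}, $\Omega_1(Z(N)) = (\Omega_1(Z(N))\cap Q)\langle x\rangle$ with $x \in N_1\setminus Q$. By \fref{lem:Nnonabelian}, $N' \neq 1$. By \fref{lem:L32}, $|N_1/N_1\cap \Phi(N)| \geq 2^4$ and both $G_i$ act faithfully there. The plan is to play these size restrictions against one another until nothing is left.

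\emph{Step 1: bound $N$ and $N_1$.} Since $N\cap Q \leq C_Q(x)$, the two-case bound after \fref{lem:N12} applies: either $|N\cap Q|\leq 2^5$ and $|N|\leq 2^7$, or $|N\cap Q|\leq 2^4$ and $|N|\leq 2^6$. Since $N/N\cap Q$ has order $4$ and embeds in the elementary abelian group $\bar S$, we have $N = (N\cap Q)\langle x, y\rangle$ with $y\notin Q\langle x\rangle$. Then $N' \leq Q$ is generated modulo $\Phi$ by $[N\cap Q, y]$ and $[x,y]$. We use \fref{lem:64}(c) and \fref{lem:essential64} to control $[N\cap Q,y]$ inside $C_Q(x)\cap C_Q(y)$, which has order at most $8$ (or $4$ modulo $Z(Q)$). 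Since $N_1 \leq Z(N)$ and $|\Omega_1(Z(N)):\Omega_1(Z(N))\cap Q|=2$, while $y$ does not centralize $N$, we get $N_1 \leq (C_Q(x)\cap C_Q(y))\langle x\rangle$ up to $\Phi(N)$. Hence $|N_1/N_1\cap\Phi(N)|\leq 2^4$, so equality holds in \fref{lem:L32}.

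\emph{Step 2: identify the action on $W := N_1/N_1\cap\Phi(N)$.} Now $W$ has order $16$, and $G_i/N \cong \Z_2\times D_{2p}$ acts faithfully on it. The only candidates are $p=3$ or $p=5$, inside $\GL_4(2)\cong A_8$. The element $u_1\in Q_1$ acts on $W$ with $[W,u_1]\leq (W\cap \hat Q)$. Because $x\notin Q$ lies in $N_1$ and $W\cap\hat Q$ has index $2$, $u_1$ induces a transvection on $W$. The same holds for its $\beta_t$-image, since $u_1$ is fixed by $\beta_t$. We will show that $\langle O_p(G_1/N), u_1\rangle$ on $W$ must then be $\Sigma_3$ acting as on a $2$-dimensional natural module plus trivial, so $p=3$ and $|[W,O_3(G_i/N)]|=4$. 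We then repeat the argument of \fref{lem:faithful}. Because $t\in Q$, the group $\langle u_1,t\rangle$ acts quadratically, so $[W,O_3(G_1/N)]\cap[W,O_3(G_2/N)]\neq 1$. The central involution of $G_1/N$, namely the image of $u$, must act nontrivially, since $u^t\neq u$. Together these force $\Z_2\times\Sigma_3$ into $\GL_3(2)$ acting on the span $[W,\langle O_3(G_1/N),O_3(G_2/N)\rangle]$ of order at most $8$. This is impossible, or else contradicts faithfulness.

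\emph{Main obstacle.} The hardest part is Step 1, in particular showing that $\Phi(N)$, or $[N\cap Q,y]$, is large enough that $N_1$ really has order at most $2^4$ modulo $\Phi(N)$. We must also handle separately the case where $x$ lies in $Qs_{i_0}$, as opposed to some $E_x$ of order $64$. In the latter case we would first try to mimic \fref{lem:N1struc}: centricity gives $E_x\cap E_y\leq N$, and the structure of $S/E_x\cong$ a Sylow $2$-subgroup of $A_8$ then gives $|N_S(E):E|\geq 4$, contradicting \fref{lem:normal4}. In the $s_{i_0}$-case we use $C_Q(x)\leq Z_3(S)$ from \fref{lem:Z3E}. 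This gives $|N\cap Q|\leq 16$ and $|N| \leq 2^6$, which is small enough that the transvection argument of Step 2 applies directly.
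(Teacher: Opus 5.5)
Your Step 1 already contains what is needed, but you do not use the equality it produces, and you rest the contradiction on Step 2, which does not go through.

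\textbf{The missing finish.} The phrase ``up to $\Phi(N)$'' is unnecessary. $N_1$ is generated by involutions of $Z(N)$, so $N_1\le\Omega_1(Z(N))=(\Omega_1(Z(N))\cap Q)\langle x\rangle$. Moreover $\Omega_1(Z(N))\cap Q\le C_Q(x)\cap C_Q(y)$, which has order at most $8$ by \fref{lem:64}(c). Hence $|\Omega_1(Z(N))|\le 2^4$, and \fref{lem:L32} forces $N_1=\Omega_1(Z(N))$ of order $2^4$ with $N_1\cap\Phi(N)=1$. Therefore $\Phi(N)\cap Z(N)=1$. A nontrivial normal subgroup of the $2$-group $N$ meets $Z(N)$, so $\Phi(N)=1$, contradicting \fref{lem:Nnonabelian}. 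This is exactly the paper's proof. You list $N'\neq 1$ among your ingredients, but Step 2 never uses it. No case split according to $x\in E_x$ versus $xQ=s_{i_0}Q$ is needed. The appeal to \fref{lem:N1struc} is beside the point, since that lemma assumes $N=C_Q(x)\langle x\rangle$.

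\textbf{Gaps in Step 2.} By the equality above, $W$ is $N_1$ itself, and $Z(Q)\le N_1$ survives in it.
\begin{itemize}
\item For $u_1\in Q$ you only know $[N_1\cap Q,u_1]\le Z(Q)$. So you get $|[W,u_1]|\le 4$, and the claim that $u_1$ induces a transvection on $W$ does not follow.
\item In \fref{lem:faithful}, the quadratic action of $\langle u_1,t\rangle$ was on $\hat N=N/N_1\Phi(N)$, where $Z(Q)$ had been factored out. On $W$ you only have $[W,u_1,t]\le Z(Q)$. So $[W,O_3(G_1/N)]\cap[W,O_3(G_2/N)]\neq 1$ is not established.
\item The closing ``impossible, or else contradicts faithfulness'' is not an argument.
\item $p=5$ cannot occur: an element of order $5$ in $\GL_4(2)$ has centralizer of order $15$, so $\Z_2\times D_{10}$ does not embed.
\end{itemize}
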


\begin{proof} Assume that \fref{hyp:nonstrict} holds. By \fref{lem:Nnonabelian}  $Z(N) \not= N$. Further by \fref{lem:N4} $|N : N \cap Q| = 4$. According to \fref{lem:N12} let  $x \in \Omega_1(Z(N)) \setminus Q$. Then $N \cap Q \leq C_Q(x)$ and so $$N = (N \cap C_Q(x))\langle x,y \rangle \text{ with }y^2 \in Q.$$ We have
$N_1 \cap Q \leq C_Q(x)  \cap C_Q(y)$. By \fref{lem:L32} $|N_1/N_1 \cap \Phi(N)| \geq 2^4$. Hence $|Z(N)/Z(N) \cap \Phi(N)| \geq 2^4$ and so by \fref{lem:64}
$C_Q(x) \cap C_Q(y) \leq Z(N)$. By \fref{lem:N12} $|Z(N) : Z(N) \cap Q| = 2$ and then $N_1 = Z(N)$ is of order  $2^4$. By \fref{lem:L32} $\Phi(N) \cap Z(N) = 1$, which yields $\Phi(N) = 1$, a contradiction. This proves the lemma.
\end{proof}

\begin{proposition}\label{prop:strict} Assume \fref{hyp:setup}. Then $G_1/E \cong D_{2p}$  and $G_1/N$ is strictly $2$-constrained.
\end{proposition}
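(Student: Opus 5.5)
The statement will follow by assembling the preceding lemmas. Under \fref{hyp:setup}, \fref{lem:normal4} gives $|T:E| = 2$, so \cite[Lemma 4.1(b)]{An} applies with $E$ in the role of $P_1$ and $T = EQ_1$ in the role of $P$. This yields the groups $G_1$, $G_2$, the isomorphism $\beta_t$ and the subgroup $N$ of \fref{not:amalgam}, with $G_1/E \cong D_{2p}$ for some odd prime $p$; that is the first assertion. Since $N$ is normal in $G_1$ and $G_2$ and contained in $T$, we have $N \leq O_2(G_1) = E$, and what remains is the strict $2$-constraint of $G_1/N$.

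The reduction is to show $O_p(G_1/N) = 1$. Suppose $O_p(G_1/N) \not= 1$. By \fref{lem:centric} $N$ is centric, and by \fref{lem:P1} $G_1/N \cong \Z_2 \times D_{2p}$. Since $\beta_t$ maps $G_1$ onto $G_2$ and fixes $N$ (\fref{not:amalgam}(iv)), the same holds for $G_2/N$. Hence $E/N$ has order $2$, say $E = N\langle u \rangle$. Pick $u_1 \in Q_1 \setminus E$; this is possible since $Q_1 = N_Q(E) \not\leq E$ by \fref{lem:normal1}. As $t$ normalizes $N$ but not $E$, $uN$ is mapped by $t$ into the coset $uu_1N$ after adjusting notation. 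This is exactly \fref{hyp:nonstrict}, which is impossible by \fref{lem:nonnonstrict}. Therefore $O_p(G_1/N) = 1$, and applying $\beta_t$ gives $O_p(G_2/N) = 1$ as well.

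Now $G_1/N$ has the form $O_2(G_1/N)$ extended by $D_{2p}$, so $O_{2'}(G_1/N) = 1$, because any normal $2'$-subgroup would map into $O_p(D_{2p})$ and centralize $O_2(G_1/N)$. It remains to see that $C_{G_1/N}(O_2(G_1/N)) \leq O_2(G_1/N)$. Suppose a $p$-element centralizes $E/N = O_2(G_1/N)$. Then $O_p(G_1/N)$ would be nontrivial, since the preimage of $C_{G_1/N}(E/N)$ is normal and its odd part is a normal $p$-subgroup modulo $E/N$; by coprime action it splits off as a normal $p$-subgroup. This contradicts the previous paragraph, so $G_1/N$ is strictly $2$-constrained.

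The main obstacle is the bookkeeping in the middle step: checking that the abstract output of \cite[Lemma 4.1]{An} really produces the precise configuration of \fref{hyp:nonstrict}. In particular, one must verify that $E/N$ has order $2$ and that $t$ can be chosen so that $(uN)^t = uu_1N$. If this fails, I would first try to replace $t$ by $tu$ or by another element of $Q_2 \setminus Q_1$ before looking for a different argument.
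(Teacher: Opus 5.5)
Your proof is correct and follows the paper's route: Lemma~\ref{lem:normal4} with \cite[Lemma 4.1]{An} gives $G_1/E\cong D_{2p}$, and Lemmas~\ref{lem:P1} and~\ref{lem:nonnonstrict} give $O_p(G_1/N)=1$; the only difference is that you check $C_{G_1/N}(E/N)\le E/N$ by hand (this works once you note that the image of this normal centralizer in $G_1/E\cong D_{2p}$, if nontrivial, contains the $p$-part), whereas the paper simply cites \cite[Lemma 4.2(c)]{An}. The bookkeeping you flagged is automatic: $t\in Q$ normalizes $N$ and $Q_1$ but not $E$, and $Q\cap E\le N$ (otherwise $E=N(E\cap Q)$ would be $t$-invariant), so on the four-group $T/N$ the element $t$ fixes $Q_1N/N=\langle u_1N\rangle$ and therefore sends $E/N$ to $\langle uu_1N\rangle$.
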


\begin{proof} By \fref{lem:normal4} we have that $G_1/E \cong D_{2p}$. By \fref{lem:nonnonstrict} $O_p(G_1/N) = 1$.  The assertion now follows by \cite[Lemma 4.2 (c)]{An}.
\end{proof}

\begin{lemma}\label{lem:Gold} Assume \fref{hyp:setup}. Then one of the following holds
\begin{itemize}
\item[(i)] $G_1/N \cong G_2/N \cong \Sigma_4$; or
\item[(ii)] $G_1/N \cong G_2/N \cong \Z_2 \times \Sigma_4$.
\end{itemize}
\end{lemma}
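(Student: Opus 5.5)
By \fref{prop:strict} we know that $G_1/E \cong D_{2p}$ and that $G_1/N$ is strictly $2$-constrained, i.e.\ $C_{G_1/N}(O_2(G_1/N)) \leq O_2(G_1/N)$. Applying $\beta_t$ gives the same for $G_2/N$, and $\beta_t$ induces an isomorphism $G_1/N \cong G_2/N$ (by \fref{not:amalgam}(iii),(iv), since $N^t = N$). The plan is therefore to regard $(G_1/N, G_2/N)$ as an amalgam with common Sylow $2$-subgroup $T/N$ and with $G_i/O_2(G_i) \cong D_{2p}$. By the definition of $N$ as the largest subgroup of $T$ normal in both $G_i$, no nontrivial subgroup of $T/N$ is normal in both. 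Hence this is a faithful amalgam of Goldschmidt/Fan type with $p$-rank one quotients, and we may invoke the classification in \cite{Gold} (for $p = 3$) and \cite{Fan} (for general odd $p$).

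First I reduce to $p = 3$. In both classifications the faithful amalgams whose parabolics have $D_{2p}$ quotients over $O_2$ and a common Sylow $2$-subgroup of index $2$ in each parabolic modulo $O_2$ force $p = 3$. Concretely, the Goldschmidt list consists of the amalgams $G_1$, $G_1^1$, $G_2$, $G_2^1$, $G_2^2$, $G_2^3$, $G_2^4$, $G_3$, $G_3^1$, $G_3^2$, $G_3^3$, $G_3^4$. Alternatively, Fan's result gives that $O_2(G_i/N)$ is elementary abelian or special, and that an element of order $p$ acts on it as on a module for $\L_2(2)$, forcing $p = 3$. I then go through the Goldschmidt list and retain only the symmetric amalgams, those admitting an automorphism interchanging $G_1/N$ and $G_2/N$ that restricts to an automorphism of $T/N$. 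This automorphism is induced by $t$, which has order $2$ modulo $T$ since $t \in Q_2 \subseteq Q$ and $t^2 \in Z(Q) \leq T$. The symmetric ones are of type $\L_3(2)$ (where $G_i/N \cong \Sigma_4$ or $\Z_2 \times \Sigma_4$ with $T/N \cong D_8$ or $\Z_2 \times D_8$) and of type $\Sp_4(2)$/$\M_{12}$-like. The amalgams of type $G_3$ (from $G_2(2)$ and $\Aut(\M_{12})$-like shapes) are not symmetric, because their two parabolics have non-isomorphic $O_2$'s.

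Second, I exclude the remaining symmetric amalgams with larger $O_2$, namely those of type $\Sp_4(2)$ with $|T/N| = 2^4$ (where $O_2(G_i/N)$ is $Q_8$-like or $2^{1+2}$ with $\Sigma_3$ on top) and the $G_2^3$, $G_2^4$ types of order $2^5$. Here I use the bound on $|T|$. We have $E \not\leq Q$, $Q \not\leq T$ and $|T : E| = 2$, so $|T/N|$ is controlled by $|E/N|$. Moreover $E/N$ is normal in $G_1/N$ with quotient $D_{2p}$, so $E/N = O_2(G_1/N)$, and $E$ contains the elementary abelian part $E \cap Q$ modulo $Z(Q)$. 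I expect that in the larger cases $O_2(G_i/N)$ contains a quaternion or $\Z_4 \times \Z_4$ subgroup on which $u_1 \in Q_1$ acts. This would conflict with $[E,g] \leq Q$ for $g \in Q$ together with the structure of $[Q,s_i]$ from \fref{lem:64}(b), under which these commutators are elementary abelian or at most $\Z_4 \times 2^3$. It would also conflict with $S/Q$ being elementary abelian, by the same reasoning as in \fref{lem:Nnonabelian}.

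The main obstacle is this exclusion step: matching each symmetric Goldschmidt amalgam with larger $T/N$ against the restrictions coming from $Q$. I would first try to use only that $t \in Q$ acts on $T/N$ as an involutory automorphism swapping $E/N$ and $E^t/N$ with $[E,t] \leq Q$. This leaves exactly $\Sigma_4$ and $\Z_2 \times \Sigma_4$, the two assertions (i) and (ii).
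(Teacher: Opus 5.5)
\textbf{Verdict: genuine gap.} Your skeleton is the paper's: strict $2$-constraint from Proposition~\ref{prop:strict}, the isomorphism $G_1/N\cong G_2/N$ coming from $\beta_t$, then \cite{Fan} and \cite{Gold}. The passage from the lists to the conclusion, however, has two concrete defects.

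\emph{Fan's exceptional outcomes are never handled.} Under strict $2$-constraint, \cite[Theorem 1]{Fan} gives either a Goldschmidt amalgam, so $p=3$, or an amalgam of type ${}^2F_4(2)$ or ${}^2F_4(2)'$. It is not the statement you attribute to it (``$O_2$ elementary abelian or special with an $\L_2(2)$-module action''). Your claim that ``both classifications force $p=3$'' is therefore false as stated. The paper removes the ${}^2F_4$ cases because their two parabolics are not isomorphic, which contradicts $G_1/N\cong G_2/N$. You have this isomorphism but never use it at this point.

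\emph{Your reading of Goldschmidt's list is wrong, and your second step rests on it.} The $\Sp_4(2)$ amalgam is $(\Z_2\times\Sigma_4,\ \Z_2\times\Sigma_4)$ over $\Z_2\times D_8$; in $\Sigma_6$ these are $\Sigma_2\times\Sigma_4$ and $\Z_2\wr\Sigma_3$. This is exactly case (ii), not a variant of the $\L_3(2)$ amalgam.

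So your plan to exclude ``type $\Sp_4(2)$ with $|T/N|=2^4$'' would attempt to eliminate (ii) itself, and your mechanism cannot do it. In that amalgam $O_2(G_i/N)$ is elementary abelian of order $8$, so there is no $Q_8$ or $\Z_4\times\Z_4$ on which $u_1$ could act. The paper deliberately keeps (ii) at this stage and only disposes of (i) and (ii) together later, in Proposition~\ref{prop:essential}, using $[E,t]\le Q$, the coset $\bar s_{i_0}$ and $Z_3(S)$.

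The other amalgams you want to rule out by $Q$-specific arguments do not need ruling out. By \cite{Gold}, the only strictly $2$-constrained Goldschmidt amalgams with isomorphic parabolics are the $\L_3(2)$ one ($\Sigma_4$ over $D_8$) and the $\Sp_4(2)$ one ($\Z_2\times\Sigma_4$ over $\Z_2\times D_8$). The lemma is read off directly from this, with no exclusion step. In any case, your exclusion step as written (``I expect \dots'') is not an argument.
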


\begin{proof} We consider the amalgam $(G_1/N, G_2/N)$. As $G_1/E \cong D_{2p} \cong G_2/E^t$, where $EE^t = T$, we may apply \cite[Theorem 1]{Fan}. Recall that $G_1/N \cong G_2/N$. By \fref{prop:strict} we either have a Goldschmidt amalgam or an amalgam of type $^{2}F_4(2)$ or $^{2}F_4(2)^\prime$. In the latter the two  groups $G_1/N$ and $G_2/N$ would  not be isomorphic. Thus we have one of the 15 Goldschmidt amalgams. Application of \cite{Gold} yields that there are only two pairs, which consists of isomorphic constrained groups, the ones from the statement.
\end{proof}

\begin{proposition}\label{prop:essential} \fref{hyp:setup} does not hold. In particular any essential subgroup is normal in $S$.
\end{proposition}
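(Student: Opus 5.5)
We argue by contradiction, assuming \fref{hyp:setup}. By \fref{lem:normal4} we have $|T:E|=2$, so we are in the situation of \fref{not:amalgam}, with $t\in Q_2\setminus Q_1\subseteq Q$. By \fref{lem:Gold} the amalgam $(G_1/N,G_2/N)$ is of type $\Sigma_4$ (the $\L_3(2)$ case) or of type $\Z_2\times\Sigma_4$ (the $\Sp_4(2)$ case). In both cases we read off the structure of $T/N$:
\begin{itemize}
\item in the $\Sigma_4$ case $T/N\cong D_8$, of order $8$;
\item in the $\Z_2\times\Sigma_4$ case $T/N\cong \Z_2\times D_8$, of order $16$.
\end{itemize}
In either case $T/N$ has exactly two maximal elementary abelian subgroups. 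Since $E/N=O_2(G_1/N)$ and $E^t/N=O_2(G_2/N)$ are elementary abelian of the correct order, these two subgroups are $E/N$ and $E^t/N$. Hence $t$, and so $N_S(T)$, interchanges them.

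First I check $N\neq 1$. If $N=1$, then $|E|\le 8$. But $Z(Q)\le E$ and $C_S(E)\le E$, while by \fref{lem:cent} and \fref{lem:64} every involution of $S$ has centralizer of order far larger than $8$; this is a contradiction. So $N\neq1$.

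Next I use $t\in Q$. Then $[E,t]\le Q$, and $[E,t]N/N$ is the commutator of $E/N$ with an element interchanging $E/N$ and $E^t/N$ in $D_8$ (respectively $\Z_2\times D_8$). Such a commutator contains the image of an element of order $4$ of $T/N$. So $[E,t]\le Q$ contains an element $a$ with $a^2\notin N$. As $a\in Q$ has order $4$, we get $a^2\in Z(Q)$, and therefore $Z(Q)\not\le N$, that is, $Z(Q)\cap N=1$. Consequently $(Q\cap N)'\le Z(Q)\cap N=1$ and $\Phi(Q\cap N)=1$, so $Q\cap N$ is elementary abelian. Hence $|Q\cap N|\le 2^5$, the maximal order of an elementary abelian subgroup of the extraspecial group $Q$ of order $2^9$. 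Combined with $|N:N\cap Q|\le |S:Q|=8$ this gives $|N|\le 2^8$.

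The main step, and the place I expect the real difficulty, is turning $Z(Q)\cap N=1$ into a contradiction. My first attempt is the following. $N$ is normalized by $T$ and by $t$, so $N$ is a nontrivial normal subgroup of $P=T\langle t\rangle$. Then $N\cap Z(P)\neq 1$, and I want to show $Z(P)\le Z(Q)\,C_S(Q\cap P)$. For this I would use that $Q\cap P\ge Q_2>Q_1$ is large in $Q$, so by \fref{lem:64}(b),(c) its centralizer in $S$ is small, essentially contained in $Z_3(S)$ via \fref{lem:Z3} and \fref{lem:Z3E}. The aim is to force an involution of $N$ into $C_Q(Q\cap P)$, and from there into $Z(Q)$.

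If that fails, I would argue by orders instead. From $|T:E|=2$ and $|T/N|\in\{8,16\}$ we get $|E:N|\in\{4,8\}$. Since $Z(Q)\le E$ but $Z(Q)\not\le N$, the group $Z(Q)N/N$ is a nontrivial subgroup of $Z(T/N)\cap E/N$. On the other hand, $Z(Q)$ is centralized by $t$ and by $T$, and in $T/N$ the only subgroup of $E/N\cap Z(T/N)$ that could be normalized by both $G_1/N$ and $G_2/N$ is trivial. I would then compare $C_E(Z(Q))=E$ with the action of $O^2(G_1/N)$ on $E/N$ (natural $\L_3(2)$- or $\Sp_4(2)$-type module). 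Through \fref{lem:elab}, \fref{lem:essential64} and the bound $|N|\le 2^8$, this should force $|E|$ too small for $C_S(E)\le E$.

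Finally, the \emph{in particular} part. Let $E$ be essential and suppose $E$ is not normal in $S$. Take $U$ to be a model of $N_{\mathcal F}(E)$, or rather the subgroup generated by the conjugates of $T=N_S(E)$. By \fref{lem:strongembedded} and the discussion following it, $U$ satisfies \fref{hyp:setup}(i). Moreover $T\neq S$ because $E$ is not normal in $S$, and $T$ is not normal in $U$ because $O_2(\Out_{\mathcal F}(E))=1$. So \fref{hyp:setup} holds, contradicting the first part. Hence every essential subgroup is normal in $S$.
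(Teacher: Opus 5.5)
Your reduction agrees with the paper's proof up to $Z(Q)\cap N=1$. You show that $T/N\cong D_8$ or $\Z_2\times D_8$ has exactly the two maximal elementary abelian subgroups $E/N$ and $E^t/N$, that $t$ swaps them, that $[E,t]\le Q\cap T=Q_1$ contains an element $v$ of order $4$ modulo $N$, and that $v^2\in Z(Q)\setminus N$.

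There is a gap after that point. The decisive step, turning $Z(Q)\cap N=1$ into a contradiction, is never carried out, and neither of your sketches works:
\begin{itemize}
\item In the first, $Z(P)\le C_S(Q\cap P)$ holds trivially. Getting $C_Q(Q\cap P)\le Z(Q)$ would need $Q\le P=T\langle t\rangle$, since $|C_Q(Q\cap P)|=2|Q:Q\cap P|$. You do not know $Q\le P$; you only have $Q_1<Q_1\langle t\rangle\le Q_2$. Nor is there any reason for an involution of $N\cap Z(P)$ to lie in $Q$, since $N\not\le Q$ is possible.
\item In the second, $Z(Q)N/N$ has no reason to be invariant under $G_1/N$ or $G_2/N$, so the remark about subgroups normalized by both is beside the point. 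Also, $E/N$ is not a natural $\mathrm{L}_3(2)$- or $\mathrm{Sp}_4(2)$-module. It is the $2$-dimensional module for $G_1/E\cong\Sigma_3$, plus a trivial summand in the $\Z_2\times\Sigma_4$ case. No mechanism is given that forces $|E|$ to be small.
\end{itemize}
Your argument for $N\neq 1$ is also loose: large centralizers of single involutions say nothing about $C_S(E)$. The paper's route does not need $N\neq1$.

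The missing idea is to use which coset of $Q$ the elements of $E$ lie in.
\begin{itemize}
\item Take $y\in E$ with $yN\in E/N\setminus Z(T/N)$. Then $[y,t]\in[Q,y]$ has order $4$ modulo $N$, so $[Q,y]$ is not elementary abelian. Lemma~\ref{lem:64}(b) then forces $yQ=\bar{s}_{i_0}$, and in particular $y\notin Q$.
\item Applying this to $E$, and via $t$ to $E^t$, shows that every involution of $Q_1N/N$ is central in $T/N$. Hence $Q_1N/N\le\langle vN, Z(T/N)\rangle$. So $Q_1N/N$ centralizes $vN$, and $\Omega_1(Q_1N/N)\le (E\cap E^t)/N$. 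The paper phrases this through the uniqueness of the coset $\bar{s}_{i_0}$, also applied to $wu$ for $w$ in the preimage of $Z(G_1/N)$.
\item $Z_2(S)\le Q_1$ because $Z(Q)\le E$. Since $Z_2(S)$ is elementary abelian, the previous step gives $Z_2(S)\le E$. Hence $Z_3(S)\le Q_1$.
\item Then $[v,Z_3(S)]\le N\cap Z(Q)=1$. By Lemma~\ref{lem:Z3}, $v\in C_S(Z_3(S))=Z_3(S)$, which is elementary abelian. This contradicts $o(v)=4$.
\end{itemize}

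Your deduction of the \emph{in particular} part from the first statement is fine.
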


\begin{proof} Assume \fref{hyp:setup} holds. Then we have the situation of \fref{lem:Gold}. As $|T : E| = 2$ we have that $E/N = O_2(G_1/N)$. Then $E^t/N$ is the other maximal elementary abelian subgroup of $T/N$. As $t \in Q$, we have $[E,t]N/N \leq Q_1N/N$.  As $E/N \cup E^t/N = \Omega_1(T/N)$, we have that $Q_1N/N$ contains some element $v$ of order 4. In particular
\begin{gather*}\label{eq:1} 1\not= Z(Q)N/N \leq T/N.\tag{1}\end{gather*}
  We have some element $u \in E \setminus Q_1$ such that $(uN)^t = (uvN)$.  Now $u$ inverts some element $v$ of order 4 in $[Q,u]$ with $v^2 \in Z(Q)$.  By \fref{lem:64} the coset $uQ$ is uniquely determined. In the language of that lemma $uQ = \bar{s}_{i_0}$.
Suppose $G_1/N \cong \Z_2 \times \Sigma_4$. Then for any $w$  in the preimage of $Z(G_1/N)$ also $(wuN)$ inverts some element of order four in $[Q,wu]$. As there is a unique coset with this property we have $wN \in Q_1N/N$. Hence in either case
\begin{gather*}\label{eq:2}Q_1N/N\text{  centralizes }vN,\tag{2}\end{gather*}
 i.e. $Q_1N/N = \langle v, Z(T/N) \rangle$. Thus
\begin{gather*}\label{eq:Q1} \Omega_1(Q_1N/N) = (E \cap E^t)/N.\tag{3}\end{gather*}

As $C_S(E) \leq E$, $Z(Q) \leq E$, we have that $Z_2(S) \in Q_1$ and so $[v,Z_2(S)] \leq N$. Hence by \fref{eq:Q1} $Z_2(S) \leq E$. Now $Z_3(S) \leq Q_1$. As $v \in Q$ we have by \fref{eq:2}  $[v,Z_3(S)] \leq N \cap Z(Q) = 1$. But this contradicts \fref{lem:Z3}. This proves the first part of the proposition.

Now assume that $E$ is some essential subgroup of $\mathcal F$, which is not normal in $S$. Then by the remark after \fref{lem:strongembedded} $E$ satisfies \fref{hyp:setup}(i). If $E$  would not be normal in $S$ it also satisfies \fref{hyp:setup}(ii). Hence this implies that all essential subgroups must be normal in $E$.
\end{proof}

\section{ Essential subgroups of $S$ normal in $S$}\label{sec:essnormal}

In this section we will determine which subgroups $E$ normal in $S$ could be essential and how does their normalizer looks like. Recall that by \fref{prop:essential} these then will be all the potential essential subgroups. This is by far not as difficult as the proofs in \fref{sec:notnormal} as we know the exact structure of $T = N_S(E) =S$.

\begin{lemma}\label{lem:noA5} Let $U$ be some group, $F^\ast(U) = O_2(U)$, $U/O_{2,2^\prime}(U) \cong \PSL_2(q)$, $\PSU_3(q)$ or $Sz(q)$, $q = 2^n$,  $n \geq 2$.   Then a Sylow $2$-subgroup of $U$ is not isomorphic to $S$.
\end{lemma}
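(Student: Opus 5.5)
Suppose, for a contradiction, that $U$ is as in the statement and $S \in \Syl_2(U)$. Put $E = O_2(U)$ and $U_1 = U/O_{2,2^\prime}(U)$. Then $S/E$ is isomorphic to a Sylow $2$-subgroup of $U_1$. That quotient is elementary abelian of order $q$ for $\PSL_2(q)$, and special of order $q^2$ or $q^3$ for $\Sz(q)$ or $\PSU_3(q)$. Since $F^\ast(U) = O_2(U)$, we have $C_S(E) \leq E$, and $E$ is normal in $S$ with $S/E$ nonabelian of exponent at most $4$ in the last two cases. Also $S \neq E$ and $|S| = 2^{12}$.

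The key tool is \fref{lem:wc}: $Q$ is the unique subgroup of $S$ isomorphic to an extraspecial group of order $2^9$. It follows that $Q$ is weakly closed, so $Z(Q) = Z(S)$ is normalized by $N_U(S)$. We split according to how $E$ sits relative to $Q$.

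Case $Q \leq E$. Then $S/E$ is a quotient of $\bar S$, hence elementary abelian of order at most $8$. This forces $U_1 \cong \PSL_2(4)$ or $\PSL_2(8)$. A Sylow $2$-subgroup $S/E$ of $U_1$ is inverted-and-permuted transitively by an element $\omega$ of odd order in $N_U(S)$. Indeed, in $\PSL_2(2^n)$ the Borel subgroup acts transitively on the involutions of the unipotent radical, and by the Frattini argument such an $\omega$ normalizes $S$. Now $\omega$ induces an automorphism of $S$ of order $3$ (for $n=2$) or $7$ (for $n=3$). The order $7$ case is impossible by \fref{lem:aut}. For $n = 2$, $\omega$ acts on $\bar S$ with $|[\bar S,\omega]| = 4$ and $C_{\bar S}(\omega) = \langle \bar s_{i_0}\rangle$ by \fref{lem:aut1}, so $S/E$ must be $[\bar S,\omega]$ and $E = \langle Q, s_{i_0}\rangle$. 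Then $O^2(U)$ acts on $E$ with $\omega$ nontrivial on $S/E$ while centralizing $Q/\Phi(S)$ by \fref{lem:aut1}. I would derive a contradiction by looking at an involution $s \in S\setminus E$ and its conjugates under $U$. By \fref{lem:representations}, $[E/\Phi(E),s]$ must have order at least $4$ in a nontrivial $\PSL_2(4)$-module, while by \fref{lem:64} the modules $[Q/Z(Q),s_i]$ are too large and not arranged in a way compatible with a $\PSL_2(4)$-module in which elements of $Q$ commute. The more robust approach is to note that $Z(S) = Z(Q)$ must then be normal in $U$, since $Q$ is characteristic in $E$ (weakly closed in $S$, hence in $E$). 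Then $U$ acts on $Q$, so $U/C_U(Q/Z(Q))$ embeds in $O^+_8(2)$ normalizing $Q$ with $S/Q$ a $2$-group of order $8$. Meanwhile $C_U(Q/Z(Q))$ is a $2$-group by the argument of \fref{lem:aut}. So $U/Q$ embeds in $\Omega_8^+(2)$ with Sylow $2$-subgroup $\bar S$, and $\PSL_2(4) \cong \Omega_4^-(2)$ would have to act on $Q/Z(Q)$ with $\bar S$ elementary abelian of order $8$. Because $|S/E| = 4$, this is impossible.

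Case $Q \not\leq E$. Since $E \unlhd S$ and $S/Q$ is abelian, $EQ \unlhd S$. Here I would use that $[E,Q] \leq E \cap Q$, so $Q E/E$ acts quadratically on $E/\Phi(E)$ modulo $Z(Q)$. As in \fref{lem:normal2} and \fref{lem:normal4}, \fref{lem:representations} bounds $|QE/E|$ by $|E/E\cap Q|$. Moreover, $E \not\leq Q$ forces $|E : E \cap Q| \leq 8$, hence $q \leq 8$. Then $|S/E| \geq q$ together with \fref{lem:elab} (elementary abelian sections over $Z(Q)$ of order at least $2^6$ lie in $Q$) should give that the $U$-module $[E,O^2(U)]$ is too small. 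I would compare $|QE/E| \geq 4$ against $|C_Q(E)|$ via \fref{lem:64}(c), as in the proof of \fref{lem:normal3}, to get a contradiction.

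I expect the main obstacle to be the case $Q \leq E$ with $U_1 \cong \PSL_2(4)$. There the $3$-element of $\Aut(S)$ really exists, and the contradiction must come from the action on $Q/Z(Q)$ rather than from counting automorphisms.
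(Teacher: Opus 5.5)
There are two gaps: the final step of your case $Q\le E$ is missing, and the case $Q\not\le E$ is not proved (though it turns out to be unnecessary).

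\textbf{The end of the case $Q\le E$.} Up to $U/O_{2,2'}(U)\cong\PSL_2(4)$ and $O_2(U)=Q\langle s_{i_0}\rangle$ your reduction agrees with the paper. After that, the claim that $\PSL_2(4)$ cannot act on $Q/Z(Q)$ with $\bar S$ elementary abelian of order $8$ ``because $|S/E|=4$'' is not an argument. In fact no contradiction can come from order and Sylow data alone. $O^+_8(2)$ does contain $\Z_2\times A_5$ with elementary abelian Sylow $2$-subgroup of order $8$: write the space as $V_1\perp V_2$ with $V_1\cong V_2$ of $-$ type, let $A_5\cong\Omega^-_4(2)$ act diagonally, and let the involution $t$ interchange $V_1$ and $V_2$. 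In that example the preimage in $Q$ of $[Q/Z(Q),t]$ is elementary abelian.

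What excludes the configuration for $S$ is Lemma~\ref{lem:64}(b): $A:=[Q,s_{i_0}]\cong\Z_4\times\Z_2\times\Z_2\times\Z_2$. This is the idea the paper's proof rests on, and it runs as follows.
\begin{itemize}
\item $A=[Q,O_2(U)]$ is normal in $U$. Its characteristic series $1<\{a^2 : a\in A\}<\Omega_1(A)<A$ has factors of orders $2,8,2$. Since $5$ does not divide $|\GL_3(2)|$, every element of order $5$ centralizes $A$, so $C_U(A)$ covers $U/O_{2,2'}(U)$.
\item As $A$ is abelian, $A/Z(Q)$ is a maximal totally isotropic subspace of $Q/Z(Q)$, so $Q/A$ is its dual. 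Coprime action then shows that $C_U(Q)$ also covers $U/O_{2,2'}(U)$.
\item Hence $|C_S(Q)|\ge 4$, contradicting $C_S(Q)=Z(Q)$.
\end{itemize}
Your remark that $C_U(Q/Z(Q))$ is a $2$-group is correct. However, the reason is $C_U(Q)=Z(Q)$, which follows from $C_S(Q)=Z(Q)$ and $O(U)=1$; it is not the argument of Lemma~\ref{lem:aut}.

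\textbf{The case $Q\not\le E$.} You leave this as a plan (``should give'', ``I would compare''), so it is not proved. The case split is unnecessary, because the torus argument you use inside the first case needs no assumption on $Q$.
\begin{itemize}
\item Since $C_U(S)\le C_U(O_2(U))\le O_2(U)$, the group $N_U(S)/S$ embeds in $\Out(S)$.
\item By the Frattini argument, $N_U(S)/S$ maps onto the torus of the Borel subgroup of $U/O_{2,2'}(U)$. This torus has order $q-1$, or $(q^2-1)/\gcd(3,q+1)$ for $\PSU_3(q)$.
\item Lemma~\ref{lem:aut} therefore excludes $\Sz(q)$, $\PSU_3(q)$ and $q>4$ at once, wherever $Q$ lies.
\item Then $S/E$ is a fours group, so $\Phi(S)\le E$. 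A $3$-element $\omega\in N_U(S)$ acts fixed-point-freely on $S/E$, but centralizes $Q/\Phi(S)$ by Lemma~\ref{lem:aut1}. Hence $QE/E=1$, i.e.\ $Q\le E$.
\end{itemize}
So your first case, completed with the argument above, is the whole proof.
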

\begin{proof} Assume false, i.e. $S$ is a Sylow 2-subgroup of $U$. Then $N_U(S)/S$ involves a cyclic group $\Z_{q-1}$. In case of $\PSU_3(q)$
it even involves a cyclic group of order $(q^2-1)/\gcd (3,q+1)$.  By \fref{lem:aut} we receive that $U/O_{2,2^\prime}(U) \cong \L_2(4)$. As $|Q/\Phi(S)| = 2$ by \fref{lem:frattini} we receive that $Q \leq O_2(U)$. Now $Q$ is normal in $U$  by \fref{lem:wc} and so $(U/Q)/O(U/Q)  \cong \Z_2 \times \L_2(4)$. Now by \fref{lem:aut} an element of order three in $N_U(S)$ induces two orbits of length three on the elementary abelian groups of order $64$ in $S$. Thus $O_2(U) = \langle Q,s_{i_0}\rangle$ in the notation of \fref{lem:64}. By \fref{lem:64} $[Q,s_{i_0}] \cong \Z_4 \times \Z_2\times \Z_2 \times \Z_2$. As $\L_2(4)$ cannot act non trivially on such a group it shows that $C_U([Q,s_{i_0}])$ covers $U/O_{2,2^\prime}(U) \cong \L_2(4)$. But then also $C_U(Q)$ covers $U/O_{2,2^\prime}(U)$, a contradiction as $C_S(Q) =Z(Q)$.
\end{proof}

\fref{lem:noA5} together with \fref{lem:strongembedded} shows that for an essential subgroup $E$, which is normal in $S$  that $S/E$ is cyclic or quaternion. We will investigate these situations in the following lemma and show that $|S/E| = 2$.

\begin{lemma}\label{lem:essentialnormal1} Let $U$ be a group with Sylow $2$-subgroup isomorphic to $S$. Assume that $F^\ast(U) = O_2(U)$,  $S/O_2(U)$ is cyclic or quaternion and $U/O_2(U) = O(U/O_2(U))(S/O_2(U))$, $[S/O_2(U),O(U/O_2(U))] = O(U/O_2(U))$, then $|S :O_2(U)| = 2$.
\end{lemma}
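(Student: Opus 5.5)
We argue by contradiction: assume $|S:E|\geq 4$, where $E = O_2(U)$. Set $P = O(U/E)$ and $T = S/E$, so $T$ is cyclic of order at least four or quaternion. Since $[T,P]=P$, a Frattini argument lets us replace $P$ by a $T$-invariant Sylow $p$-subgroup not centralized by $\Omega_1(T)$. Shrinking $U$ if necessary, we are in the solvable situation described after \fref{lem:strongembedded}: $X = PT$ acts faithfully on $E$, $T$ acts irreducibly on $P/\Phi(P)$, and the involution $t$ of $T$ inverts $P/\Phi(P)$. The point of the argument is that $T$ is an abelian or quaternion quotient of $S$ of order at least $4$, and the structure of $S$ worked out in Section 3 leaves no room for it.

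\textbf{Step 1: locate $Q$ relative to $E$.} First I will show $Q \leq E$. We have $|S/\Phi(S)| = 16$ by \fref{lem:frattini}, and $T/\Phi(T)$ has order $2$ or $4$, so $E\Phi(S)$ has index at most $4$ in $S$. The aim is to show that the image of $Q$ in $T$ is contained in $\Phi(T)$, or at least that $QE/E$ acts quadratically (by transvections) on suitable sections of $E$. \fref{lem:wc} shows that $Q$ is weakly closed, hence $E \cap Q$ is $S$-invariant. The aim is to show that $QE/E$ is normal in $X$, hence centralizes $P$, since $X$ has no nontrivial normal $2$-subgroup intersecting $T$ non-trivially unless $P$ is centralized. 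For the quaternion and cyclic case I will use that $\Omega_1(T)=\langle t\rangle$ must invert $P$. If $Q \not\leq E$, then $\langle tE\rangle \leq QE/E$, which forces $t$ to be represented in $Q$. Then the elements of $Q$ act on $E/Z(Q)$ quadratically with small commutator.

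\textbf{Step 2: bound $[E,t]$ and apply \fref{lem:representation1}.} Take $t$ to be an involution representative. If $t \in QE$ with $t\in Q$, then $[E,t] \leq Z(Q)$ modulo $E \cap Q$ considerations, giving $|[V,t]| \leq 8$ on $V = E/\Phi(E)$ or $V = \Omega_1(Z(E))$. If instead $t \notin QE$, then $tQ=\bar s_i$, and \fref{lem:64}(b),(c) give $|[Q/Z(Q),t]|=16$. I will then pass to a chief factor $V$ of $X$ on $E$ on which $P$ acts nontrivially. In either case I expect $1\neq|[V,t]|\le 8$, which restricts $X$ by \fref{lem:representation1} to $5:4$, $3^2:4$, or $3^{1+2}:(4,8,Q_8)$. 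In particular $|T|\le 8$, so $|E|\ge 2^9$.

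\textbf{Step 3: eliminate the survivors.} Now $|S:E|\in\{4,8\}$ and $T$ is $\Z_4$, $\Z_8$ or $Q_8$. Since $\bar S = S/Q$ is elementary abelian of order $8$ (\fref{lem:cent}), $T$ is not a quotient of $\bar S$. Hence $Q\not\le E$, and $QE/E$ contains $\Phi(T)\ni t$. So $t$ is represented by $q\in Q$, and $[E,q]\le E\cap Q$ with $[E\cap Q,q]\le Z(Q)$. The element $y$ of order four with $y^2=t$ has $y\notin QE$ in the cyclic case, because $T/(QE/E)$ is elementary abelian. I will derive the contradiction from \fref{lem:representation1}, which requires $|[V,O_p(X)]|\ge 16$, while $t$, being an element of $Q$, acts on the $Q$-sections with $[V,t]$ contained in an elementary abelian section. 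Concretely, I will compare with \fref{lem:elab}: the $P$-chief factor $[V,P]$ of order $16$ or $64$ lies, modulo $Z(Q)$, in elementary abelian subgroups of $S/Z(Q)$. The extraspecial $3^{1+2}$ case needs an extraspecial $2^{1+6}_+$ section, which is impossible as in the proof of \fref{lem:normal4}. The $5:4$ and $3^2:4$ cases need $D_8\ast D_8$ with $[X,t]\cong\Z_4\times\Z_2$, which contradicts $[E,t]\le Q$ being elementary abelian; this parallels the final paragraph of \fref{lem:normal4}.

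\textbf{Main obstacle.} I expect Step 1/2 to be the main difficulty: showing rigorously that the involution of $T$ has a representative in $Q$ and that it acts with $|[V,t]|\le8$ on the relevant $P$-chief factor, since $E$ need not contain $Z_3(S)$. My first attempt will be to prove $Z_2(S)\le E$ and $Z_3(S)\le E$ via the no-transvection argument, and then to use \fref{lem:Z3} to control $E\cap Q$.
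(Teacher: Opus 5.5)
Your proposal has a genuine gap: Steps 1 and 2 state what you intend to prove, not proofs. The bound $1\neq|[V,t]|\le 8$ on a $P$-chief factor is what would license \fref{lem:representation1}, but it is only ``expected'', and you flag it yourself as the main obstacle. Step 1 announces $Q\le E$ without an argument. Such an argument would itself have to be the contradiction, since Step 3 correctly shows $Q\not\le E$. The eliminations of $5:4$, $3^2:4$ and $3^{1+2}:(4,8,Q_8)$ in Step 3 are asserted by analogy with \fref{lem:normal4}. That proof works in the setting of \fref{hyp:setup}, with $E$ not normal in $S$, and you do not check that it transfers. So the argument is missing exactly where the work would be. More to the point, the whole representation-theoretic route is unnecessary: $O(U/E)$ is used only through $C_S(E)\le E$.

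The missing idea is to treat the statement as a fact about quotients of $S$.

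First, $S'=\Phi(S)$ by \fref{lem:frattini}, so every abelian quotient of $S$ is elementary abelian. Hence $T=S/E$ cannot be cyclic of order at least $4$, and $T$ is quaternion.

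Your own observation in Step 3, that $T$ is not a quotient of $\bar S$, gives $Q\not\le E$. Since $F^\ast(U)=E$, we have $Z(Q)=Z(S)\le C_S(E)\le E$. So $QE/E$, an image of $Q/Z(Q)$, is a nontrivial elementary abelian subgroup of $T$. In a quaternion group this forces $QE/E=Z(T)$, of order $2$.

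Then $T/Z(T)\cong S/QE$ is a quotient of $\bar S$, hence elementary abelian. This forces $T\cong Q_8$, so $|S:QE|=4$ and $|QE:Q|=2$.

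By \fref{lem:64}, the six elementary abelian subgroups of order $64$ lie over six distinct involutions of $\bar S$. So some such $E_i$ satisfies $E_i\not\le QE$. But $E_iE/E$ is an elementary abelian subgroup of $Q_8$, so it lies in $Z(T)=QE/E$. That gives $E_i\le QE$, a contradiction.

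This is the paper's proof. It completes your Step 3 in a few lines and needs nothing from Steps 1 and 2.
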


\begin{proof} Assume  that $|S/O_2(U)| > 2$. By \fref{lem:frattini} $S/S^\prime$ is elementary abelian, so $S/O_2(U)$ must be quaternion. As $S/Q$ is elementary abelian, we see that $Q \not\leq O_2(U)$ and $S/O_2(U)$ is a quaternion group of order eight. Further $Z(Q) \leq O_2(U)$ and so $QO_2(U)/O_2(U)$ is elementary abelian. Thus  $|QO_2(U)/O_2(U)| = 2$ and $|S : QO_2(U)| = 4$. This shows $|\overline{O_2(U)}| =2$. By \fref{lem:64} there is some elementary abelian group $E$ of order 64 with $E \not\leq O_2(U)Q$. But this contradicts the structure of a quaternion group.
\end{proof}

\begin{lemma}\label{lem:essentialnormal2} Let $U$ be as in \fref{lem:essentialnormal1} then $U/O_2(U) \cong \Sigma_3$. Further
\begin{itemize}
\item[(1)] If $Q \not\leq O_2(U)$, then $Z_2(S)$ is normal in $U$ and $C_U(Z_2(S)) = O_2(U)$. Furthermore $U$ normalizes each of the six elementary abelian subgroups $E$ from \fref{lem:64}.
    \item[(2)] If $Q \leq O_2(U)$ then $U$ normalizes exactly three of the elementary abelian subgroups of order $64$ in $S$ as described in \fref{lem:64} and $\bar{s}_{i_0} \not\in \overline{O_2(U)}$. We have $\overline{O_2(U)} = \langle \bar{s}_i, \bar{s}_j\rangle$, where $\bar{s}_i$, $\bar{s}_j$ and $\bar{s}_i\bar{s}_j$ all correspond to elementary abelian groups of order $64$. Further $[O_2(U),U] \leq Q$.
\end{itemize}
\end{lemma}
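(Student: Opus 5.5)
By \fref{lem:essentialnormal1} we have $|S:O_2(U)|=2$. Write $P=O(U/O_2(U))$, so $U/O_2(U)=P\langle s\rangle$ with $s$ an involution. By the remarks after \fref{lem:strongembedded}, $s$ normalizes some $r$-subgroup $R$ of $P$ which it does not centralize, and inverts $R/\Phi(R)$. First, $R$ acts faithfully on $S/\Phi(S)$ modulo the kernel. Indeed, $R$ acts on $O_2(U)/\Phi(O_2(U))$, and coprime action plus $F^\ast(U)=O_2(U)$ gives faithfulness on that quotient. Since $s$ induces at most quadratic action of a very restricted kind (its commutator space is bounded using \fref{lem:64}(b),(c)), every element of $R$ inverted by $s$ has order $3$. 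I would then try to show $|P|=3$ by the argument pattern of \fref{lem:representation1}. The commutator $[O_2(U)/\Phi(O_2(U)),s]$ is small, so $\langle s,s^x\rangle$ for $x\in P$ is a dihedral group whose odd part is a $3$-group acting on a small space. The condition $P=[P,s]$ then forces $P$ to be generated by elements of order $3$ inverted by $s$. Comparing with \fref{lem:aut}, where a Sylow $3$-subgroup of $\Aut(S)$ has order $3$, requires passing to $N_U(S)$-type information, so I would instead argue via the Frattini argument that $P$ is cyclic of order $3$. This gives $U/O_2(U)\cong\Sigma_3$. I expect this reduction to be the main obstacle; the route I would try first is to show that $P$ acts on $Q/Z(Q)$ or on $Z_2(S)$ with small commutators.

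For (1), suppose $Q\not\le O_2(U)$. Then $O_2(U)Q=S$ and we may take $s\in Q$. The element $s$ induces on $O_2(U)$ an action with $[O_2(U),s]\le Q$ and $[Q\cap O_2(U),s]\le Z(Q)$. Now $Z_2(S)\le O_2(U)$ is centralized by $O_2(U)\cap Q$ only modulo small index. I would show $\langle Z(Q)^U\rangle=Z_2(S)$. Since $s$ centralizes $Z_2(S)$ modulo $Z(Q)$, $Z(Q)^U$ generates a $U$-invariant group on which $O^2(U)$ acts, and \fref{lem:representations} together with the order-$3$ element forces it to be of order $4$. Hence this group is $Z_2(S)$, with $U/C_U(Z_2(S))\cong\Sigma_3$, so $C_U(Z_2(S))=O_2(U)$. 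For the six groups $E$: each satisfies $Z_2(S)\le E$ by \fref{lem:essential64}(a). The element of order $3$ can be chosen to act on $O_2(U)$, and by \fref{lem:elab} and \fref{lem:64}(a) the $E_i$ are the only elementary abelian subgroups of order $64$. I would show each $E_i$ lies in $O_2(U)$: since $s\in Q$, $EQ/Q$ is a single involution image, and $E_i\le O_2(U)$ unless $\bar{E_i}=\bar S$ mod $O_2(U)$, which I rule out by counting. The elementary abelian subgroups of order $64$ in $O_2(U)$ are then permuted by $U$, and they are normalized by $S$ and by the order-$3$ element, as in the proof of \fref{lem:essential64}(b).

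For (2), suppose $Q\le O_2(U)$. Then $|\overline{O_2(U)}|=4$, and the order-$3$ element $\rho$ acts on $S/O_2(U)$ trivially and on $\overline{O_2(U)}$. By \fref{lem:wc}, $Q$ is normal in $U$, so $U/Q$ acts on $\bar S$ with $[\bar S,U]\le\overline{O_2(U)}$. Since $\rho$ centralizes $Q/\Phi(S)$ by the argument of \fref{lem:aut1}, and $\rho$ acts fixed point freely on $\overline{O_2(U)}$, we get $[O_2(U),U]\le Q$. The three nontrivial elements of $\overline{O_2(U)}$ are permuted transitively by $\rho$. Because $\bar s_{i_0}$ is distinguished by \fref{lem:64}(b), it cannot lie among them, so all three correspond to elementary abelian groups of order $64$. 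These three groups are therefore permuted by $U$ and normalized by $S$; if $\rho$ permutes them cyclically, $U$ normalizes their product but not each of them. Here I would need to reconcile with the claim that each of the three is normalized. I would check that $\rho$ fixes each coset class and swaps the two $E$'s over each $\bar s_i$ appropriately, so that exactly three of the six $E$ are $U$-invariant, using the $A_8$ description in \fref{lem:A8}.
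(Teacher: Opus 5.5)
There are genuine gaps. The most serious is that the headline claim $U/O_2(U)\cong\Sigma_3$ is never proved. You list strategies and yourself flag this as the main obstacle. The pattern of \fref{lem:representation1} is not available here: it uses an element $y$ of order $4$ with $y^2=t$, whereas $|S:O_2(U)|=2$. The Frattini argument also does not bound $|O(U/O_2(U))|$.

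In the paper the bound comes out of the case analysis. In case (1) the decisive step, which your sketch of (1) also omits, is that $Z(Q)$ is not $U$-invariant. Take an involution $x\in Q\setminus O_2(U)$ and $\rho$ of odd order with $\rho^x=\rho^{-1}$, and suppose $[Z(Q),\rho]=1$. Then $(Q\cap O_2(U))^\rho/Z(Q)$ is elementary abelian of order $2^7$, so by \fref{lem:elab} $\rho$ normalizes $Q\cap O_2(U)$. Now $\rho\in\langle x,x^\rho\rangle$, and $x$ centralizes both $(Q\cap O_2(U))/Z(Q)$ and $O_2(U)/(Q\cap O_2(U))$. Coprime action then gives $[O_2(U),\rho]=1$, contradicting $F^\ast(U)=O_2(U)$. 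Hence $Z(Q)\neq Z(Q)^\rho\le Z(O_2(U))=Z_2(S)$, so $U$ induces $\Sigma_3$ on $Z_2(S)$, and $C_U(Z_2(S))$ is bounded via $N_U(S)$ and \fref{lem:aut}.

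Your appeal to \fref{lem:representations} cannot do this job, since that lemma concerns $\PSL_2(q)$, $\Sz(q)$ and $\PSU_3(q)$. Once $O_2(U)=C_S(Z_2(S))$ is known, $E_i\le O_2(U)$ is immediate from \fref{lem:essential64}(a); no counting is needed. Moreover $U$ fixes each $E_i$ because the kernel of its permutation action on the six $E_i$ is normal and contains $S$, hence contains $\langle S^U\rangle=U$. Your reference to the proof of \fref{lem:essential64}(b) does not supply this.

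For (2) your argument rests on a false claim: $\rho$ does not act fixed-point-freely on $\overline{O_2(U)}$. Since $\bar S$ is abelian, it centralizes $\overline{O_2(U)}$. Also $U=\langle S^U\rangle$, because $[S/O_2(U),O(U/O_2(U))]=O(U/O_2(U))$, so all of $U$ centralizes $\overline{O_2(U)}$. Your two claims are in fact incompatible: fixed-point-free action of $\rho$ on $O_2(U)/Q$ would give $[O_2(U),\rho]Q=O_2(U)$, not $[O_2(U),U]\le Q$. Consequently $\rho$ does not permute the three involutions of $\overline{O_2(U)}$ transitively, and your reason for $\bar s_{i_0}\notin\overline{O_2(U)}$ collapses.

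Two further points are wrong. First, $\rho$ does not normalize $S$, so \fref{lem:aut1} does not apply to it. Second, there are not two $E$'s over each $\bar s_i$: each coset $Qs_i$ with $i\neq i_0$ contains exactly one.

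The trivial action on $\overline{O_2(U)}$ is exactly what you need. It gives $[O_2(U),U]\le Q$. It also shows that $U$ normalizes each $Q\langle s_i\rangle\le O_2(U)$, hence its unique elementary abelian subgroup of order $64$. The other three such subgroups are not in $O_2(U)$ and so are not normal in $U$.

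What remains, and is missing from your proposal, are two $A\times B$-lemma arguments. First, $O(U/O_2(U))$ acts on the four-group $[Q/Z(Q),s_1,s_2]$ of \fref{lem:64}(c); showing that the centralizer of this four-group in $O(U/O_2(U))$ is trivial yields $|O(U/O_2(U))|=3$. Second, if $\bar s_{i_0}\in\overline{O_2(U)}$, one shows that $O(U/O_2(U))$ centralizes $[Q/Z(Q),s_{i_0}]$, which again contradicts the $A\times B$-lemma.
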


\begin{proof}  By \fref{lem:essentialnormal1} $|S/O_2(U)| = 2$. Let first $Q \not\leq O_2(U)$. Then $S = O_2(U)Q$. Let $x \in Q \setminus O_2(U)$, $x^2 = 1$, this exists as $Q = \Omega_1(Q)$, and $\rho \in U$ of odd order with $\rho^x = \rho^{-1}$. Assume that $[Z(Q),\rho] =1$. Then $(Q \cap O_2(U))^\rho/Z(Q)$ is an elementary abelian subgroup of $S/Z(Q)$ of order $2^7$. By \fref{lem:elab} $Q \cap O_2(U)$ is normalized by $\rho$. As $x$ centralizes $(Q \cap O_2(U))/Z(Q)$ also $\rho$ centralizes this group. Further $[O_2(U),x] \leq Q \cap O_2(U)$ and so also $[\rho, O_2(U)] \leq O_2(U) \cap Q$, which gives the contradiction $[\rho,O_2(U)] = 1$.

Hence $Z(Q) \not= Z(Q)^\rho \leq Z(O_2(U))$. This shows $Z(O_2(U)) = Z_2(S)$. Now $U$ induces $\Sigma_3$ on $Z_2(S)$. Further $[C_U(Z_2(S)),x] \leq O_2(U)$. This shows that $C_U(Z_2(S)) \leq N_U(S)$. By \fref{lem:aut} this is of order at most three and so by assumption $U/O_2(U) \cong \Sigma_3$ and $Z_2(S)$ is normal in $U$, which is (1).
\\
\\
Let now $Q \leq O_2(U)$. By \fref{lem:wc} $Q$ is normal in $U$. Then as $\bar{S}$ is elementary abelian, we have that $\overline{O_2(U)}$ is centralized by $\bar{U}$.

Let first $\overline{O_2(U)} = \langle \bar{s}_1,\bar{s}_2 \rangle$ in the notation of \fref{lem:64}, where $\bar{s}_i$ correspond to elementary abelian subgroups of order 64 in $S$. Then $O(U/O_2(U))$ acts on $Q_1 = [Q/Z(Q),s_1,s_2]$, which by \fref{lem:64} is of order 4. Thus $U_1 = C_{O(U/O_2(U))}(Q_1)$ is of index at most three in $O(U/O_2(U))$. But $U_1$ centralizes $\bar{s}_2$ and so it centralizes also $[s_1,Q]$. This now implies that $U_1$ centralizes the unique elementary abelian subgroup of order 64 in $Q\langle s_1\rangle$. By the $A \times B$-Lemma it then also centralizes $O_2(U)$ and so $U_1 = 1$. Hence again $U/O_2(U) \cong \Sigma_3$.
\\
\\
Suppose now that $\bar{s}_{i_0} \in \overline{O_2(U)}$. Then $C_{[Q/Z(Q),s_{i_0}]}(O(U/O_2(U)) \not= 1$. As $|[Q/Z(Q),s_1,s_{i_0}]| = 4 = |[Q/[Q,s_1],s_{i_0}]|$ we receive that $$[[Q/Z(Q),s_{i_0}],O(U/O_2(U))] = 1.$$ Hence again by the $A \times B$-Lemma we get a contradiction. Thus $\bar{s}_{i_0} \not\in \overline{O_2(U)}$ and so the three non-trivial cosets of $Q$ in $O_2(U)$ correspond to three elementary abelian subgroups of order 64, which all are normalized by $U$. This is (2).
\end{proof}

We now collect the results of this section and apply  to the case of essential subgroups, which are normal in $S$. For this we introduce the following groups:
\\
Let $F_2, \ldots, F_5$ be the four subgroups of index two in $S$ containing $Q$ such that $\bar{F}_i$ is a fours group, which does not contain $\bar{s}_{i_0}$. Recall that there are exactly four such subgroups in $\bar{S}$ by \fref{lem:64}. Set $F_1 = C_S(Z_2(S))$.
\begin{proposition}\label{prop:normalinS} Let $\mathcal F$ be a saturated fusion system on $S$. Let $F$ be some essential subgroup in $S$ such that $F$ is normal in $S$. Then $F \in \{F_1, \ldots , F_5\}$. Further
\begin{itemize}
\item[(1)] If $N_{\mathcal{F}}(S) = S$, then $N_{\mathcal{F}}(F)/F \cong \Sigma_3$.
\item[(2)] If $|N_{\mathcal{F}}(S)| $ is divisible by $3$, then in case of $F = F_1$, we have that $N_{\mathcal F}(F)/F \cong \Sigma_3 \times \Z_3$. If $F$ is one of $F_2, \ldots F_5$ and $ N_{\mathcal F}(S) \leq N_{\mathcal F}(F)$, then $N_{\mathcal F}(F)/F \cong \Sigma_3 \times \Z_3$. In this case either $F$ is the only essential subgroup in $\{F_2, \ldots, F_5\}$ or all groups $F_2, \ldots F_5$ are essential, but $F$ is the only one, which is normalized by $N_{\mathcal F}(S)$.
\end{itemize}
\end{proposition}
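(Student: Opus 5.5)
We first determine which normal subgroups $F \unlhd S$ can be essential, and then read off the structure of $N_{\mathcal F}(F)$ from \fref{lem:essentialnormal2}.

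\textbf{Step 1: reduction to the situation of \fref{lem:essentialnormal2}.} Let $F \unlhd S$ be essential. Since $F$ is normal in $S$ and fully normalized, $N_S(F) = S$ and $S/F$ is a Sylow $2$-subgroup of $\Out_{\mathcal F}(F)$. Model $N_{\mathcal F}(F)$ by a finite group $U$ with $F^\ast(U) = O_2(U) = F$ and $S \in \Syl_2(U)$; the model theorem gives such a $U$ because $F$ is centric and normal in $S$. Then $\Out_{\mathcal F}(F) \cong U/F$ has a strongly $2$-embedded subgroup.
\begin{itemize}
\item By \fref{lem:strongembedded} and \fref{lem:noA5}, the nonsolvable alternatives are excluded, so $S/F$ is cyclic or quaternion.
\item By the remarks after \fref{lem:strongembedded}, $O^{2'}(U/F)$ has the shape $O(U/F)(S/F)$ with $[S/F, O(U/F)] = O(U/F)$; replacing $U$ by this subgroup loses nothing.
\item \fref{lem:essentialnormal1} then gives $|S:F| = 2$, and \fref{lem:essentialnormal2} gives $O^{2'}(U)/F \cong \Sigma_3$.
\end{itemize}

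\textbf{Step 2: identification of $F$ among $F_1, \ldots, F_5$.} We split according to \fref{lem:essentialnormal2}.
\begin{itemize}
\item If $Q \not\leq F$, then case (1) gives that $Z_2(S)$ is normal in $U$ and $C_U(Z_2(S)) = F$. Hence $F = C_S(Z_2(S)) = F_1$, because $|S : C_S(Z_2(S))| = 2$ (as $Z_2(S) \not\leq Z(S)$).
\item If $Q \leq F$, then case (2) gives $\bar F = \langle \bar s_i, \bar s_j \rangle$, a fours group avoiding $\bar s_{i_0}$. By definition $F$ is one of $F_2, \ldots, F_5$.
\end{itemize}

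\textbf{Step 3: statements (1) and (2).} Write $U/F \cong O(U/F)\cdot K$ with $K$ a $2'$-part centralizing a Sylow $2$-subgroup of $U/F$. By Frattini, $U = O^{2'}(U) N_U(S)$, and $N_U(S)/S$ embeds in $\Out_{\mathcal F}(S)$, which is a $3$-group of order at most $3$ by \fref{lem:aut}.

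For (1): if $N_{\mathcal F}(S) = S$, then $U = O^{2'}(U)$ and $U/F \cong \Sigma_3$.

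For (2): if $3 \mid |N_{\mathcal F}(S)|$, take $\rho$ of order $3$ in $\Aut_{\mathcal F}(S)$. By \fref{lem:aut1}, $\rho$ fixes $\bar s_{i_0}$ and acts on $\bar S$ with $|[\bar S, \rho]| = 4$.
\begin{itemize}
\item Since $F_1$ is characteristic in $S$, $\rho$ extends to $\Aut_{\mathcal F}(F_1)$ by saturation. It centralizes $S/F_1$ of order $2$, so it normalizes the $\Sigma_3$. I expect $\Out(\Sigma_3) = 1$ together with the $3$-part computation to give $N_{\mathcal F}(F_1)/F_1 \cong \Sigma_3 \times \Z_3$; the element of order $3$ coming from $O^{2'}$ is different from $\rho$ because $\rho$ centralizes $S/F_1$ but not $S/\Phi(S)$.
\item For $F_j$, $j \ge 2$, the element $\rho$ permutes the four fours groups $\bar F_2, \ldots, \bar F_5$ of $\bar S$ not containing $\bar s_{i_0}$. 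There are four of them, so $\rho$ fixes exactly one and permutes the other three cyclically. If $N_{\mathcal F}(S) \le N_{\mathcal F}(F)$, then $F$ is the fixed one, and the same argument gives $\Sigma_3 \times \Z_3$.
\item The dichotomy follows since $\rho$ carries essential subgroups to essential subgroups. If one of the three moved $F_j$ is essential, then all three are; otherwise only the fixed one can be.
\end{itemize}

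\textbf{Main obstacle.} The hardest part is making the $\Z_3$-extension rigorous: we must show that $\rho$ really extends to $F$ and commutes, modulo $F$, with the $\Sigma_3$. I would first use the extension axiom, since $F$ is normal and $\rho$ normalizes it. Then I would compute in $U/F$: the $\Sigma_3$ is normal, and $C_{U/F}(S/F)$ has $3$-part $\langle \rho \rangle$, which forces the direct product.
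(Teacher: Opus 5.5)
Your proof is correct and follows essentially the same route as the paper. You use \fref{lem:strongembedded} with \fref{lem:noA5} to get $S/F$ cyclic or quaternion, then \fref{lem:essentialnormal1} and \fref{lem:essentialnormal2} to get $|S:F|=2$, the $\Sigma_3$, and $F\in\{F_1,\dots,F_5\}$, and finally the order-$3$ part of $\Aut_{\mathcal F}(S)$ (\fref{lem:aut}) and its orbits on the four fours groups avoiding $\bar s_{i_0}$ for (1) and (2). Your explicit Frattini argument $U=O^{2'}(U)N_U(S)$, together with $\Out(\Sigma_3)=1$, just makes the paper's terse ``$N_{\mathcal F}(F)/F\cong\Sigma_3\times L$'' step precise; the ``main obstacle'' you flag is not one, since $\rho$ normalizes $F$ and therefore simply restricts to an element of $\Aut_{\mathcal F}(F)$, with no extension axiom needed.
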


\begin{proof} We apply \fref{lem:strongembedded}. By \fref{lem:noA5} we have that \fref{lem:strongembedded}(i) holds; i.e.  $S/F$ is cyclic or quaternion. By \fref{lem:essentialnormal1} $|S: F| = 2$. Thus $N_{\mathcal F}(F)/F =K(S/F)$, where $K$ is of odd order. By \fref{lem:essentialnormal2} $[K,S/F] \cong \Z_3$ and $F \in \{F_1, \ldots ,F_5\}$  and so $N_{\mathcal F}(F)/F \cong \Sigma_3 \times L$, where $L = 1$ or $\Z_3$ depending on $N_{N_{\mathcal F}(S)}(F)$. Recall that by \fref{lem:aut} $N_{\mathcal F}(S)/S \leq \Z_3$.  Hence we have (1).

Assume now that $N_{\mathcal F}(S)/S \cong \Z_3$. As $N_{\mathcal F}(S)$ normalizes $Z_2(S)$ and then also $C_S(Z_2(F)) = F_1$, we have that $L \cong \Z_3$ in case of $F = F_1$.

There are exactly four fours groups in $\bar{S}$, which do not contain $\bar{s}_{i_0}$. By definition these are exactly the $\bar{F}_i$, $i = 2,\ldots , 5$. Hence $N_{\mathcal F}(S)$ normalizes one of the $F_i$ and acts transitively on the remaining. Suppose that $F \in \{F_2, \ldots, F_5\}$ and $N_{\mathcal F}(S)$ normalizes $F$, then we have $L \cong \Z_3$ again.  As $N_{\mathcal F}(S)$ acts transitively on the remaining $F_i \in \{F_2, \ldots , F_5\}\setminus \{F\}$, we have that either $F$ is the only essential subgroup in $\{F_2, \ldots, F_5\}$ or all $F_i$, $i = 2,\ldots , 5$ are essential. This is (2).
\end{proof}
\section{The fusion system $\mathcal{F}$}\label{sec:F}

Recall that by \fref{prop:essential} any essential subgroup is normal in $S$. We use this in what follows without further saying.
In \fref{prop:normalinS} we listed the potential essential subgroups $F_1, \ldots F_5$ and their automizers. We will keep the notation from this proposition. We first collect the properties of the $F_i$ proved in \fref{sec:essnormal}, in particular with respect to being essential.

In this section it will become important that $O_2(\mathcal F) = 1$. We will use the following proposition
\begin{proposition}\label{prop:O21}{\rm (\cite[Proposition I.4.5]{AKO})} Let ${\mathcal F}$ be a saturated fusion system over a finite $p$-group $P$, and fix $R \leq  P$.
Then $R$ is normal in ${\mathcal F}$  if and only if it satisfies the following condition: if $F = P$ or $F$ is ${\mathcal F}$-essential,
then $F \geq  R$  and each $\alpha \in  \Aut_{\mathcal F}(F)$ sends $R$ to itself.
\end{proposition}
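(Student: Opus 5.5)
The statement is a standard fact, quoted from \cite{AKO}. I would prove both directions directly from the definition: $R$ is normal in $\mathcal F$ if every $\varphi \in \Hom_{\mathcal F}(A,B)$ extends to some $\bar\varphi \in \Hom_{\mathcal F}(AR,BR)$ with $\bar\varphi(R) = R$. The two directions rest on different tools: the converse uses the Alperin--Goldschmidt theorem \fref{prop:alperin}, and the forward direction uses essentiality.

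\emph{Converse.} Assume the condition holds and let $\varphi \in \Hom_{\mathcal F}(A,B)$. By \fref{prop:alperin} we can write
$$\varphi = (\alpha_k)|_{A_{k}} \circ \cdots \circ (\alpha_1)|_{A_1},$$
where $A_1 = A$, $A_{i+1} = \alpha_i(A_i)$, $\alpha_i \in \Aut_{\mathcal F}(F_i)$, $A_i \leq F_i$, and each $F_i$ is $P$ or $\mathcal F$-essential. By hypothesis $R \leq F_i$, so $A_iR \leq F_i$. Also $\alpha_i(R) = R$, so $\alpha_i$ maps $A_iR$ onto $A_{i+1}R$. The composite of the restrictions $(\alpha_i)|_{A_iR}$ is then a morphism $AR \to BR$ of $\mathcal F$. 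It extends $\varphi$ and maps $R$ to $R$, so $R$ is normal in $\mathcal F$.

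\emph{Forward direction.} Assume $R$ is normal in $\mathcal F$. For $F = P$, every $\alpha \in \Aut_{\mathcal F}(P)$ extends to $PR = P$ preserving $R$, so $\alpha(R) = R$. Now let $F$ be essential; the main point is to show $R \leq F$.
\begin{enumerate}
\item Set $N = N_{FR}(F)$ and let $K \leq \Aut_{\mathcal F}(F)$ be the image of $N$ under conjugation. Then $K$ is a $p$-subgroup containing $\Inn(F)$.
\item Each $\alpha \in \Aut_{\mathcal F}(F)$ extends to some $\bar\alpha$ on $FR$ with $\bar\alpha(R) = R$. Such an $\bar\alpha$ is an automorphism of $FR$ fixing $F$, so it normalizes $N$. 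The identity $\alpha\, c_n\, \alpha^{-1} = c_{\bar\alpha(n)}|_F$ then shows that $K$ is normalized by $\Aut_{\mathcal F}(F)$.
\item By step 2, $K/\Inn(F)$ is a normal $p$-subgroup of $\Out_{\mathcal F}(F)$. Since $\Out_{\mathcal F}(F)$ has a strongly $p$-embedded subgroup, it satisfies $O_p(\Out_{\mathcal F}(F)) = 1$. Hence $K = \Inn(F)$.
\item From $K = \Inn(F)$ we get $N \leq F\,C_P(F)$. Since $F$ is centric, $C_P(F) \leq F$, so $N_{FR}(F) = F$.
\item In the $p$-group $FR$, a proper subgroup is properly contained in its normalizer. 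Therefore $FR = F$, that is, $R \leq F$.
\item Finally, the extension $\bar\alpha$ of any $\alpha \in \Aut_{\mathcal F}(F)$ is defined on $FR = F$, so it equals $\alpha$. Hence $\alpha(R) = R$.
\end{enumerate}

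The only delicate step is step 2, the normality of $K$ in $\Aut_{\mathcal F}(F)$. It depends on checking that the extensions $\bar\alpha$ preserve $N_{FR}(F)$, and this holds because each $\bar\alpha$ stabilizes both $F$ and $FR$.
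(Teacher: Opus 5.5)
Your proof is correct. The paper gives no proof of its own and simply cites \cite[Proposition I.4.5]{AKO}, and your argument is essentially the standard one behind that reference. The converse follows from the Alperin--Goldschmidt decomposition. For the forward direction, $\Aut_{N_{FR}(F)}(F)$ is a normal $p$-subgroup of $\Aut_{\mathcal F}(F)$, so it equals $\Inn(F)$ because $O_p(\Out_{\mathcal F}(F))=1$; centricity then gives $N_{FR}(F)=F$, and hence $R\le F$.
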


\begin{lemma}\label{lem:normalE} There is a uniquely determined essential subgroup $F_1$ of $S$ with $Q \not\leq F_1$. We have $\Out_{\mathcal F}(F_1) \cong \Sigma_3$ or $\Sigma_3 \times \Z_3$ depending on $\Aut_{\mathcal F}(S) = \Inn(S)$ or not. Further
any elementary abelian subgroup of order $64$  in $S$ is invariant under  $O^{2^\prime}(\Aut_{\mathcal F}(F_1))$.
\end{lemma}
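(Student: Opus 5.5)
Among the candidates $F_1,\ldots,F_5$ of \fref{prop:normalinS}, the groups $F_2,\ldots,F_5$ contain $Q$ by definition. So the uniqueness part only needs $Q\not\leq F_1$ together with the fact that $F_1$ is essential. First, $Q \not\leq F_1 = C_S(Z_2(S))$: the group $Z_2(S)\leq Q$ has order $4$ and is not central in $Q$ (since $Z(Q)=Z(S)$ has order $2$), so $C_Q(Z_2(S))=\Phi(S)$ has index $2$ in $Q$ by \fref{lem:frattini}. It follows that $F_1Q=S$.

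To show $F_1$ is essential I would argue by contradiction using $O_2(\mathcal F)=1$ and \fref{prop:O21}. Suppose $F_1$ is not essential, so every essential subgroup lies in $\{F_2,\ldots,F_5\}$ and contains $Q$. Each $\Aut_{\mathcal F}(F_i)$ normalizes $Q$, because $Q$ is characteristic in $F_i$ (a copy of $Q$ in $F_i\leq S$ equals $Q$ by \fref{lem:wc}). Also $\Aut_{\mathcal F}(S)$ normalizes $Q$ by \fref{lem:wc}. By \fref{prop:O21}, $Q$ is then normal in $\mathcal F$, contradicting $O_2(\mathcal F)=1$. Hence $F_1$ is essential. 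The description $\Out_{\mathcal F}(F_1)\cong\Sigma_3$ or $\Sigma_3\times\Z_3$ is read off from \fref{prop:normalinS}(1),(2), using \fref{lem:aut}: $N_{\mathcal F}(S)/S\leq \Z_3$, and $N_{\mathcal F}(S)$ always normalizes $F_1$.

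For the last assertion I would use \fref{lem:essentialnormal2}(1), applied to a model $U$ of $N_{\mathcal F}(F_1)$, i.e. to $O^{2'}(U)$ with $O_2(U)=F_1$ and $Q\not\leq F_1$. That lemma states directly that $U$ normalizes each of the six elementary abelian subgroups of order $64$ from \fref{lem:64}. To transfer this to $O^{2^\prime}(\Aut_{\mathcal F}(F_1))$, I would note that these six groups lie in $F_1$. Indeed, each such $E$ contains $Z_2(S)$ by \fref{lem:essential64}(a) and is abelian, so $E\leq C_S(Z_2(S))=F_1$. Hence the statement makes sense inside $\Aut(F_1)$. The group $O^{2'}(\Aut_{\mathcal F}(F_1))$ is generated by $\Inn(F_1)$, $\Aut_S(F_1)$ and an element $\rho$ of order $3$ inverted by $S$. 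Both $F_1$ and $S$ normalize each $E$ by \fref{lem:64}, so only $\rho$ needs checking, and it is handled by \fref{lem:essentialnormal2}(1) (realize $N_{\mathcal F}(F_1)$ by a finite model via the model theorem for constrained systems).

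The main obstacle is this last step: whether the proof of \fref{lem:essentialnormal2}(1) genuinely establishes normalization of all six $E$, since only the $Z_2(S)$ part is argued explicitly there. If I need to argue it directly, I would take $\rho$ of order $3$ and $x\in Q\setminus F_1$ inverting $\rho$. The element $x$ normalizes every $E$, because each $E$ is normal in $S$. It also acts on the set of six subgroups in $F_1$, which is $\Aut(F_1)$-invariant: these are exactly the elementary abelian subgroups of order $64$ in $S$, and all of them lie in $F_1$. The subgroup $\langle \rho,x\rangle\cong\Sigma_3$ then acts on this $6$-set. I would show that $\rho$ fixes each $E$ by a counting argument: an orbit of length $3$ would consist of three groups whose images in $\bar S$ are permuted, while $\rho$ centralizes $F_1/\Phi(S)$ modulo the action on $Z_2(S)$ (compare \fref{lem:aut1} and \fref{lem:frattini}). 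Since the cosets $E Q$ are distinct, this forces each $E$ to be fixed.
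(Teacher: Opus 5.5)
Your main line is the paper's own proof. Existence of an essential subgroup not containing $Q$ comes from \fref{lem:wc}, \fref{prop:O21} and $O_2(\mathcal F)=1$. Identification with $F_1$ comes from the list in \fref{prop:normalinS}, which also gives the automizer, and the last assertion is quoted from \fref{lem:essentialnormal2}(1).

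Your suspicion about that last step is justified. The proof of \fref{lem:essentialnormal2}(1) only argues the statements about $Z_2(S)$, and the paper's proof of this lemma does not supply the missing argument either.

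However, your fallback does not work as written. $\rho$ is an automorphism of $F_1$, not of $S$, so \fref{lem:aut1} says nothing about it. Moreover, $F_1\cap Q=\Phi(S)$ is not $\rho$-invariant: already in $\Omega^+_8(2)$, the Levi $\Sigma_3$ of the minimal parabolic for the central node carries a root group lying in $\Phi(S)$ to one outside $Q$. So there is no action of $\rho$ on $F_1/\Phi(S)\cong\bar S$ to exploit.

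The gap closes with ingredients you already listed. The six groups are exactly the elementary abelian subgroups of order $64$ of $F_1$, so $\Aut_{\mathcal F}(F_1)$ permutes them. The kernel $K$ of this action is normal in $\Aut_{\mathcal F}(F_1)$. It contains $\Aut_S(F_1)$, since each of the six groups is normal in $S$. As $\Aut_S(F_1)$ is a Sylow $2$-subgroup of $\Aut_{\mathcal F}(F_1)$, the index of $K$ is odd, hence $O^{2^\prime}(\Aut_{\mathcal F}(F_1))\le K$. Equivalently, in your $\langle\rho,x\rangle\cong\Sigma_3$ the kernel is a normal subgroup containing the involution $x$, hence is everything.
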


\begin{proof} As  $Q$ is not normal in $\mathcal F$ then by \fref{lem:wc} and \fref{prop:O21} there must be some essential subgroup $F$ with $Q \not\leq F$.  By \fref{lem:essentialnormal2} $F$ is uniquely determined and so $F = F_1$. Further $\Out_{\mathcal F}(F_1) \cong \Sigma_3$ or $\Sigma_3 \times \Z_3$ by \fref{prop:normalinS}(2). The first case occurs if  $\Aut_{\mathcal F}(S) = \Inn(S)$. Hence $O^{2^\prime}(\Out_{\mathcal F}(F_1)) \cong \Sigma_3$. This is the first assertion. The second one follows by \fref{lem:essentialnormal2}(1).
\end{proof}

\begin{lemma}\label{lem:E64} Let $F \in \{F_2, \ldots F_5\}$. Then $Q \leq F$ and $\bar{F}$  is a fours group, which just consists of involutions corresponding to elementary abelian subgroups of order $64$  in the sense of \fref{lem:64}. In particular  $\bar{s}_{i_0} \not\in  \bar{F}$. If $F$ is essential these three elementary abelian groups of order $64$ are all invariant under $O^{2^\prime}(\Aut_{\mathcal F}(F))$.
\end{lemma}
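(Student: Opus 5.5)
The statement is essentially a bookkeeping consequence of the definition of $F_2,\ldots,F_5$ together with \fref{lem:64} and \fref{lem:essentialnormal2}(2). We first unwind the definition and then apply the structural lemma about $U$ with $Q \leq O_2(U)$.

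\emph{The structural claims.} By definition each $F_i$, $i \geq 2$, is a subgroup of index two in $S$ with $Q \leq F_i$. Its image $\bar{F}_i$ is a fours group in the elementary abelian group $\bar{S}$ of order $8$ (\fref{lem:cent}), and $\bar{s}_{i_0} \notin \bar{F}_i$. By \fref{lem:64}(b) the seven involutions of $\bar{S}$ are $\bar{s}_{i_0}$ together with six cosets, and each of the six has a preimage containing exactly one elementary abelian subgroup of order $64$. So all three involutions of $\bar{F}_i$ lie in these six cosets, which gives the first assertions. It is worth recording why exactly four such fours groups exist. $\bar{S}$ has seven hyperplanes, and three of them pass through $\bar{s}_{i_0}$. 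This also matches the count in the proof of \fref{prop:normalinS}.

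\emph{Invariance when $F$ is essential.} By \fref{prop:normalinS}, $|S:F| = 2$ and $N_{\mathcal F}(F)/F \cong \Sigma_3$ or $\Sigma_3 \times \Z_3$. Realize $N_{\mathcal F}(F)$ by a model $U$ with $O_2(U) = F$, which exists because $F$ is centric and normal in $S$. Set $U_0 = O^{2'}(U)$, so that $U_0/F \cong \Sigma_3$ and $U_0$ satisfies the hypotheses of \fref{lem:essentialnormal1}. Since $Q \leq F = O_2(U_0)$, \fref{lem:essentialnormal2}(2) applies. It says that $U_0$ normalizes exactly three of the elementary abelian subgroups of order $64$, namely those attached to $\bar{s}_i$, $\bar{s}_j$ and $\bar{s}_i\bar{s}_j$ with $\overline{O_2(U_0)} = \langle \bar{s}_i, \bar{s}_j \rangle = \bar{F}$. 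These are exactly the three groups of order $64$ contained in $F$. Passing back to automorphisms, $O^{2'}(\Aut_{\mathcal F}(F))$ is generated by $\Inn(F)$ and conjugations by $U_0$, so these three groups are invariant under it.

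\emph{Expected obstacle.} The only delicate point is the identification in \fref{lem:essentialnormal2}(2). The lemma says $U$ normalizes three of the six groups, and we need these to be precisely the three contained in $F$. The way to secure this is to reread its proof: there $O(U/O_2(U))$ is shown to normalize the unique elementary abelian group of order $64$ in each preimage of $\langle \bar{s}_k \rangle \leq \overline{O_2(U)}$, using that $[O_2(U),U] \leq Q$. Each of these groups is characterized as the unique elementary abelian subgroup of order $64$ in $Q\langle s_k\rangle$, and $Q\langle s_k\rangle$ is normalized by $U_0$ because $U_0$ centralizes $\overline{O_2(U_0)}$. That uniqueness gives the invariance directly.
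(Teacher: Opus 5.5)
Your proposal is correct and follows the paper's route. The paper's proof just cites \fref{lem:essentialnormal2}(2) together with \fref{prop:normalinS}. You additionally spell out the model $U$ with $O^{2^\prime}(U)/F \cong \Sigma_3$, and you justify that the three normalized groups are exactly the three contained in $F$: $[O_2(U),U] \leq Q$, and $Q\langle s_k\rangle$ contains a unique elementary abelian subgroup of order $64$.
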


\begin{proof} The assertion is just an application of  \fref{lem:essentialnormal2}(2) and \fref{prop:normalinS}.
\end{proof}

\begin{lemma}\label{lem:essentialgroups}
Among the $F_i$, $i = 2, \ldots , 5$, there are at least three, which are essential in $\mathcal F$.

Let $F_i$, $F_j$ be two of these essential subgroups, then there is some uniquely determined elementary abelian subgroup $E_{ij}$ of order $64$ in $S$ such that $E_{ij}$ is invariant  under $\Aut_{\mathcal F}(F_i)$ and $\Aut_{\mathcal F}(F_j)$. In particular there is some elementary abelian group $E_{ij}$ in $S$ of order $64$ such that  $N_{\mathcal F}(E_{ij}) = \langle O^{2^\prime}(\Aut_{\mathcal F}(F_1)), O^{2^\prime}(\Aut_{\mathcal F}(F_i)), O^{2^\prime}(\Aut_{\mathcal F}(F_j)) \rangle $ and there is a model $G_{E_{ij}}$ for $N_{\mathcal F}(E_{ij})$ with $G_{E_{ij}}/E_{ij} \cong A_8 \cong \Omega^+_6(2)$.
\end{lemma}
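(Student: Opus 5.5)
The plan is to prove the two halves of the statement separately. First I use $O_2(\mathcal F)=1$ together with \fref{prop:O21} to force at least three of $F_2,\ldots,F_5$ to be essential. Then I use the incidence geometry of $\bar S$ to produce $E_{ij}$, and finally I identify $N_{\mathcal F}(E_{ij})$ through a model.

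\emph{The incidence picture.} Regard the seven involutions of $\bar S$ as the points of a Fano plane. By \fref{lem:64} the six points $\bar s_i\neq \bar s_{i_0}$ correspond bijectively to the six elementary abelian subgroups of order $64$. The four lines not through $\bar s_{i_0}$ are exactly $\bar F_2,\ldots,\bar F_5$. Two facts about these subgroups $E$ will be used throughout:
\begin{itemize}
\item each such $E$ contains $Z_2(S)$ by \fref{lem:essential64}(a) and is abelian, so $E\le C_S(Z_2(S))=F_1$;
\item if $\bar s_E\in \bar F_i$, then $E\le Q\langle s_E\rangle\le F_i$.
\end{itemize}
By \fref{lem:normalE}, every $E$ is invariant under $O^{2'}(\Aut_{\mathcal F}(F_1))$. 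By \fref{lem:essentialnormal2}(2) and \fref{lem:E64}, an essential $F_i$ with $i\ge 2$ normalizes precisely the three $E$'s whose points lie on the line $\bar F_i$.

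\emph{At least three of $F_2,\ldots,F_5$ are essential.} Suppose at most two are essential, and split according to $\Aut_{\mathcal F}(S)$.
\begin{itemize}
\item If $\Aut_{\mathcal F}(S)=\Inn(S)$, then either one or two lines are essential; there is at least one, since $\mathcal F\neq N_{\mathcal F}(Q)$. Two distinct lines avoiding $\bar s_{i_0}$ meet in a point different from $\bar s_{i_0}$; if only one line occurs, take any point on it. The corresponding $E$ is contained in every essential subgroup and in $S$, and it is invariant under all the automizers, with the $F_1$-automizer handled via $O^{2'}$ and the $2$-part coming from $\Inn(S)$. By \fref{prop:O21}, $E$ is then normal in $\mathcal F$, a contradiction.
\item If $3$ divides $|\Aut_{\mathcal F}(S)|$, then \fref{prop:normalinS}(2) shows that either all four lines are essential or exactly one line $\bar F$ is essential, and that $\bar F$ is the line fixed by $\rho$. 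In the latter case take $R=\bigcap$ of the three $E$'s on $\bar F$. This $R$ contains $Z_2(S)\neq 1$. It is invariant under $\Aut_{\mathcal F}(S)$, because $\rho$ permutes these three groups. It is also invariant under $\Aut_{\mathcal F}(F)$ and $\Aut_{\mathcal F}(F_1)$, and it lies in every essential subgroup. Again \fref{prop:O21} gives a contradiction.
\end{itemize}
Hence at least three lines are essential.

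\emph{Uniqueness of $E_{ij}$ and its normalizer.} For distinct essential $F_i,F_j$, the lines $\bar F_i$ and $\bar F_j$ meet in exactly one point, and this point is not $\bar s_{i_0}$. This gives a unique $E_{ij}$ invariant under both automizers, and it is also invariant under $O^{2'}(\Aut_{\mathcal F}(F_1))$. Hence the three subgroups $O^{2'}(\Aut_{\mathcal F}(F_k))$, $k=1,i,j$, restrict to $N_{\mathcal F}(E_{ij})$. Since $E_{ij}$ is centric and normal in $S$, the model theorem gives a model $G=G_{E_{ij}}$ of $N_{\mathcal F}(E_{ij})$ with $G/E_{ij}\le \GL_6(2)$. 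The Sylow $2$-subgroup of $G/E_{ij}$ is $S/E_{ij}$, a Sylow $2$-subgroup of $A_8$ by \fref{lem:64}(a).

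The subgroups $P_k$ of $G/E_{ij}$ induced by the three automizers satisfy $P_k=(S/E_{ij})\,O^{2'}(P_k)$ with $P_k/O_2(P_k)\cong\Sigma_3$. They form a rank-$3$ parabolic system; I expect the diagram to be of type $A_3$, with the $F_1$-node in the middle. Comparing with $N_{\POmega^+_8(3)}(E_{ij})/E_{ij}\cong\Omega^+_6(2)\cong \L_4(2)$, I would identify $\langle P_1,P_i,P_j\rangle\cong A_8$. This could be done via Timmesfeld's result on parabolic systems, or directly inside $\GL_6(2)$ using that the Borel subgroup is forced.

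Finally, I need $N_{\mathcal F}(E_{ij})$ to be generated by these three pieces. By Alperin's theorem applied inside $N_{\mathcal F}(E_{ij})$, every essential subgroup there is normal in $S$ (\fref{prop:essential}) and contains $E_{ij}$. So it is one of $F_1$, $F_i$, $F_j$, or a further $F_k$ whose line passes through $\bar s_{E_{ij}}$; no third line through that point avoids $\bar s_{i_0}$, so no such $F_k$ exists. The remaining contribution is the automizer of $S$, which only adds $2$-elements and, when $\rho$ is present, the order-$3$ part that the $O^{2'}$ subgroups absorb.

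\emph{Main obstacle.} I expect the hardest step to be the identification $G/E_{ij}\cong A_8$. In particular I must exclude that the generated group is a proper subgroup or a different amalgam completion inside $\GL_6(2)$, and I must control the contribution of $\Aut_{\mathcal F}(S)$ in the presence of an element of order $3$. The counting arguments before that step are routine.
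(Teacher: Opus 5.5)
Your first two steps follow the paper's own route (Alperin, \fref{prop:O21}, and the point/line incidence in $\bar S$). Your count that only $F_1,F_i,F_j$ can be essential in $N_{\mathcal F}(E_{ij})$ correctly justifies the paper's ``obviously''. There are small slips to repair:
\begin{itemize}
\item In the $\Inn(S)$ branch, $\mathcal F\neq N_{\mathcal F}(Q)$ is already guaranteed by $F_1$, so it does not produce an essential $F_i$ with $i\ge 2$.
\item In the $\rho$ branch, the essential lines form a union of $\langle\rho\rangle$-orbits, so the empty case must also be excluded.
\end{itemize}
Both are handled as in the paper: $Z_2(S)$ is invariant under $\Aut_{\mathcal F}(S)$ and under $\Aut_{\mathcal F}(F_1)$ by \fref{lem:essentialnormal2}(1), so it would otherwise be normal in $\mathcal F$. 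Also, $\rho$ is not ``absorbed'' by the $O^{2'}$-parts. Rather, by \fref{lem:aut1} no element of order $3$ of $\Aut(S)$ normalizes any of the six groups of order $64$, so $\Aut_{N_{\mathcal F}(E_{ij})}(S)=\Inn(S)$. This is what makes the generation statement true.

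The genuine gap is the last assertion, $G_{E_{ij}}/E_{ij}\cong A_8$, which you only announce. Comparing with $\POmega^+_8(3)$ proves nothing about an abstract $\mathcal F$. A Timmesfeld/Tits-type recognition would first require identifying the rank-$2$ subsystems $\langle P_1,P_i\rangle$, $\langle P_1,P_j\rangle$, $\langle P_i,P_j\rangle$ (as $L_3(2)$, $L_3(2)$, $\Sigma_3\times\Sigma_3$ modulo $O_2$), and you do not do this. In the paper these shapes are deduced later \emph{from} this lemma, so they cannot be borrowed. Even with the diagram in hand, you would still have to exclude a nontrivial normal $2$-subgroup of $G_{E_{ij}}/E_{ij}$.

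The missing idea is to prove $O_2(\hat{\mathcal F})=1$ for $\hat{\mathcal F}=N_{\mathcal F}(E_{ij})/E_{ij}$. By \fref{lem:64}, every involution of $S/E_{ij}$ outside the elementary abelian group $QE_{ij}/E_{ij}$ of order $16$ has commutator of order $4$ on it. Hence its normalizer acts irreducibly, and $QE_{ij}/E_{ij}$ is the only candidate for $O_2(\hat{\mathcal F})$; this is excluded by $Q\not\le F_1$ and \fref{prop:O21}. With this in hand, the paper proceeds in three steps:
\begin{itemize}
\item It invokes Oliver's classification over $2$-groups of sectional rank $4$, which gives the fusion system of $A_8$ or of $\PSp_4(q)$, $q$ odd. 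The latter is excluded because the automizer of $QE_{ij}/E_{ij}$ would then involve $A_5$.
\item It applies Gorenstein--Harada to get $G_{E_{ij}}/E_{ij}\cong A_8$ or $A_9$.
\item It excludes $A_9$ because $A_9$ has no faithful $6$-dimensional $\GF(2)$-module.
\end{itemize}
Without this chain, or a fully executed amalgam argument, the lemma's final claim remains unproved.
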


\begin{proof} By \fref{lem:normalE} we have that $F_1$ is the uniquely determined essential subgroup of $S$ with does not normalize $Q$. By \fref{lem:essentialnormal2}(1) $Z_2(S)$ is invariant under $N_{\mathcal F}(F_1)$.
As $Z_2(S)$ cannot be normal in $\mathcal F$ by \fref{prop:O21} we must have further essential subgroups $F_i$, which then are among $F_2, \ldots , F_5$. By \fref{lem:essentialnormal2} these all contain $Q$.
\\
\\
Assume there is just one such essential subgroup, call it  $F_2$. Then $\Aut_{\mathcal F}(S)$ normalizes $F_2$. By \fref{lem:E64} and \fref{lem:essentialnormal2}(2) there are exactly three elementary abelian subgroups $E_i$, $i = 1,2,3$, of order 64, which are contained in $F_2$. Thus $\Aut_{\mathcal F}(S)$ permutes them. This shows that $X = \langle E_1, E_2, E_3 \rangle$ is invariant under $\Aut_{\mathcal F}(S)$. By \fref{lem:normalE} also $X$ is invariant under  $O^{2^\prime}(\Aut_{\mathcal F}(F_1))$ and then also under $\Aut_{\mathcal F}(F_1)$. As $\mathcal F = \langle \Aut_{\mathcal F}(S), \Aut_{\mathcal F}(F_1), \Aut_{\mathcal F}(F_2) \rangle$ by \fref{prop:alperin} we see by \fref{prop:O21} that $X$ is normal in $\mathcal F$, a contradiction.
\\
\\
Thus we have at least two essential subgroups $F_i$, $F_j$ in $S$, which contain $Q$.  By \fref{lem:E64} and \fref{lem:essentialnormal2}(2) $|\bar{F}_i \cap \bar{F}_j| = 2$,  and so there is exactly one elementary abelian subgroup $E_{ij}$ of order 64 which is contained in $F_i \cap F_j$.

We now will  prove the first claim, i.e. there are at least three essential sugroups in $\{F_2, \ldots , F_5\}$.  If $3$ divides the order  of $\Aut_{\mathcal F}(S)$, the claim follows by \fref{prop:normalinS}(2).

Now assume that $\Aut_{\mathcal F}(S) = \Inn(S)$. Thus $E_{ij}$ is  invariant under $\Aut_{\mathcal F}(F_i)$ and $\Aut_{\mathcal F}(F_j)$ as well.  By \fref{lem:normalE}  $\Aut_{\mathcal F}(F_1)$ leaves $E_{ij}$ invariant. But as  $E_{ij}$ is not normal in $\mathcal F$, we  receive by \fref{prop:O21} that $\mathcal F$ is not generated by  $\Aut_{\mathcal F}(F_1)$, $\Aut_{\mathcal F}(F_i)$ and  $\Aut_{\mathcal F}(F_j)$. Thus in any case by \fref{prop:alperin} there are at least three essential subgroups containing $Q$.  This proves the first claim.
\\
\\
 By \fref{lem:normalE} $E_{ij}$ is invariant under $O^{2^\prime}(\Aut_{\mathcal F}(F_1))$ as well. We will consider $N_{\mathcal F}(E_{ij})$. By \cite[Theorem 2.1]{AKO}  $N_{\mathcal F}(E_{ij})$ is saturated. Now $F_1$, $F_i$, $F_j$ are all essential in $N_{\mathcal F}(E_{ij})$ and obviously these are the only ones. Hence we receive that   $O^{2^\prime}(\Aut_{\mathcal F}(F_i))$ , $O^{2^\prime}(\Aut_{\mathcal F}(F_j))$  and $O^{2^\prime}(\Aut_{\mathcal F}(F_1))$ are all contained   $N_{\mathcal F}(E_{ij})$.    By \fref{lem:aut1} we have that $Aut_{N_{\mathcal F}(E_{ij})}(S) = \Inn(S)$. Hence by  \fref{prop:alperin}  we have
 $$N_{\mathcal F}(E_{ij}) = \langle O^{2^\prime}(Aut_{\mathcal F}(F_i)),  O^{2^\prime}(Aut_{\mathcal F}(F_j)), O^{2^\prime}(Aut_{\mathcal F}(F_1))\rangle.$$

  By the model theorem \cite[Theorem 4.9]{AKO} there is a unique model $G_{E_{ij}}$ for $N_{\mathcal F}(E_{ij})$. Set  $\hat{\mathcal F} := N_{\mathcal F}(E_{ij})/E_{ij}$. Then $\hat{\mathcal F}$ is the 2-fusion system of $G_{E_{ij}}/E_{ij}$ and so it is also saturated. We show
  \begin{gather*}\label{eq:O2X} O_2(\hat{{\mathcal F}})  = 1.\tag{$\ast$}\end{gather*}

 We consider $QE_{ij}/E_{ij}$, which is elementary abelian of order 16 (see the statement before \fref{lem:A8}). By \fref{lem:64} for any involution  $y \in S/E_{ij} \setminus QE_{ij}/E_{ij}$ we have that $|[QE_{ij}/E_{ij},y]| = 4$. This implies that $N_{G_{E_{ij}}}(QE_{ij}/E_{ij})$ acts irreducibly on $QE_{ij}/E_{ij}$. Hence if $O_2(\hat{{\mathcal F}}) \not= 1$, then  $O_2({\hat{\mathcal F}})  =QE_{ij}/E_{ij}$. But by the choice of $F_1$ we have  $Q \not\leq F_1$ and so by \fref{prop:O21} \fref{eq:O2X} holds.
\\
\\
As the sectional 2-rank of $S/E$ is four and \fref{eq:O2X} holds, we may apply \cite[Theorem A (5)]{Oliv}, which gives that
$\hat{\mathcal F}$ is the 2-fusion system of some $\PSp_4(q)$, $q$ odd, or $A_8$. Recall that in the first systems $\Aut_{\hat{\mathcal F}}(QE_{ij}/E_{ij})$ would induce $A_5$, which is not the case. Thus we have the latter. This now shows that  $G_{E_{ij}}/E_{ij}$ is a subgroup of $\GL_6(2)$, with a fusion system of type $A_8$.

By \cite[Theorem A]{GoHa} $G_{E_{ij}}/E_{ij} \cong A_8$ or $A_9$. As $A_9$ contains an elementary abelian subgroup of order 9 all of whose elements of order three are conjugate, coprime action shows that the smallest $\GF(2)$-representation for $A_9$ is of dimension 8. Thus we have
$G_{E_{ij}}/E_{ij} \cong A_8 \cong \Omega^+_6(2)$.
\end{proof}

We will use \fref{lem:essentialgroups} and \cite{Orno} to produce a parabolic system for the fusion system $\tilde{\mathcal F}$ to be defined below. The problem with the application of \cite{Orno} is that it requires that $N_{\mathcal F}(S)$ is contained in $N_{\mathcal F}(F)$ for any essential subgroup $F$. But this is not the case as long as $\Aut_{\mathcal F}(S) \not= \Inn_{\mathcal F}(S)$. We define $$\tilde{\mathcal F} = \langle O^{2^\prime}(\Aut_{\mathcal F}(F_1)), O^{2^\prime}(\Aut_{\mathcal F}(F))~|~Q \leq F \le S, F~\mathcal F-\text{essential}\rangle.$$

\begin{lemma}\label{lem:subsystem}  $\tilde{\mathcal F} = O^{2^\prime}(\mathcal F)$ is saturated with the same essential subgroups as in $\mathcal F$ and $\Aut_{\tilde{\mathcal F}}(S) = \Inn(S)$.
\end{lemma}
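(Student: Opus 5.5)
The plan is to identify $\tilde{\mathcal F}$ with $O^{2^\prime}(\mathcal F)$ through the standard description of subsystems of $2^\prime$-index (\cite[Theorem I.7.7]{AKO}). Write $\Gamma = \Aut_{\mathcal F}(S)/\Inn(S)$. By \fref{lem:aut} it has order $1$ or $3$. Let $\Aut^0_{\mathcal F}(S)$ be the subgroup of $\Aut_{\mathcal F}(S)$ generated by the $\alpha\in\Aut_{\mathcal F}(S)$ whose restriction to some $\mathcal F$-centric $P$ lies in $O^{2^\prime}(\Aut_{\mathcal F}(P))$. The theorem then says: $O^{2^\prime}(\mathcal F)$ is the unique saturated subsystem on $S$ whose automizer of $S$ is $\Aut^0_{\mathcal F}(S)$, and it is generated by the groups $O^{2^\prime}(\Aut_{\mathcal F}(P))$. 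Using \fref{prop:alperin} together with \fref{prop:essential} and \fref{prop:normalinS}, I will check that the only centric subgroups contributing are $S$ and the essential $F_i$. Since $O^{2^\prime}(\Aut_{\mathcal F}(S)) = \Inn(S)$, this gives $O^{2^\prime}(\mathcal F) = \langle O^{2^\prime}(\Aut_{\mathcal F}(F_i)) \mid F_i \text{ essential}\rangle = \tilde{\mathcal F}$. Here $F_1$ is essential by \fref{lem:normalE}, and every other essential subgroup contains $Q$ by \fref{lem:essentialnormal2}.

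The key step is to show $\Aut^0_{\mathcal F}(S) = \Inn(S)$, that is, no element of order $3$ in $\Aut_{\mathcal F}(S)$ extends into some $O^{2^\prime}(\Aut_{\mathcal F}(F_i))$. By \fref{prop:normalinS}, $\Out_{\mathcal F}(F_i) \cong \Sigma_3$ or $\Sigma_3\times \Z_3$, so $O^{2^\prime}(\Out_{\mathcal F}(F_i))\cong \Sigma_3$ and its $2$-part normalizing $S/F_i$ is just $S/F_i$. An automorphism of $S$ restricting into $O^{2^\prime}(\Aut_{\mathcal F}(F_i))$ therefore induces on $F_i$ an element of $N_{O^{2^\prime}(\Aut_{\mathcal F}(F_i))}(\Aut_S(F_i)) = \Aut_S(F_i)$. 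Composing with an inner automorphism, we may assume it centralizes $F_i$, so it has $2$-power order modulo $\Inn(S)$, since $C_{\Aut(S)}(F_i)$ is a $2$-group (it acts trivially on $F_i$ and on $S/F_i$). Because $|\Gamma|\le 3$, we get $\Aut^0_{\mathcal F}(S)=\Inn(S)$.

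Saturation is immediate from the cited theorem. Alternatively, I would verify it directly: $\tilde{\mathcal F}$ is generated by $\Inn(S)$ and the groups $O^{2^\prime}(\Aut_{\mathcal F}(F_i))$, each of which has $\Aut_S(F_i)$ as Sylow $2$-subgroup. For the claim about essential subgroups, every $F_i$ that is essential in $\mathcal F$ has $\Out_{\tilde{\mathcal F}}(F_i)\cong\Sigma_3$, which has a strongly $2$-embedded subgroup, so $F_i$ remains essential in $\tilde{\mathcal F}$. Conversely, any essential subgroup of $\tilde{\mathcal F}$ is essential in $\mathcal F$: by \fref{prop:essential} and \fref{prop:normalinS} (which need no hypothesis beyond saturation) it is one of $F_1,\ldots,F_5$, and it is essential in $\tilde{\mathcal F}\subseteq\mathcal F$ only if $\mathcal F$ already contributes an element of odd order there. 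I expect the main obstacle to be the bookkeeping in matching the definition of $\Aut^0_{\mathcal F}(S)$ with the essential subgroups, in particular when $F_2,\ldots,F_5$ are permuted by the order-$3$ element of $\Aut_{\mathcal F}(S)$ as in \fref{prop:normalinS}(2).
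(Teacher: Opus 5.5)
Your computation at the essential subgroups is correct. It is exactly the paper's claim $\Aut^{(F)}_{\mathcal F}(S)=\Inn(S)$, and you justify it in more detail than the paper's bare ``by Proposition~\ref{prop:normalinS}''. Since $O^{2^\prime}(\Out_{\mathcal F}(F_i))\cong\Sigma_3$, the restriction $\alpha|_{F_i}$ lies in $N_{O^{2^\prime}(\Aut_{\mathcal F}(F_i))}(\Aut_S(F_i))=\Aut_S(F_i)$. Hence $\alpha$ lies in $\Inn(S)\,C_{\Aut(S)}(F_i)$, and since $C_{\Aut(S)}(F_i)$ is a $2$-group while $|\Out_{\mathcal F}(S)|\le 3$, $\alpha$ is inner.

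The gap is the step you defer with ``I will check that the only centric subgroups contributing are $S$ and the essential $F_i$.'' In \cite[Theorem I.7.7]{AKO}, $\Aut^0_{\mathcal F}(S)$ is generated over \emph{all} $\mathcal F$-centric $P$, and your argument never touches the non-essential ones. For instance, $Q$ is centric and is normalized by every automorphism of $S$ (Lemma~\ref{lem:wc}), but it is not essential. So you would still have to rule out $\alpha|_Q\in O^{2^\prime}(\Aut_{\mathcal F}(Q))$ for $\alpha$ of order $3$, and similarly for every other centric subgroup normalized by $\alpha$.

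Alperin's theorem does not give this reduction. It writes a $2$-element of $\Aut_{\mathcal F}(P)$ as a composite of restrictions of automorphisms of essential subgroups and of $S$. However, those factors need not lie in the $O^{2^\prime}$-parts, and showing that the resulting ``$\Out_{\mathcal F}(S)$-component'' is well defined is essentially the hard part of the theory behind I.7.7.

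The missing ingredient is Oliver's lemma \cite[Lemma 1.4]{Oliv1}, which is what the paper cites: $\Aut^0_{\mathcal F}(S)=\langle \Aut^{(P)}_{\mathcal F}(S)\mid P \text{ essential or } P=S\rangle$. With it your argument closes. The identification $O^{2^\prime}(\mathcal F)=\tilde{\mathcal F}$ then follows by applying Alperin's theorem to the saturated system $O^{2^\prime}(\mathcal F)$. Its essential subgroups lie among $F_1,\dots,F_5$, because Propositions~\ref{prop:essential} and \ref{prop:normalinS} hold for any saturated system on $S$. Its automizers there are $O^{2^\prime}(\Aut_{\mathcal F}(F_i))\Aut_S(F_i)=O^{2^\prime}(\Aut_{\mathcal F}(F_i))$, by the Frattini argument and the extension axiom with $\Aut_{O^{2^\prime}(\mathcal F)}(S)=\Inn(S)$.

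Also drop your ``direct'' verification of saturation. A system generated by $\Inn(S)$ and groups $O^{2^\prime}(\Aut_{\mathcal F}(F_i))$, each having $\Aut_S(F_i)$ as Sylow $2$-subgroup, need not be saturated. This is precisely why the paper explicitly refrains from claiming saturation of $\bar{\mathcal F}$. Saturation of $\tilde{\mathcal F}$ has to come from \cite[Theorem I.7.7]{AKO}, once $\Aut^0_{\mathcal F}(S)=\Inn(S)$ has been established via Oliver's lemma.
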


\begin{proof}  That the essential subgroups in $\mathcal F$ and $\tilde{\mathcal F}$ are the same is obvious. Let $F$ be some essential subgroup of $\mathcal F$. Then
 $$\Aut^{(F)}_{\mathcal F}(S) := \{\alpha \in \Aut_{\mathcal F}(S)~|~\alpha(F) = F, \alpha_{| F} \in O^{2^\prime}(\Aut_{\mathcal F}(F))\} = \Inn(S)$$ by \fref{prop:normalinS}. Hence by \cite[Lemma 1.4]{Oliv1} we have that
 $O^{2^\prime}(\mathcal F) \not= \mathcal F$ and so $\tilde{\mathcal F} = O^{2^\prime}(\mathcal F)$. By \cite[Theorem 7.7]{AKO} $\tilde{\mathcal F}$ is saturated. Clearly $\Inn (S) = \Aut_{\tilde{\mathcal F}}(S)$.
 \end{proof}

For the convenience of the reader we repeat the following definition:
\begin{Def}\label{def:Ono}{\rm \cite[Definition 5.1]{Orno})} {\rm Let $\mathcal F$ be a fusion system over a $p$-group $S$ and set ${\mathcal B} = N_{\mathcal F}(S)$. We say that $\mathcal F$ has a family of parabolic subsystems if $\mathcal F$ contains a collection $\{{\mathcal F}_i~|~i\in I\}$ of saturated constrained fusion subsystems, each one with exactly one ${\mathcal F}_i$-conjugacy class of ${\mathcal F}_i$-essential subgroups, with the following properties
\begin{itemize}
\item[(1)] $\mathcal B$ is a proper subsystem of ${\mathcal F}_i$ for all $i \in I$;
\item[(2)] ${\mathcal F} = \langle {\mathcal F}_i~|~ i \in I\}$and no proper subset $\{{\mathcal F}_j~|~j \in J \subset I\}$ generates $\mathcal F$;
\item[(3)]  ${\mathcal F}_i \cap {\mathcal F}_j = \mathcal B$ for any pair of distict elements ${\mathcal F}_i$ and ${\mathcal F}_j$;
\item[(4)] ${\mathcal F}_{ij} : = \langle {\mathcal F}_i, {\mathcal F}_j \rangle$ is a saturated constrained subsystem of $\mathcal F$ for all $i,j \in I$.
    \end{itemize}}
    \end{Def}

    Set

\begin{lemma}\label{lem:parabolic} In $\tilde{\mathcal F}$ consider the collection of  subsystems $$\{O^{2^\prime}(N_{\mathcal F}(F_1),  O^{2^\prime}(N_{\mathcal F}(F))~|~Q \leq F \le S, F~\mathcal F\text{-essential}\}.$$ Then this collection satisfies  \fref{def:Ono}$(1)$, $(3)$ and $(4)$.
\end{lemma}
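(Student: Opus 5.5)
We write $\mathcal F_1 = O^{2^\prime}(N_{\mathcal F}(F_1))$ and $\mathcal F_F = O^{2^\prime}(N_{\mathcal F}(F))$ for $F$ essential with $Q \leq F$. By \fref{lem:subsystem} these lie in $\tilde{\mathcal F}$, and $\mathcal B = N_{\tilde{\mathcal F}}(S)$ is the fusion system of $S$ itself, since $\Aut_{\tilde{\mathcal F}}(S) = \Inn(S)$. Before checking the three conditions, we first confirm that each member is a saturated constrained subsystem with exactly one class of essential subgroups. For $F$ essential and normal in $S$, the normalizer $N_{\mathcal F}(F)$ is saturated by \cite[Theorem 2.1]{AKO}. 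It is constrained because $F$ is centric and normal in it. By \fref{prop:normalinS}, $\Out_{\mathcal F}(F) \cong \Sigma_3$ or $\Sigma_3 \times \Z_3$, so $O^{2^\prime}$ has automizer $\Sigma_3$ on $F$. Hence its model is $G_F$ with $G_F/F \cong \Sigma_3$, its unique essential subgroup is $F$, and $\Aut_{\mathcal F_F}(S) = \Inn(S)$.

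For (1), we have $\mathcal B = \mathcal F_S(S) \subseteq \mathcal F_F$, and the inclusion is proper because $\mathcal F_F$ contains an element of order $3$ in $\Aut(F)$.

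For (3), consider distinct members $\mathcal F_F$ and $\mathcal F_{F'}$. The model $G_F$ has the form $F\langle\rho\rangle S$ with $|S:F| = 2$. Any morphism in $\mathcal F_F$ between subgroups $P, P'$ is a restriction of conjugation by an element of $G_F$. The intended argument is that a morphism lying in both $\mathcal F_F$ and $\mathcal F_{F'}$ must already come from $S$. For this I would use the Alperin decomposition inside each constrained system: every morphism of $\mathcal F_F$ is a composite of restrictions of elements of $\Aut_{\mathcal F_F}(F) \cong F:\Sigma_3$ and of $\Inn(S)$. The difference between the two systems then shows up in the orbits. The $3$-element in $\mathcal F_F$ moves $Q$ when $F = F_1$, because $Z(Q) \neq Z(Q)^\rho$ by \fref{lem:essentialnormal2}(1). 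When $Q \leq F$, it instead acts nontrivially on $F/Q \cong \langle \bar s_i,\bar s_j\rangle$ and on the three elementary abelian groups of order $64$ inside $F$, by \fref{lem:essentialnormal2}(2).

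Concretely, I would take a morphism $\varphi \colon P \to P'$ in $\mathcal F_F \cap \mathcal F_{F'}$ and use that $\mathcal F_F$-conjugates of $P$ are exactly the $G_F$-conjugates. I would then argue that if $\varphi$ is not induced by $S$, then $P$ is not contained in $F'$. In that case $P$ has an $\mathcal F_{F'}$-image inside $S$ whose structure is distinguished by the characteristic subgroups $Q$, $Z_2(S)$ and the $E_i$. This is the step I expect to be the main obstacle. Intersection of fusion systems is subtle, and I would handle it by reducing to $\Aut$ groups. Using the models $G_F$ and $G_{F'}$ and the weak closure of $Q$ (\fref{lem:wc}), I would show that $\Hom_{\mathcal F_F}(P,S) \cap \Hom_{\mathcal F_{F'}}(P,S) = \Hom_S(P,S)$. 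The comparison is of the double cosets $S\backslash G_F/S$ with $S\backslash G_{F'}/S$, whose only common part is the trivial one.

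For (4), I would split by type:
\begin{itemize}
\item If one of the pair is $\mathcal F_1$ and the other is $\mathcal F_{F_i}$, then by \fref{lem:essentialgroups} there is $E_{ij}$ of order $64$ invariant under both. So $\langle \mathcal F_1, \mathcal F_{F_i}\rangle \leq N_{\mathcal F}(E_{ij})$, which has model $G_{E_{ij}}$ with $G_{E_{ij}}/E_{ij} \cong A_8$. The generated subsystem is the fusion system of the subgroup of $G_{E_{ij}}$ generated by $G_{F_1}$ and $G_{F_i}$. That subgroup is a parabolic of $A_8$ extended by $E_{ij}$, namely $E_{ij}(2^4:(\Sigma_3\times\Sigma_3))$ or $E_{ij}:\Sigma_3 \wr 2$. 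Hence it is saturated and constrained, with $E_{ij}$ centric normal.
\item If both $F_i$ and $F_j$ contain $Q$, then $Q$ is invariant under both by \fref{lem:wc}. The generated system lies in $N_{\mathcal F}(Q)$, which is constrained, and it is the fusion system of the group generated by $G_{F_i}$ and $G_{F_j}$ inside the model of $N_{\mathcal F}(Q)$. That group is $Q.(3^2:2^3)$-type, hence again saturated and constrained.
\end{itemize}
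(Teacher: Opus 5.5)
The gap is in (3). Your preliminary remarks, your argument for (1), and your treatment of the pairs $(\mathcal F_1,\mathcal F_{F_i})$ in (4) via $N_{\mathcal F}(E_{ij})$ and its model $G_{E_{ij}}$ agree with the paper, but (3) is not actually proved. You reduce it to $\Hom_{\mathcal F_F}(P,S)\cap\Hom_{\mathcal F_{F'}}(P,S)=\Hom_S(P,S)$ for \emph{every} $P\le S$, and then only announce a comparison of $S\backslash G_F/S$ with $S\backslash G_{F'}/S$. That comparison does not work as stated:
\begin{itemize}
\item Suppose you realize $G_F$ and $G_{F'}$ inside one group $X$ (e.g.\ $G_{E_{ij}}$) with $G_F\cap G_{F'}=S$. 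An equality $c_g|_P=c_h|_P$ with $g\in G_F$, $h\in G_{F'}$ still only gives $gh^{-1}\in C_X(P)$. So disjointness of the nontrivial double cosets says nothing about morphisms of small $P$.
\item The step ``if $\varphi$ is not induced by $S$, then $P\not\le F'$'' is only asserted.
\item The statement you aim for is far stronger than needed. For a small $P$ that is not fully normalized in $\mathcal F_F$, $\Aut_S(P)$ need not be a Sylow $2$-subgroup of $\Aut_{\mathcal F_F}(P)$, so it is not clear the statement even holds.
\end{itemize}
The missing idea is to compare only the automizers of the essential subgroups, which is what the paper does. By Proposition~\ref{prop:normalinS} and Lemma~\ref{lem:essentialnormal2}, $O^{2^\prime}(\Out_{\mathcal F}(U))\cong\Sigma_3$ for every essential $U$. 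The only subgroups of $\Sigma_3$ containing $\Out_S(U)$ are $\Out_S(U)$ and $\Sigma_3$. Now look at $F$ inside $\mathcal F_{F'}$. If $N_{G_{F'}}(F)/F'$ were all of $G_{F'}/F'\cong\Sigma_3$, then $F$ would be normal in $G_{F'}$, so $F\le O_2(G_{F'})=F'$, which is false. Hence $N_{G_{F'}}(F)=S$ and $\Aut_{\mathcal F_{F'}}(F)=\Aut_S(F)$. So no automorphism of order $3$ of an essential subgroup lies in two of the systems, and the common part is $\mathcal B$.

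In (4), your second case ($Q\le F_i\cap F_j$) also has an unproved step. You embed both systems in $N_{\mathcal F}(Q)$ and assert that $\langle G_{F_i},G_{F_j}\rangle$ inside its model has type $Q.(3^2:2^3)$. Nothing you have shown controls the group generated by the two elements of order $3$ modulo $Q$. Nor have you shown that the fusion system of that subgroup equals $\langle\mathcal F_{F_i},\mathcal F_{F_j}\rangle$. The paper handles this case exactly like the first one. Lemma~\ref{lem:essentialgroups} is stated precisely for a pair $F_i,F_j\ge Q$, so both systems lie in $N_{\mathcal F}(E_{ij})$, whose model satisfies $G_{E_{ij}}/E_{ij}\cong A_8$. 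There the generated subgroup is the known $E_{ij}.(2^4:(\Sigma_3\times\Sigma_3))$ parabolic. Conversely, for the pair $(\mathcal F_1,\mathcal F_{F_i})$ the nodes are adjacent, so the relevant parabolic is $E_{ij}.(2^3:\L_3(2))$, not $2^4:(\Sigma_3\times\Sigma_3)$ or $\Sigma_3\wr 2$. That slip does not affect saturation.
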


\begin{proof} Obviously all of $O^{2^\prime}(N_{\mathcal F}(F_1)$ and   $O^{2^\prime}(N_{\mathcal F}(F))$, $F$ essential with $Q \leq F$, are constrained saturated fusion systems,which have exactly one essential subgroup $F_1$, $F$, respectively.

We have ${\mathcal B} = N_{\tilde{\mathcal F}}(S) = {\mathcal F}_S(S)$ according to \fref{lem:subsystem}. Thus  \fref{def:Ono}(1)  is satisfied.

As for all essential subgroups $U$ of $S$ we have $O^{2^\prime}(\Out_{\mathcal F}(U)) \cong \Sigma_3$ by \fref{lem:essentialnormal2} \fref{def:Ono}(3) holds.
\\
\\
We now show that also \fref{def:Ono}(4) holds. Take two of the subsystems above and call them ${\mathcal G}$ and ${\mathcal H}$. Then by \fref{lem:essentialgroups} they are contained in a set of three subsystems, which generate the normalizer in $\mathcal F$ of an elementary abelian group $E$ of order 64. Further the fusion system $\langle \mathcal G, \mathcal H\rangle$ is the fusion system of some parabolic subgroup of $G_E$, the model for $N_{\mathcal F}(E)$ as in \fref{lem:essentialgroups}. Hence it is saturated.  Thus also  \fref{def:Ono}(4) is satisfied.
\end{proof}

We now introduce the following notation.
As in \fref{lem:normalE} we fix $F_1$ the essential subgroup of $S$ with $Q \not\leq F_1$ and further three essential subgroups named $F_2$, $F_3$, $F_4$.
Set ${\mathcal F}_i$, $i = 1,2,3,4$, as $O^{2^\prime}(N_{\mathcal F}(F_i))$. Then we consider the following fusion system on $S$:
$$\bar{\mathcal F} = \langle {\mathcal F}_i, i = 1,2,3,4 \rangle.$$
We have that $\bar{\mathcal F} \subseteq \tilde{\mathcal F}$. We do not claim that $\bar{\mathcal F}$ is saturated. To prove this in fact is the aim of the paper \cite{Orno}. But we can do all the calculation within the saturated fusion system $O^{2^\prime}(\mathcal F)$.

\begin{lemma}\label{lem:Orno} The collection $\{{\mathcal F}_i$, $i = 1,2,3,4\}$, is a family of parabolic subsystems in $\bar{\mathcal F}$ in the sense of \fref{def:Ono}.
\end{lemma}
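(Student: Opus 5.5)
We have to check the four conditions of \fref{def:Ono} for $\{{\mathcal F}_1,\ldots,{\mathcal F}_4\}$ inside $\bar{\mathcal F}$. Conditions (1), (3) and (4) were already verified in \fref{lem:parabolic} for the larger collection over $\tilde{\mathcal F}$, so only two things need fresh attention: that these properties survive in the subsystem $\bar{\mathcal F}$, and condition (2).

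For (1), (3) and (4) the plan is as follows. Since $\bar{\mathcal F}\subseteq\tilde{\mathcal F}$, we have $N_{\bar{\mathcal F}}(S)\le N_{\tilde{\mathcal F}}(S)={\mathcal F}_S(S)$ by \fref{lem:subsystem}, so ${\mathcal B}={\mathcal F}_S(S)$ again. By \fref{prop:normalinS} each ${\mathcal F}_i$ has $O^{2^\prime}(\Out_{\mathcal F}(F_i))\cong\Sigma_3$, and $F_i$ is its only essential subgroup. Hence ${\mathcal B}$ is proper in ${\mathcal F}_i$, and ${\mathcal F}_i\cap{\mathcal F}_j={\mathcal B}$ for $i\ne j$, since the $F_i$ are distinct. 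For (4), I would use \fref{lem:essentialgroups}. Given $\{i,j\}$ with $i,j\in\{2,3,4\}$, the pair lies in a triple $\{1,i,j\}$ generating $N_{\mathcal F}(E_{ij})$ with model $G_{E_{ij}}$, where $G_{E_{ij}}/E_{ij}\cong A_8$. So $\langle{\mathcal F}_i,{\mathcal F}_j\rangle$ is the fusion system of a parabolic of $G_{E_{ij}}$, generated by two minimal parabolics over the Borel $S$, and is therefore saturated and constrained. The same argument covers pairs $\{1,i\}$, using $E_{ij}$ for any $j\ne i$ in $\{2,3,4\}$.

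Condition (2) is the main obstacle, namely that no three of the ${\mathcal F}_i$ generate $\bar{\mathcal F}$. Generation by all four holds by definition. For minimality I would attach to each triple a proper subgroup of $S$ that is normal in the subsystem the triple generates, and show that this subgroup is not normal in the remaining ${\mathcal F}_k$. I would argue via \fref{prop:O21}, applied inside the relevant saturated systems.

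For the triples $\{1,i,j\}$ I would use $E_{ij}$. It is invariant under ${\mathcal F}_1$ by \fref{lem:normalE}, and under ${\mathcal F}_i$ and ${\mathcal F}_j$ by \fref{lem:E64}. It is not contained in $F_k$ for the third $k\in\{2,3,4\}$, since $\bar F_i\cap\bar F_j\cap\bar F_k=1$, as the three fours groups avoiding $\bar s_{i_0}$ pairwise meet in distinct involutions. So ${\mathcal F}_k$ cannot lie in $\langle{\mathcal F}_1,{\mathcal F}_i,{\mathcal F}_j\rangle=N_{\mathcal F}(E_{ij})$. To justify this I would note that $\Aut_{{\mathcal F}_k}(F_k)$ contains an element of order $3$, and by \fref{lem:essentialnormal2}(2) this element fixes only the three elementary abelian groups of order 64 in $F_k$. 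Any morphism of $N_{\mathcal F}(E_{ij})$ defined on $F_k$ extends to $F_kE_{ij}=S$, and so lies in $\Inn(S)$, because every automizer of $S$ is inner.

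For the triple $\{2,3,4\}$ the witness is $Q$. It is normal in each ${\mathcal F}_i$ with $i\ge2$ by \fref{lem:wc}, since $Q\le F_i$ and $Q$ is characteristic there. It is not normal in ${\mathcal F}_1$, because $Q\not\le F_1$. Here the same extension argument shows that an element of order three in $\Aut_{{\mathcal F}_1}(F_1)$ is not realized in $\langle{\mathcal F}_2,{\mathcal F}_3,{\mathcal F}_4\rangle$.
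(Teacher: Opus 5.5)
Your proposal is correct and follows the paper's proof. Conditions (1), (3) and (4) come from \fref{lem:parabolic}, and for (2) the witnesses are $Q$ for the triple $\{{\mathcal F}_2,{\mathcal F}_3,{\mathcal F}_4\}$ and $E_{ij}$ for the triples containing ${\mathcal F}_1$. The one local difference is how you exclude ${\mathcal F}_k\subseteq N_{\mathcal F}(E_{ij})$. The paper counts the three minimal parabolics of the model $G_{E_{ij}}$ with $G_{E_{ij}}/E_{ij}\cong A_8$. You instead use $\bar F_i\cap\bar F_j\cap\bar F_k=1$ to get $F_kE_{ij}=S$, and then observe that an order-$3$ element of $\Aut_{{\mathcal F}_k}(F_k)$ cannot extend to an automorphism of $S$ normalizing $E_{ij}$, which is more elementary. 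Such automorphisms are inner by \fref{lem:aut1}, or by \fref{lem:subsystem} if you work inside $\bar{\mathcal F}$; this does not follow from $\Aut_{\mathcal F}(S)$ itself being inner, which it need not be.
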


\begin{proof} By \fref{lem:parabolic} we just have to show that \fref{def:Ono}(2) holds. Hence we have to show that $\bar{\mathcal F}$ is not generated by three of the ${\mathcal F}_i$.

Consider first $\langle {\mathcal F}_2, {\mathcal F}_3, {\mathcal F}_4 \rangle = \mathcal H$.
By \fref{lem:wc} ${\mathcal H}$ leaves  $Q$ invariant and we are done as ${\mathcal F}_1$ does not.

Thus we may consider $\langle {\mathcal F}_1, {\mathcal F}_2, {\mathcal F}_3 \rangle$.  But then by \fref{lem:essentialgroups} they all normalize some elementary abelian group $E$ of order 64. In particular $F_1, F_2, F_3$ are  essential subgroups in $G_E$, $G_E$ the model for $N_{\mathcal F}(E) = N_{\bar{\mathcal F}}(E)$. As $G_E$ has just three minimal parabolics, there are also exactly three of the $F_j$ essential in $N_{\mathcal F}(E)$ and so ${\mathcal F}_4 \not\subseteq N_{\mathcal F}(E)$, which proves the lemma.
\end{proof}

By \cite[Proposition 5.2]{Orno} there are 2-constrained finite groups $G_i$ and $G_{ij}$  which realize ${\mathcal F}_i$ and ${\mathcal F}_{ij} = \langle {\mathcal F}_i, {\mathcal F}_j \rangle$, $i,j = 1,2,3,4.$ Further there are injective group homomorphisms $\psi_{ij} :G_i \rightarrow G_{ij}$, $\psi_i : S \rightarrow G_i$, such that $\mathcal A = \{(S;G_i,G_{ij}), (\psi_i, \psi_{ij}), i,j = 1,2,3,4\}$ is a diagram of groups. Let $G$ be a faithful completion of $\mathcal A$. Then by \cite[Lemma 5.4]{Orno} $\{S, G_i, i = 1,2,3,4\}$ is a parabolic system of $G$.

We have that $G_i/O_2(G_i) \cong \Sigma_3$. If we apply \fref{lem:essentialgroups} to $\bar{\mathcal F}$ as in the proof before we receive that $G_{ij} \leq G_{E_{ij}}$ with $G_{E_{ij}}/E_{ij} \cong \Omega^+_6(2)$. As $G_{ij}$ is a maximal parabolic we receive  either $G_{ij}/O_2(G_{ij}) \cong \GL_3(2)$ or $\Sigma_3 \times \Sigma_3$. Hence $\{G_1,G_2,G_3,G_4\}$ forms a classical parabolic system of type $\mathcal M$ in the sense of \cite[Definition 7.4]{Orno} with the following diagram:

$$\Dvier$$

We now receive
\begin{proposition}\label{prop:O82} $\bar{\mathcal F}$ is the $2$-fusion system of $\Omega^+_8(2)$. In particular if $\mathcal F$ possesses exactly four essential subgroups, then $\bar{\mathcal F} = \tilde{\mathcal F}$ is the $2$-fusion system of $\Omega^+_8(2)$.
\end{proposition}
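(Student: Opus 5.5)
We will identify a faithful completion of the parabolic system with $\Omega^+_8(2)$ via a classification of chamber systems, and then read off the fusion system. By \fref{lem:Orno} and the discussion after it, \cite[Proposition 5.2]{Orno} supplies 2-constrained groups $G_i$ realizing ${\mathcal F}_i$ and $G_{ij}$ realizing ${\mathcal F}_{ij}$, together with the diagram of groups $\mathcal A$. Let $G$ be its universal completion, the amalgamated product of the $G_{ij}$ over the $G_i$ and $S$. By \cite[Lemma 5.4]{Orno}, $\{S,G_1,\ldots,G_4\}$ is a parabolic system of $G$, and it is of type $\mathcal M$ with the $D_4$ diagram shown above.

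The first step is to show that the associated chamber system is locally that of a $D_4(2)$ building. Each rank 1 residue comes from $G_i/O_2(G_i)\cong\Sigma_3$, so all panels have three chambers. For the rank 2 residues we use \fref{lem:essentialgroups}: $G_{ij}\leq G_{E_{ij}}$ with $G_{E_{ij}}/E_{ij}\cong A_8\cong\Omega^+_6(2)$, where $G_{ij}$ is a parabolic of $A_8$ relative to the Borel $S/E_{ij}$. Hence $G_{ij}/O_2(G_{ij})\cong\GL_3(2)$ when $i,j$ are joined in the diagram and $\Sigma_3\times\Sigma_3$ otherwise. We then verify that the rank 2 residues are the generalized 3-gon of $\L_3(2)$ or generalized digons, and that the rank 3 residues, coming from $G_{ijk}$ realized inside the $A_8$ model of $N_{\mathcal F}(E)$, are the $A_3(2)$ buildings. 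Tits' local approach \cite{Tim} then shows that the universal 2-cover of the chamber system is the building of type $D_4(2)$. Consequently $G$, or at least its image acting on that building, is a group of type-preserving automorphisms acting chamber-transitively with Borel $S$.

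The second step identifies the group. A group acting flag-transitively on the $D_4(2)$ building, with chamber stabilizer the 2-group $S$ and minimal parabolics $\Sigma_3$ modulo $O_2$, is $\Omega^+_8(2)$ by the classification of flag-transitive automorphism groups of finite thick buildings. Since all $O^{2'}(\Aut(G_i))$ are generated inside, and $\Aut_{\tilde{\mathcal F}}(S)=\Inn(S)$ by \fref{lem:subsystem}, graph and field automorphisms are excluded and the group is exactly $\Omega^+_8(2)$. The fusion system of $\Omega^+_8(2)$ on $S$ is generated, by Borel--Tits and \fref{prop:alperin}, by the automizers of the radicals of its four minimal parabolics. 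These agree with the ${\mathcal F}_i$ because the $G_i$ embed compatibly. Therefore $\bar{\mathcal F}={\mathcal F}_S(\Omega^+_8(2))$.

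The second claim is immediate. If exactly four subgroups $F_1,\ldots,F_4$ are essential, then by \fref{lem:subsystem} and \fref{prop:alperin} $\tilde{\mathcal F}$ is generated by $O^{2'}(\Aut_{\mathcal F}(F_i))$, $i=1,\ldots,4$, together with $\Inn(S)$, and this is $\bar{\mathcal F}$.

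The main obstacle is the local verification: confirming that the rank 3 residues are genuinely buildings, i.e.\ simply connected, so that the $G_{ijk}$ are the correct $A_8$ parabolic systems rather than proper covers. We plan to handle this with the uniqueness of the model $G_{E_{ij}}$ from \fref{lem:essentialgroups}.
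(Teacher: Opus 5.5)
Your proposal is correct and is essentially the paper's argument with the black box opened. The paper cites \cite[Proposition 7.5]{Orno}, whose proof rests on \cite{Tim}, to get a group $D_4(2^m)$ possibly extended by diagonal or field automorphisms, and then forces $m=1$ from $|Z(S)|=2$; you run Tits' local approach yourself, read off $q=2$ from the panels $G_i/O_2(G_i)\cong\Sigma_3$, and finish with Borel--Tits and \fref{prop:alperin}.

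Your anticipated main obstacle is not a real one: the $D_4$ diagram has no $C_3$ or $H_3$ subdiagram, and the $A_3$ residues are realized inside the $G_{E_{ij}}$ anyway. The only point to tidy is to use a faithful completion, as the paper does, so that the chamber stabilizer is $S$ itself. This costs nothing, since a kernel would be normal in $G_{E_{ij}}$ and contained in $E_{ij}\cap E_{ik}$, hence trivial.
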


\begin{proof} By \cite[Proposition 7.5]{Orno} $\bar{\mathcal F}$ is the  $2$-fusion system of some group $G$ of Lie type in characteristic two, which has the Dynkin diagram above, extended by diagonal- and/or field-automorphisms. (The proof of \cite[Proposition 7.5]{Orno} involves the result \cite{Tim}) As we have a $D_4$-diagram we get that this group is $D_4(2^m)$ for suitable $m$. As $|Z(S)| =2$, we then have that $m = 1$. In particular there are no diagonal or field automorphisms.  Hence $G \cong \Omega^+_8(2)$.
\end{proof}

We now consider the case that we have four essential subgroups of $S$ containing $Q$. Hence we extend the notation above. We denote by $F_5$ the fourth one which contains $Q$ and set ${\mathcal F}_5 = O^{2^\prime}(N_{\mathcal F}(F_5))$. Then set $$\tilde{\mathcal F} = \langle {\mathcal F}_i, i = 1, \ldots 5 \rangle = O^{2^\prime}(\mathcal F)$$
We approach as before.

\begin{lemma}\label{lem:Orno2} $\{{\mathcal F}_i, i = 1, \ldots , 5\}$ is  a family of parabolic subsystems in $\tilde{\mathcal F}$.
\end{lemma}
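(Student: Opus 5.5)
We follow the proof of \fref{lem:Orno}. First, the analogue of \fref{lem:parabolic} holds for the five subsystems ${\mathcal F}_1,\ldots,{\mathcal F}_5$. Its proof never used how many essential subgroups contain $Q$. Each ${\mathcal F}_i$ is a saturated constrained system whose only essential subgroup is $F_i$. By \fref{lem:subsystem} we have ${\mathcal B} = N_{\tilde{\mathcal F}}(S) = {\mathcal F}_S(S)$, which is proper in every ${\mathcal F}_i$; this is \fref{def:Ono}(1). Since $O^{2^\prime}(\Out_{\mathcal F}(F_i)) \cong \Sigma_3$ for all $i$ by \fref{prop:normalinS} and \fref{lem:essentialnormal2}, the pairwise intersections are $\mathcal B$, which is \fref{def:Ono}(3). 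For \fref{def:Ono}(4), take two of the subsystems. \fref{lem:essentialgroups} puts them inside $N_{\mathcal F}(E)$ for an elementary abelian $E$ of order $64$, whose model $G_E$ satisfies $G_E/E \cong A_8$. The pair then generates the fusion system of a parabolic of $G_E$, so it is saturated and constrained.

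There is one point to check here. When both subsystems are among ${\mathcal F}_2,\ldots,{\mathcal F}_5$, we need $E_{ij}$ to be invariant under ${\mathcal F}_1$ as well. This follows from \fref{lem:normalE}. Moreover $\bar F_i \cap \bar F_j$ has order $2$ and corresponds to a unique $E_{ij}$, by \fref{lem:E64}. When one subsystem is ${\mathcal F}_1$ and the other is ${\mathcal F}_i$ with $i \geq 2$, choose any $E$ of order $64$ with $E \leq F_i$. It is invariant under $O^{2^\prime}(\Aut_{\mathcal F}(F_i))$ by \fref{lem:E64} and under ${\mathcal F}_1$ by \fref{lem:normalE}. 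The argument of \fref{lem:essentialgroups} then gives a model with quotient $A_8$ once we pick a third $F_j$ with $E \leq F_j$. Such an $F_j$ exists because each involution $\bar s_k \neq \bar s_{i_0}$ lies in exactly two of the fours groups $\bar F_2,\ldots,\bar F_5$.

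The real work is \fref{def:Ono}(2): no four of the five subsystems generate $\tilde{\mathcal F}$. The subsystem $\langle {\mathcal F}_2,\ldots,{\mathcal F}_5\rangle$ normalizes $Q$ by \fref{lem:wc}, while ${\mathcal F}_1$ does not, so it is proper. Now omit some ${\mathcal F}_k$ with $k\geq 2$, say ${\mathcal F}_5$. We must show ${\mathcal F}_5 \not\subseteq {\mathcal H} := \langle {\mathcal F}_1,\ldots,{\mathcal F}_4\rangle$. By \fref{prop:O82}, applied to these four subsystems, ${\mathcal H}$ is the $2$-fusion system of $\Omega^+_8(2)$. Its essential subgroups are exactly the radicals of the four minimal parabolics of $\Omega^+_8(2)$, namely $F_1,\ldots,F_4$.

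Suppose ${\mathcal F}_5 \subseteq {\mathcal H}$. The automizer $O^{2^\prime}(\Aut_{\mathcal F}(F_5))$ induces an element of order $3$ on $F_5$, normalizing $S$-up-to-$F_5$. Since $F_5$ is normal in $S$ of index $2$ and not essential in ${\mathcal H}$, Alperin's theorem (\fref{prop:alperin}) applied to ${\mathcal H}$ forces $\Aut_{\mathcal H}(F_5)$ to be generated by restrictions of automorphisms of overgroups. The only overgroup is $S$, so $\Aut_{\mathcal H}(F_5)$ is a $2$-group, because $\Aut_{\mathcal H}(S)=\Inn(S)$. This contradicts the existence of the order-$3$ element. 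Hence no four generate, \fref{def:Ono}(2) holds, and $\{{\mathcal F}_1,\ldots,{\mathcal F}_5\}$ is a family of parabolic subsystems. I expect the main obstacle to be making this last step rigorous: correctly identifying $\Aut_{\mathcal H}(F_5)$ via the known fusion of $\Omega^+_8(2)$, and ensuring that \fref{prop:O82} applies equally to every choice of four subsystems. By symmetry under the $\Sigma_4$ permuting $F_2,\ldots,F_5$, all choices should behave the same way.
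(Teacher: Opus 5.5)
Your proposal is correct, but for the substantive case of \fref{def:Ono}(2), four subsystems including ${\mathcal F}_1$, it argues differently from the paper. (The paper simply cites \fref{lem:parabolic} for (1), (3), (4), so re-verifying those is harmless but unnecessary.)

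\textbf{The paper's route.} It assumes $\langle {\mathcal F}_1,\dots,{\mathcal F}_4\rangle=\tilde{\mathcal F}$, so that $\tilde{\mathcal F}$ is the $2$-fusion system of $\Omega^+_8(2)$ by \fref{prop:O82}. It then gets a global contradiction. With four essential $F_i\geq Q$, each of the six elementary abelian subgroups of order $64$ is some $E_{ij}$, so by \fref{lem:essentialgroups} it has a normalizer modelled by $2^6:A_8$. In $\Omega^+_8(2)$, however, some of them are not normalized by $\Omega^+_6(2)$.

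\textbf{Your route.} You argue locally at $F_5$. The subsystem $\mathcal H$ is saturated with $\Aut_{\mathcal H}(S)=\Inn(S)$, and its essential subgroups are $F_1,\dots,F_4$. Alperin's theorem then gives $\Aut_{\mathcal H}(F_5)=\Aut_S(F_5)$, a $2$-group. This cannot contain the elements of order $3$ in $O^{2^\prime}(\Aut_{\mathcal F}(F_5))$.

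\textbf{Comparison.} Your route needs only the Borel--Tits fact that the $2$-fusion system of $\Omega^+_8(2)$ has exactly four essential subgroups. It does not need the finer information about normalizers of its elementary abelian subgroups of order $64$, and it is in the spirit of the paper's own Borel--Tits analogue. The paper's route instead reuses \fref{lem:essentialgroups} directly.

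\textbf{Points to make explicit.}
First, say why $F_1,\dots,F_4$ are $\mathcal H$-essential. They are $\mathcal F$-centric, hence $\mathcal H$-centric; they are normal in $S$; and $O^{2^\prime}(\Out_{\mathcal F}(F_i))\cong\Sigma_3\leq\Out_{\mathcal H}(F_i)\leq\Out_{\mathcal F}(F_i)$. The count of four then forces $F_5$ to be non-essential in $\mathcal H$.
Second, in the Alperin decomposition the first group containing $F_5$ must be $S$, since $F_5\not\leq F_j$ for $j\leq 4$. As $\Inn(S)$ fixes $F_5$, the chain never leaves $F_5$.
Third, your appeal to a $\Sigma_4$-symmetry of $F_2,\dots,F_5$ is unnecessary, and not justified, since $\mathcal F$ need not admit such automorphisms. \fref{prop:O82} was proved for an arbitrary triple of essential subgroups containing $Q$, so it applies to every choice of four subsystems that includes ${\mathcal F}_1$.
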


\begin{proof} By \fref{lem:parabolic} we just have to show that \fref{def:Ono}(2) holds.
For this we consider a collection of four of the ${\mathcal F}_i$. If ${\mathcal F}_1$ is not among them, we have by \fref{lem:wc} that they all leave $Q$ invariant  and so as ${\mathcal F}_1$ does not, they cannot generate $\tilde{\mathcal F}$. Thus we may assume that ${\mathcal F}_1$ is one of them. Changing numbering, we may assume that they generate $\bar{\mathcal F}$. According to \fref{prop:O82} $\bar{\mathcal F}$ is the 2-fusion system of $\Omega^+_8(2)$. But in $\Omega^+_8(2)$ we have elementary abelian subgroups of order 64 which are not normalized by $\Omega^+_6(2)$, which contradicts  \fref{lem:essentialgroups}. Thus \fref{def:Ono}(2) is satisfied.
\end{proof}

Again by \cite[Proposition 5.2]{Orno} there are groups $G_i$ and $G_{ij}$,  which realize ${\mathcal F}_i$ and ${\mathcal F}_{ij} = \langle {\mathcal F}_i, {\mathcal F}_j \rangle$, $i,j = 1,2,3,4,5$. Further the $G_i$ form a parabolic system. We have that $G_i/O_2(G_i) \cong \Sigma_3$. As before by \fref{lem:essentialgroups} we receive that  either $G_{ij}/O_2(G_{ij}) \cong \L_3(2)$ or $\Sigma_3 \times \Sigma_3$. Hence $\{G_1,G_2,G_3,G_4,G_5\}$ forms a classical parabolic system of type $\mathcal M$ in the sense of \cite[Definition 7.4]{Orno}. But this time the diagram is not spherical and so we cannot apply \cite[Proposition 7.5]{Orno}.

The diagram is as follows:
$$\setlength{\unitlength}{2mm}
\begin{picture}(50,40)(-20,-30) \put(0.3,0.3){\makebox(0,0){$\circ^1$}} \put(6.1,0.3){\makebox(0,0){$\circ^2$}}
\put(-5.6,0.3){\makebox(0,0){$\circ^3$}} \put(0.3,5.6){\makebox(0,0){$\circ_4$}}
\put(0.3,-5.9){\makebox(0,0){$\circ_5$}} \put(-0.05,0.4){\line(0,1){5}} \put(-0.05,-0.4){\line(0,-1){5}}
\put(-0.5,0){\line(-1,0){5}} \put(0.4,0){\line(1,0){5}} \end{picture} $$
\vspace{-4cm}

But we can follow the proof of \cite[Proposition 7.5]{Orno}. In the first place we have to have control over the universal group for our parabolic system (chamber system).
\begin{proposition}\label{prop:covering} Let $\mathcal C$ be the chamber system corresponding to the parabolic system above. Then
the universal covering is the affine building $\Gamma$ of type $\Omega_8(\Q_2,f)$ where $f$ is the form $f = \sum_{i=1}^8 x_i^2$. The discrete group acting on the building is $\Omega_8(\Z[\frac{1}{2}],f)$.
\end{proposition}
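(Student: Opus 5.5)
The plan is to identify the chamber system $\mathcal C$ with the one coming from the Iwahori (chamber) stabilizers of a concrete affine building, and then to show simple connectedness of that building. Tits' local approach does this. The diagram above is the affine diagram $\tilde D_4$: node $1$ is the central node, joined to the four end nodes $2,3,4,5$. First I would check that every rank two residue of $\mathcal C$ is a generalized digon or a projective plane over $\GF(2)$. This follows from the rank two parabolics: $G_{ij}/O_2(G_{ij})\cong \L_3(2)$ when $1\in\{i,j\}$ and $\Sigma_3\times\Sigma_3$ otherwise. Next I would check that every rank three residue containing node $1$ is the building of $\Omega^+_6(2)\cong\L_4(2)$, which is given by \fref{lem:essentialgroups}. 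The remaining rank three residues are of type $A_1\times A_1\times A_1$ and come from the $\{2,3,4,5\}$-triples, which are pairwise commuting.

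I would also verify that the rank four residues containing node $1$ are buildings of type $D_4(2)$. For the residues whose nodes all lie among the $F_j\ge Q$, I would show they are generalized digons/generalized products. This uses \fref{prop:O82}, applied to each of the four subsystems generated by $\mathcal F_1$ and three of $\mathcal F_2,\dots,\mathcal F_5$. That proposition is symmetric in the choice of three end nodes, so every such residue is the spherical $D_4(2)$ building. By \cite{Tim} (Tits' local-to-global theorem) a chamber system of type $M$ whose spherical rank three residues are buildings has a universal $2$-cover that is a building of type $M$. Since all rank three residues here are spherical buildings, the universal cover $\Gamma$ of $\mathcal C$ is a thick affine building of type $\tilde D_4$ with all panels of size $3$.

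The identification step uses the classification of affine buildings of rank at least four (Tits, or the form used in \cite{Kant}). Such a building is Bruhat--Tits for an algebraic group over a local field with residue field $\GF(2)$. The spherical building at infinity has type $D_4$, and its vertex residues are $D_4(2)$ for the special vertices. Hence $\Gamma$ is the building of a group with relative type $D_4$ over $\Q_2$ or $\GF(2)((t))$, of split type. To pin down the field I would compare with the known example: $\Omega_8(\Z[\frac12],f)$ for $f=\sum x_i^2$ acts on the $\Q_2$-building, because $f$ is split over $\Q_2$. This group acts chamber-transitively with chamber stabilizer whose $2$-local structure gives exactly our parabolic system, namely the vertex stabilizers $\Omega_8(\Z,f)\cong 2.\Omega^+_8(2)$-type groups. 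This is \cite{Kant}.

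The step I expect to be the main obstacle is excluding the equal-characteristic option $\GF(2)((t))$ and showing the amalgam has the prescribed group $\Omega_8(\Z[\frac12],f)$ rather than some other lattice. My first approach would be uniqueness of the amalgam. The rank two parabolics $G_{ij}$ are determined up to isomorphism by the fusion data, via the models in \fref{lem:essentialgroups} and the $\Omega^+_8(2)$ parabolics. Goldschmidt-type rigidity then makes the amalgam $\{G_i,G_{ij}\}$ unique up to isomorphism, so it coincides with the one realized inside $\Omega_8(\Z[\frac12],f)$. Consequently $\mathcal C$ is a quotient of $\Gamma$, and the discrete group acting on the universal cover is the universal completion, which \cite{Kant} identifies with $\Omega_8(\Z[\frac12],f)$. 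The $\GF(2)((t))$ building would need a different chamber stabilizer, with unipotent radical of infinite rank. That is impossible here, since our chamber stabilizer is the finite group $S$.
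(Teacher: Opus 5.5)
The paper gives no argument of its own here; it imports the statement from Kantor, \cite[Theorem 7.1 and Corollary 7.4]{Kant}, which identifies the universal cover of exactly this $\tilde D_4$ chamber system together with the discrete group. Your first step is sound, and more than is needed. Since $\tilde D_4$ has no $C_3$ or $H_3$ subdiagrams, Tits' local approach already shows that the universal $2$-cover is a thick affine building of type $\tilde D_4$ with panels of size $3$. The gap is in deciding \emph{which} building and \emph{which} group, and that is the whole content of the proposition.

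Two problems arise there. First, your dichotomy ``$\mathbb{Q}_2$ or $\GF(2)((t))$'' is incomplete. The Bruhat--Tits building of split $\mathrm{Spin}_8$ over \emph{any} local field with residue field $\GF(2)$ has exactly the same residues. This includes every totally ramified extension of $\mathbb{Q}_2$, such as $\mathbb{Q}_2(\sqrt2)$. So the classification of affine buildings plus local data leaves infinitely many candidates. Second, your exclusion of $\GF(2)((t))$ is invalid. $S$ is the chamber stabilizer in the \emph{discrete} group, not in $\Aut(\Gamma)$. The Iwahori subgroup is an infinite pro-$2$ group over $\mathbb{Q}_2$ just as over $\GF(2)((t))$. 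Discrete chamber-transitive groups with finite, even trivial, chamber stabilizers do occur on $\GF(2)((t))$-buildings. An example is Tits' four chamber-regular $\tilde A_2$ amalgams of order $2$ with $\mathrm{Frob}_{21}$ vertex stabilizers. They have identical local groups, yet their universal covers are the $\GF(2)((t))$-building, the $\mathbb{Q}_2$-building and two exotic buildings. So finiteness of $S$ tells you nothing about the characteristic.

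The same example shows why your ``first approach'' is not yet an argument. You would need the amalgam $\{S;G_i,G_{ij}\}$ to be isomorphic to the one realized by $\Omega_8(\Z[\frac12],f)$ acting on the $\mathbb{Q}_2$-building. Only then do the universal completion and the universal cover follow. Goldschmidt's theorem does not give this: it classifies rank-$2$ amalgams with index-$3$ subgroups. It says nothing about how the rank-$2$ parabolics of a rank-$5$ amalgam of type $\tilde D_4$ are glued along the $G_i$. The isomorphism types of the $G_i$ and $G_{ij}$ do not determine that gluing, as the $\tilde A_2$ examples show. Reading the amalgam off the fusion system would be circular, since $\tilde{\mathcal F}$ is exactly what is being identified. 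This identification is what Kantor's theorem supplies. As written, your proposal therefore reduces to that citation, with an unproved uniqueness claim standing in for it.

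A minor correction: $\Omega_8(\Z,f)$ is not of type $2.\Omega^+_8(2)$, because it lies in $O(\Z^8,f)\cong 2^8{:}\Sigma_8$. The $2.\Omega^+_8(2)$ vertex stabilizers in $\Omega_8(\Z[\frac12],f)$ are stabilizers of lattices of $E_8$-type.
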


\begin{proof} This is \cite[Theorem 7.1 and Corollary 7.4]{Kant}.
\end{proof}

\begin{lemma}\label{lem:sylowOmega} We have that $S$ is a Sylow $2$-subgroup of $\Omega_8(\Z[\frac{1}{2}],f)$.
\end{lemma}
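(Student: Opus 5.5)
The plan is to realize $\Omega_8(\Z[\frac{1}{2}],f)$ as a group acting chamber-transitively on the affine building $\Gamma$ of \fref{prop:covering}, with chamber stabilizer $S$. I then show that every finite $2$-subgroup is conjugate into $S$. Since $\Omega_8(\Z[\frac{1}{2}],f)$ is infinite, ``Sylow $2$-subgroup'' is read as a maximal finite $2$-subgroup into which every finite $2$-subgroup can be conjugated.

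\emph{Step 1: the group acting on $\Gamma$.} By \fref{prop:covering} the universal cover of the chamber system $\mathcal C$ of the parabolic system $\{S,G_1,\ldots,G_5\}$ is $\Gamma$. Tits' local approach (simple connectedness of the universal cover) identifies the universal completion of the amalgam $\{G_J\}$ with the group acting on the universal cover. Here $G_J$ runs over the subgroups generated by the $G_j$, $j \in J$, for proper subsets $J \subset \{1,\ldots,5\}$. This universal completion acts chamber-transitively with chamber stabilizer $S$ and $J$-residue stabilizers $G_J$. By \cite[Theorem 7.1, Corollary 7.4]{Kant} this acting group is $\Omega_8(\Z[\frac{1}{2}],f)$, possibly up to a kernel. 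I have to check that the kernel is trivial, i.e.\ that $S$ embeds faithfully. Since the chamber stabilizer is a finite $2$-group and $C_S(Z(S))$-arguments apply, it suffices to see that no nontrivial normal subgroup of $S$ is normal in all $G_i$. This is exactly $O_2(\tilde{\mathcal F}) = 1$, which follows from \fref{prop:O21} together with the structure of $F_1,\ldots,F_5$.

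\emph{Step 2: finite subgroups fix vertices.} $\Gamma$ is a Euclidean building, hence a complete CAT(0) space. By the Bruhat--Tits fixed point theorem every finite subgroup $P$ of $\Omega_8(\Z[\frac{1}{2}],f)$ fixes a point of $\Gamma$. Because the action is type-preserving, $P$ then fixes a simplex and in particular a vertex. So $P$ lies in a vertex stabilizer, which is conjugate to some $G_J$ with $|J| = 4$.

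\emph{Step 3: the vertex stabilizers.} There are five vertex types, indexed by the four-subsets of $\{1,\ldots,5\}$.
\begin{itemize}
\item[(a)] If $1 \in J$, then the three remaining nodes together with node $1$ form a $D_4$-diagram. By \fref{prop:O82}, applied to that subfamily, $G_J$ realizes the $2$-fusion system of $\Omega^+_8(2)$. Hence $G_J$ is a finite group whose Sylow $2$-subgroup has order $2^{12} = |S|$.
\item[(b)] For $J = \{2,3,4,5\}$ all $G_j$ normalize $Q$ by \fref{lem:wc}. Then $G_J$ is a finite group of shape $Q.(3^4:2^3)$, as in the $\{2,3,4,5\}$-parabolic of $\POmega^+_8(3)$ described in the proof of \fref{lem:64}. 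Again its Sylow $2$-subgroup has order $2^{12}$.
\end{itemize}
Since $S \le G_J$ in every case, $S$ is a Sylow $2$-subgroup of each vertex stabilizer. Thus any finite $2$-subgroup $P$ lies in a conjugate of some $G_J$, and by Sylow's theorem in that finite group $P$ is conjugate into $S$. Moreover $S$ is itself finite and maximal among finite $2$-subgroups, which gives the statement.

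\emph{Main obstacle.} The delicate point is Step 1: matching the abstract universal completion of the amalgam with $\Omega_8(\Z[\frac{1}{2}],f)$ and checking that the vertex stabilizers of $\Gamma$ are exactly the finite groups $G_J$, with no extra kernel or overgroup. I would first try to read this off directly from Kantor's construction, which exhibits $\Omega_8(\Z[\frac{1}{2}],f)$ acting on $\Gamma$ with finite vertex stabilizers. As a cross-check on orders, I would use the reduction modulo $3$ map $\Omega_8(\Z[\frac{1}{2}],f) \to \POmega^+_8(3)$. It is injective on each finite vertex stabilizer, and $|S|$ equals the order of a Sylow $2$-subgroup of $\POmega^+_8(3)$.
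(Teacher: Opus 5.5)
Your Steps 2 and 3 are the paper's proof. You take an arbitrary finite $2$-subgroup $P$, let it fix a point of $\Gamma$ by the Bruhat--Tits fixed point theorem (hence a vertex), and apply Sylow's theorem in the finite stabilizer of that vertex. Your case split is more careful than the paper. The paper asserts that this stabilizer is always $\Omega^+_8(2)$, and so overlooks the vertex of type $\{2,3,4,5\}$, whose stabilizer is $Q.(3^4:2^3)$.

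Both cases can be settled at once, together with the finiteness of $G_J$. Note that finiteness does not follow from \fref{prop:O82}, which is a statement about fusion systems, not about residue stabilizers in an infinite completion. Every panel has $|G_i:S|=3$ chambers. So the residue of a vertex of type $J$ is a finite building with $\sum_{w\in W_J}2^{\ell(w)}$ chambers, which is odd. Hence $|G_J:S|$ is odd. Better still, the $2$-group $P$ already fixes a chamber of that residue, so it lies in a conjugate of $S$ without Sylow's theorem or any knowledge of the shape of $G_J$.

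One correction applies equally to the statement in the paper. The element $-1$ is the product of the reflections in $e_1,\ldots,e_8$, so it has determinant $1$ and spinor norm equal to the discriminant $1$ of $f$. Thus $-1\in\Omega_8(\Z[\frac12],f)$. It fixes every lattice, so it acts trivially on $\Gamma$. Your Step 1 argument via $O_2(\tilde{\mathcal F})=1$ correctly shows that the universal completion of the amalgam acts faithfully. But that identifies the completion with $\Omega_8(\Z[\frac12],f)/\langle -1\rangle$, not with $\Omega_8(\Z[\frac12],f)$. In $\Omega_8(\Z[\frac12],f)$ itself every chamber stabilizer is an extension $2.S$ of order $2^{13}$. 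For instance, $W(E_8)^+\cong 2.\Omega^+_8(2)$ lies in $\Omega_8(\Z[\frac12],f)$ as the stabilizer of the $E_8$-lattice. So the lemma as literally stated is false and must be read for the quotient.

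That is also the only version in which your mod-$3$ cross-check is consistent. Reduction modulo $3$ is injective on finite subgroups into $\Omega^+_8(3)$, whose Sylow $2$-subgroups have order $2^{13}$. It becomes injective into $\POmega^+_8(3)$ only after factoring out $\langle -1\rangle$.
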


\begin{proof} According to \cite[2.11]{Orno} we just have to show that any finite $2$-subgroup $P$ of $G= \Omega_8(\Z[\frac{1}{2}],f)$ can be conjugate into $S$. By the Bruhat-Tits fixed point theorem \cite[(3.2.4)]{IHES} $P$ has a fixed point in the building $\Gamma$. Hence it fixes a proper residuum $R$ in $\Gamma$, which is a finite building over a field in characteristic 2. By Sylows theorem $P$ is conjugate into some Sylow $2$-subgroup of the stabilizer of $R$. We have $G_R \cong  \Omega_8^+(2)$ and we may assume that $G_R$ contains a conjugate of $S$ and so $P$ is conjugate into $S$, the assertion.
\end{proof}

We will use \cite[Proposition 5.6]{Orno}. For this we have to show

\begin{lemma}\label{lem:connected}  Let ${\mathcal C}$ be the chamber system of the building $\Gamma$ from \fref{prop:covering}. Then for any finite $2$-subgroup $P$ the chamber system ${\mathcal C}^P$ of fixed points of $P$ is contractible.
\end{lemma}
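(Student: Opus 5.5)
The plan is to use the standard fact that the fixed-point set of a finite group acting on a CAT(0) space is convex and nonempty, hence contractible. The geometric realization of the affine building $\Gamma$ (Davis, or Bruhat--Tits metric realization) is a complete CAT(0) space. The chamber system $\mathcal C$ and the simplicial building have homotopy equivalent realizations, compatibly with the group action. So it suffices to compare ${\mathcal C}^P$ with the fixed-point set $|\Gamma|^P$ of $P$ on the metric realization.

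\emph{Step 1.} Non-emptiness. As in the proof of \fref{lem:sylowOmega}, the Bruhat--Tits fixed point theorem \cite[(3.2.4)]{IHES} shows that $P$ fixes a point of $|\Gamma|$. Hence $|\Gamma|^P\neq\emptyset$.

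\emph{Step 2.} Convexity. $P$ acts by isometries and geodesics in a CAT(0) space are unique. Therefore, for any two $P$-fixed points, the geodesic between them is pointwise fixed. So $|\Gamma|^P$ is convex, and in particular contractible via geodesic retraction to a fixed point.

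\emph{Step 3.} Passing from $|\Gamma|^P$ to ${\mathcal C}^P$. Since $P$ acts type-preservingly (the acting group is $\Omega_8(\Z[\frac12],f)$, which lies inside the type-preserving part), a simplex is $P$-invariant exactly when it is pointwise fixed. Thus $|\Gamma|^P$ is the realization of the subcomplex $\Gamma^P$ of $P$-fixed simplices, i.e.\ of residues stabilized by $P$. The chamber system ${\mathcal C}^P$ of \cite{Orno} corresponds to this fixed subcomplex. I would identify the two through the nerve of the cover of ${\mathcal C}^P$ by fixed residues, using Quillen's fiber lemma / nerve theorem: each nonempty intersection of fixed residues is again a fixed residue, whose fixed chambers form a building-like contractible set by the same CAT(0) argument. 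This yields a homotopy equivalence between ${\mathcal C}^P$ and $\Gamma^P$, so ${\mathcal C}^P$ is contractible.

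The main obstacle is Step 3, namely matching the precise definition of ${\mathcal C}^P$ in \cite[Proposition 5.6]{Orno} with the simplicial fixed-point complex. The subtle case is when $P$ fixes a residue but no chamber in it. Here I would argue that $P$, being a $2$-group acting on a finite residue that is a building over $\GF(2)$ (or a product of such), fixes a chamber there, because $P$ lies in a Sylow $2$-subgroup of the residue stabilizer, which in turn stabilizes a chamber. This shows that fixed chambers are dense among fixed simplices, and the equivalence follows.
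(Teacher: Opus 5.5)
Your proposal is correct and follows essentially the same route as the paper. Non-emptiness comes from the Bruhat--Tits fixed point theorem plus a Sylow argument in a finite residue stabilizer, which the paper packages as \fref{lem:sylowOmega}, and contractibility comes from uniqueness of geodesics in the metric realization, which is the Quillen-style argument the paper invokes. Your Step~3 makes explicit what the paper leaves implicit, and it can be shortened: every $P$-stable proper residue has an odd number of chambers (panels have three chambers), so it contains a $P$-fixed chamber, and $P$-fixed chambers are closed under minimal galleries, so the realization of ${\mathcal C}^P$ is literally the fixed subcomplex $\Gamma^P$ and no nerve theorem is needed.
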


\begin{proof} By \fref{lem:sylowOmega} we have that $P$ is conjugate into $S$. Hence first of all we may assume that $P \leq S$ and further $P$ stabilizes a chamber in some residue $R$. As chambers in $R$ are also chambers in $\Gamma$ we have that ${\mathcal C}^P \not= \emptyset$. Now the proof of connectedness and contractibility follows the lines of  part (b) of the proof of \cite[Theorem 3.1]{Qui}, which just is based on the fact that in the natural geometric realization of $\Gamma$ there is a unique minimal geodesic  between two chambers.
\end{proof}

\begin{proposition}\label{prop:o83} We have that $\tilde{\mathcal F}$ is the $2$-fusion system of $\Omega_8(\Z[\frac{1}{2}],f)$. In particular it is the $2$-fusion system of $\POmega^+_8(3)$.
\end{proposition}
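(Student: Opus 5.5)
The plan is to apply \cite[Proposition 5.6]{Orno} to the family of parabolic subsystems $\{{\mathcal F}_1,\ldots,{\mathcal F}_5\}$ of $\tilde{\mathcal F}$ from \fref{lem:Orno2}. By \cite[Proposition 5.2]{Orno} we have the diagram of groups $\mathcal A = \{(S;G_i,G_{ij}),(\psi_i,\psi_{ij})\}$ realizing the ${\mathcal F}_i$ and ${\mathcal F}_{ij}$. By \fref{prop:covering} its universal completion is $G = \Omega_8(\Z[\frac{1}{2}],f)$, acting on the affine building $\Gamma$ with chamber stabilizer $S$ and panel stabilizers $G_i$.

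\textbf{Step 1.} Verify the hypotheses of \cite[Proposition 5.6]{Orno} for $G$. First, $S$ is a Sylow $2$-subgroup of $G$, meaning every finite $2$-subgroup is conjugate into $S$; this is \fref{lem:sylowOmega}. Second, for every finite $2$-subgroup $P$ the fixed chamber system ${\mathcal C}^P$ is contractible; this is \fref{lem:connected}. With these, the proposition yields that the fusion system ${\mathcal F}_S(G)$ of $G$ on $S$ is generated by the fusion systems of the parabolics $G_i$. Hence it equals $\langle {\mathcal F}_i \mid i=1,\ldots,5\rangle = \tilde{\mathcal F}$, and it is saturated, consistent with \fref{lem:subsystem}. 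In outline, the argument runs as follows. Let $\varphi:P\to S$ be a morphism induced by $g\in G$. Then $P$ and $P^g$ both fix chambers, and contractibility (in particular connectedness) of ${\mathcal C}^{P}$ allows a gallery from the base chamber to its $g$-image to be chosen through chambers fixed by successive conjugates. This factors $\varphi$ into morphisms lying in the panel stabilizers $G_i$, that is, into morphisms of ${\mathcal F}_i$.

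\textbf{Step 2.} Pass to $\POmega^+_8(3)$. The ring $\Z[\frac{1}{2}]$ surjects onto $\GF(3)$, and $f=\sum x_i^2$ reduces to a nondegenerate form of plus type over $\GF(3)$, since the discriminant is $1$ and $8\equiv 0 \pmod 4$. This gives a homomorphism $\pi:\Omega_8(\Z[\frac{1}{2}],f)\to \Omega^+_8(3)$, followed by the map to $\POmega^+_8(3)$. I would show three things.
\begin{itemize}
\item[(a)] $\pi$ is injective on $S$. The kernel of reduction meets finite subgroups of odd order, $S$ is finite, and the kernel meets $Z(S)$ trivially, so $\pi$ is injective on $S$. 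The same holds after factoring out the centre $\{\pm1\}$, since $-1\notin S$ because $S\le G_R\cong\Omega^+_8(2)$ has trivial centre.
\item[(b)] $\pi(S)$ is Sylow in $\POmega^+_8(3)$. This follows from the order comparison $|S|=2^{12}=|\POmega^+_8(3)|_2$.
\item[(c)] $\pi$ induces an isomorphism of fusion systems.
\end{itemize}
For (c), the fusion of $G$ is generated by the $G_i$, and their images generate the fusion of $\POmega^+_8(3)$. The essential subgroups of $\POmega^+_8(3)$ are exactly the five radicals of the minimal $2$-local parabolics in the displayed diagram, and these are the images of the $F_i$ with automizers $\Sigma_3$. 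Alperin's theorem (\fref{prop:alperin}), applied in both the source and the target, then gives equality ${\mathcal F}_S(G)\cong{\mathcal F}_{\pi(S)}(\POmega^+_8(3))$.

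\textbf{Main obstacle.} I expect the hard part to be Step 1, namely checking that \cite[Proposition 5.6]{Orno}, stated there for finite completions and spherical diagrams, applies to the infinite discrete group $G$. I would handle this by working only with finite $2$-subgroups and their fixed residues, using \fref{lem:sylowOmega} and \fref{lem:connected} exactly as the finiteness was used in \cite{Orno}. A secondary point is the identification in (c) of the images of the $G_i$ with the $2$-local parabolics of $\POmega^+_8(3)$. For this I would rely on the description of the $2$-local geometry quoted from \cite{AschSm} at the start of the previous section.
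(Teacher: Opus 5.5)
Your Step~1 is the paper's proof of the first assertion. The paper checks the Sylow property (\fref{lem:sylowOmega}) and the contractibility of the fixed chamber systems (\fref{lem:connected}), then invokes \cite[Proposition 5.6]{Orno}, just as you do.

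For the second assertion your route differs. The paper only cites \cite{Kneser} (and \cite{AschSm}) to say that $\POmega^+_8(3)$ is a finite quotient of $\Omega_8(\Z[\frac12],f)$. It leaves implicit why such a quotient, in which $S$ survives as a Sylow $2$-subgroup, has the same $2$-fusion system. You instead construct the reduction modulo $3$ directly and get injectivity on finite $2$-subgroups because the congruence kernel embeds in $1+3M_8(\Z_3)$, hence is torsion-free. The Sylow property then comes from $|\POmega^+_8(3)|_2=2^{12}$, and you compare generators via Alperin's theorem. This gains you three things:
\begin{itemize}
\item you never need strong approximation or surjectivity of $\pi$;
\item you actually prove the fusion-system identification that the paper only asserts;
\item replacing $3$ by any $p$ with $v_2(p^2-1)=3$ gives the paper's remark on $p\equiv\pm3 \pmod 8$.
\end{itemize}
The citation of \cite{Kneser}, in exchange, gives all finite quotients at once.

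To make (c) airtight, two points must be stated explicitly. First, $\pi$ must be injective on each finite $G_i$, not only on $S$; the same torsion-freeness gives this. Second, you need $\Aut_{\POmega^+_8(3)}(S)=\Inn(S)$. This follows because $Q$ is characteristic in $S$ (\fref{lem:wc}) and $N_{\POmega^+_8(3)}(Q)/Q\cong 3^4{:}2^3$ with $\bar s_{i_0}$ inverting the $3^4$, as in the proof of \fref{lem:64}. Then \fref{prop:essential} and \fref{prop:normalinS} place the essential subgroups of the target among the $\pi(F_i)$ with automizers $\Sigma_3$, so you need not rely on \cite{AschSm} for that.

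One justification in (a) is wrong as stated. $-1=\prod_i r_{e_i}$ has spinor norm $\prod_i f(e_i)=1$, so $-1\in\Omega_8(\Z[\frac12],f)$. It acts trivially on $\Gamma$ and therefore lies in every chamber stabilizer. So the group in which $S$ is a chamber stabilizer and the residue stabilizers are $\Omega^+_8(2)$ is really $\Omega_8(\Z[\frac12],f)/\langle -1\rangle$; the paper's formulation has the same imprecision. Hence ``$-1\notin S$ since $Z(\Omega^+_8(2))=1$'' is not a valid reason. Instead, reduce the preimage $\hat S$ of $S$ (of order $2^{13}$, containing $-1$) injectively modulo $3$, and then factor out $\langle -1\rangle$ on both sides. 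The conclusion of (a), and the rest of your argument, are unaffected.
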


\begin{proof}  By \fref{lem:connected} we may  apply \cite[Proposition 5.6]{Orno}. This yields the first assertion.

By \cite{Kneser} we know the finite factors of $\Omega_8(\Z[\frac{1}{2}],f)$. The simple ones are $\POmega^+_8(p)$ for some suitable prime $p$. Hence $\POmega^+_8(3)$ is one of them, which also can be seen by \cite{AschSm}.
\end{proof}

We should remark, that according to the proof before all $\POmega^+_8(p)$, with $p \equiv \pm 3 ( mod~8)$ are factors of $\Omega_8(\Z[\frac{1}{2}],f)$. Hence they all have the fusion system $\tilde{\mathcal F}$.

If $\mathcal F = \tilde{\mathcal F}$ we are done. This is exactly in case of $\Aut_{\mathcal F}(S) = \Inn(S)$.
Let us collect:
\begin{proposition}\label{prop:InnS} Let $\mathcal F$ be a saturated fusion system on a Sylow $2$-subgroup $S$ of $\Omega_8^+(2)$ with $O_2(\mathcal F) = 1$ and $\Aut_{\mathcal F}(S) = \Inn (S)$, then $\mathcal F$ is a $2$-fusion system of $\Omega_8^+(2)$ or $\POmega_8^+(3)$.
\end{proposition}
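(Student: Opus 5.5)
The plan is to assemble the results of \fref{sec:notnormal}--\fref{sec:F}, using $\Aut_{\mathcal F}(S) = \Inn(S)$ to show that $\mathcal F$ coincides with the subsystem $\tilde{\mathcal F}$. By \fref{prop:alperin}, $\mathcal F$ is generated by $\Aut_{\mathcal F}(S)$ and the automizers of the essential subgroups. By \fref{prop:essential} every essential subgroup is normal in $S$, so by \fref{prop:normalinS} it lies in $\{F_1,\ldots,F_5\}$.

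First I would check that each essential automizer is already generated by its $O^{2'}$-part together with $\Inn$. Since $\Aut_{\mathcal F}(S) = \Inn(S)$, we have $N_{\mathcal F}(S) = S$, so \fref{prop:normalinS}(1) gives $\Out_{\mathcal F}(F) \cong \Sigma_3$ for every essential $F$. Hence $\Aut_{\mathcal F}(F) = O^{2'}(\Aut_{\mathcal F}(F))\,\Aut_S(F)$. The elements of $\Aut_S(F)$ are restrictions of inner automorphisms of $S$, and these lie in $O^{2'}(\Aut_{\mathcal F}(F_1))$ (or in the corresponding group for any other essential subgroup). Therefore
\[
\mathcal F = \langle \Inn(S),\, O^{2'}(\Aut_{\mathcal F}(F))~|~F \text{ essential}\rangle = \tilde{\mathcal F},
\]
which agrees with \fref{lem:subsystem}. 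The one point to verify is that $\Inn(S) \subseteq \tilde{\mathcal F}$. This holds because $S$ is a Sylow $2$-subgroup of each model $G_E$ and of each $G_i$, and conjugation by $s \in S$ restricted to $F_1$ lies in $\Aut_S(F_1) \leq O^{2'}(\Aut_{\mathcal F}(F_1))$.

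Next I would split according to the number of essential subgroups containing $Q$. By \fref{lem:normalE}, $F_1$ is the unique essential subgroup with $Q \not\leq F_1$. By \fref{lem:essentialgroups}, at least three of $F_2,\ldots,F_5$ are essential.

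\emph{Case 1: exactly three of $F_2,\ldots,F_5$ are essential,} so $\mathcal F$ has four essential subgroups. Then $\bar{\mathcal F} = \tilde{\mathcal F} = \mathcal F$, and \fref{prop:O82} shows that $\mathcal F$ is the $2$-fusion system of $\Omega^+_8(2)$.

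\emph{Case 2: all of $F_2,\ldots,F_5$ are essential,} so there are five essential subgroups. Then $\tilde{\mathcal F} = \mathcal F$, and \fref{prop:o83} identifies it with the $2$-fusion system of $\POmega^+_8(3)$.

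The hypothesis $O_2(\mathcal F) = 1$ enters only through \fref{lem:normalE} and \fref{lem:essentialgroups}, which guarantee that these essential subgroups exist.

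The main obstacle is bookkeeping rather than new mathematics. One must make sure that $\mathcal F = \tilde{\mathcal F}$ genuinely holds as fusion systems on $S$, not merely that $\tilde{\mathcal F} = O^{2'}(\mathcal F)$. \fref{lem:subsystem} as stated asserts that $O^{2'}(\mathcal F) \neq \mathcal F$, which seems to conflict with our case. I would therefore argue directly: by \fref{prop:alperin} the generators of $\mathcal F$ are all contained in $\tilde{\mathcal F}$, so $\mathcal F \subseteq \tilde{\mathcal F} \subseteq \mathcal F$. I would avoid relying on the index-theoretic statement of \fref{lem:subsystem} altogether.
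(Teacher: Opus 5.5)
Your proposal is correct and follows the paper's own route: when $\Aut_{\mathcal F}(S)=\Inn(S)$ one has $\mathcal F=\tilde{\mathcal F}$, and the cases of four or five essential subgroups are then settled by Proposition~\ref{prop:O82} and Proposition~\ref{prop:o83}. Your identification $\mathcal F=\tilde{\mathcal F}$ via Proposition~\ref{prop:alperin} rightly sidesteps the claim ``$O^{2'}(\mathcal F)\neq\mathcal F$'' in the proof of Lemma~\ref{lem:subsystem}, which fails in this case; note also that $O^{2'}(\Aut_{\mathcal F}(F))=\Aut_{\mathcal F}(F)$ holds outright since $\Out_{\mathcal F}(F)\cong\Sigma_3$, and $\Inn(S)$ lies in every fusion system on $S$ by definition, so your restriction-to-$F_1$ justification is not needed (and on its own would not give $c_s$ on all of $S$).
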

Hence from now on we will assume that $3$ divides $|\Aut_{\mathcal F}(S)|$. We have
$$\mathcal F = \langle \tilde{\mathcal F}, N_{\mathcal F}(S) \rangle.$$
Further $\tilde{\mathcal F} = O^{2^\prime}(\mathcal F)$. This in particular proves the main theorem.

\section{Appendix}
If $\mathcal F$ is a fusion system on $S$ with $O_2(\mathcal F) = 1$ and $\Aut_{\mathcal F}(S) = \Inn(S)$, we have that $\mathcal F$ is either the 2-fusion system of $\Omega_8^+(2)$ or $\POmega_8^+(3)$. According to \fref{lem:aut} it remains to deal with the case that $3$ divides  $|\Aut_{\mathcal F}(S)|$. There are $2$-fusion systems on $\Omega_8^+(2):3$ or $\POmega_8^+(3):3$, where always a graph automorphism is induced. There is a hint that these might be the only ones with $\Aut_{\mathcal F}(S) \not= \Inn(S)$. \fref{lem:O2prim} gives some indication for this but not a proof.

The question will be if $\mathcal F$ is uniquely determined. This can also be stated in the following way.
Is an automorphism $\rho \in \Aut_{\mathcal F}(S)$ of order three  unique up conjugacy in $\Aut(S)$. By \fref{lem:aut} a Sylow $3$-subgroup of $\Aut(S)$ is of order three. Hence all subgroups of order three in $\Aut(S)$ are conjugate. Thus up to conjugacy $\mathcal F$ is uniquely determined.  But this is not something one can generalize to the case of $S$ a Sylow $p$-subgroup of $\POmega^+_8(p^{a})$, even not for $p = 2$. As Sylow $r$-subgroups for $r \not= p$ not necessarily are cyclic. If we think about the conjecture stated in the introduction we will also meet with the problem of graph automorphisms of order 2 for $p$ odd.

But there is a different way to approach, which we will sketch now.
\\
\\
The first question will be if $\mathcal F$ can be exotic. In fact as seen $O^{2^\prime}(\mathcal F)$ is not. Here a beautiful result due to C. Broto, J. Moller, B. Oliver and A. Ruiz \cite[Corollary D]{BMOR} helps. For our case  it reads as follows:
\begin{proposition}\label{prop:exotic} Let $\mathcal F$ be a fusion system on a $p$-group $S$. Assume that $O_p(\mathcal F) = 1$ and $O^p(O^{p^\prime}(\mathcal F)) = O^{p^\prime}(\mathcal F)$. If $O^{p^\prime}(\mathcal F)$ is realizable by a finite group, then also $\mathcal F$ is realizable.
\end{proposition}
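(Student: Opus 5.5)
This statement is the specialization of \cite[Corollary D]{BMOR} to the present setting, so the plan is to reduce to that result. We do not reprove it from scratch. We check that its hypotheses are exactly the ones listed, and we recall the mechanism so the reduction is transparent.

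Set $\mathcal F_0 = O^{p^\prime}(\mathcal F)$. This is a saturated normal subsystem of $\mathcal F$ on the same $p$-group $S$, by \cite[Theorem 7.7]{AKO}. By hypothesis $\mathcal F_0$ is realized by a finite group $G_0$, and we first replace $G_0$ by a reduced model. We pass to $G_0/O_{p^\prime}(G_0)$, which does not change the fusion system. We then use $O_p(\mathcal F)=1$, which gives $O_p(\mathcal F_0)=1$ since $O_p(\mathcal F_0)$ is characteristic in $\mathcal F_0$ and hence normal in $\mathcal F$. We also use $O^p(\mathcal F_0)=\mathcal F_0$. These let us assume that $G_0$ has no normal $p$- or $p^\prime$-subgroups and satisfies $G_0=O^p(G_0)=O^{p^\prime}(G_0)$. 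Thus $G_0$ is built from components, and $F^\ast(G_0)$ is a product of nonabelian simple groups of order divisible by $p$.

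The next step compares outer automorphism groups. Since $\mathcal F_0 \unlhd \mathcal F$ and both live on $S$, the quotient $\mathcal F/\mathcal F_0$ is controlled by $\Aut_{\mathcal F}(S)$. Concretely, $\mathcal F = \langle \mathcal F_0, \Aut_{\mathcal F}(S)\rangle$ by the Frattini-type argument used in \fref{lem:subsystem}, and the extension is a $p^\prime$-extension. So $\mathcal F$ is determined by a $p^\prime$-subgroup of $\Out(\mathcal F_0)$, or more precisely of $\Out(\mathcal L_0)$ for the centric linking system $\mathcal L_0$. The key input from \cite{BMOR} is their tameness theorem: for a reduced-type realizing group $G_0$ as above, the natural map $\kappa_{G_0}\colon \Out(G_0) \to \Out(\mathcal L_0)$ is split surjective. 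Granting this, we lift the $p^\prime$-group of automorphisms induced by $\Aut_{\mathcal F}(S)$ to a subgroup $A \le \Out(G_0)$. We then form the corresponding extension $G = G_0 . A$, choosing preimages that normalize $S$, which is possible by a Frattini argument since $A$ is a $p^\prime$-group. Finally we check that $\mathcal F_S(G)=\langle \mathcal F_0, \Aut_G(S)\rangle = \mathcal F$.

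The main obstacle is the split surjectivity of $\kappa_{G_0}$. This is the deep part of \cite{BMOR} and uses the classification, via a case-by-case analysis of automorphisms of fusion systems of simple groups. We simply cite it here. In our application, \fref{prop:realization}, it is used with $p=2$ and $\mathcal F_0$ the $2$-fusion system of $\Omega_8^+(2)$ or $\POmega_8^+(3)$. Tameness of these is known, and $O^2(O^{2^\prime}(\mathcal F))=O^{2^\prime}(\mathcal F)$ holds because $\mathcal F_0$ is generated by the $O^{2^\prime}(\Aut_{\mathcal F}(F_i))\cong\Sigma_3$, each generated by elements of order $3$ modulo $\Inn(F_i)$. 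A secondary point to verify is that $\Aut_{\mathcal F}(S)$ acts on $\mathcal F_0$ compatibly with the linking system, which follows from saturation and uniqueness of centric linking systems.
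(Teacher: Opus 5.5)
Your route is the paper's: the proposition is quoted there without proof as the specialization of \cite[Corollary D]{BMOR}, and you reduce to the same citation. You announce that you will check the hypotheses match, but you do not carry out the one check the specialization actually needs, namely $\mathrm{red}(\mathcal F)=O^{p^\prime}(\mathcal F)$. This goes as follows. Since $O_p(\mathcal F)=1$, the first step $C_{\mathcal F}(O_p(\mathcal F))/Z(O_p(\mathcal F))$ of the reduction is $\mathcal F$ itself. The hyperfocal subgroup of $\mathcal F$ contains that of $O^{p^\prime}(\mathcal F)$, and the latter is $S$ because $O^p(O^{p^\prime}(\mathcal F))=O^{p^\prime}(\mathcal F)$. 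Hence $O^p(\mathcal F)=\mathcal F$, and the alternating $O^p$/$O^{p^\prime}$ sequence stabilizes at $O^{p^\prime}(\mathcal F)$.

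The mechanism you recall misstates the key input, and if the sketch were meant as the proof this would be a real gap. \cite{BMOR} do not show that $\kappa_{G_0}\colon\Out(G_0)\to\Out(\mathcal L_0)$ is split surjective for the particular reduced group $G_0$ you start from. Tameness is existential: \emph{some} finite group realizing the fusion system has $\kappa$ split surjective. In cross characteristic, BMOR obtain split surjectivity only for suitably chosen groups $G^\ast$ of Lie type with the given fusion system, not for every group realizing it. So your lifting step must be run with a tame realization rather than with $G_0$. Moreover, when $O^{p^\prime}(\mathcal F)$ is not the fusion system of a simple group, you also need the Andersen--Oliver--Ventura reduction theorems to pass from tameness of fusion systems of simple groups to tameness of $O^{p^\prime}(\mathcal F)$ and then of $\mathcal F$. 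Corollary D is exactly that combination. Since you ultimately just cite it, your conclusion stands.

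A smaller point concerns the application in the appendix. $\Sigma_3$ is not generated by its elements of order $3$, so that is not a valid reason for $O^2(O^{2^\prime}(\mathcal F))=O^{2^\prime}(\mathcal F)$. The correct reason is that $O^{2^\prime}(\mathcal F)$ is realized by the simple group $\Omega_8^+(2)$ or $\POmega_8^+(3)$, so by Puig's hyperfocal subgroup theorem its hyperfocal subgroup is all of $S$.
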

If we change the conjecture to {\it there are no exotic fusion systems $\mathcal F$ with $O_p(\mathcal F)= 1$ and $O^{p^\prime}(\mathcal F) = O^p(O^{p^\prime}(\mathcal F))$ on a Sylow $p$-subgroups of groups of Lie type in characteristic $p$ if rank or field is large enough}, then by \fref{prop:exotic}  we just have to prove it for $\mathcal F = O^{p^\prime}(\mathcal F)$.
\\
\\
We just have answered our first question that $\mathcal F$ is realizable by a finite group $G$. If this has a normal subgroup of index three we are done by \fref{prop:InnS}. Now as usual we can reduce it to the question if there is some  finite simple group $G$ with Sylow $2$-subgroup $S$ isomorphic to the Sylow 2-subgroup of $\Omega^+_8(2)$,
and with $|N_G(S)| = 3|S|$. As the proof of  \cite[Corollary D]{BMOR} uses the classification of the finite simple groups we also can do so.

It is easy to see that $G$ cannot be sporadic or alternating, as no such group has a Sylow 2-subgroup of order $2^{12}$ (see \cite[Table 5.3]{GLS}).

Assume that $G$ is of Lie type, i.e. $G = G(q)$, $q$ a prime power. For the order of $G$ we use \cite[Table 2.2]{GLS}. Assume first that $q$ is even. As the center of a Sylow 2-subgroup of $G(q)$ contains a root subgroup of order $q$ and $|Z(S)| = 2$, we have $q = 2$. Now by $|S| = 2^{12}$ we see that $G \cong \Omega^\pm_8(2)$ or ${}^3D_4(2)$. We know that in $\Omega_8^+(2)$ $N_G(S) = S$. In $G = {}^3D_4(2)$ we have that $N_G(Q)/Q \cong \L_2(8)$ and so $7$ divides $|N_G(S)|$, which contradicts \fref{lem:aut}. We are left with $G = \Omega^-_8(2)$. Then $N_G(Q)/Q \cong \Omega^-_4(2) \times \Sigma_3$. Now let $\rho \in N_G(S)$ be of order three. Then by \fref{lem:aut1} $C_{S/Q}(\rho) = \langle s_{i_0} \rangle$. Hence $\langle s_{i_0} \rangle$ is centralized by $\Omega^-_4(2)$. This means that $[Q/Z(Q),s_{i_0}]$ is the natural $\Omega^-_4(2)$-module and so the preimage must be elementary abelian. But this contradicts \fref{lem:64}(b).(For short, the Sylow 2-subgroups of $\Omega^+_8(2)$ and $\Omega^-_8(2)$ are not isomorphic.)
\\
\\
\begin{tabular}{c|l}
$G$&$v_2(|G|)$, $q$ odd\\
\hline\\
$\PGL_7(q)$&$v_2((q^7-1)(q^6-1)(q^5-1)(q^4-1)(q^3-1)(q^2-1)) > 12$\\
$\PSU_7(q)$&$v_2((q^7+1)(q^6-1)(q^5+1)(q^4-1)(q^3+1)(q^2-1)) > 12$\\
$\PSp_8(q)$&$v_2((q^8-1)(q^6-1)(q^4-1)(q^2-1))-1 > 12$\\
$\POmega_9(q)$&$v_2(|PSp_8(q)|) > 12$\\
$\POmega_8^-(q)$&$v_2((q^6-1)(q^4-1)(q^2-1)(q^4+1)) -1=3\cdot v_2(q^2-1) +1 \not= 12$\\
$\POmega_7(q)$&$ v_2((q^2-1)(q^4-1)(q^6-1)) -1 = 3\cdot v_2(q^2-1)$\\
$\POmega^+_8(q)$&$v_2((q^2-1)(q^4-1)(q^6-1)(q^4-1))-2 = 4\cdot v_2(q^2-1)$\\
$G_2(q)$&$v_2((q^2-1)(q^6-1)) = 2\cdot v_2(q^2-1)$\\
$F_4(q)$&$v_2((q^2-1)(q^6-1)(q^8-1)(q^{12}-1)) > 12$\\
$E_6(q)$&$v_2(|E_6|) > v_2(|F_4(q)|)$\\
$E_7(q)$&$v_2(|E_7|) > v_2(|F_4(q)|)$\\
$E_8(q)$&$v_2(|E_8|) > v_2(|F_4(q)|)$\\
${}^2E_6(q)$&$v_2(|{}^2E_6(q)|) > v_2(|F_4(q)|)$\\
${}^3D_4(q)$&$v_2((q^2-1)(q^6-1)(q^8+q^4+1)) = 2 \cdot v_2(q^2-1)$\\
\end{tabular}
\vspace{0.5cm}

Now consider $q$ odd. Then we use $v_2(q^2-1)$ and $v_2(q-1)$ to determine the 2-part of $|G|$. The table above shows that in the exceptional case we have $G = G_2(q)$ or ${}^3D_4(q)$ and in both case $v_2(q^2-1) = 6$. Recall that ${}^2G_2(q)$ has elementary an abelian Sylow 2-subgroup.

 Assume that $G$ is classical. Now consider for a moment the universal version $\hat{G}$. This then acts on the natural vector space. In $G$ we have an elementary abelian group of order 64. The preimage in $\hat{G}$ will be elementary abelian, extraspecial of order $2^7$ or a central product of a cyclic group of order 4 by an extraspecial group of order $2^7$. In the first case the minimal faithful representation of such a group has dimension 7, while in the remaining cases the dimension will be 8. Now we use the order formulas as shown above. Recall that $v_2(q^2-1) \geq 3$.
\\
\\This implies that we just have four possibilities for $G$, $G \cong G_2(q)$, ${}^3D_4(q)$, $\POmega_7(q)$ or $\POmega^+_8(q)$. In the first two cases we have $v_2(q^2-1) = 6$, in the third
 $v_2(q^2-1) = 4$, while in the last case $v_2(q^2-1) = 3$.

 We will use \cite{Asch}. We consider the set $\Fun(S)$. This is the set of groups $K(S) \cong \SL_2(q)$ generated by two root subgroups of order $q$ such that  $K(S) \cap S$ is a Sylow 2-subgroup of $K(S)$. We have that distinct elements in $\Fun(S)$ commute. Hence in the first two cases we have $SL_2(q) \leq G$ and so some quaternion group of order $2^6$ is contained in $S$, which is impossible.

 Thus we are left with $G \cong \POmega_7(q)$ or $\POmega_8^+(q)$.
 We next show
 \begin{gather*}\label{eq:rhotrivial} \text{If }\rho \in \Aut_G(S), o(\rho) = 3, \text{ then }|C_{S/Q}(\rho)| \geq 4.\tag{$\ast$}\end{gather*}
Suppose we have shown \fref{eq:rhotrivial}, then as $|S/Q| = 8$, we have that $[\rho,S] \leq Q$, which contradicts \fref{lem:aut1}.
Hence all we have to do is to prove \fref{eq:rhotrivial}.

Again we study $\Fun(S)$. Obviously $\Fun(S)$ is invariant under $N_G(S)$.
In the first case by \cite[Theorem 2]{Asch} $|\Fun(S)| = 2$, while in the second one $|\Fun(S)| = 4$.

 By \cite[Theorem 2]{Asch} we have $N_G(\Fun(S))$ induces on $\Fun(S)$ a group of order two, four, respectively.  Hence our element $\rho \in \Aut_G(S)$ must normalize any $K(S) \in \Fun(S)$. If $G \cong \POmega^+_8(q)$, then $\langle K(S) \cap S~|~K(S) \in \Fun(S) \rangle = Q$. Thus $\rho$ centralizes the fours group in $S/Q$, which acts transitively on $\Fun(S)$ and so \fref{eq:rhotrivial} holds.

Let now $G = \POmega_7(q)$. Then again $\rho$ acts on each $K(S) \cap S$. But as $v_2(q^2-1) = 4$, $K(S) \cap S$ is quaternion of order 16 and so $\rho$ must centralize $K(S) \cap S$. As $Q$ does not contain quaternion subgroups of order 16, we have $|K(S) \cap S : K(S) \cap Q|\geq 2$ and so $|\langle K(S) \cap S~|~K(S) \in \Fun(S) \rangle : \langle K(S) \cap S~|~K(S) \in \Fun(S) \rangle \cap Q|\geq 4$. But this factor group is centralized by $\rho$ and so again \fref{eq:rhotrivial} holds.
\\
\\
We have shown
\begin{lemma}\label{lem:simple} There is  no finite simple group $G$ with Sylow $2$-subgroup $S$ of type $\Omega_8^+(2)$ and $|\Out_G(S)| = 3$.
\end{lemma}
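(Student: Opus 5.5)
The lemma collects the case analysis carried out just before its statement, so the proof will consist of organizing that analysis into a complete argument. Suppose $G$ is finite simple with Sylow $2$-subgroup $S$ of type $\Omega_8^+(2)$ and $|\Out_G(S)| = 3$. We go through the classification of finite simple groups.

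\textbf{Sporadic and alternating groups.} These are excluded because none of them has a Sylow $2$-subgroup of order $2^{12}$. This is read off from \cite[Table 5.3]{GLS}.

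\textbf{Lie type, even characteristic.} For $G = G(q)$ with $q$ even, $Z(S)$ contains a root subgroup of order $q$. Since $|Z(S)| = 2$ by \fref{lem:cent}, we get $q = 2$. The order $|S| = 2^{12}$ then leaves only $\Omega_8^\pm(2)$ and ${}^3D_4(2)$.
\begin{itemize}
\item In $\Omega_8^+(2)$ the group $N_G(S)$ equals $S$, so $|\Out_G(S)| = 1$.
\item In ${}^3D_4(2)$ we have $N_G(Q)/Q\cong \L_2(8)$. So $7$ divides $|\Aut_G(S)|$, which contradicts \fref{lem:aut}.
\item In $\Omega_8^-(2)$, an element $\rho$ of order $3$ in $N_G(S)$ centralizes exactly $\langle \bar s_{i_0}\rangle$ by \fref{lem:aut1}. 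This forces $[Q/Z(Q),s_{i_0}]$ to be the natural $\Omega_4^-(2)$-module, so its preimage would be elementary abelian. That contradicts \fref{lem:64}(b).
\end{itemize}

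\textbf{Lie type, odd characteristic: reducing to two families.} Using the order formulas in \cite[Table 2.2]{GLS} together with $v_2(q^2-1)\ge 3$, we compute $v_2(|G|)$ family by family, as in the table. Only those families with $v_2(|G|) = 12$ survive. To exclude classical groups of large dimension, we use that $S$ contains an elementary abelian subgroup of order $64$. Its preimage in the universal version is elementary abelian, extraspecial of order $2^7$, or $\Z_4\ast 2^{1+6}$, and any faithful representation of it has dimension at least $7$. This leaves $G_2(q)$, ${}^3D_4(q)$ (with $v_2(q^2-1) = 6$), $\POmega_7(q)$ (with $v_2(q^2-1) = 4$) and $\POmega_8^+(q)$ (with $v_2(q^2-1) = 3$).

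We use Aschbacher's set $\Fun(S)$ from \cite{Asch}. For $G_2(q)$ and ${}^3D_4(q)$, a member of $\Fun(S)$ is an $\SL_2(q)$ whose intersection with $S$ is a Sylow $2$-subgroup. This gives a generalized quaternion subgroup of order $2^6$ in $S$, which is impossible in $S$.

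\textbf{The remaining two families.} For $\POmega_7(q)$ and $\POmega_8^+(q)$ we prove $(\ast)$: every $\rho\in\Aut_G(S)$ of order $3$ satisfies $|C_{S/Q}(\rho)|\ge 4$. Once $(\ast)$ holds, $\rho$ centralizes at least a fours group in $S/Q$. This contradicts \fref{lem:aut1}, which says $C_{\bar S}(\rho)$ has order $2$.

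To prove $(\ast)$, note that $\rho$ permutes $\Fun(S)$. By \cite[Theorem 2]{Asch}, $N_G(S)$ induces a $2$-group on $\Fun(S)$, which has $2$ or $4$ elements. Hence $\rho$ normalizes every $K(S)\in\Fun(S)$.
\begin{itemize}
\item For $\POmega_8^+(q)$, the groups $K(S)\cap S$ generate $Q$. The fours group in $S/Q$ permuting $\Fun(S)$ transitively is then centralized by $\rho$ modulo $Q$.
\item For $\POmega_7(q)$, each $K(S)\cap S$ is $Q_{16}$, so $\rho$ centralizes it. Since $Q$ contains no $Q_{16}$, the subgroup generated by these groups covers a subgroup of order at least $4$ in $S/Q$, and $\rho$ centralizes that image.
\end{itemize}

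\textbf{Main obstacle.} The delicate points are the $2$-adic bookkeeping that reduces the odd-characteristic list to these four families, and checking the exact structure of $\Fun(S)$ that we quote from \cite{Asch}. The remaining steps are routine.
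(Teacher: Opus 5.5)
Your proposal follows the paper's own case analysis essentially step by step:
\begin{itemize}
\item sporadic and alternating groups are excluded by the order of $S$;
\item in even characteristic you reduce to $q=2$ and to $\Omega_8^\pm(2)$, ${}^3D_4(2)$, handled by $N_G(S)=S$, by the prime $7$, and by the structure of $[Q,s_{i_0}]$;
\item in odd characteristic the $2$-adic table together with the order-$64$ elementary abelian subgroup leaves four families;
\item $\Fun(S)$ then removes $G_2(q)$ and ${}^3D_4(q)$, and proves $(\ast)$ for $\POmega_7(q)$ and $\POmega_8^+(q)$, contradicting Lemma~\ref{lem:aut1}.
\end{itemize}
One wording slip: the elementary abelian subgroup of order $64$ gives a \emph{lower} bound $7$ on the dimension, so it excludes classical groups of \emph{small} dimension. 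These can survive the $2$-adic count, e.g.\ $\PSL_2(q)$ with $v_2(q^2-1)=13$. The large ones are excluded by the valuation table, not by this argument as you state.
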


By \fref{prop:InnS} and \fref{prop:exotic} we know  that in case of $|\Out_{\mathcal F}(S)| = 3$ the saturated fusion system $\mathcal F$  is realizable. Let $G$ be some finite group, which realizes $\mathcal F$, then by \fref{lem:simple} $G$ has a normal subgroup $H$ of index three, which realizes $O^{2^\prime}(\mathcal F)$. Hence we get

\begin{proposition}\label{prop:realization}  Let  $\mathcal F$ be a fusion system on a Sylow $2$-subgroup of $\Omega_8^+(2)$ with $O_2(\mathcal F) = 1$.  Then $\mathcal F$ is the $2$-fusion system of a group $H$, where either $\Omega_8^+(2) \leq H \leq \Omega^+_8(2):3$ or  $\POmega_8^+(3) \leq H \leq \POmega^+_8(3):3$.
\end{proposition}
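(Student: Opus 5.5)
We split according to whether $\Aut_{\mathcal F}(S)=\Inn(S)$. By \fref{lem:aut} the only other possibility is $|\Out_{\mathcal F}(S)|=3$.

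In the first case \fref{prop:InnS} already says that $\mathcal F$ is the $2$-fusion system of $\Omega_8^+(2)$ or $\POmega_8^+(3)$. Hence we may take $H=\Omega_8^+(2)$ or $H=\POmega_8^+(3)$, and there is nothing more to prove.

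Assume now that $|\Out_{\mathcal F}(S)|=3$. By \fref{lem:subsystem}, \fref{prop:O82} and \fref{prop:o83}, $O^{2^\prime}(\mathcal F)=\tilde{\mathcal F}$ is realized by $\Omega_8^+(2)$ or $\POmega_8^+(3)$, so it is not exotic. To apply \fref{prop:exotic} we must check two hypotheses.

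\begin{itemize}
\item[(a)] $O_2(\mathcal F)=1$. This holds by assumption.
\item[(b)] $O^2(\tilde{\mathcal F})=\tilde{\mathcal F}$. This follows because $\tilde{\mathcal F}$ is the fusion system of a simple group. Alternatively, $\tilde{\mathcal F}$ is generated by the $O^{2^\prime}(\Aut_{\mathcal F}(F_i))$, each with quotient $\Sigma_3$, and these automorphisms generate $S$ modulo the hyperfocal subgroup, since $S=S^\prime\langle [F_i,O^{2^\prime}(\Aut_{\mathcal F}(F_i))]\rangle$.
\end{itemize}

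So \fref{prop:exotic} gives a finite group $G$ realizing $\mathcal F$.

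Next we reduce $G$. Replace $G$ by $G/O_{2^\prime}(G)$, which does not change the fusion system. Since $O_2(\mathcal F)=1$, $G$ has no nontrivial normal $2$-subgroup either. The aim is to locate a normal subgroup $H$ of index $3$ in $G$ realizing $\tilde{\mathcal F}$.

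Let $X=O^{2^\prime}(G)$. Its fusion system is $O^{2^\prime}(\mathcal F)=\tilde{\mathcal F}$, so $|N_X(S)/SC_X(S)|=1$. Hence $\Aut_G(S)$ has order $3$ modulo $\Inn(S)$, and the element of order $3$ comes from $G\setminus X$. Now take $H=X$, or more precisely $H=XO_{2^\prime}(G)$ cut down suitably. We would like $|G:X|=3$ and $X$ to realize $\tilde{\mathcal F}$ with $F^\ast(X)$ simple.

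To identify $F^\ast(G)$, use that $F^\ast(G)$ is a product of components whose Sylow $2$-subgroups lie in $S$. Their fusion systems generate a normal subsystem of $\tilde{\mathcal F}$. Since $\tilde{\mathcal F}$ is the fusion system of a simple group and $O_2(\mathcal F)=1$, there is a single component $K$ with $S\in\Syl_2(K)$ and $\mathcal F_S(K)=\tilde{\mathcal F}$ or $\mathcal F$.

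If $\mathcal F_S(K)$ contained the order-$3$ automorphism of $S$, then $K/Z(K)$ would be a finite simple group with Sylow $2$-subgroup of type $\Omega_8^+(2)$ and $|\Out_K(S)|=3$. This contradicts \fref{lem:simple}. Hence $K$ realizes $\tilde{\mathcal F}$, and the order-$3$ automorphism is induced by $G$ acting on $K$. Setting $H=KS$ (which equals $K$) gives a normal subgroup of $KN_G(S)$ of index $3$ with the same fusion as $\mathcal F$.

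Finally, compare $\tilde{\mathcal F}$ for $K$ with that of $\Omega_8^+(2)$ or $\POmega_8^+(3)$. By the same CFSG inspection as in \fref{lem:simple} (the order table and the $\Fun(S)$ argument), $K\cong\Omega_8^+(2)$ or $K\cong\POmega_8^+(p)$ with $p\equiv\pm3 \pmod 8$. All of these have the same $2$-fusion system as $\POmega_8^+(3)$, so we may replace $K$ by $\POmega_8^+(3)$ or $\Omega_8^+(2)$. The induced automorphism of order $3$ is then a graph automorphism of $S$. Since a Sylow $3$-subgroup of $\Aut(S)$ has order $3$, it is unique up to conjugacy, so $\mathcal F$ is the fusion system of $\Omega_8^+(2):3$ or $\POmega_8^+(3):3$.

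The main obstacle is the component analysis: showing that $F^\ast(G/O_{2^\prime}(G))$ is a single simple group containing $S$ as a Sylow subgroup. The $CFSG$-based identification of $K$ also needs the full order table rather than only the cases with $|\Out_K(S)|=3$.
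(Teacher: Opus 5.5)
\textbf{Gaps in the component analysis.} Your architecture matches the paper's proof. You split via \fref{lem:aut}, settle the inner case by \fref{prop:InnS}, realize $\mathcal F$ by \fref{prop:exotic}, show $F^\ast(G)$ is simple with $S$ Sylow, and push the order-$3$ automorphism out of $F^\ast(G)$ with \fref{lem:simple}. Two middle steps do not hold as written.

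First, $X=O^{2^\prime}(G)$ need not have fusion system $O^{2^\prime}(\mathcal F)$. Only $\mathcal F_S(O^{2^\prime}(G))\supseteq O^{2^\prime}(\mathcal F_S(G))$ holds: for $G=A_5$, $p=2$, $O^{2^\prime}(G)=G$ but $O^{2^\prime}(\mathcal F_S(G))=\mathcal F_S(S)$. Deciding whether the order-$3$ automorphism lives inside $O^{2^\prime}(G)$ is exactly the job of \fref{lem:simple}, so that paragraph assumes what is to be shown. Drop it; your later use of \fref{lem:simple} on $K$ is the right replacement.

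Second, ``$\tilde{\mathcal F}$ is the fusion system of a simple group and $O_2(\mathcal F)=1$'' does not give a single component containing $S$; this is the step you flag. The paper gets it from \fref{lem:essentialgroups}. Concretely:
\begin{itemize}
\item $S_0=S\cap F^\ast(G)\neq1$ is strongly closed and normal in $S$. So $S_0\cap E_{ij}$ is invariant under the irreducible $A_8$ in $\Aut_{\mathcal F}(E_{ij})$.
\item If $S_0\cap E_{ij}=1$, then $S_0\le C_S(E_{ij})=E_{ij}$, i.e.\ $S_0=1$. Hence $E_{ij}\le S_0$.
\item The (at least three) $E_{ij}$ generate $S$ modulo $Q$. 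So $S=S_0Q$ and $[Q,S]=[Q,S_0]Z(Q)\le S_0$, giving $|S:S_0|\le 2$ by (the proof of) \fref{lem:frattini}.
\item Your (b) gives $\mathrm{hyp}(\tilde{\mathcal F})=S$, so $S\le\mathrm{hyp}(\mathcal F)=S\cap O^2(G)$. This excludes $|S:S_0|=2$: otherwise $G/F^\ast(G)$ has a normal $2$-complement and $S\cap O^2(G)\le S_0$.
\item Finally, for $F^\ast(G)=K_1\times\cdots\times K_r$ we have $S=\prod_i(S\cap K_i)$, and $|Z(S)|=2$ forces $r=1$.
\end{itemize}

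\textbf{Identification of $K$ and the uniqueness step.} Identifying $K$ via the classification is unnecessary, and as stated it is unsupported. The inspection behind \fref{lem:simple} disposes of $\POmega_7(q)$ only by using the order-$3$ automorphism, so it does not yield your list. Instead: $\mathcal F_S(K)$ has odd index in $\mathcal F$, so it contains $\tilde{\mathcal F}=O^{2^\prime}(\mathcal F)$. Also $\Aut_K(S)=\Inn(S)$ by \fref{lem:simple}. Hence $\mathcal F_S(K)=\tilde{\mathcal F}$, which \fref{prop:O82} and \fref{prop:o83} already identify.

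In your last step, conjugacy in $\Aut(S)$ is not enough; the conjugating automorphism must preserve $\tilde{\mathcal F}$. Apply Sylow's theorem in $A=N_{\Aut(S)}(\tilde{\mathcal F})$ instead. This group contains both $\rho\in\Aut_{\mathcal F}(S)$ and the order-$3$ automorphism $\rho_0$ coming from $\Omega^+_8(2):3$ (resp.\ $\POmega^+_8(3):3$). Its Sylow $3$-subgroups have order $3$ by \fref{lem:aut}. The resulting $\alpha\in A$ maps $\mathcal F=\langle\tilde{\mathcal F},\Aut_{\mathcal F}(S)\rangle$ (end of \fref{sec:F}) onto the fusion system of that group.

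This repaired last step proves the proposition on its own, with neither \fref{prop:exotic} nor the classification. The paper notes this at the start of the appendix but prefers the realization route because it might generalize. Conversely, the paper's proof stops once $F^\ast(H)$ has the fusion system of $\Omega^+_8(2)$ or $\POmega^+_8(3)$. It tacitly needs this uniqueness argument to land on a group between $\Omega^+_8(2)$ and $\Omega^+_8(2):3$ (or the $\POmega^+_8(3)$ analogue).
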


\begin{proof} If $\Inn(S) = \Aut_{\mathcal F}(S)$ the result follows by \fref{prop:InnS}. Thus by \fref{lem:aut} we may assume that $|\Out_{\mathcal F}(S)| = 3$.

The essential subgroups of $\mathcal F$ are given by \fref{lem:essentialgroups}. By \fref{prop:exotic} $\mathcal F$ is realizable by a finite group $H$. We have $O(H) = 1$. Further by \fref{lem:essentialgroups} $H$ has two subgroups, which are extensions of an elementary abelian subgroup of order 64 by $A_8$. Hence $O_2(H) = 1$. Thus $F^\ast(H)$ is a direct product of simple groups. Again by \fref{lem:essentialgroups} we have that $F^\ast(H)$ is simple. By \fref{lem:simple} $F^\ast(H) \not= H$.
By the Frattini argument $H = F^\ast(H)N_H(S)$. In particular $|H : F^\ast(H)| = 3$. Further $\Aut_{F^\ast(H)}(S) = \Inn(S)$ and so by \fref{prop:InnS} $F^\ast(H)$ has the 2-fusion system of $\Omega^+_8(2)$ or $\POmega^+_8(3)$. This proves the proposition.
\end{proof}

The notation $\POmega_8^+(3):3$ is not really uniquely defined. We have that  $\Out(\POmega^+_8(3)) \cong \Sigma_4$. Hence  there are four groups $\POmega^+_8(3) : 3$, which are conjugate in $\Aut(\POmega^+_8(3))$. In particular their 2-fusion systems are isomorphic.

 \end{document}